
\documentclass{article}
\usepackage[margin=1in]{geometry}

\usepackage[utf8]{inputenc}
\usepackage{lmodern}
\usepackage[T1]{fontenc}

\usepackage{amsmath, amssymb, amsthm, amsfonts}

\usepackage[polish, german, english]{babel}

\usepackage{bbm}
\usepackage[retainorgcmds]{IEEEtrantools}
\usepackage{thm-restate}
\usepackage{enumerate}
\usepackage{listings}
\usepackage{verbatim}
\usepackage{faktor}

\usepackage{tikz}
\usetikzlibrary{calc, arrows, decorations.markings, positioning, fit,
shapes.geometric}
\tikzset{>=stealth'}

\theoremstyle{plain}
\newtheorem{theorem}{Theorem}
\newtheorem{lemma}[theorem]{Lemma}
\newtheorem{corollary}[theorem]{Corollary}
\newtheorem{claim}[theorem]{Claim}

\theoremstyle{definition}

\theoremstyle{remark}
\newtheorem{remark}[theorem]{Remark}

\DeclareMathOperator*{\EE}{\mathbb{E}}
\DeclareMathOperator{\EEs}{\mathbb{E}}

\DeclareMathOperator{\Var}{Var}
\DeclareMathOperator*{\Cov}{Cov}
\let\Re\relax 
\DeclareMathOperator{\Re}{Re}
\let\Im\relax
\DeclareMathOperator{\Im}{Im}
\DeclareMathOperator{\sgn}{sgn}

\newcommand{\bbR}{\mathbb{R}}

\newcommand{\1}{\mathbbm{1}}
\newcommand{\cE}{\mathcal{E}}
\newcommand{\cG}{\mathcal{G}}

\newcommand{\cS}{\mathcal{S}}
\newcommand{\beats}{\succ}
\newcommand{\eps}{\varepsilon}
\newcommand{\fhat}{\widehat{f}}
\newcommand{\ghat}{\widehat{g}}
\newcommand{\hhat}{\widehat{u}}
\newcommand{\uhat}{\widehat{u}}
\newcommand{\gtilde}{\widetilde{g}}
\newcommand{\Xtilde}{\widetilde{X}}
\newcommand{\cO}{\mathcal{O}}
\newcommand{\Otilde}{\widetilde{\mathcal{O}}}
\newcommand{\Bcompl}{\overline{B}}
\newcommand{\cond}{\;\vert\;}
\newcommand{\mcond}{\;\middle\vert\;}
\newcommand{\VarAcond}{\Var_{A,\mathrm{cond}}}
\newcommand{\VarBcond}{\Var_{B,\mathrm{cond}}}
\newcommand{\CVABcond}{CV_{AB,\mathrm{cond}}}

\usepackage{xcolor}
\definecolor{DSgray}{cmyk}{0,0,0,0.7}
\definecolor{DSred}{cmyk}{0,0.7,0,0.7}

\usepackage[pdftex]{hyperref}

\title{Intransitive dice tournament
is not quasirandom}
\author{Elisabetta Cornacchia\thanks{EPFL, Lausanne, 
Switzerland. E-mail:
{\tt elisabetta.cornacchia@epfl.ch}.}
\and Jan Hązła\thanks{AIMS Rwanda, Kigali. E-mail: {\tt jan.hazla@aims.ac.rw}.
J.H.~is supported by the AIMS Rwanda research chair funding from the Alexander von Humboldt Foundation, as well
as the DAAD grant No. 57610033.}}
\date{}

\begin{document}\maketitle
\begin{abstract}
We settle a version of the conjecture about intransitive dice posed by Conrey, Gabbard, Grant, Liu and Morrison in 2016 and Polymath in 2017. We consider generalized dice with $n$ faces and
we say that a die $A$ beats $B$ if a random
face of $A$ is more likely to show a higher number than 
an independently chosen random face of $B$.
We study random dice with faces drawn iid from the 
uniform distribution on $[0,1]$
and conditioned on the sum of the faces equal to $n/2$. 
Considering the ``beats'' relation for three such random dice,
Polymath showed that
each of eight possible tournaments between them is asymptotically equally likely. In particular, three dice form an intransitive cycle with probability converging to $1/4$. In this paper we prove that for \emph{four} random dice \emph{not} all tournaments are equally likely and the probability of a transitive tournament is strictly higher than $3/8$.
\end{abstract}

\section{Introduction}
\label{sec:introduction}

Intransitive dice are an accessible example of counterintuitive behavior 
of combinatorial objects. For our purposes, a \emph{die}
is a vector of $n$ real numbers $A=(a_1,\ldots,a_n)$. The traditional cube dice have $n=6$. 
Given two $n$-sided dice $A$ and $B$,
we say that $A$ beats $B$, writing it as $A\beats B$, if
\begin{align*}
  \left(\sum_{i,j=1}^n \1[a_i>b_j]-\1[a_i<b_j]\right)>0\;.
\end{align*}
In other words, $A$ beats $B$ if a random roll (uniformly
chosen face)
of $A$ is more likely to display a larger number than a random roll of $B$.
The term ``intransitive dice'' refers to the fact that the ``beats''
relation is not transitive. One well-known example
due to Efron~\cite{Gar70} with $n=6$ is
\begin{align*}
    A = (0,0,4,4,4,4)\;,\qquad
    B = (3,3,3,3,3,3)\;,\qquad
    C = (2,2,2,2,6,6)\;,\qquad
    D = (1,1,1,5,5,5)\;,
\end{align*}
where one checks that $A\beats B\beats C\beats D\beats A$.
In particular, this set of dice allows for an amusing game: If two players sequentially choose one die each and make a throw, with the player rolling a higher number receiving a dollar from the other player, then it is the second player who has a strategy
with positive expected payout.

The intransitivity of dice is a noted subject in popular
mathematics~\cite{Gar70, Sch00, Gri17}. It has been studied under various
guises for a considerable time~\cite{ST59, Try61, Try65, MM67, Sav94}.
There is a multitude of constructions of intransitive
dice with various properties~\cite{MM67, FT00, BB13, AD17, SS17, BGH18, AS19},
as well as studies of game-theoretic aspects~\cite{Rum01, SMB06, HW19}. In particular, it is known (e.g., \cite{MM67, AD17}) that for every tournament
on $k$ vertices, there exists a set of $k$ dice whose
``beats'' relation realizes the tournament. 

We also note another related
result by Buhler, Graham and Hales~\cite{BGH18} that constructs an unexpected example
which they dub ``maximally nontransitive dice''. Given a set of $k$ dice, let $T_1,\ldots,T_\ell,\ldots$ be the sequence
of tournaments on $k$ vertices defined in the following way:
There is an edge from $i$ to $j$ in the tournament $T_{\ell}$
if and only if the sum of $\ell$ independent rolls of die $i$
is more likely to yield a higher value than the sum
of $\ell$ independent rolls of die $j$.
\cite{BGH18}~constructs, for
every $k$, a set of $k$ dice 
where every possible tournament on $k$ vertices occurs infinitely often in the sequence $(T_\ell)_{\ell\in\mathbb{N}}$.

While most of these results concern interesting examples and constructions, 
there has been recent interest in understanding how common intransitive dice
really are. In other words, is there a natural way of generating intransitive examples?
Conrey, Gabbard, Grant, Liu and Morrison~\cite{CGG16}
proposed several models of random dice to study this question.
A natural starting point is to pick a distribution of a single face,
e.g., uniform over $[0,1]$ or $[n]:=\{1,\ldots,n\}$ and sample dice
with $n$ iid faces from this distribution. Some thought reveals that this does not generate intransitivity
(see, e.g., Theorem~6 in~\cite{HMRZ20} for details). 
The reason
is that with high probability, in a small set of dice
sampled from this model, $A$ beats $B$ if and only
if $\sum_{i=1}^n a_i>\sum_{i=1}^n b_i$. That is, the \emph{face-sum} 
$\sum_{i=1}^n a_i$ is
a proxy of how strong a die is 
and the ``beats'' relation on the set of dice is transitive.

\cite{CGG16} suggested and presented experimental evidence that to ``reveal''
intransitivity one can sample \emph{random balanced dice}. What
we mean by that is the
faces are iid and the die is 
conditioned on having the expected face-sum. They conjectured
(in a related model where random die is a uniform balanced multiset
of $[n]$ rather
than iid sequence) that three dice in such model form an intransitive
cycle with asymptotic probability $1/4$. A collaborative 
Polymath project~\cite{Pol17a} was launched to investigate this conjecture
and indeed proved it for dice with iid uniform faces: 
\begin{theorem}[\cite{Pol17}]\label{thm:polymath-main}
If $A,B,C$ are three random dice with $n$ faces iid uniform in $[n]$ and
conditioned on all face-sums equal to $n(n+1)/2$, then the probability that
$A,B,C$ are intransitive is $1/4+o(1)$.
\end{theorem}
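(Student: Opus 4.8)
The plan is to reduce ``$A\beats B$'' to the sign of a statistic, prove a joint central limit theorem for the three statistics attached to the pairs in $\{A,B,C\}$, and read the eight tournament probabilities off the limiting Gaussian vector; everything will hinge on the face-sum conditioning forcing the limiting covariance matrix to be \emph{diagonal}, which is exactly what equalizes the eight probabilities. Concretely, for dice $A,B$ write $W_{AB}=\sum_{i,j=1}^n\bigl(\1[a_i>b_j]-\1[a_i<b_j]\bigr)$ (a Mann--Whitney-type statistic), so $A\beats B\iff W_{AB}>0$ and $W_{AB}=-W_{BA}$. For $0\le v\le n$ put $F_A(v)=|\{i:a_i\le v\}|$ and let $G_A(v)=F_A(v)-v$ be the centered profile of $A$; one always has $G_A(0)=G_A(n)=0$, while the balancing constraint $\sum_i a_i=n(n+1)/2$ is \emph{exactly equivalent} to $\sum_{v=1}^n G_A(v)=0$. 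I would first establish the following exact identity: grouping the double sum by the value of $b_j$ and summing by parts --- using $\sum_v h_A(v)=0$, where $h_A(v)=|\{i:a_i>v\}|-|\{i:a_i<v\}|$, and then $\sum_{v=1}^{n-1}G_B(v)=0$, both consequences of the balancing --- yields
\[
  W_{AB}=\sum_{v=1}^{n-1}\bigl(G_A(v+1)-G_A(v-1)\bigr)\,G_B(v),
\]
i.e.\ $W_{AB}$ is a discretized $2\int_0^1 G_B\,dG_A$. This is where balancing matters conceptually: for \emph{unconditioned} dice the leading term of $W_{AB}$ is proportional to $\sum_i a_i-\sum_i b_i$ and the relation is essentially a linear order, whereas conditioning annihilates that term and leaves only the genuinely nontransitive bilinear fluctuation above, of size $\Theta(n)$.

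Next --- and this is the main obstacle --- I would prove that the rescaled profile $t\mapsto n^{-1/2}G_A(\lfloor tn\rfloor)$ converges in distribution on $[0,1]$ to a centered Gaussian process $X$, namely a Brownian bridge conditioned to have zero integral, and that for three independent balanced dice the limits are independent copies $X_A,X_B,X_C$. Feeding this into the identity above (reading $\sum_v(G_A(v+1)-G_A(v-1))G_B(v)$ as a discrete stochastic integral of $G_B$ against the increments of $G_A$) gives
\[
  n^{-1}\,\bigl(W_{AB},\,W_{BC},\,W_{CA}\bigr)\ \Longrightarrow\ \bigl(Z_{AB},\,Z_{BC},\,Z_{CA}\bigr),\qquad Z_{AB}:=2\!\int_0^1\! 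X_B\,dX_A,
\]
and cyclically; each $Z_{AB}$ is a centered Gaussian bilinear functional of $(X_A,X_B)$, antisymmetric ($Z_{AB}=-Z_{BA}$), with $\Var Z_{AB}>0$ (conditionally on $X_A$, the integral $\int_0^1 X_A\,dX_B$ is a centered Gaussian of a.s.\ positive variance). I would carry this step out by the method of moments: expand $\EE\bigl[(W_{AB}/n)^{k_1}(W_{BC}/n)^{k_2}(W_{CA}/n)^{k_3}\bigr]$ as a sum over tuples of faces, evaluate these moments \emph{under the face-sum conditioning}, and match the answers to the Wick formula. Evaluating the conditioned moments is exactly where a local limit theorem for $\sum_i a_i$ on $\mathbb{Z}$ is needed --- both to replace the conditioned law by its Gaussian proxy and to control the small bias the conditioning introduces --- and this is where essentially all of the technical work lies; the rest is soft.

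Finally, the covariance computation --- short, and the reason the answer is exactly $1/4$. By exchangeability of $A,B,C$ the limiting vector $(Z_{AB},Z_{BC},Z_{CA})$ is cyclically invariant, so its three pairwise covariances agree and it suffices to show $\Cov(Z_{AB},Z_{BC})=0$. As both factors have mean zero, $\Cov(Z_{AB},Z_{BC})=4\,\EE\bigl[\bigl(\!\int X_B\,dX_A\bigr)\bigl(\!\int X_C\,dX_B\bigr)\bigr]$; condition on $X_B$. Given $X_B$, the first factor is a linear functional of $X_A$ only, the second a linear functional of $X_C$ only, and $X_A,X_C$ stay independent given $X_B$; hence the two factors are conditionally independent, while $\EE\bigl[\int X_C\,dX_B\mid X_B\bigr]=\int \EE[X_C(t)]\,dX_B(t)=0$, so $\Cov(Z_{AB},Z_{BC})=0$. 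Thus $Z_{AB},Z_{BC},Z_{CA}$ are jointly Gaussian, pairwise uncorrelated --- hence independent --- each symmetric about $0$ and non-atomic. The two intransitive tournaments are precisely the events on which these three variables share a sign (for instance $A\beats B\beats C\beats A$ is $\{Z_{AB}>0,Z_{BC}>0,Z_{CA}>0\}$, and $A\beats C\beats B\beats A$ is the all-negative event), each of limiting probability $2^{-3}$, while boundary events such as $\{W_{AB}=0\}$ contribute $o(1)$. Hence $A,B,C$ are intransitive with probability $2\cdot\frac18+o(1)=\frac14+o(1)$.
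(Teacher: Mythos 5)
Since the paper cites~\cite{Pol17} for this theorem rather than proving it, I am comparing against the Polymath strategy that the paper summarizes.  Your route is genuinely different.  Polymath first reduces Theorem~\ref{thm:polymath-main} to the single-die statement~\ref{thm:polymath-reduced} using the symmetry-of-tournaments identity $\Pr[A\beats B,\,A\beats C]=\EE\big[\Pr[A\beats B\mid A]^2\big]$ together with $\EE\Pr[A\beats B\mid A]=1/2$; after that only a $1$-dimensional CLT for $(U_A^{*n},V^{*n})$ with a \emph{fixed} typical balanced $A$ is needed, and the relevant limit really is a centered Gaussian.  You instead go straight for the joint law of the three cyclic Mann--Whitney statistics.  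Your exact identity $W_{AB}=\sum_{v=1}^{n-1}\big(G_A(v+1)-G_A(v-1)\big)G_B(v)$ is correct (I checked the two Abel summations and the two balance identities), the $n^{-1}$ scaling and the heuristic limit $Z_{AB}=2\int_0^1 X_B\,dX_A$ are right, and the covariance computation $\Cov(Z_{AB},Z_{BC})=0$ via conditioning on $X_B$ is correct.  This is more ambitious than the Polymath route, which deliberately avoids identifying the full triple limit.

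There is, however, a genuine gap at the last step: the variables $Z_{AB},Z_{BC},Z_{CA}$ are \emph{not} jointly Gaussian.  Each $Z_{AB}=2\int X_B\,dX_A$ is a second-order (L\'evy-area type) functional of the underlying Gaussian processes; it is Gaussian \emph{conditionally} on $X_A$ or on $X_B$, but unconditionally it is a scale mixture of Gaussians and has a non-Gaussian law.  So ``pairwise uncorrelated $\Rightarrow$ independent'' does not apply, and in fact the triple is not independent (conditionally on $(X_B,X_C)$, both $Z_{AB}$ and $Z_{CA}$ are linear functionals of $X_A$ and are correlated).  The inference that each of the eight sign patterns has limiting probability $1/8$ is therefore unsupported as written.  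The ``match to the Wick formula'' part of the moment step is symptomatic of the same confusion: the moments of $(Z_{AB},Z_{BC},Z_{CA})$ are not Wick moments, since the vector is not Gaussian.

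The conclusion can be rescued, but you need a symmetry argument in place of independence.  The conditioned-bridge law satisfies $X_A\overset{d}{=}-X_A$, and $(X_A,X_B,X_C)$ is exchangeable.  The flip $X_A\to-X_A$ sends $(Z_{AB},Z_{BC},Z_{CA})\to(-Z_{AB},Z_{BC},-Z_{CA})$ and preserves the joint law (likewise for $B$ and $C$, giving the four ``even'' sign changes).  The swap $A\leftrightarrow B$ --- using $\int X_A\,dX_B=-\int X_B\,dX_A$, which holds because the processes vanish at both endpoints --- sends $(Z_{AB},Z_{BC},Z_{CA})\to(-Z_{AB},-Z_{CA},-Z_{BC})$, an odd sign change.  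The group generated acts transitively on $\{\pm\}^3$, so all eight sign patterns have limiting probability exactly $1/8$, and the two cyclic patterns $(+,+,+)$ and $(-,-,-)$ give $1/4$.  With this replacing the joint-Gaussianity claim (and once you verify that each $Z_{AB}$ has no atom at $0$, which does follow from your conditional-Gaussian observation, and that $\Pr[W_{AB}=0]=o(1)$ handles the discrete ties) your outline becomes a sound, if technically heavier, alternative to the Polymath reduction.
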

For later reference we now give an equivalent statement of 
Theorem~\ref{thm:polymath-main}. What it says is that 
a random balanced die
is unbiased with respect to
the ``beats'' relation:
\begin{theorem}[\cite{Pol17}]
\label{thm:polymath-reduced}
If $A,B$ are two random dice with $n$ faces iid uniform in $[n]$ and
conditioned on all face-sums equal to $n(n+1)/2$, then, except with probability
$o(1)$, we have $\Pr[A\beats B\cond B]=1/2+o(1)$.
\end{theorem}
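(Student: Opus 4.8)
The plan is to deduce Theorem~\ref{thm:polymath-reduced} from Theorem~\ref{thm:polymath-main} by a short second moment argument that uses nothing beyond the exchangeability of independently drawn random balanced dice. Throughout, let $A$, $A'$ and $B$ be three \emph{independent} random dice, each with $n$ faces iid uniform in $[n]$ and each conditioned on face-sum $n(n+1)/2$, and set
\[
  p(B) \;:=\; \Pr[A \beats B \cond B],
\]
the random variable that Theorem~\ref{thm:polymath-reduced} claims equals $1/2 + o(1)$ with probability $1 - o(1)$. It suffices to show $\EEs[p(B)] = 1/2 + o(1)$ and $\EEs[p(B)^2] = 1/4 + o(1)$, for then $\Var(p(B)) = o(1)$ and Chebyshev's inequality completes the proof.

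The only fact we import from Theorem~\ref{thm:polymath-main} is the joint law of the tournament induced by $A, A', B$: in the form recorded in the abstract, all eight tournaments on three random balanced dice are asymptotically equally likely, so this tournament equals each of the eight possibilities with probability $1/8 + o(1)$; in particular any two of the three dice tie with probability $o(1)$. Hence, using the $A \leftrightarrow B$ symmetry, $\Pr[A \beats B] = \tfrac12\big(1 - \Pr[A\text{ and }B\text{ tie}]\big) = 1/2 + o(1)$, so that $\EEs[p(B)] = \Pr[A \beats B] = 1/2 + o(1)$.

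I would then turn to the second moment. Conditionally on $B$ the dice $A$ and $A'$ are independent, so
\[
  \EEs[p(B)^2] \;=\; \EEs_B\!\big[\,\Pr[A \beats B \cond B]\,\Pr[A' \beats B \cond B]\,\big] \;=\; \Pr[A \beats B \text{ and } A' \beats B].
\]
The event on the right-hand side says precisely that $B$ loses to both $A$ and $A'$, i.e.\ that the tournament on $\{A, A', B\}$ is one of the two transitive orientations $A \beats A' \beats B$ or $A' \beats A \beats B$; by the previous paragraph its probability is $2\cdot(1/8) + o(1) = 1/4 + o(1)$. Therefore $\Var(p(B)) = \EEs[p(B)^2] - \EEs[p(B)]^2 = \big(1/4 + o(1)\big) - \big(1/2 + o(1)\big)^2 = o(1)$.

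Finally, since $\EEs[p(B)] \to 1/2$ and $\Var(p(B)) \to 0$, Chebyshev's inequality produces a sequence $\eta_n \to 0$ with $\Pr\big[\,|p(B) - 1/2| > \eta_n\,\big] \to 0$, which is exactly the assertion that $\Pr[A \beats B \cond B] = 1/2 + o(1)$ except with probability $o(1)$. I do not foresee a real obstacle anywhere in this reduction — all of the analytic difficulty is already absorbed into Theorem~\ref{thm:polymath-main} — and the only points that call for a little care are confirming that a tie between two random balanced dice has probability $o(1)$ (so that the eight three-die tournaments really do carry all of the mass asymptotically) and the routine bookkeeping that turns a vanishing variance into the ``$1/2 + o(1)$ except with probability $o(1)$'' formulation.
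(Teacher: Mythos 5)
Your reduction uses the right kind of argument: the second-moment trick exploiting conditional independence of $A$ and $A'$ given $B$ to write $\EE[p(B)^2]=\Pr[A,A'\beats B]$, followed by Chebyshev, is precisely the "general properties of tournaments" device the paper alludes to for the equivalence of Theorems~\ref{thm:polymath-main} and~\ref{thm:polymath-reduced}; the identical identity $\EE Y^2=\Pr[A,B\beats C,D]$ drives Lemma~\ref{lem:four-three-reduction}, and your Chebyshev step is the content of Claim~\ref{cl:inverse-chebyshev}. Do note the directionality, though: you are running the equivalence from Theorem~\ref{thm:polymath-main} to Theorem~\ref{thm:polymath-reduced}, whereas the cited Polymath proof establishes Theorem~\ref{thm:polymath-reduced} \emph{directly} by a one-die Fourier/CLT argument (the ancestor of the two-dice Theorem~\ref{thm:conditional-clt}) and then deduces Theorem~\ref{thm:polymath-main} from it. Your derivation is a valid half of the equivalence but is not the route by which Theorem~\ref{thm:polymath-reduced} was actually proved, so it would not help a reader trying to reconstruct the Polymath argument from scratch.

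There is also one genuine gap that cannot be dismissed as bookkeeping: you need $\Pr[A\text{ ties }B]=o(1)$, and this does \emph{not} follow from Theorem~\ref{thm:polymath-main} as stated. The theorem gives $P_\triangle=1/4+o(1)$, which by symmetry pins each of the two cyclic tournaments at $1/8+o(1)$; but the six transitive tournaments together carry mass $3/4-\Pr[\text{some pair ties}]+o(1)$, so the claim that each of the eight tournaments has probability $1/8+o(1)$ is \emph{equivalent} to the vanishing of ties, not a consequence of Theorem~\ref{thm:polymath-main}. Both of your moment computations lean on this: $\EE[p(B)]=\bigl(1-\Pr[\text{tie}]\bigr)/2$ so the mean would not tend to $1/2$ otherwise, and the decomposition of $\{A,A'\beats B\}$ into the two transitive orientations silently drops the sub-event in which $A$ and $A'$ tie. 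The paper's own prose after Theorem~\ref{thm:polymath-reduced} makes the same tacit identification, so you are in good company, but an anticoncentration estimate on $\sum_{i,j}\bigl(\1[a_i>b_j]-\1[a_i<b_j]\bigr)$ is an additional analytic ingredient your argument must supply; it is not already absorbed into Theorem~\ref{thm:polymath-main}. With that lemma in hand, the rest of the reduction goes through as you wrote it.
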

The equivalence of the two theorems is proved
using general properties of tournaments and establishing
it is the first step in the Polymath proof of Theorem~\ref{thm:polymath-main}.

Indeed, it seems that the balanced face-sum for uniform faces is the
``right'' model for generating intransitive dice.
Perhaps surprisingly, it turns out that both balancedness and uniformity are important.
In particular, a follow-up work~\cite{HMRZ20} 
showed that if dice are balanced with
faces iid from any (continuous) distribution that is \emph{not} uniform
(and also in some other models with correlated Gaussian faces), then the 
intransitivity does not show up and three dice are transitive with high probability.

Recall that there are eight tournaments on three vertices, six of them 
transitive and the other two cycles. By symmetry considerations, tournaments
in each of those two groups are equally likely when sampling
three random dice. Therefore, 
Theorem~\ref{thm:polymath-main} means that each tournament on three 
random balanced dice
is asymptotically equally likely.
In that context,
\cite{CGG16} made a natural further-reaching conjecture in the multiset model on $[n]$.
They conjectured that, 
for any fixed $k$, all tournaments on $k$ dice are equally likely.
Using terminology from the paper by Chung and Graham~\cite{CG91}, what
they conjectured is that
the balanced dice tournament is \emph{quasirandom}. 
However, in another surprising turn,
experiments done by Polymath~\cite{Pol17b} suggested that 
for four balanced dice with faces iid uniform in $[n]$ this conjecture is false, with the probability of
a transitive outcome hovering around $0.39$ rather than
$3/8=0.375$ expected in a quasirandom tournament
(cf.~Figure~\ref{fig:tournaments}). 
However, it seems
that the question was ultimately left open by Polymath~\cite{Pol17c}.

\begin{figure}[!ht]\centering\begin{tikzpicture}
    \node [draw, circle] (A1) at (0,4.5) {\phantom{$A$}};
    \node [draw, circle] (B1) at (0,3) {\phantom{$A$}};
    \node [draw, circle] (C1) at (0,1.5) {\phantom{$A$}};
    \node [draw, circle] (D1) at (0,0) {\phantom{$A$}};
    \draw [thick, ->] (A1)--(B1);
    \draw [thick, ->] (B1)--(C1);
    \draw [thick, ->] (C1)--(D1);
    \draw [thick, ->] (A1.east) to [bend left] (C1.east);
    \draw [thick, ->] (B1.west) to [bend right] (D1.west);
    \draw [thick, ->] (A1.east) to [bend left=45] (D1.east);
    
    \node [draw, circle] (A2) at (4, 4) {\phantom{$A$}};
    \node [draw, circle] (B2) at (4, 2.5) {\phantom{$A$}};
    \node [draw, circle] (C2) at (5, 1) {\phantom{$A$}};
    \node [draw, circle] (D2) at (3, 1) {\phantom{$A$}};
    \draw [thick, ->] (A2)--(B2);
    \draw [thick, ->] (A2.east) to [bend left] (C2.north);
    \draw [thick, ->] (A2.west) to [bend right] (D2.north);
    \draw [thick, ->] (B2)--(C2);
    \draw [thick, ->] (C2)--(D2);
    \draw [thick, ->] (D2)--(B2);

    \node [draw, circle] (A3) at (7.5, 3.25) {\phantom{$A$}};
    \node [draw, circle] (B3) at (9.5, 3.25) {\phantom{$A$}};
    \node [draw, circle] (C3) at (9.5, 1.25) {\phantom{$A$}};
    \node [draw, circle] (D3) at (7.5, 1.25) {\phantom{$A$}};
    \draw [thick, ->] (A3)--(B3);
    \draw [thick, ->] (B3)--(C3);
    \draw [thick, ->] (C3)--(D3);
    \draw [thick, ->] (D3)--(A3);
    \draw [thick, ->] (A3)--(C3);
    \draw [thick, ->] (B3)--(D3);
\end{tikzpicture}
\caption{Three types of tournaments on four elements. From the left:
A transitive ordering (there are 24 in total), an overall winner on top of a 3-cycle (16 in total together with
a symmetric case of overall loser) and
a 4-cycle (24 in total).}
\label{fig:tournaments}
\end{figure}
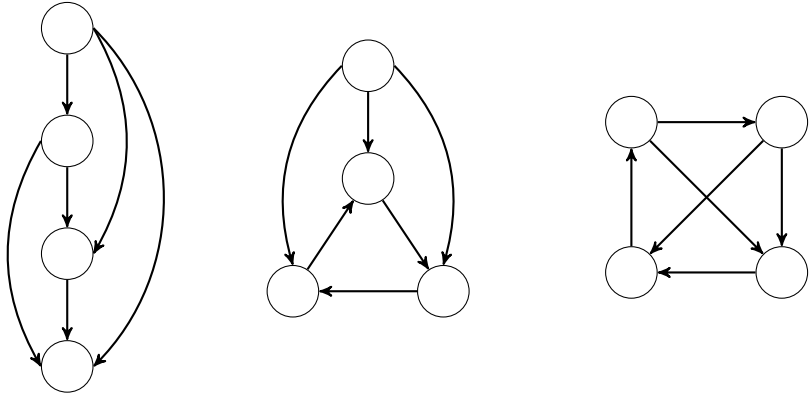

In this paper we settle a version of this conjecture in
accordance with the Polymath experiments.
We are working in the model of random balanced dice with
faces uniform in $[0,1]$: The dice are drawn with
faces iid uniform in $[0,1]$ and conditioned on the face-sums
equal to $n/2$. To be precise, we think of drawing
balanced dice by rejection sampling: Let $(a_1,\ldots,a_{n-1})$
be iid in $[0,1]$, set $a_n:=n/2-\sum_{i=1}^{n-1} a_i$
and accept and output a sample
if $0\le a_n\le 1$, otherwise reject and repeat.

\begin{theorem}\label{thm:main-intro}
  Let $A,B,C,D$ be four random dice with $n$ faces iid uniform in
  $[0,1]$ conditioned on all face-sums equal to $n/2$. Then,
  there exists $\eps>0$ such that,
  for $n$ large enough,
  the probability that the ``beats'' relation 
  on $A,B,C,D$ is transitive is 
  larger than $3/8+\eps$.
\end{theorem}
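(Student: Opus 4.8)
The natural approach is to compute, to leading order, the joint distribution of the ``beats'' statistics for four dice and to show the transitive probability exceeds $3/8$ by a constant. Write, for two dice $A,B$, the statistic
\begin{align*}
  W_{AB} := \frac{1}{n}\sum_{i,j=1}^n \bigl(\1[a_i>b_j]-\1[a_i<b_j]\bigr)\;,
\end{align*}
so that $A\beats B$ iff $W_{AB}>0$. The plan is to understand the fluctuations of the six variables $W_{AB},W_{AC},W_{AD},W_{BC},W_{BD},W_{CD}$ (antisymmetric under swapping indices) as $n\to\infty$. The first step is to recall, as in the Polymath analysis and in~\cite{HMRZ20}, that for a single pair a Hoeffding/Hájek-type decomposition of $W_{AB}$ into its linear part (governed by the empirical CDFs of $A$ and $B$) plus a lower-order ``degenerate'' part shows that $W_{AB}$, suitably scaled, is asymptotically Gaussian; the key feature of the \emph{balanced uniform} model is that the naive linear part coming from the face-sums is killed by the conditioning, so the leading behaviour is one order down and is \emph{not} simply a function of a one-dimensional ``strength'' of each die. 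This is exactly the phenomenon that makes three dice quasirandom at this order.

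**Key steps in order.**
First, I would set up the conditioning carefully via the rejection-sampling description, and pass to a convenient representation of a balanced die: e.g., condition $n$ iid uniforms on their sum, or equivalently use the well-known fact that $n$ such points have the law of $n-1$ uniform spacings-type variables, so that one can do asymptotics with a local CLT correction of size $O(1/\sqrt n)$. Second, for each pair I would expand $W_{AB}$ in the orthogonal decomposition
\begin{align*}
  W_{AB} = L_{AB} + Q_{AB} + (\text{higher order})\;,
\end{align*}
where $L_{AB}$ is linear in the two dice and $Q_{AB}$ is the leading bilinear (degenerate $U$-statistic) term; identify the scaling at which $L_{AB}$ becomes comparable to $Q_{AB}$ after the balancing conditioning removes the dominant contribution. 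Third — and this is the crux — I would compute the joint limiting law of the six rescaled statistics. By the pairwise CLT each $W_{AB}$ is asymptotically Gaussian, but they are \emph{not} jointly Gaussian: the bilinear terms $Q_{AB}$ share the same underlying die-randomness, producing a limiting object that is a quadratic (chaos-2) functional of a Gaussian field indexed by the four dice, together with a Gaussian linear part. Concretely I expect the limit to be expressible through a small number of Gaussian processes on $[0,1]$ (one per die, roughly the limit of the centered empirical process conditioned to integrate to the right value) and the transitive probability to become an explicit, $n$-independent integral against this limiting law.

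**The main obstacle.**
The decisive point is to show that this explicit limiting probability is \emph{strictly} above $3/8$, rather than merely computing it. Three dice are quasirandom precisely because the relevant third moments / odd correlations vanish; for four dice the deviation must come from a nonzero fourth-order correlation among the signs $\sgn(W_{AB})$, which in the limit is a nonzero mixed moment of the chaos-2 limiting variables. So the hard part is twofold: (i) proving that the limiting joint law of the six statistics is genuinely non-Gaussian with a computable nonzero fourth cumulant structure (this is where the balanced-uniform model must be used essentially — uniformity makes many lower-order terms vanish, which both simplifies the computation and, as in~\cite{HMRZ20}, is what keeps the leading non-quasirandom effect alive rather than swamped), and (ii) evaluating the sign-probability of the resulting explicit multivariate distribution well enough to certify the strict inequality. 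I would expect to control (ii) either by a direct (possibly computer-assisted) evaluation of a low-dimensional Gaussian-chaos integral, or by a perturbative argument: write the transitive probability as $3/8$ plus a correction that is a fixed linear combination of fourth moments of the limit variables, and show this combination is a positive quantity by exhibiting it as (a positive multiple of) the variance or second moment of some explicit auxiliary statistic, so positivity is manifest. Throughout, the routine-but-lengthy part — justifying the interchange of limits, bounding the rejection-sampling / local-CLT error terms uniformly, and controlling the tails so that convergence in distribution upgrades to convergence of the sign probabilities — I would relegate to lemmas, since the conceptual content is entirely in identifying the limit and proving the strict positivity of its fourth-order correction.
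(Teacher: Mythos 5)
Your proposal takes a genuinely different route from the paper, and while it is conceptually plausible, it leaves the hardest step unresolved and misses the single reduction that makes the paper's proof tractable. The paper never attempts to identify the joint limiting law of the six pairwise statistics $W_{ij}$. Instead it exploits a simple symmetry/conditional-independence identity: with $Y:=\Pr[A\beats B,C\mid B,C]$, the events ``$A\beats C,D$'' and ``$B\beats C,D$'' are conditionally independent given $C,D$, and any tournament with two vertices dominating the other two is automatically transitive, so
\begin{align*}
P_{\text{4-line}}=6\Pr[A,B\beats C,D]=6\,\EE\big[\Pr[A\beats C,D\mid C,D]^2\big]=6\,\EE Y^2\;.
\end{align*}
Combined with $\EE Y=\EE X^2\ge 1/4$ (where $X=\Pr[A\beats B\mid A]$ is shown to concentrate at $1/2$ by the CLT), this converts the four-dice problem into proving $\Var Y=\Omega(1)$ --- a \emph{variance} of a three-dice functional, not a fourth cumulant. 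Your ``perturbative alternative'' (exhibit the excess as a variance of some auxiliary statistic) is pointing in this direction, but you have not identified $Y$ as the right statistic, nor the above identity, and without it you are stuck with the plan stated first: establish joint convergence of six degree-two chaos variables and then evaluate the orthant probability of the limiting six-dimensional non-Gaussian law. That is a much heavier computation, and the paper explicitly avoids it.

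There is also a gap downstream even if one accepts the $6\EE Y^2$ reduction: showing $\Var Y=\Omega(1)$ is itself nontrivial, and the mechanism is not something your decomposition surfaces. The paper proves a bivariate local CLT (conditionally on the two ``spectator'' dice $B,C$, the pair $(U_B^{*n},U_C^{*n})$ given the face-sum constraint is close to a bivariate Gaussian), so that the deviation of $Y$ from $1/4$ is controlled by the \emph{conditional covariance} $\CVABcond=\Cov[U_B,U_C]-12\Cov[U_B,V]\Cov[U_C,V]$ between the empirical processes of $B$ and $C$. Bounding that quantity away from zero with constant probability is done by second/fourth moment computations (via a precise local CLT for densities, following \cite{HMRZ20}) and Paley--Zygmund. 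Your Hájek-decomposition picture does contain the germ of this (the shared die-randomness is indeed the source of the effect), but the object that must be shown to fluctuate at the right scale is a covariance functional of two dice, not a fourth cumulant of six scalar statistics, and your proposal does not isolate it or explain how to lower-bound it. Finally, a small correction: the marginal law of a single $W_{AB}$ over both random dice is not Gaussian (it is a variance mixture of Gaussians, or a chaos-2 functional, as you note later), so ``by the pairwise CLT each $W_{AB}$ is asymptotically Gaussian'' should be understood as a conditional statement given one of the two dice.
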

Theorem~\ref{thm:main-intro} implies that the dice tournament
is not quasirandom, since there are $2^{\binom{4}{2}}=64$ tournaments on four
elements and $4!=24$ of them, i.e., $3/8$ fraction, are transitive.
We note that our main motivation for the choice of the continuous distribution on $[0,1]$ was the ease of presentation (in particular we do not have to worry about ties). While we have not verified the details, we believe that our proof strategy can be adapted to the discrete case without conceptual difficulties.

The proof of Theorem~\ref{thm:main-intro} combines the main ideas present in two preceding 
works~\cite{Pol17} and~\cite{HMRZ20} and can roughly be
divided into two parts inspired by these papers.
However, there is a considerable number of items that need to be fleshed out and some care is required to get the details right.

The resulting $\eps$ in Theorem~\ref{thm:main-intro} seems tiny and we do not attempt to estimate it. 
We also note that in our model,
for typical balanced dice $A$ and $B$, 
the ``one-roll probability''
\begin{align*}
    P_1(n):=\Pr[\text{$A$ rolls higher than $B$}]
    =\frac{1}{n^2}\sum_{i,j=1}^n \Big(\mathbbm{1}(a_i>b_j)-\mathbbm{1}(a_i<b_j)\Big)
\end{align*}
is rather close to half, satisfying
$1/2-c/n\le P_1(n)\le 1/2+C/n$ for some universal
constants $c,C>0$.
Therefore, in the game we described before
it takes the second player order of $n$ dice rolls
to achieve one expected unit of payoff.

We leave open some intriguing
questions. For example, is there a more complete characterization
of the ``dice tournament'' except for the fact that it is not 
quasirandom?\footnote{We note that this question
was extensively addressed and answered in a subsequent
work by Sah and Sawney~\cite{SS24}.}
Is there another natural tweak to the model that makes the tournament
quasirandom or is there a reason to believe otherwise?

To explain how Theorem~\ref{thm:main-intro} comes about, it is worthwhile
to state its equivalent version that is a close analogue of 
Theorem~\ref{thm:polymath-reduced}. Indeed, in Section~\ref{sec:four-three-reduction}
we show that the following statement
implies Theorem~\ref{thm:main-intro}:
\begin{theorem}\label{thm:reduced-intro}
  Let $A,B,C$ be three random dice with $n$ faces iid uniform in
  $[0,1]$ conditioned on all face-sums equal to $n/2$. Then, there exists
  $\eps>0$ such that, for large enough $n$, with probability at least $\eps$
  we have $\big|\Pr[A\beats B,C\cond B,C]-1/4\big|>\eps$.
\end{theorem}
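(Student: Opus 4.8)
The plan is to estimate directly the quantity $X(B,C):=\Pr[A\beats B,C\cond B,C]$ --- the probability, over $A$, that $A$ beats both $B$ and $C$ --- and to show that for a constant fraction of pairs $(B,C)$ it exceeds $1/4$ by a constant. First I would reformulate the ``beats'' relation: since $\sum_i a_i=n/2$, for any fixed die $B$ one has $A\beats B$ if and only if $\sum_{i=1}^n h_B(a_i)>0$, where
\[
  h_B(x)\ :=\ \#\{j:b_j<x\}-\#\{j:b_j>x\}-n(2x-1)
\]
is the \emph{fluctuation profile} of $B$; this is the step where the linear part of the advantage drops out because of balancedness, exactly as in~\cite{Pol17,HMRZ20}, and note that $\int_0^1 h_B=n-2\sum_j b_j=0$. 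Hence $X(B,C)=\Pr_A[\sum_i h_B(a_i)>0,\ \sum_i h_C(a_i)>0\cond B,C]$ is a two-dimensional orthant probability for a sum of $n$ conditionally i.i.d.\ contributions.

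The heart of the argument is a quantitative bivariate central limit theorem: for $(B,C)$ outside an event of probability $o(1)$, the pair $(\sum_i h_B(a_i),\sum_i h_C(a_i))$, under the conditional law of $A$ and rescaled by $1/\sqrt n$, is within $o(1)$ --- in the relevant orthant-probability sense --- of a centered bivariate Gaussian with correlation
\[
  \rho(B,C)\ :=\ \frac{\langle\Pi h_B,\Pi h_C\rangle}{\lVert\Pi h_B\rVert\,\lVert\Pi h_C\rVert},
\]
where $\langle\cdot,\cdot\rangle$ and $\lVert\cdot\rVert$ are the $L^2[0,1]$ inner product and norm and $\Pi$ is the orthogonal projection onto the orthocomplement of the affine functions. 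The projection $\Pi$ is forced by the conditioning $\sum_i a_i=n/2$ (it annihilates the component of the profiles along $x\mapsto 2x-1$); a convenient way to organize this is either a three-dimensional local CLT for $(\sum_i h_B(a_i),\sum_i h_C(a_i),\sum_i a_i)$ followed by conditioning the last coordinate, or a Cram\'er--Wold reduction to a univariate CLT for $\sum_i(\alpha h_B+\beta h_C)(a_i)$ --- whose variance works out to $n\lVert\Pi(\alpha h_B+\beta h_C)\rVert^2$ --- for each fixed $(\alpha,\beta)$. Granting this, $X(B,C)=\tfrac14+\tfrac1{2\pi}\arcsin\rho(B,C)+o(1)$ uniformly over a ``good set'' of pairs. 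I expect this to be the main obstacle: it requires a multivariate Berry--Esseen/local-CLT estimate under the face-sum conditioning with error controlled uniformly over typical $(B,C)$, together with a check that the limiting covariance is nondegenerate there; it is where the one-dimensional analysis of~\cite{Pol17} (and the covariance bookkeeping of~\cite{HMRZ20}) has to be extended to this slightly larger class of functions. The good set should be cut out by a handful of conditions that hold with probability $1-o(1)$ (or even only $\ge 1-\eps$, which is enough), such as $\lVert h_B\rVert_\infty,\lVert h_C\rVert_\infty\le\sqrt n\log^2 n$ and $\lVert\Pi h_B\rVert^2,\lVert\Pi h_C\rVert^2\ge c_2 n$ for a small constant $c_2$; these are routine tail and moment bounds for polynomial statistics of i.i.d.\ variables.

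Next I would show there are constants $c_0,\eta,\delta>0$ with $\Pr_{B,C}[\rho(B,C)\in[c_0,1-\eta]]\ge\delta$. Conditioning on $B$ and using $\Pi h_B\perp\{\text{affine functions}\}$, one computes
\[
  \langle\Pi h_B,\Pi h_C\rangle\ =\ -2\sum_{k=1}^n\int_0^{c_k}(\Pi h_B)(t)\,dt,
\]
a sum of $n$ bounded, mean-$\approx 0$, conditionally i.i.d.\ contributions; for typical $B$ the variance of one such contribution is a positive constant (one checks its expectation over $B$, using that $h_B/\sqrt n$ behaves like twice a Brownian bridge). A one-dimensional CLT under the face-sum conditioning on $C$ --- a special case of the machinery of the previous step --- then gives $\langle\Pi h_B,\Pi h_C\rangle\ge c'n$ with constant probability over $C$, while $\lVert\Pi h_B\rVert,\lVert\Pi h_C\rVert$ lie in $[\sqrt{c_2 n},O(\sqrt n)]$ with probability $\ge 1-\eps$ (using $\EE\lVert h_B\rVert^2=O(n)$); choosing the constants consistently yields $\rho(B,C)\in[c_0,1-\eta]$ on an event of probability $\ge\delta$. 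It is worth stressing that $\lVert\Pi h_B\rVert^2/n$ does \emph{not} concentrate --- it has a nontrivial chi-square-type distributional limit --- so one cannot expect $\rho$ to be bounded away from $0$ with high probability, only with constant probability, which is exactly what Theorem~\ref{thm:reduced-intro} asks for.

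Finally, intersecting the good set with $\{\rho(B,C)\in[c_0,1-\eta]\}$ gives an event of probability $\ge\delta-o(1)\ge\delta/2$ on which, by the bivariate CLT, $X(B,C)\ge\tfrac14+\tfrac1{2\pi}\arcsin c_0-o(1)$. Choosing $\eps$ to be a sufficiently small positive constant (depending only on $c_0$ and $\delta$), we conclude that $X(B,C)>1/4+\eps$ on an event of probability $>\eps$, hence $\Pr_{B,C}[\,|X(B,C)-1/4|>\eps\,]>\eps$, which is Theorem~\ref{thm:reduced-intro}. (Polymath's Theorem~\ref{thm:polymath-reduced} is absorbed into the first step: the assertion that the limiting bivariate Gaussian is \emph{centered} is precisely its two-dimensional refinement.)
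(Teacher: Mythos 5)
Your proposal follows essentially the same route as the paper: reformulate ``$A$ beats both $B,C$'' as a bivariate orthant probability for $(\sum_i g_B(a_i),\sum_i g_C(a_i))$ under the face-sum conditioning on $A$ (your $h_B$ is exactly $2g_B$), prove a conditional bivariate CLT against the matching Gaussian, and then show the conditional correlation $\rho = \CVABcond/\sqrt{\VarAcond\VarBcond}$ is bounded away from zero with constant probability; your projection $\Pi$ onto the orthocomplement of affines is precisely the paper's ``$\mathrm{cond}$'' covariance written invariantly, and your arcsin formula is the paper's Claim~\ref{cl:gaussian-gamma}. You also correctly flag that $\VarAcond/n$ does not concentrate, which is why only constant (not high) probability is claimed.

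The one place where you are looser than the paper is the covariance lower bound. You propose to fix a good $B$ and run a one-dimensional conditional CLT for $\langle\Pi h_B,\Pi h_C\rangle=-2\sum_k\int_0^{c_k}\Pi h_B$ over $C$; the paper instead applies Paley--Zygmund to $\CVABcond^2$ after computing its second and fourth moments over both dice via Petrov's local CLT. Your route can work, but note that the per-face variance you need, $\Var_V\big[\int_0^V\Pi h_B\big]=\big\|\int_0^{\cdot}\Pi h_B\big\|_2^2$, is an $H^{-1}$-type norm of $\Pi h_B$, not the $L^2$ norm, so the good-set conditions $\|\Pi h_B\|^2\ge c_2 n$ and $\|h_B\|_\infty=\Otilde(\sqrt n)$ do not by themselves give the required $\Omega(n)$ lower bound; you would need a separate Paley--Zygmund-type argument for that quantity, which is essentially the paper's computation in another guise. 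Relatedly, the positivity you need is a genuine near-cancellation -- the paper finds $\EE[CV_{AB}^2]\approx\frac{11}{12600}n^2$ against $\EE[CV_A^2]\approx\frac{1}{60}n$, so $\sqrt{\EE CV_{AB}^2}\approx 0.030n$ barely exceeds $\EE CV_A^2\approx 0.017n$ -- so ``$h_B/\sqrt n$ behaves like a Brownian bridge'' is the right heuristic but does not substitute for carrying the constants, which is why the paper invests in the precise $p_{n-k}$ expansions of Section~\ref{sec:moments}.
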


In the rest of this section we are going to give a 
sketch of the strategy that we use to prove
Theorem~\ref{thm:reduced-intro}.

\paragraph{Proof strategy} 
For the sake of clarity let us assume
that we want to demonstrate a slightly stronger statement
$\Pr[A\beats B,C\cond B,C]\allowbreak>1/4+\eps$ with constant probability. 
Intuitively, we would like to show that, with constant probability,
two random balanced dice $B,C$ are, in some sense, close to each other
and therefore there is positive correlation in the ``beats'' relation with another random die $A$. 
The equivalence between Theorems~\ref{thm:main-intro}
and~\ref{thm:reduced-intro} has already been spotted by Polymath
and~\cite{Pol17c} discusses some hypotheses on how the correct notion
of closeness could look like.

To continue, we need to say a few more words about the ideas used by Polymath to prove
Theorem~\ref{thm:polymath-reduced}. For a die $A$ with faces in $[0,1]$
and $0\le x\le 1$, let
\begin{align}\label{eq:34}
    f_A(x):=
    \sum_{i=1}^n \1\left[a_i\le x\right]=
    \big|\left\{i\in[n]: a_i\le x\right\}\big|\;,\qquad\qquad
    g_A(x):=f_A(x)- xn\;.
\end{align}
One checks that, as long as there are no ties between the faces,
a die $A$ beats $B$ if and only if
$\sum_{i=1}^n f_A(b_i)<n^2/2$. Furthermore, for balanced $A$ and $B$ it holds that $\sum_{i=1}^n g_A(b_i) =\sum_{i=1}^n f_A(b_i) - n^2/2  $, thus $A$ beats $B$ if and only if $\sum_{i=1}^n g_A(b_i)<0$. For a fixed
balanced die $A$, let $V$ be a uniform random variable in $[0,1]$ and
$U_A:=g_A(V)$. Now the event ``$A$ beats $B$'' for a random balanced die $B$
can be written as
\begin{align*}
    U_A^{*n}<0\quad\text{ conditioned on }\quad V^{*n}=n/2\;,
\end{align*}
where $(*n)$ denotes the $n$-wise convolution of random variables,
i.e.,
$(U_A^{*n},V^{*n}) = (\sum_{i=1}^n U_{A,i},
\allowbreak\sum_{i=1}^n V_i)$, and $(U_{A,i},V_i)$ are $n$
iid copies of the joint distribution of $(U_A, V)$.
\cite{Pol17} shows that, in their setting, a precise version of central limit
theorem holds for $(U_A, V)$ for a typical balanced $A$. Since $\EE U_A=0$
and we condition on $V^{*n}$ being equal to its expectation, after
the conditioning $U_A^{*n}$ is still close to a centered Gaussian and
Theorem~\ref{thm:polymath-reduced} follows.

In order to prove Theorem~\ref{thm:reduced-intro}, we follow suggestions
from the Polymath discussion and extend this limit theorem to a
triple of random variables $(U_B, U_C, V)$ for typical random balanced
dice $B$ and $C$. Now a notion of closeness suggests itself:
Since we are interested in finding $B$, $C$ for which
$\Pr[U_B^{*n},U_C^{*n}>0]$ is bounded away from $1/4$,
it might be helpful to prove that, with constant probability, the
covariance $\Cov[U_B, U_C]$ is bounded away from zero. Since
for balanced $B$ and $C$ it holds that
\begin{align*}
    \int_0^1\big(g_B(x)-g_C(x)\big)^2\,\mathrm{d}x
    = \Var U_B + \Var U_C - 2\Cov[U_B,U_C]\;,
\end{align*}
a somewhat large value of the covariance is 
roughly equivalent to somewhat small $L^2$ distance
between functions $g_B$ and $g_C$. While we follow this
intuitive picture, in the proofs we will be always
dealing directly with the covariance.

As a matter of fact, the preceding discussion is not quite correct. Since we are conditioning
on $V^{*n}=n/2$, we need to look at the \emph{conditional covariance}, that
is exclude the possibility that all the covariance between $U_B$ and
$U_C$ is ``mediated'' through $V$. An elementary calculation shows that
the correct expression is $CV_{\mathrm{cond}}:=\Cov[U_B,U_C]-12\Cov[U_B,V]\Cov[U_C,V]$.  
The event $|\Pr[A\beats B,C\cond B,C]-1/4|>\eps$ 
is essentially equivalent
to $|CV_{\mathrm{cond}}|$ being 
not too small compared to the variances of
$U_B$ and $U_C$.

We show the bound on $|CV_{\mathrm{cond}}|$
using the second moment method. Thinking of $CV_{\mathrm{cond}}$
as a random variable over the choice of balanced $B$ and $C$, we bound
its second and fourth moments and apply the Paley-Zygmund inequality to conclude
that indeed sometimes it is bounded away from zero. In the proof we employ a 
modification of the argument used in~\cite{HMRZ20}:
If $B$ and $C$ are not balanced, estimating the moments is an elementary
calculation. To account for the conditioning on balanced $B$ and $C$, we
employ yet another precise local central limit theorem to estimate
changes in the relevant moments. This requires some care, but in the end we
conclude that the conditioning does not fundamentally alter the behavior
of $CV_{\mathrm{cond}}$.

\paragraph{Organization} The rest of the paper contains the proof
of Theorem~\ref{thm:main-intro}. Section~\ref{sec:proof-outline}
includes a more detailed outline, while Sections~\ref{sec:clt-proof}
and~\ref{sec:moments} the details, of, respectively,
our central limit theorem and the moment calculations.

\paragraph{Notation} All statements of theorems, lemmas etc.~concerning
dice are meant to hold for number of faces $n$ large enough, 
even if not explicitly mentioned.
We also assume for ease of presentation that the number of
faces $n$ is odd, but the proof can be adapted to the case of even $n$. All logarithms are natural. We use $\cO(\cdot)$ for the
Big-oh notation, as well as
$f=\Omega(g)$ for $g=\cO(f)$ and
$f=\Theta(g)$ for $f=\cO(g)$ and $f=\Omega(g)$.
Furthermore, $\Otilde(f(n))$ denotes
$\cO(f(n)\cdot\log^c n)$ for some fixed $c$.
For $S\subseteq\mathbb{R}$, we let 
$d_S(x):=\inf\left\{|x-s|:s\in S\right\}$.
For a random variable $U$, we use the notation
$\|U\|_\infty$ for the essential supremum of $|U|$.

As we mentioned, by a ``random balanced die'' we mean a
die with iid uniform faces constrained to have the expected
face-sum by rejection sampling. Equivalently, if
we consider the uniform measure $\mu$ over 
$a_1,\ldots,a_{n-1}\in [0,1]^{n-1}$ and adopt
the convention that given $a_1,\ldots,a_{n-1}$ we let 
$a_n := n/2-\sum_{i=1}^{n-1} a_i$, then we decree that for an
integrable $f:\mathbb{R}^{n}\to\mathbb{R}$ we have
\begin{align}\label{eq:44}
\EE\left[f(a_1,\ldots,a_n)\mcond \sum_{i=1}^n a_i=n/2\right]
&:=\frac{\int
 f\left(a_1,\ldots,a_n\right)\cdot
 \mathbbm{1}\left(0\le a_n\le 1\right)
 \,\mathrm{d}\mu
}
{\int \mathbbm{1}
\left(0\le a_n\le 1\right)\,\mathrm{d}\mu}\;.
\end{align}

\paragraph{Acknowledgments} We are grateful to Ido Nachum for his
comments on the manuscript and to the referees for multiple corrections
and suggestions.

\section{Proof outline}
\label{sec:proof-outline}

In this section we present the main steps in the proof
of Theorem~\ref{thm:main-intro},
leaving more technical details for the following
sections. We present the proof divided into a number of stages,
starting with reducing Theorem~\ref{thm:main-intro}
to Theorem~\ref{thm:reduced-intro}.

\subsection{Reduction from four to three dice}
\label{sec:four-three-reduction}

In order to make a connection between 
Theorems~\ref{thm:main-intro}
and~\ref{thm:reduced-intro}, we also need a version
of Theorem~\ref{thm:polymath-reduced} adapted to our setting.
\begin{theorem}\label{thm:one-fixed}
  Let $A,B$ be two random balanced dice with faces
  uniform in $[0,1]$. Then, 
  except with probability $1/n$ over the 
  choice of $A$, we have
  \begin{align*}
      \left|\Pr\Big[
      A\beats B\cond A
      \Big]-\frac{1}{2}\right|=
      \Otilde\left(\frac{1}{\sqrt{n}}\right)\;.
  \end{align*}
\end{theorem}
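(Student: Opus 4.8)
The plan is to follow the Polymath strategy for Theorem~\ref{thm:polymath-reduced}, adapted to the continuous $[0,1]$ setting. Fix a balanced die $A$ and recall the reformulation: with $V$ uniform in $[0,1]$ and $U_A:=g_A(V)=f_A(V)-Vn$, the event ``$A\beats B$'' for a random balanced $B$ is, conditionally on the faces of $A$, the event $U_A^{*n}<0$ conditioned on $V^{*n}=n/2$. Since $A$ is balanced we have $\EE U_A=0$. The heart of the matter is a local central limit theorem for the pair $(U_A,V)$: for a \emph{typical} balanced $A$, the conditional law of $U_A^{*n}$ given $V^{*n}=n/2$ is approximated by a centered Gaussian, with an explicit error term. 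Once that is in place, $\Pr[U_A^{*n}<0\cond V^{*n}=n/2]=1/2+o(1)$ follows because the Gaussian is symmetric about $0$; quantitatively, I would aim for the $\Otilde(1/\sqrt n)$ bound stated, which should come out of the Berry--Esseen-type error in the local CLT combined with the size of the anticoncentration window.

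The first step is to identify the ``good'' event on $A$ that holds except with probability $1/n$. The relevant quantities are the variance $\sigma_A^2:=\Var U_A=\int_0^1 g_A(x)^2\,dx$ (using balancedness), the covariance $\Cov[U_A,V]$, and some control on the smoothness/non-latticeness and higher moments of $(U_A,V)$ needed to run the local CLT. The key estimate is that for a random balanced $A$, with probability $1-\cO(1/n)$ we have $\sigma_A^2=\Omega(1/n)$ (indeed $g_A$ is a mean-zero discrepancy-type function of magnitude $\Theta(\sqrt n)$ pointwise but $\Theta(1)$ in $L^2$ after the $xn$ subtraction — one should check the normalization carefully here), and simultaneously $\|U_A\|_\infty=\cO(\sqrt n)$ or so, plus enough regularity of $g_A$ that the sum $V^{*n}$ is genuinely non-degenerate. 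These are tail estimates on functionals of iid uniforms conditioned on their sum, handled by standard concentration after removing the conditioning via a local CLT for $V^{*n}$ itself (the denominator in~\eqref{eq:44} is $\Theta(1/\sqrt n)$). I would isolate these facts as preliminary lemmas.

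The second step is the two-dimensional local central limit theorem: conditioned on the faces of a good $A$, the joint law of $(U_A^{*n},V^{*n})$ near $(0,n/2)$ is close to a bivariate Gaussian with the matching covariance matrix, uniformly enough that we may condition on the thin slice $V^{*n}=n/2$ and read off that $U_A^{*n}$ has a near-Gaussian density centered at $0 - \text{(correction)}$, where the correction comes from the off-diagonal covariance $\Cov[U_A,V]$ times the deviation of the conditioning point from the mean of $V^{*n}$ — but that deviation is exactly zero since $\EE V^{*n}=n/2$, so the center stays at $0$. Here I expect to reuse, essentially verbatim, the machinery that the paper develops in Section~\ref{sec:clt-proof} for the harder three-variable statement (Theorem~\ref{thm:reduced-intro}); Theorem~\ref{thm:one-fixed} is the degenerate case of that CLT with one of $U_B,U_C$ dropped, so the honest thing is to state it as a corollary of the general CLT lemma. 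Then $\Pr[U_A^{*n}<0\cond V^{*n}=n/2]$ equals the Gaussian probability of a centered normal being negative, namely $1/2$, up to the total error, which after tracking constants is $\Otilde(1/\sqrt n)$.

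The main obstacle is the local CLT error analysis: controlling the characteristic function $\EE[e^{i t U_A + i s V}]$ away from the origin for a typical $A$, so that the inversion integral is dominated by the Gaussian part. Near the origin one Taylor-expands and the third-moment term gives the Berry--Esseen rate; the genuinely delicate part is the ``large-frequency'' regime, where one must use that $g_A$, although it is a step function, does not conspire to make $U_A$ lattice-like at the relevant scale — this is where the continuous uniform distribution on $[0,1]$ buys us the smoothness of $V$ and lets us dominate the tail of the Fourier integral. A secondary technical point is bookkeeping the conditioning in~\eqref{eq:44}: every probability over $B$ is really a ratio, and one must verify the $1/n$-exceptional set for $A$ is not inflated by this. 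I would handle both by importing the estimates proved in Section~\ref{sec:clt-proof} and checking that the one-die specialization only simplifies them.
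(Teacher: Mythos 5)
Your high-level plan — reuse the characteristic-function machinery of Section~\ref{sec:clt-proof} and argue that the conditional law of $U_A^{*n}$ is close to a \emph{centered} Gaussian, hence $\Pr[U_A^{*n}<0\cond V^{*n}=n/2]\approx1/2$ — is in the right spirit, and the paper indeed obtains Theorem~\ref{thm:one-fixed} as a byproduct of the two-die CLT estimates. But the specific route you propose, deducing Theorem~\ref{thm:one-fixed} as a ``degenerate case'' of the general CLT statement Theorem~\ref{thm:conditional-clt} by dropping one of $U_B,U_C$, has a genuine gap: Theorem~\ref{thm:conditional-clt} carries the hypothesis $\VarAcond\ge\eps n$ for a \emph{fixed} $\eps$, and this hypothesis is \emph{not} satisfied with probability $1-\cO(1/n)$. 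Since $\Var_A/n$ converges in distribution to $\int_0^1 B(x)^2\,\mathrm{d}x$ for a Brownian bridge $B$, the event $\Var_A<\eps n$ has probability bounded away from zero for any fixed $\eps>0$; one cannot fold it into the $1/n$ exceptional set. (Your stated bound $\sigma_A^2=\Omega(1/n)$ is also off by a factor of $n^2$: for balanced dice $\Var_A=\int_0^1 g_A(x)^2\,\mathrm{d}x$ has expectation $\Theta(n)$, not $\Theta(1/n)$.) So the ``state it as a corollary of the general CLT lemma'' plan stalls precisely where the exceptional probability must be made $1/n$.

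The paper avoids needing any variance lower bound by arguing via \emph{symmetry} rather than via a variance-normalized Gaussian. Concretely, from Lemma~\ref{lem:characteristic-close} and Claim~\ref{cl:inverse-fourier} one gets $\bigl|P(a,b)\cdot u(0)-g(a,b,0)\bigr|\le\log^{16}n/n^4$ uniformly in $a,b\in\mathbb{Z}+1/2$, and since the Gaussian density $g$ is centered, $g(a,b,0)=g(-a,-b,0)$, which yields $|P(a,b)-P(-a,-b)|\le 8\log^{16}n/n^{5/2}$ after dividing by $u(0)\ge 1/(4n\sqrt n)$. Summing this over the $\Otilde(n^2)$ relevant lattice points (controlled by Lemma~\ref{lem:u_a-concentration}) gives $\Pr[U_A^{*n}>0\cond V^{*n}=n/2]=\tfrac12\pm\Otilde(1/\sqrt n)$ with no assumption on $\Var_A$ beyond the high-probability sup-norm bound of Lemma~\ref{lem:supremum-norm}. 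If you want to pursue your route, you would either have to prove a weaker, $n$-dependent lower bound like $\Var_A\ge n/\log n$ holding with probability $1-\cO(1/n)$ and then re-track the error in the Gaussian-anticoncentration step of the Theorem~\ref{thm:conditional-clt} proof, or you would have to switch to the symmetry argument — at which point you are doing exactly what the paper does in Section~\ref{sec:one-fixed-equivalent}.
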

While one could prove Theorem~\ref{thm:one-fixed} directly by adapting the argument from~\cite{Pol17}, we will obtain it as a byproduct
in the proof of Theorem~\ref{thm:reduced-intro}.

We will now show that Theorem~\ref{thm:main-intro}
can be deduced
from Theorems~\ref{thm:reduced-intro}
and~\ref{thm:one-fixed}. We start with some preliminary 
observations. Note that in our model almost surely all
faces of the sampled dice will have different values.
In particular, assuming $n$ odd,
the draws between
dice (i.e., situations where neither $A\beats B$ nor $B\beats A$) occur with
probability zero.
Therefore, almost surely, the
``beats'' relation on three random balanced
dice forms one of the eight possible tournaments.
As discussed, six of these tournaments are transitive and 
the other two are cycles, and by symmetry
the tournaments 
inside each class occur with the same probability.
Denoting the overall probabilities of the two classes as
$P_{\text{3-line}}$ and $P_\triangle$, we have, e.g.,
\begin{align*}
    P_{\text{3-line}}
    &=6\Pr[A\beats B,C \text{ and } B\beats C]\;,\\
    P_{\triangle}
    &=2\Pr[A\beats B\beats C\beats A]\;.
\end{align*}

Similarly, a random tournament on four balanced dice is
characterized by three probabilities of different types
of tournaments on four elements, as seen
in Figure~\ref{fig:tournaments}. These are 24 transitive
tournaments, 
16 tournaments with a unanimous winner (or loser)
above (respectively below) a 3-cycle, and 24 4-cycles. 
Again, tournaments inside
each class are equally likely and we denote the respective
overall probabilities as
$P_{\text{4-line}}$, $P_{1,\triangle}$ and $P_\square$.

We will need a simple application of Markov's inequality:
\begin{claim}\label{cl:inverse-chebyshev}
Let a random variable $X$ satisfy $|X-\EE X|\le 1$ almost surely.

If $\Pr[|X-\EE X|\ge \eps_1]\le\eps_2$, then 
$\Var X\le \eps_1^2+\eps_2$.
On the other hand, if $\Pr[|X-\EE X|\ge c_1]\ge c_2$, then
$\Var X\ge c_1^2c_2$.
\end{claim}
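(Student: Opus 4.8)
The plan is to prove both statements directly from the identity $\Var X = \EE[(X-\EE X)^2]$ by splitting the expectation at the deviation threshold; this is the second-moment counterpart of the Markov inequality alluded to in the claim's name. Throughout I would write $Y := X - \EE X$, so that $\EE Y = 0$ and $\Var X = \EE[Y^2]$, and the hypotheses become $|Y|\le 1$ (pointwise), $\Pr[|Y|\ge\eps_1]\le\eps_2$, and $\Pr[|Y|\ge c_1]\ge c_2$.

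For the upper bound I would decompose the variance according to whether the deviation exceeds $\eps_1$:
\[
  \EE[Y^2] = \EE\!\bigl[Y^2\,\1(|Y|<\eps_1)\bigr] + \EE\!\bigl[Y^2\,\1(|Y|\ge\eps_1)\bigr].
\]
On the event $\{|Y|<\eps_1\}$ we have $Y^2<\eps_1^2$, so the first term is at most $\eps_1^2\Pr[|Y|<\eps_1]\le\eps_1^2$. On the event $\{|Y|\ge\eps_1\}$ the boundedness hypothesis $|X-\EE X|\le 1$ gives $Y^2\le 1$, so the second term is at most $\Pr[|Y|\ge\eps_1]\le\eps_2$. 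Summing the two bounds yields $\Var X\le\eps_1^2+\eps_2$.

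For the lower bound I would simply discard the contribution of the small-deviation event:
\[
  \Var X=\EE[Y^2]\ge\EE\!\bigl[Y^2\,\1(|Y|\ge c_1)\bigr]\ge c_1^2\,\Pr[|Y|\ge c_1]\ge c_1^2 c_2,
\]
where the final inequality is exactly the hypothesis $\Pr[|X-\EE X|\ge c_1]\ge c_2$; note that the boundedness assumption is not needed for this direction. There is no genuine obstacle in either part, as both reduce to a one-line truncation estimate; the only point worth flagging is that the upper bound really does require some a priori bound on $|Y|$ (here $|Y|\le 1$) to control $Y^2$ on the tail event, without which the first statement would be false.
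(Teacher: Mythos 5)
Your proof is correct and is the natural truncation argument; the paper itself only states Claim~\ref{cl:inverse-chebyshev} without a written proof (it is prefaced as ``a simple application of Markov's inequality''), and your decomposition of $\EE[(X-\EE X)^2]$ at the threshold is exactly the intended one-line argument. Your remark that the boundedness hypothesis is essential for the upper bound but not for the lower bound is accurate.
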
 

We can now state and prove the lemma establishing the reduction:

\begin{lemma}\label{lem:four-three-reduction}
Let $A,B,C,D$ be random balanced dice and
let $X:=\Pr[A\beats B\cond A]$ and $Y:=\Pr[A\beats B,C\cond B,C]$.
Assume that for some constants $\eps_1,\eps_2,c_1,c_2>0$ such
that $c_1-\eps_1^2-\eps_2\ge 0$ the following
two conditions
hold:
\begin{align}
    \Pr\left[\left|X-\frac{1}{2}\right|\ge\eps_1\right]
    &\le\eps_2\;,
    \label{eq:32}\\
    \Pr\left[\left|Y-\frac{1}{4}\right|\ge c_1\right]
    &\ge c_2\;.
    \label{eq:31}
\end{align}
Then,
\begin{align*}
    P_{\mathrm{4\text{-}line}}\ge\frac{3}{8}+
    6c_2(c_1-\eps_1^2-\eps_2)^2\;.
\end{align*}
\end{lemma}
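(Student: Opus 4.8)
The plan is to express $P_{\mathrm{4\text{-}line}}$ in terms of the random variables $X$ and $Y$ and then decompose it into a "quasirandom baseline" of $3/8$ plus a correction term controlled by the variance of a suitable quantity. First I would recall the combinatorial identity relating $P_{\mathrm{4\text{-}line}}$ to transition probabilities of the "beats" relation. Thinking of $D$ as a fresh die, given the tournament on $A,B,C$, the die $D$ beats each of $A,B,C$ independently-ish, and we want to count when the four-dice tournament is a transitive line. The cleanest route is: condition on $B,C$, let $Y=\Pr[A\beats B,C\cond B,C]$ be the probability a fresh die beats both, and note that by symmetry and the reduction machinery the probability that four dice form a transitive tournament can be written as an expectation of a polynomial in quantities like $Y$ and $X$ (for the single-die beats probability). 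The key point is that if all these conditional probabilities were exactly $1/2$ and $1/4$ (the quasirandom values), one gets exactly $P_{\mathrm{4\text{-}line}}=3/8$; deviations feed in quadratically because $P_{\mathrm{4\text{-}line}}$ is a smooth function of them with vanishing first-order term at the symmetric point.

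Concretely, I would first use Theorem~\ref{thm:one-fixed} together with Claim~\ref{cl:inverse-chebyshev} (the first part) to conclude $\Var X$ is small — of order $\eps_1^2+\eps_2$ — so that $X$ is tightly concentrated around $1/2$. Then I would use hypothesis \eqref{eq:31} together with the second part of Claim~\ref{cl:inverse-chebyshev} to get $\Var Y\ge c_1^2 c_2$, i.e. $Y$ has a genuinely nonvanishing variance around $1/4$. The heart of the argument is an algebraic identity of the shape
\begin{align*}
P_{\mathrm{4\text{-}line}} = \frac{3}{8} + (\text{const})\cdot\EE\big[(Y-1/4)^2\big] - (\text{error terms involving } \Var X)\;,
\end{align*}
which one obtains by expanding the count of transitive tournaments on $\{A,B,C,D\}$. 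Roughly: a transitive line on four vertices is determined by which of the $4!$ orderings occurs; grouping by the position of $A$ and using that, conditioned on $B,C,D$, the events "$A\beats B$", "$A\beats C$", "$A\beats D$" are determined by where $A$ sits relative to a population, one reduces everything to moments of $Y$-type variables. The second-moment term $\EE[(Y-1/4)^2]=\Var Y \ge c_1^2 c_2$ gives the positive contribution; the cross terms that could spoil the bound are all of the form $\EE[(X-1/2)\cdot(\cdots)]$ and are bounded in absolute value by something like $\sqrt{\Var X}\le \sqrt{\eps_1^2+\eps_2}$, or more crudely by $\eps_1^2+\eps_2$ after using the uniform bound $|X-1/2|\le 1$. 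Collecting constants and being slightly lossy yields the stated $P_{\mathrm{4\text{-}line}}\ge 3/8 + 6c_2(c_1-\eps_1^2-\eps_2)^2$; the constant $6$ presumably comes from the multiplicity of transitive orderings consistent with a fixed relative position, and the $(c_1-\eps_1^2-\eps_2)^2$ shape reflects that we must subtract the $X$-fluctuation from the $Y$-fluctuation before squaring (hence the hypothesis $c_1-\eps_1^2-\eps_2\ge 0$ that makes the subtraction meaningful).

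The main obstacle I expect is getting the algebraic identity for $P_{\mathrm{4\text{-}line}}$ exactly right and accounting honestly for the conditioning structure: the events "$A\beats B$" and "$A\beats C$" are \emph{not} conditionally independent given $A$ alone, and one must be careful about which variable is conditioned on (the reduction trades "fix $A$, vary $B,C$" for "fix $B,C$, vary $A$"). Handling this correctly is exactly where Theorem~\ref{thm:one-fixed} is used — it lets us replace the awkward joint law by a near-product structure up to a controlled error, and the $1/n$ exceptional probability there is absorbed into the $\eps_2$-type slack. Once the identity is in place with all error terms expressed via $\Var X$ and $\Var Y$, plugging in the two applications of Claim~\ref{cl:inverse-chebyshev} is routine bookkeeping.
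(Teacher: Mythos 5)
Your high-level picture (show $\Var X$ is small, show $\Var Y$ is bounded below, then feed both into an expression for $P_{\mathrm{4\text{-}line}}$) is on target, but the proposal is missing the key identity that makes the argument short and clean, and your mental model of where the $(c_1-\eps_1^2-\eps_2)$ comes from is not quite right.

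The crucial observation, which your sketch does not contain, is that the event ``$A,B\beats C,D$'' \emph{forces} the tournament on $A,B,C,D$ to be transitive (both $A$ and $B$ beat both $C$ and $D$, and the two remaining edges $A$--$B$, $C$--$D$ then always produce a total order). Since by symmetry any one of the $4!=24$ transitive orderings is equally likely, and exactly $4$ of them have $\{A,B\}$ as the top two and $\{C,D\}$ as the bottom two, one gets $\Pr[A,B\beats C,D]=\frac{4}{24}P_{\mathrm{4\text{-}line}}=\frac{1}{6}P_{\mathrm{4\text{-}line}}$. On the other hand, conditioning on $C,D$ and using independence of $A$ and $B$ given $C,D$ gives $\Pr[A,B\beats C,D]=\EE\big[\Pr[A\beats C,D\cond C,D]^2\big]=\EE Y^2$. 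That is the whole identity: $P_{\mathrm{4\text{-}line}}=6\,\EE Y^2=6(\EE Y)^2+6\Var Y$, with no cross terms and no error terms. Your proposed decomposition with a subtraction ``$-\,(\text{error terms involving }\Var X)$'' is pointing in the wrong direction: $\Var X$ does not appear with a negative sign in the identity at all. In fact $\EE Y=\EE X^2=1/4+\Var X\ge 1/4$, so fluctuations in $X$ only push $\EE Y$, and hence $(\EE Y)^2$, \emph{up}.

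The place where $\eps_1^2+\eps_2$ genuinely enters, and where the $(c_1-\eps_1^2-\eps_2)^2$ shape comes from, is different from what you describe. Hypothesis \eqref{eq:31} concerns deviations of $Y$ from $1/4$, but to lower bound $\Var Y$ via the second part of Claim~\ref{cl:inverse-chebyshev} one needs deviations from $\EE Y$. Since $0\le\EE Y-1/4=\Var X\le\eps_1^2+\eps_2$ (the first part of Claim~\ref{cl:inverse-chebyshev} applied to \eqref{eq:32}), the hypothesis translates to $\Pr[|Y-\EE Y|\ge c_1-\eps_1^2-\eps_2]\ge c_2$, giving $\Var Y\ge c_2(c_1-\eps_1^2-\eps_2)^2$. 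Plugging into $P_{\mathrm{4\text{-}line}}=6(\EE Y)^2+6\Var Y\ge 6(1/4)^2+6\Var Y=3/8+6\Var Y$ finishes. Finally, note that Theorem~\ref{thm:one-fixed} and its $1/n$ exceptional probability play no role in proving this lemma; they enter only later, when the lemma is \emph{applied}. As stated, the proposal gestures at ``expanding the count of transitive tournaments'' without producing the identity, and the expected sign and role of the $\Var X$ correction are wrong, so the gap is real.
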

\begin{proof}
Note that if $A,B\beats C,D$, then $A,B,C,D$ form a transitive
tournament.
Hence, using symmetry considerations,
\begin{align*}
    \EE Y^2 = \EE \Pr[A,B\beats C,D \cond C,D]
    =\Pr[A,B\beats C,D]=\frac{1}{6}P_{\mathrm{4\text{-}line}}\;.
\end{align*}
Accordingly, we have $P_{\text{4-line}}=6\EE Y^2$.
At the same time, by symmetry we have $\EE X=1/2$ and, applying
Claim~\ref{cl:inverse-chebyshev} to~\eqref{eq:32},
$\Var X\le\eps_1^2+\eps_2$. 

Noting that 
$\EE Y=\Pr[A\beats B,C]=\EE X^2=\EEs^2 X+\Var X
\le 1/4+\eps_1^2+\eps_2$, we apply~\eqref{eq:31} and
Claim~\ref{cl:inverse-chebyshev} to conclude
\begin{align*}
P_{\text{4-line}}
&=6\EE Y^2
=6\EEs^2 Y+6\Var Y\ge6\EEs^4 X+6\Var Y
\ge \frac{3}{8}+6c_2(c_1-\eps_1^2-\eps_2)^2\;.\qedhere
\end{align*}
\end{proof}

The fact that Theorem~\ref{thm:main-intro} follows from
Theorems~\ref{thm:reduced-intro} and~\ref{thm:one-fixed} is now
a straightforward application of Lemma~\ref{lem:four-three-reduction}.

\begin{remark}
For a somewhat broader picture of the dice tournament,
note that a similar argument as in the proof of Lemma~\ref{lem:four-three-reduction} gives
\begin{align*}
    3\EE X^2 = 3\Pr[A\beats B,C]=P_{\text{3-line}}\;.
\end{align*}
Hence, as a result of Theorem~\ref{thm:one-fixed} we have estimates 
\begin{align*}
 P_{\text{3-line}}
 &=3\EE X^2
 =3\EEs^2 X + 3\Var X
 =\frac{3}{4}+\Otilde(1/n)
\end{align*}
and similarly $P_{\triangle}=1/4-\Otilde(1/n)$, meaning
that in the limit all eight tournaments are equally likely. 

With four dice, observe that choosing three dice is equivalent
to choosing four (unordered) dice and then randomly discarding
one of those four and permuting the remaining three.
Analyzing this procedure, we see that
\begin{align*}
    P_{\text{3-line}}
    &=P_{\text{4-line}}+\frac{1}{2}P_{\square}
    +\frac{3}{4}P_{1,\triangle}\;,\\
    P_{\triangle}
    &=\frac{1}{2}P_{\square}+\frac{1}{4}P_{1,\triangle}\;,
\end{align*}
from which it follows that in our case $P_{\text{4-line}}=P_{\square}+\Otilde(1/n)$.

Therefore, in light of Theorem~\ref{thm:main-intro}, the following
picture emerges. 
Asymptotically, all transitive tournaments
and all 4-cycles have the same 
probability\footnote{Technically we only
prove that this probability is $1/64+c_n$ for
some $c_n > c > 0$ without excluding the possibility
that $c_n$ does not converge.}
$1/64+c$ for some constant
$c>0$, while all ``winner/loser + 3-cycle'' tournaments have probability
$1/64-3c$.
\end{remark}

\subsection{Function \texorpdfstring{$g_A$}{g\textunderscore A}}
\label{sec:g_a-function}

In order to prove Theorems~\ref{thm:reduced-intro} and~\ref{thm:one-fixed}, a crucial
quantity is the $g_A$ function given by~\eqref{eq:34} 
(and already defined by Polymath).
For reasons of presentation we will consider dice with
faces sampled from different uniform distributions on intervals,
not necessarily $[0,1]$. In that general case of an interval
$[z_1,z_2]$, a die is called balanced if its face-sum is
equal to $n(z_1+z_2)/2$. We still consider sampling uniform balanced
dice.

Given a die $A=(a_1,\ldots,a_n)$ with faces
sampled from such a uniform distribution on an interval
with cdf $F$, we let for $z_1\le x\le z_2$
\begin{align*}
    f_A(x):=
    \sum_{i=1}^n \1\left[a_i\le x\right]\;,
    \qquad
    g_A(x):=f_A(x)-F(x)\cdot n\;.
\end{align*}
Indeed, in our current setting of
faces in $[0,1]$ this formula matches~\eqref{eq:34}.
In general, the value of $g_A(x)$ indicates
the difference between the number of faces of $A$ not exceeding
$x$ and the expected number of faces not exceeding $x$ if
drawn iid from the distribution.

We now state a number of straightforward claims
about the function $g_A$. The proofs are calculations
using that $F(x)=\frac{x-z_1}{z_2-z_1}$ for $z_1\le x\le z_2$.
\begin{claim}\label{cl:simple-g}
In the following let $A$ and $B$ be fixed dice with
$n$ faces from $[z_1,z_2]$ and $V$ a random variable uniform
in $[z_1,z_2]$ independent of everything else:
  \begin{enumerate}
  \item $\EE F(V)=1/2$.
  Furthermore, if $A$ is balanced,
  then $\sum_{i=1}^n F(a_i)=n/2$.
  \item If $A$ is balanced,
  then $\EE g_A(V)=0$.
  \item If $B$ is balanced
  and $a_i\ne b_j$ for every
    pair of faces, then $B$ beats $A$ if and only if
    $\sum_{i=1}^n g_A(b_i)>0$.
  \item If $B$ has face-sum $b$, then
  $\sum_{i=1}^n g_A(b_i)\in\mathbb{Z}-n\frac{b-nz_1}{z_2-z_1}$.
  In particular,
  if $B$ is balanced and $n$ is odd, then
  $\sum_{i=1}^n g_A(b_i)\in\mathbb{Z}+1/2$.
  \end{enumerate}
\end{claim}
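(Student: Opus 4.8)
The statement to prove is Claim~\ref{cl:simple-g}, which collects four elementary facts about the function $g_A$ for a die with faces drawn from a uniform distribution on $[z_1,z_2]$ with cdf $F$. I will verify each part directly from the definitions, using throughout that $F(x) = \frac{x - z_1}{z_2 - z_1}$ is affine on $[z_1, z_2]$.

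\begin{proof}
Write $L := z_2 - z_1$, so $F(x) = (x - z_1)/L$ on $[z_1, z_2]$.

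\textbf{Part 1.} Since $V$ is uniform on $[z_1, z_2]$, we have $\EE F(V) = \frac{1}{L}\int_{z_1}^{z_2} \frac{x - z_1}{L}\,\mathrm{d}x = \frac{1}{L^2}\cdot \frac{L^2}{2} = \frac12$. If $A$ is balanced, then by definition $\sum_{i=1}^n a_i = n(z_1 + z_2)/2$, and therefore
\begin{align*}
  \sum_{i=1}^n F(a_i) = \sum_{i=1}^n \frac{a_i - z_1}{L} = \frac{1}{L}\left(\frac{n(z_1+z_2)}{2} - n z_1\right) = \frac{1}{L}\cdot \frac{n(z_2 - z_1)}{2} = \frac{n}{2}\;.
\end{align*}

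\textbf{Part 2.} If $A$ is balanced, then using Part 1 and linearity of expectation,
\begin{align*}
  \EE g_A(V) = \EE\bigl[f_A(V)\bigr] - n\,\EE F(V) = \sum_{i=1}^n \PP[a_i \le V] - \frac{n}{2} = \sum_{i=1}^n \bigl(1 - F(a_i)\bigr) - \frac{n}{2} = n - \frac{n}{2} - \frac{n}{2} = 0\;,
\end{align*}
where $\PP[a_i \le V] = \PP[V \ge a_i] = 1 - F(a_i)$ since $V$ is uniform on $[z_1, z_2]$ and $a_i \in [z_1, z_2]$.

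\textbf{Part 3.} Assume $a_i \ne b_j$ for all $i, j$. By the definition of the ``beats'' relation, $B$ beats $A$ iff $\sum_{i,j} \bigl(\1[b_i > a_j] - \1[b_i < a_j]\bigr) > 0$. Since there are no ties, for each fixed $i$ we have $\sum_{j} \1[b_i > a_j] = f_A(b_i)$ and $\sum_j \1[b_i < a_j] = n - f_A(b_i)$, so the left-hand side equals $\sum_{i=1}^n \bigl(2 f_A(b_i) - n\bigr) = 2\sum_{i=1}^n f_A(b_i) - n^2$. Hence $B$ beats $A$ iff $\sum_{i=1}^n f_A(b_i) > n^2/2$. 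Now, since $B$ is balanced, Part 1 (applied to $B$) gives $\sum_{i=1}^n F(b_i) = n/2$, so $\sum_{i=1}^n g_A(b_i) = \sum_{i=1}^n f_A(b_i) - n\sum_{i=1}^n F(b_i)/n \cdot n$; more directly, $\sum_i g_A(b_i) = \sum_i f_A(b_i) - n \sum_i F(b_i) = \sum_i f_A(b_i) - n^2/2$. Therefore $B$ beats $A$ iff $\sum_{i=1}^n g_A(b_i) > 0$.

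\textbf{Part 4.} Each $f_A(b_i)$ is an integer, so $\sum_{i=1}^n f_A(b_i) \in \mathbb{Z}$. If $B$ has face-sum $b$, then $\sum_{i=1}^n F(b_i) = \frac{1}{L}(b - n z_1) = n\frac{b - n z_1}{z_2 - z_1}$, so
\begin{align*}
  \sum_{i=1}^n g_A(b_i) = \sum_{i=1}^n f_A(b_i) - n\frac{b - n z_1}{z_2 - z_1} \in \mathbb{Z} - n\frac{b - n z_1}{z_2 - z_1}\;.
\end{align*}
If moreover $B$ is balanced, then $b = n(z_1 + z_2)/2$, and $n\frac{b - n z_1}{z_2 - z_1} = n \cdot \frac{n(z_2 - z_1)/2}{z_2 - z_1} = n^2/2$. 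Since $n$ is odd, $n^2/2 \in \mathbb{Z} + 1/2$, and hence $\sum_{i=1}^n g_A(b_i) \in \mathbb{Z} + 1/2$.
\end{proof}

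There is no real obstacle here: every part is a one- or two-line computation, and the only points requiring a moment's attention are the no-ties bookkeeping in Part 3 (making sure the count $\sum_j \1[b_i > a_j] = f_A(b_i)$ is exact) and the parity observation $n^2/2 \in \mathbb{Z} + 1/2$ for odd $n$ in Part 4. The affine form of $F$ is what makes the face-sum condition translate cleanly into the statements $\sum_i F(a_i) = n/2$ and, more generally, the arithmetic-progression constraint in Part 4.
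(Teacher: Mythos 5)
Your proof is correct and follows exactly the approach the paper indicates (the paper itself gives no explicit proof, merely stating that ``the proofs are calculations using that $F(x)=\frac{x-z_1}{z_2-z_1}$''). One cosmetic slip: in Part~4 you write $\sum_{i=1}^n F(b_i) = \frac{1}{L}(b - n z_1) = n\frac{b - n z_1}{z_2 - z_1}$, but the rightmost expression has a spurious factor of $n$; the correct value is $\sum_i F(b_i) = \frac{b-nz_1}{z_2-z_1}$, and the factor of $n$ enters only when you multiply by $n$ in $g_A = f_A - nF$. Your very next line uses the correct quantity $n\sum_i F(b_i) = n\frac{b-nz_1}{z_2-z_1}$, so the final conclusion is unaffected, but the intermediate equality should be fixed.
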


Let $V$ be a random variable uniform in $[0, 1]$ and
for a die $A$ let $U_A:=g_A(V)$. By the above, and using the notation
$(U_A^{*n},U_B^{*n},U_C^{*n},V^{*n})$ for the $n$-fold
convolution of the random tuple $(U_A,U_B,U_C,V)$,
for balanced dice $A$, $B$, $C$ with faces uniform in $[0,1]$ we have
\begin{align*}
    \Pr[A\beats B,C\cond B,C]
    &=
    \Pr\left[U_B^{*n},U_C^{*n}>0\mcond B,C,V^{*n}=\frac{n}{2}\right]\;,\\
    \Pr\left[A\beats B\mcond A\right]
    &=
    \Pr\left[U_A^{*n}<0\mcond A,V^{*n}=\frac{n}{2}\right]\;.
\end{align*}
Therefore, from now on we can focus on proving the following equivalent
versions of Theorems~\ref{thm:reduced-intro} 
and~\ref{thm:one-fixed}. For clarity we omit the conditioning
on dice $A$ and $B$ from the probabilities:
\begin{theorem}\label{thm:two-fixed-equivalent}
There exists $\eps>0$ such that with probability at least $\eps$
over the choice of two random balanced dice $A$ and $B$ 
with faces in $[0,1]$ we have
\begin{align*}
    \left|\Pr\left[U_A^{*n},U_B^{*n}>0\mcond V^{*n}=\frac{n}{2}\right]
    -\frac{1}{4}\right|>\eps\;.
\end{align*}
\end{theorem}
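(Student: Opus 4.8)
The plan is to establish a joint local central limit theorem for the triple $(U_A, U_B, V)$ for typical balanced dice $A$ and $B$, and then extract the conclusion from properties of the limiting Gaussian. Fixing balanced dice $A,B$, the vector $(U_A^{*n}, U_B^{*n}, V^{*n})$ is a sum of $n$ iid copies of $(U_A, U_B, V) = (g_A(V), g_B(V), V)$. We have $\EE U_A = \EE U_B = 0$ by Claim~\ref{cl:simple-g}(2), $\EE V = 1/2$, and $\Var V = 1/12$. The key structural parameters are $\sigma_A^2 := \Var U_A$, $\sigma_B^2 := \Var U_B$, the covariances $\rho_A := \Cov[U_A, V]$, $\rho_B := \Cov[U_B, V]$, and $\tau := \Cov[U_A, U_B]$. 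Conditioning $V^{*n}$ on its mean $n/2$ and appealing to the Gaussian conditional covariance formula, $(U_A^{*n}, U_B^{*n})$ conditioned on $V^{*n} = n/2$ is, in the limit and after rescaling by $\sqrt{n}$, approximately a centered bivariate Gaussian with covariance matrix entries $\sigma_A^2 - 12\rho_A^2$, $\sigma_B^2 - 12\rho_B^2$ (on the diagonal) and $\tau - 12\rho_A\rho_B = CV_{\mathrm{cond}}$ off-diagonal. For such a Gaussian, the orthant probability $\Pr[\text{both coordinates} > 0]$ equals $1/4 + \frac{1}{2\pi}\arcsin(r)$ where $r$ is the conditional correlation coefficient; hence $|\Pr[U_A^{*n}, U_B^{*n} > 0 \mid V^{*n} = n/2] - 1/4|$ is bounded away from zero precisely when $|CV_{\mathrm{cond}}|$ is bounded away from zero relative to $\sqrt{(\sigma_A^2 - 12\rho_A^2)(\sigma_B^2 - 12\rho_B^2)}$.

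The argument then splits into two independent pieces. First, a \emph{local CLT step}: I would prove that for typical balanced $A,B$ (all but an $o(1)$ — in fact we will want a positive constant — fraction), the conditional distribution of $(U_A^{*n}, U_B^{*n})/\sqrt{n}$ given $V^{*n} = n/2$ is within $o(1)$ in an appropriate sense (e.g. in a smoothed/Kolmogorov metric on orthant probabilities) of the Gaussian described above. This requires a quantitative local CLT in three dimensions, valid on the event that $(U_A, U_B, V)$ is sufficiently ``non-degenerate'': the covariance matrix must be non-singular with smallest eigenvalue bounded below, and a lattice/Cramér-type condition must hold so that the local (rather than integral) CLT applies — note Claim~\ref{cl:simple-g}(4) shows $U_A^{*n} \in \mathbb{Z} + 1/2$, so the relevant lattice structure must be tracked carefully. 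This is essentially the Polymath CLT from~\cite{Pol17} extended from two to three variables, and I expect it to follow their strategy (characteristic function estimates, splitting the Fourier integral into a central region where a Gaussian approximation holds and a tail region bounded using anti-concentration of $g_A$) with mostly bookkeeping changes. Second, a \emph{moment step}: thinking of $CV_{\mathrm{cond}}$, $\sigma_A^2$, $\sigma_B^2$ as random variables over the choice of balanced $A,B$, I would show via the second (and fourth) moment method — Paley–Zygmund applied to $CV_{\mathrm{cond}}^2$ — that with constant probability $|CV_{\mathrm{cond}}|$ is a constant times $\sqrt{\sigma_A^2 \sigma_B^2}$, while $\sigma_A^2, \sigma_B^2$ and the ``defects'' $12\rho_A^2, 12\rho_B^2$ are all well-behaved (say $\sigma_A^2 \approx c/n$ and $\rho_A^2$ a constant factor smaller, so the conditional variances are $\Theta(1/n)$). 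The moment computations are elementary for \emph{unbalanced} dice with truly iid faces; to pass to balanced dice one applies yet another local CLT (one-dimensional, conditioning on the face-sum) to control how the conditioning perturbs each moment, which is the technique borrowed from~\cite{HMRZ20}.

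Combining the two steps: on the constant-probability event from the moment step, the conditional correlation $r = CV_{\mathrm{cond}}/\sqrt{(\sigma_A^2 - 12\rho_A^2)(\sigma_B^2 - 12\rho_B^2)}$ is bounded away from $0$ (and, one checks, away from $\pm 1$, so the Gaussian is genuinely non-degenerate and the local CLT applies); on this event the local CLT step gives $|\Pr[U_A^{*n}, U_B^{*n} > 0 \mid V^{*n} = n/2] - 1/4| = \frac{1}{2\pi}|\arcsin r| - o(1) > \eps$ for a suitable constant $\eps > 0$ and $n$ large, which is exactly Theorem~\ref{thm:two-fixed-equivalent}. As a byproduct, applying the same CLT to the single variable $U_A^{*n}$ (whose conditional distribution is asymptotically centered Gaussian with variance $\Theta(n)$, uniformly for typical $A$) yields $\Pr[U_A^{*n} < 0 \mid V^{*n} = n/2] = 1/2 + \Otilde(1/\sqrt{n})$, giving Theorem~\ref{thm:one-fixed}.

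The main obstacle I anticipate is the local CLT step, specifically obtaining it with \emph{explicit, uniform} error bounds that hold for a positive-constant fraction of balanced dice $(A,B)$ rather than almost all of them — one must show that the ``bad'' events (degenerate covariance, failure of the lattice/anti-concentration condition for $g_A$ or $g_B$, or atypically small conditional variance) each have probability bounded away from $1$, and crucially that these bad events do not swallow the constant-probability event produced by the moment method. Making the quantitative three-dimensional local CLT interface cleanly with the Paley–Zygmund output — so that the same dice are simultaneously ``CLT-good'' and ``covariance-large'' — is where the bulk of the technical care will be needed.
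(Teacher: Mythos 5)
Your proposal follows essentially the same route as the paper: a three-dimensional characteristic-function CLT for $(U_A^{*n}, U_B^{*n}, V^{*n})$ (adapted from Polymath's two-variable argument, carefully tracking the $\mathbb{Z}+1/2$ lattice and the anti-concentration of $g_A$), combined with a Paley--Zygmund bound on $CV_{\mathrm{cond}}$ using moment computations for unbalanced dice corrected by a one-dimensional local CLT for the face-sum, and then the observation via the Gaussian orthant formula that a bounded-away-from-zero conditional correlation forces the orthant probability away from $1/4$. The only slip is a scaling remark ($\sigma_A^2 \approx c/n$, conditional variances $\Theta(1/n)$): in the $[0,1]$-face normalization one has $\Var U_A = \Theta(n)$ and conditional variances $\Theta(n)$, but since your argument ultimately only uses the scale-invariant correlation coefficient, this does not affect the validity of the approach.
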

\begin{theorem}\label{thm:one-fixed-equivalent}
  Except with probability
  $1/n$ over the choice of a random balanced die $A$
  with faces in $[0,1]$,
  \begin{align*}
      \left|\Pr\left[U_A^{*n}>0\mcond V^{*n}=\frac{n}{2}\right]
      -\frac{1}{2}\right|=\Otilde\left(\frac{1}{\sqrt{n}}\right)\;.
  \end{align*}
\end{theorem}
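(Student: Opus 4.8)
}
The plan is to establish a precise local central limit theorem for the pair $(U_A,V)$ for a typical random balanced die $A$, and then read off the one-sided probability after conditioning on $V^{*n}=n/2$. First I would record the moments of $(U_A,V)$ over the choice of a \emph{non}-balanced die (i.e.\ with truly iid uniform faces): here $\EE U_A$, $\Var U_A$, $\Cov[U_A,V]$ are elementary computations in terms of order statistics, and one finds $\EE U_A=0$, $\Var U_A=\Theta(1)$, and $\Cov[U_A,V]=\Theta(1)$ with high probability; crucially $\|U_A\|_\infty\le n$ deterministically but is in fact $\Otilde(\sqrt n)$ with high probability, since $g_A(x)$ is, for fixed $x$, a centered binomial-type fluctuation and a union bound over a fine net (plus monotonicity of $f_A$) controls $\sup_x|g_A(x)|$. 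I would then argue that conditioning on the balancedness event $\sum_i F(a_i)=n/2$, which is a probability-$\Theta(1/n)$ event under the iid measure but is exactly our sampling model via~\eqref{eq:44}, perturbs these moments only by $\Otilde(1/\sqrt n)$; this is the same ``precise local CLT to track the effect of conditioning'' device the paper invokes for $CV_{\mathrm{cond}}$, applied here to the simpler statistics $\EE U_A$ and $\Var U_A$.

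Next, with $A$ fixed to a typical balanced die, I would analyze $(U_A^{*n},V^{*n})=\sum_{i=1}^n(U_{A,i},V_i)$. The random variable $V^{*n}$ is a sum of $n$ iid uniforms on $[0,1]$, hence $V^{*n}-n/2$ has a smooth density of order $\Theta(1/\sqrt n)$ near $0$; and $(U_A^{*n},V^{*n})$ is a sum of $n$ iid bounded two-dimensional vectors with nondegenerate covariance, so a quantitative bivariate local CLT (Berry–Esseen / Edgeworth type, with the $\Otilde(\sqrt n)$ bound on $\|U_A\|_\infty$ feeding the error term) gives that the conditional law of $U_A^{*n}/\sqrt n$ given $V^{*n}=n/2$ is within $\Otilde(1/\sqrt n)$ in Kolmogorov distance of a centered Gaussian with variance $\sigma^2:=\Var U_A-12\,\Cov[U_A,V]^2=\Theta(1)$ — the conditional variance obtained by projecting out $V$, using $\Var V=1/12$. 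Since the limiting Gaussian is centered, $\Pr[U_A^{*n}>0\mid V^{*n}=n/2]=1/2+\Otilde(1/\sqrt n)$, which is exactly the claim; the failure probability $1/n$ absorbs the atypical dice where some moment estimate breaks.

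One subtlety I would handle carefully: $U_A^{*n}$ is a \emph{discrete} random variable — by Claim~\ref{cl:simple-g}(4) it lives on $\mathbb{Z}+1/2$ when $n$ is odd — so "$U_A^{*n}>0$" is really "$U_A^{*n}\ge 1/2$", and I must make sure the local CLT is stated at the lattice scale and that the $\Otilde(1/\sqrt n)$ shift from the atom at the median is harmless (it is, since the lattice spacing $1$ is $O(1)\ll \sqrt n$, so a single lattice step changes the Gaussian CDF by $\Otilde(1/\sqrt n)$). The main obstacle is the joint conditioning: I need a local CLT for $U_A^{*n}$ at a point \emph{conditioned} on $V^{*n}$ hitting its mean, and $U_A$ and $V$ are dependent (indeed strongly, since $U_A=g_A(V)$ is a deterministic function of $V$), so the cleanest route is to prove a bivariate local limit theorem for $(U_A^{*n},V^{*n})$ directly — controlling the characteristic function $\EE\exp(i t U_A^{*n}+i s V^{*n})$ away from the origin using the smoothness in the $s$ (uniform) direction and the lattice structure in the $t$ direction — rather than trying to decouple. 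This is precisely the hard technical core that the paper defers to Section~\ref{sec:clt-proof}, and the argument for Theorem~\ref{thm:two-fixed-equivalent} will be the three-variable version of the same estimate; indeed Theorem~\ref{thm:one-fixed-equivalent} should drop out as the announced byproduct of that more general computation, so in the write-up I would prove the trivariate CLT for $(U_B,U_C,V)$ once and specialize.
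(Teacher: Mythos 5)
Your proposal is essentially the route the paper takes: derive a precise local limit theorem for the characteristic function of $(U_A^{*n},U_B^{*n},V^{*n})$ (the paper does the trivariate version once and specializes, exactly as you anticipate in your last paragraph), and then use the symmetry of the approximating centered Gaussian to read off $\Pr[U_A^{*n}>0\mid V^{*n}=n/2]=1/2+\Otilde(1/\sqrt n)$. The paper's version of this step is a little leaner than what you describe: after Lemma~\ref{lem:characteristic-close} it applies the inversion formula (Claim~\ref{cl:inverse-fourier}) to get $\bigl|P(a,b)u(0)-g(a,b,0)\bigr|\le\log^{16}n/n^4$, then uses the exact symmetry $g(a,b,0)=g(-a,-b,0)$ of the centered Gaussian density together with the lower bound $u(0)\ge 1/4n\sqrt n$ to obtain $|P(a,b)-P(-a,-b)|=\Otilde(n^{-5/2})$, and then sums this over $|a|,|b|\le 50n\log n$ after truncating by Lemma~\ref{lem:u_a-concentration}. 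In particular, the paper never needs to estimate the conditional variance $\Var_A-CV_A^2/\Var H$ for this theorem (a nondegeneracy hypothesis does appear in Theorem~\ref{thm:conditional-clt}, but not here), because symmetry alone suffices; you can streamline your plan accordingly. Your remark about the lattice $\mathbb{Z}+1/2$ and the median atom is handled automatically by the symmetry argument, since $P(a,b)\leftrightarrow P(-a,-b)$ is an exact pairing of lattice points.

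One quantitative slip worth flagging: in the $[0,1]$ model $g_A(x)=f_A(x)-nx$ has typical size $\Theta(\sqrt n)$, so for a typical balanced die $\Var U_A=\Theta(n)$ and $\Cov[U_A,V]=\Theta(\sqrt n)$, not $\Theta(1)$ (compare $\EE[\Var_A]=n/15+\cO(1)$ and $\EE[CV_A^2]=n/60+\cO(1)$ from Section~\ref{sec:moments}). Likewise, the ``balancedness event'' under the iid measure is a measure-zero conditioning handled via rejection sampling onto a window, not a probability-$\Theta(1/n)$ event. Neither misstatement undermines the strategy, but the scaling must be tracked carefully in any Berry--Esseen/Edgeworth bound you invoke, since the error term depends on $\|U_A\|_\infty/\sigma$.
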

In the following sections we explain how to prove 
Theorems~\ref{thm:two-fixed-equivalent} and~\ref{thm:one-fixed-equivalent}.

\subsection{Central limit theorem for 
 \texorpdfstring{$\boldsymbol{g_A}$}{g\textunderscore A}}
\label{sec:clt}

One method for understanding the distribution of the random
variables $(U_A^{*n},U_B^{*n},V^{*n})$ is to establish a limit
theorem for them. This is indeed the approach taken
by~\cite{Pol17} and the one we adopt. 
To that end, let $(G_A, G_B, H)$ be joint Gaussians with
the same first and second moments as $(U_A,U_B,V)$. Recall that
by Claim~\ref{cl:simple-g} we have $\EE U_A =\EE U_B =0$
and $\EE V=1/2$. In addition, we introduce the notation
for the respective variances
\begin{align*}
    &\Var_A:=\Var U_A\;,\qquad\qquad 
    \Var_B:=\Var U_B\;,\\
    &CV_{AB}:=\Cov[U_A, U_B]\;,\quad\;
    CV_A:=\Cov[U_A, V]\;,\qquad
    CV_B:=\Cov[U_B, V]\;.
\end{align*}

In the end the main
result we prove is
\begin{theorem}\label{thm:conditional-clt}
  Fix $\eps>0$.
  Except with probability $1/n$ over the choice of two
  balanced dice $A$ and $B$, we have the following:
  If the dice satisfy $\Var_A-(CV_A^2/\Var V)$,
  $\Var_B-(CV_B^2/\Var V)\ge \eps n$, then
  \begin{align*}
      \Bigg|
      \Pr\Big[U_A^{*n},U_B^{*n}>0
      \cond V^{*n}=\EE V^{*n}\Big]
      -\Pr\Big[G_A^{*n},G_B^{*n}>0
      \cond H^{*n}=\EE H^{*n}\Big]
      \Bigg|=\Otilde\left(\frac{1}{\sqrt{n}}\right)\;.
  \end{align*}
\end{theorem}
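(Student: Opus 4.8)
The plan is to prove Theorem~\ref{thm:conditional-clt} via a quantitative \emph{mixed lattice--continuous} local central limit theorem, following and extending to a second spatial coordinate the argument of~\cite{Pol17}. The key observation is that $U_A^{*n}=\sum_i g_A(V_i)=\sum_i f_A(V_i)-nV^{*n}$ with $\sum_i f_A(V_i)\in\bbN$, so by Claim~\ref{cl:simple-g} (applied to the ``die'' $(V_1,\dots,V_n)$, whose face-sum is $n/2$, and using that $n$ is odd) we have $U_A^{*n}\in\mathbb{Z}+\tfrac12$ conditionally on $\{V^{*n}=n/2\}$, and similarly for $U_B^{*n}$. Thus, writing $q_n(a,b,v)$ for the density in $v$ of the event $\{U_A^{*n}=a,\,U_B^{*n}=b,\,V^{*n}=v\}$ and $p_{V^{*n}}$ for the Irwin--Hall density,
\begin{equation*}
\Pr\!\left[U_A^{*n},U_B^{*n}>0\cond V^{*n}=\tfrac n2\right]=\sum_{\substack{a,b\in\mathbb{Z}+1/2\\ a,b>0}}\frac{q_n(a,b,\tfrac n2)}{p_{V^{*n}}(\tfrac n2)}\;,
\end{equation*}
while the Gaussian analogue is $\int_{a,b>0}\varphi^{\mathrm{cond}}_n(a,b)\,da\,db$ with $\varphi^{\mathrm{cond}}_n$ the density of $(G_A^{*n},G_B^{*n})$ given $H^{*n}=n/2$. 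So it suffices to show (i) a per-point local CLT $q_n(a,b,\tfrac n2)/p_{V^{*n}}(\tfrac n2)=\varphi^{\mathrm{cond}}_n(a,b)\bigl(1+\Otilde(n^{-1/2})\bigr)$ uniformly on a box around the origin, (ii) negligibility of the tail outside the box, and (iii) the Riemann-sum estimate $\sum_{a,b>0}\varphi^{\mathrm{cond}}_n(a,b)=\int_{a,b>0}\varphi^{\mathrm{cond}}_n+\cO(1/n)$, which is immediate because $\varphi^{\mathrm{cond}}_n$ varies on scale $\Omega(n)$ while the lattice has spacing $1$.

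Before that I would fix the ``good die'' events and the covariance structure. A Dvoretzky--Kiefer--Wolfowitz estimate for the empirical process, together with the $\cO(\sqrt n)$ inflation coming from conditioning on the face-sum, gives that except with probability $\cO(1/n)$ over $A$ we have $\|U_A\|_\infty\le\|g_A\|_\infty=\Otilde(\sqrt n)$, hence $\Var_A\le\Otilde(n)$ and $\EE|U_A|^3\le\|U_A\|_\infty\Var_A=\Otilde(n^{3/2})$; likewise for $B$. Replacing $U_A$ by its $V$-residual $U_A':=U_A-12CV_A(V-\tfrac12)$ and $U_B$ by $U_B'$ changes neither side of the identity above, since $(U_A')^{*n}=U_A^{*n}$ on $\{V^{*n}=n/2\}$ and likewise on the Gaussian side on $\{H^{*n}=n/2\}$; now $(U_A',U_B')$ is uncorrelated with $V$ and has marginal variances $\Var_A-CV_A^2/\Var H,\ \Var_B-CV_B^2/\Var H\in[\eps n,\Otilde(n)]$. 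If the conditional covariance $CV_{AB}-CV_ACV_B/\Var H$ is within $o(n)$ of $\pm\bigl((\Var_A-CV_A^2/\Var H)(\Var_B-CV_B^2/\Var H)\bigr)^{1/2}$, then $(U_A^{*n},U_B^{*n})$ is essentially supported on a line on $\{V^{*n}=n/2\}$ and the statement follows from the one-dimensional Theorem~\ref{thm:one-fixed-equivalent}; so I may assume the $2\times 2$ covariance of $(U_A',U_B')$ has both eigenvalues at least $\eps' n$.

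For (i)--(ii), truncate to the box $Q=\{|a|,|b|\le Cn\log n\}$. For $\varphi^{\mathrm{cond}}_n$ the tail outside $Q$ is negligible, and for $q_n$ it is controlled by Bernstein's inequality for $(U_A')^{*n}=\sum_i h_A(V_i)$ (bounded summands $h_A$, conditional variance $\Theta(n^2)$), using that the coordinates of $(V_1,\dots,V_n)$ conditioned on $\{V^{*n}=n/2\}$ are negatively associated. On $Q$ one uses Fourier inversion with the characteristic function $\phi(s,t,\theta)=\int_0^1 e^{i(s f_A(x)+t f_B(x)+\theta x)}\,dx$ of $(f_A(V),f_B(V),V)$, which is $2\pi$-periodic in $s$ and in $t$ since $f_A,f_B$ are integer-valued:
\begin{equation*}
q_n(a,b,v)=\frac1{(2\pi)^3}\int_{[-\pi,\pi]^2\times\bbR}\phi(s,t,\theta)^n\,e^{-i(s(a+n^2/2)+t(b+n^2/2)+\theta v)}\,d\theta\,ds\,dt\;,
\end{equation*}
and the Gaussian analogue of $q_n$ is obtained by replacing $\phi$ with the matching Gaussian characteristic function. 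One checks that $|\phi(s,t,\theta)|=1$ on $[-\pi,\pi]^2\times\bbR$ only at the origin (the phase must be a.e.\ constant mod $2\pi$, forcing $\theta=0$ and then $s=t=0$), and splits the domain into three pieces: a neighborhood of the origin, where a third-order Taylor expansion of $\log\phi$ --- using $\|g_A\|_\infty,\|g_B\|_\infty=\Otilde(\sqrt n)$ to bound the third-order terms and the eigenvalue lower bound $\eps'n$ for non-degeneracy --- makes $\phi^n$ agree with the matching Gaussian expression up to a relative $L^1$ error $\Otilde(n^{-1/2})$ there; the region of large $|\theta|$, where integrating by parts against the $2n$ jumps of $f_A+f_B$ gives $|\phi|=\cO\bigl((1+n\|(s,t)\|)/|\theta|\bigr)$ and hence a super-polynomially small contribution; and the bulk, where one needs $|\phi(s,t,\theta)|\le 1-c$. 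Combining (i)--(iii) proves the theorem.

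The step I expect to be the main obstacle is the bulk estimate $|\phi(s,t,\theta)|\le 1-c$ on $[-\pi,\pi]^2\times\{|\theta|\le Cn\}$ away from the origin, uniformly over all but a $1/n$-fraction of pairs of balanced dice $(A,B)$. Near $(s,t)\approx(\pm\pi,\ast)$ or $(\ast,\pm\pi)$ with $\theta$ of moderate size, $\phi$ degenerates into $\int_0^1 e^{i\theta x}(\pm1)^{f_A(x)+f_B(x)}\,dx$, and one must rule out cancellation structure in the gap sizes of the merged face set --- an anti-concentration property that holds for typical random dice but needs an argument in the spirit of~\cite{Pol17}, now carried out for two interleaved step functions and with the extra $\theta$-integration and the joint conditioning on $\{V^{*n}=n/2\}$ supplying most of the additional bookkeeping. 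The possible near-degeneracy of the conditional covariance of $(U_A',U_B')$ is by contrast a minor side case, dispatched by the one-dimensional reduction above.
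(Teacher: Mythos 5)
Your plan — Fourier inversion for the mixed lattice/continuous distribution, a box truncation at $|a|,|b|\le Cn\log n$, a Taylor-expansion comparison to the Gaussian characteristic function near the origin, an integration-by-parts tail bound for large $|\theta|$, and a bulk estimate $|\phi|\le 1-c$ — is precisely the strategy the paper uses (cf.\ Claim~\ref{cl:inverse-fourier}, Lemmas~\ref{lem:supremum-norm}, \ref{lem:u_a-concentration}, \ref{lem:large-gamma}, \ref{lem:characteristic-close}). But the proposal leaves the central technical estimate open: you write that the bulk bound ``needs an argument in the spirit of~\cite{Pol17}\dots now carried out for two interleaved step functions,'' which is exactly where the bulk of the paper's effort goes. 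Lemmas~\ref{lem:fhat-decay}, \ref{lem:fhat-decay-unbalanced} and~\ref{lem:intermediate} establish the quantitative decay via an Azuma-type martingale over consecutive windows of $[0,n]$ combined with Berry--Esseen (small $|\alpha|$) and Poisson (large $|\alpha|$, handling the case $s\approx\pi$ that you correctly flag); Lemma~\ref{lem:interpolation} and a grid union bound handle uniformity in $(\alpha,\beta)$; and a dedicated coupling argument in the proof of Lemma~\ref{lem:fhat-decay} transfers the estimate from unconditioned to balanced dice, since conditioning on a measure-zero event defeats the naive union bound. Your sketch diagnoses the difficulty accurately but does not carry it out, so as written the proof does not establish the theorem.

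Separately, the proposed dispatch of the near-degenerate covariance case is flawed and, fortunately, unnecessary. If $\rho:=\CVABcond/\sqrt{\VarAcond\VarBcond}$ satisfies $1-|\rho|=\eta=o(1)$, the Gaussian quantity $\Gamma_\rho(0,0)$ deviates from $1/2$ (resp.\ $0$) by $\Theta(\sqrt{\eta})$, and $\Pr\left[U_A^{*n}>0,\,U_B^{*n}\le 0\cond\cE_V\right]$ is then only $\cO(\sqrt{\eta})$ in general; reducing to the one-dimensional Theorem~\ref{thm:one-fixed-equivalent} therefore yields an error of order $\sqrt{\eta}$, not $\Otilde(1/\sqrt{n})$, unless $\eta=\Otilde(1/n)$. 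The paper avoids this dichotomy entirely: it compares the conditioned lattice probability to the conditional Gaussian \emph{density} at each lattice point (Claim~\ref{cl:inverse-fourier} together with Lemma~\ref{lem:characteristic-close}), and the final Riemann-sum and anti-concentration steps (Claim~\ref{cl:gaussian-anti}) use only the marginal variance lower bound $\VarAcond,\VarBcond\ge\eps n$ supplied by the hypothesis, with no non-degeneracy requirement on the $2\times 2$ conditional covariance. You should drop the case split and prove the local CLT uniformly in the covariance structure.
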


In Section~\ref{sec:clt-proof} we also show
that Theorem~\ref{thm:one-fixed-equivalent} is 
a direct consequence of intermediate results 
established on the way to proving
Theorem~\ref{thm:conditional-clt}.
We note that the ``error probability'' $1/n$
in the statement of Theorem~\ref{thm:conditional-clt}
can be decreased,
at least to $n^{-k}$ for arbitrary $k$.
Before proceeding with the proof outline, let us explain
the $\Var_A-(CV_A^2/\Var V)$ factors from the statement
of the theorem. The reason for them is the following
claim, which is proved by a standard calculation
in Section~\ref{sec:facts-cf}:
\begin{claim}\label{cl:conditional-variance}
Assume $A$ and $B$ are balanced dice with uniform faces in
an interval $[z_1,z_2]$.
Conditioned on $H^{*n}=\EE H^{*n}$, random variables
$G_A^{*n}$ and $G_B^{*n}$ are joint centered Gaussians
and furthermore:
\begin{align}
    \Var\big[G_A^{*n}\cond H^{*n}=\EE H^{*n}\big]
    &=n\left(\Var_A-\frac{CV_A^2}{\Var V}\right)\;,
    \label{eq:26}\\
    \Var\big[G_B^{*n}\cond H^{*n}=\EE H^{*n}\big]
    &=n\left(\Var_B-\frac{CV_A^2}{\Var V}\right)\;,
    \label{eq:27}\\
    \Cov\big[G_A^{*n},G_B^{*n}\cond H^{*n}=\EE H^{*n}\big]
    &=n\left(CV_{AB}-\frac{CV_ACV_B}{\Var V}\right)\;.
    \label{eq:28}
\end{align}
\end{claim}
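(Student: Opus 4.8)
The plan is to reduce everything to the standard formula for conditional laws of jointly Gaussian vectors. First I would note that $(G_A^{*n},G_B^{*n},H^{*n})$ is a sum of $n$ independent copies of the Gaussian vector $(G_A,G_B,H)$, hence is itself jointly Gaussian, with mean $n\cdot(\EE G_A,\EE G_B,\EE H)=\EE(G_A^{*n},G_B^{*n},H^{*n})$ (recall $\EE G_A=\EE G_B=0$ by Claim~\ref{cl:simple-g}) and covariance matrix equal to $n$ times the covariance matrix of $(G_A,G_B,H)$, which by construction coincides with that of $(U_A,U_B,V)$. Writing it in block form with respect to the split $X=(G_A^{*n},G_B^{*n})$, $Y=H^{*n}$, we have
\[
\Sigma_{XX}=n\begin{pmatrix}\Var_A & CV_{AB}\\ CV_{AB} & \Var_B\end{pmatrix},\qquad
\Sigma_{XY}=n\begin{pmatrix}CV_A\\ CV_B\end{pmatrix},\qquad
\Sigma_{YY}=n\Var H\;,
\]
and since $V$ is uniform on $[z_1,z_2]$ we have $\Var H=\Var V=(z_2-z_1)^2/12>0$, so $\Sigma_{YY}$ is invertible and the conditioning is non-degenerate.

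Next I would apply the Gaussian conditioning identity: given $H^{*n}=y$, the vector $X$ is Gaussian with mean $\EE X+\Sigma_{XY}\Sigma_{YY}^{-1}(y-\EE Y)$ and covariance $\Sigma_{XX}-\Sigma_{XY}\Sigma_{YY}^{-1}\Sigma_{YX}$. Taking $y=\EE Y=\EE H^{*n}$ makes the mean-shift term vanish, so conditionally $(G_A^{*n},G_B^{*n})$ is a centered joint Gaussian, and its covariance is
\[
\Sigma_{XX}-\Sigma_{XY}\Sigma_{YY}^{-1}\Sigma_{YX}
=n\begin{pmatrix}\Var_A & CV_{AB}\\ CV_{AB} & \Var_B\end{pmatrix}
-\frac{n}{\Var H}\begin{pmatrix}CV_A^2 & CV_ACV_B\\ CV_ACV_B & CV_B^2\end{pmatrix}
=n\begin{pmatrix}\Var_A-\tfrac{CV_A^2}{\Var H} & CV_{AB}-\tfrac{CV_ACV_B}{\Var H}\\ CV_{AB}-\tfrac{CV_ACV_B}{\Var H} & \Var_B-\tfrac{CV_B^2}{\Var H}\end{pmatrix}.
\]
Reading off the diagonal and off-diagonal entries yields \eqref{eq:26}, \eqref{eq:27} (whose right-hand side should read $CV_B^2$ rather than $CV_A^2$) and \eqref{eq:28}.

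There is no genuine difficulty here — this is the ``standard calculation'' the claim refers to — and the only points needing a line each are (i) that $n$-fold convolution multiplies the covariance matrix by exactly $n$, which is additivity of (co)variance over independent summands, and (ii) the non-degeneracy $\Var H\neq 0$, so that the Schur-complement formula applies verbatim. If one prefers to avoid quoting the conditioning identity, the same computation can be done by hand: write $G_A^{*n}=\tfrac{CV_A}{\Var H}\big(H^{*n}-\EE H^{*n}\big)+R_A$, where $R_A$ is Gaussian and, by the choice of coefficient, uncorrelated with — hence independent of — $H^{*n}$, and likewise $G_B^{*n}=\tfrac{CV_B}{\Var H}\big(H^{*n}-\EE H^{*n}\big)+R_B$; then $(R_A,R_B)$ is exactly the conditional law, and $\Var R_A$, $\Var R_B$, $\Cov[R_A,R_B]$ are computed directly. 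This is the form of the argument I would write out in Section~\ref{sec:facts-cf}.
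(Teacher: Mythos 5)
Your proposal is correct and takes essentially the same route as the paper: the by-hand decomposition you describe at the end, writing $G_A^{*n}=\frac{CV_A}{\Var H}(H^{*n}-\EE H^{*n})+R_A$ with $R_A$ independent of $H^{*n}$, is exactly the paper's argument (with $R_A=\alpha_A\cG_A^{*n}$), and quoting the Schur-complement conditioning identity is just a packaged version of the same computation. You also correctly spotted that \eqref{eq:27} as printed has $CV_A^2$ where it should read $CV_B^2$, consistent with the definition of $\VarBcond$ in~\eqref{eq:39}.
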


\paragraph{Proof strategy for Theorem~\ref{thm:conditional-clt}}
As a first step, for consistency with the Polymath
preprint~\cite{Pol17} we find it convenient to consider 
balanced dice with faces uniform in $[0,n]$.
That is, we assume that $A$ and $B$ are random dice
with faces iid uniform in $[0,n]$ and conditioned on 
face-sums equal to $\EE V^{*n}=\EE H^{*n}=n^2/2$.
Furthermore, we have $g_A(x)=f_A(x)-x$ and $V$ is uniform
in $[0,n]$. 

Using the natural measure-preserving bijection between
balanced dice $A\leftrightarrow nA$, it is easy to check
that the distribution of random variable 
$\Var_{A}$ in the $[0,n]$ setting is
the same as the distribution of $\Var_A$ in the $[0,1]$ setting. Furthermore, since the covariance $CV_A$ is multiplied
by a factor $n$ in the $[0,n]$ setting and the variance
$\Var V$ is $n^2$ times larger, also the random
variable
$\Var_{A}-(CV_{A}^2/\Var V)$ has the same distribution.
The upshot is that in the proof of Theorem~\ref{thm:conditional-clt}
we can assume that the faces of $A$ and $B$
are drawn from $[0,n]$ and indeed
this is what we will do from now on and throughout
Section~\ref{sec:clt-proof}.

The proof strategy follows Polymath in most important respects.
That is, we proceed by directly bounding the characteristic function
of the triple of random variables $(U_A,U_B,V-n/2)$:
\[
    \fhat(\alpha,\beta,\gamma):=
    \EE e\big(\alpha U_A+\beta U_B+\gamma(V-n/2)\big)\;,
\]
where $e(x)$ is a shorthand for $\exp(2\pi i x)$.
The main bound we achieve is given in
the following lemma:
\begin{lemma}\label{lem:fhat-bounded}
Except with probability $n^{-2}$ over
the choice of balanced dice $A$ and $B$, the following
holds:
$\|U_A\|_\infty,\|U_B\|_\infty\le 5\sqrt{n\log n}$
and furthermore,
for every $\alpha,\beta,\gamma\in\mathbb{R}$ such that
$|\alpha|,|\beta|\le 1/2$ and 
either $|\alpha|$ or $|\beta|$ exceeds
$10^{10}\log n/n$ or $|\gamma|$ exceeds 
$6\log^2 n/n^{3/2}$,
\[
\left|\fhat(\alpha,\beta,\gamma)\right|\le 1-\frac{10\log n}{n}\;,
\]
and consequently
\[
\left|\fhat(\alpha,\beta,\gamma)\right|^n\le n^{-10}\;.
\]
\end{lemma}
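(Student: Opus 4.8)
The plan is to reduce the statement to the pointwise bound $|\fhat(\alpha,\beta,\gamma)|\le 1-10\log n/n$ for all admissible $(\alpha,\beta,\gamma)$; the ``consequently'' clause is then immediate from $(1-10\log n/n)^n\le e^{-10\log n}=n^{-10}$. In the $[0,n]$ normalization one has $U_A=g_A(V)$ with $g_A(x)=f_A(x)-x$, so $\sup_x|g_A(x)|=n\sup_{t\in[0,1]}|\widehat F_n(t)-t|$ for the empirical cdf $\widehat F_n$ of $n$ i.i.d.\ uniforms, and the Dvoretzky--Kiefer--Wolfowitz inequality gives $\Pr[\sup_x|g_A(x)|>5\sqrt{n\log n}]\le 2n^{-50}$ before conditioning; since the balancedness event has probability $\Theta(1/\sqrt n)$ by a local limit theorem, the conditional probability is still $o(n^{-3})$. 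It then remains to prove the pointwise bound deterministically, given a short list of structural events about $A$ and $B$, each of unconditional probability $\ge 1-n^{-3}$ (hence $\ge 1-o(n^{-2})$ after conditioning on balancedness): (a) the sup-norm bound above; (b) regularity of the order statistics of $A$, of $B$ and of $A\cup B$, e.g.\ that a constant fraction of consecutive spacings of each set lie in $[1/4,3/4]$ and every window of length $\log n$ meets $O(\log n)$ faces; and (c) non-degeneracy of the quadratic form $Q(\alpha,\beta,\gamma):=\Var(\alpha U_A+\beta U_B+\gamma(V-n/2))$, namely $Q(\alpha,\beta,\gamma)\ge c_0\big((\alpha^2+\beta^2)\,n/\log n+\gamma^2 n^2\big)$ for an absolute constant $c_0>0$ and all $(\alpha,\beta,\gamma)$. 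Item (c) asserts that $g_A$, $g_B$, and $x\mapsto x-n/2$ are robustly linearly independent in $L^2[0,n]$; it follows from small-ball estimates for the balanced empirical process --- the probability that its squared $L^2$ norm is $\le\epsilon n^2$ is $\exp(-\Theta(1/\epsilon))$, which is $\le n^{-4}$ when $\epsilon$ is a small constant over $\log n$ --- combined with a net argument over directions $(\alpha,\beta,\gamma)/\|(\alpha,\beta,\gamma)\|$.

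For the main estimate I would analyze $\fhat(\alpha,\beta,\gamma)=\frac1n\int_0^n e(\phi(v))\,\mathrm{d}v$ directly, where $\phi(v)=\alpha g_A(v)+\beta g_B(v)+\gamma(v-n/2)$ is piecewise affine with common slope $c:=\gamma-\alpha-\beta$ on every gap of $A\cup B$, an upward jump of $\alpha$ at each face of $A$ and $\beta$ at each face of $B$, and split into three cases. \textbf{(i) $|c|\ge 10$} (which forces $|\gamma|\ge 9$ since $|\alpha|,|\beta|\le 1/2$). Evaluating $\int_0^n e(\phi)$ gap-by-gap and summing by parts over the $2n$ jump locations $t$ with jumps $j_t\in\{\alpha,\beta\}$ gives $\fhat=\frac1{2\pi i c n}\big(e(\phi(n))-e(\phi(0))+\sum_t e(\phi(t^+))(e(-j_t)-1)\big)$, hence $|\fhat|\le\frac1{\pi|c|n}+\frac{|\sin\pi\alpha|+|\sin\pi\beta|}{\pi|c|}\le\frac3{|c|}\le\tfrac13$. \textbf{(ii) $\|\phi\|_\infty\le 1/100$}, which holds whenever $|\alpha|,|\beta|\le c_1/\sqrt{n\log n}$ and $|\gamma|\le c_1/n$ for a suitable small $c_1$, since $\|\phi\|_\infty\le(|\alpha|+|\beta|)(\|g_A\|_\infty\vee\|g_B\|_\infty)+|\gamma|n/2$. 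Taylor-expanding $e(x)=1+2\pi i x-2\pi^2 x^2+O(|x|^3)$ and using $\EE\phi(V)=\alpha\EE U_A+\beta\EE U_B+\gamma\EE(V-n/2)=0$ by Claim~\ref{cl:simple-g}, the first-order term integrates to zero and one gets $1-|\fhat|\ge\pi^2 Q(\alpha,\beta,\gamma)$; the threshold hypothesis puts $|\alpha|$ or $|\beta|$ above $10^{10}\log n/n$ or $|\gamma|$ above $6\log^2 n/n^{3/2}$, which with (c) forces $Q\ge c_0\min\big((10^{10}\log n/n)^2\,n/\log n,\ (6\log^2 n/n^{3/2})^2 n^2\big)=\Omega(\log n/n)$ with a huge implied constant. \textbf{(iii) $|c|<10$ and $\|\phi\|_\infty>1/100$.} Here the integrand oscillates --- through the drift, or the jumps --- but not fast enough for (i) and not slowly enough for (ii). I would combine the gap representation $\int_J e(\phi)=e(\phi(c_J))\,|J|\,\mathrm{sinc}(\pi c|J|)$ (with $c_J$ the midpoint of $J$) with the observation that consecutive post-jump values of $\phi$ differ by $\alpha+cG_k$ (resp.\ $\beta+cG_k$), $G_k$ the intervening spacing, and bound the exponential sum $\sum_k e(\phi(a_{(k)}^+))$ over the order statistics $a_{(1)}<\dots<a_{(n)}$ following Polymath's single-die argument, using the spacing regularity (b) and arguing according to how well $\alpha$, $\beta$, or $c$ is approximated by rationals of small denominator.

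I expect case (iii) to be the main obstacle, for the same reason it is delicate for one die: in the band of frequencies that are neither tiny (so Taylor control fails) nor fast-oscillating (so crude cancellation fails), the exponential sum must be estimated by an argument sensitive to Diophantine properties of the multipliers, and adapting it here means carrying along \emph{two} interleaved jump families together with the drift $c$ while keeping every bound uniform over the whole admissible range, so that no union bound over a continuum of frequencies is ever taken. A secondary but nontrivial point is establishing (c) with failure probability $\le n^{-3}$: although any fixed near-null direction is ruled out by an exponentially strong small-ball estimate, the discretization over directions needs care, because the normalized form --- with off-diagonal entries $\Cov[U_A,U_B]/\sqrt{\Var_A\Var_B}$, $\Cov[U_A,V]/\sqrt{\Var_A\Var H}$, etc.\ --- does \emph{not} concentrate: these correlations are genuinely random of constant order.
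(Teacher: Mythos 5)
Your plan departs from the paper's at the heart of the argument, and the departure is exactly where the gap sits. The paper does \emph{not} try to exhibit a short list of deterministic structural events under which $|\fhat(\alpha,\beta,\gamma)|\le 1-10\log n/n$ holds simultaneously for all admissible frequencies. Instead, for each fixed $(\alpha,\beta)$ with $|\alpha|\ge 1/n^3$, Lemma~\ref{lem:fhat-decay} is a \emph{per-frequency probabilistic} bound: except with probability $O(n^{-7})$ over the random balanced $(A,B)$, the estimate holds for every $\gamma$. The paper then covers $[-1/2,1/2]^2$ with a grid of $O(n^4)$ points, takes a union bound, and passes to the continuum by the Lipschitz estimate of Lemma~\ref{lem:interpolation} (Lipschitz constant $\Otilde(\sqrt{n})$, grid spacing $n^{-2}$, so the interpolation costs only $\Otilde(n^{-3/2})$). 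You explicitly rule out ``union bound over a continuum of frequencies,'' but a discrete grid plus Lipschitz interpolation is exactly the tool that makes this harmless at the probability budget $n^{-2}$, and forsaking it is what forces you into the much harder task of making case (iii) deterministic.

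Case (iii) is the genuine gap. The substance of the lemma lives in that intermediate band, and the paper's treatment (Lemmas~\ref{lem:fhat-decay-unbalanced} and~\ref{lem:intermediate}) is inherently probabilistic per frequency: for fixed $(\alpha,\beta)$ one partitions $[0,n]$ into blocks of an $\alpha$-dependent length $4m$, uses Berry--Esseen when $|\alpha|\le 10^{-4}$ and a Poisson limit theorem when $|\alpha|>10^{-4}$ to show that each fresh block, conditioned on the earlier faces, produces an $\eps$-separation from $\mathbb{Z}$ with probability $\ge 1/20$, and then amplifies by Azuma (Lemma~\ref{lem:modified-azuma}). Both the block length $m$ and the threshold $\eps$ depend on $\alpha$. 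It is far from clear that spacing-regularity events of the kind in your (b) could certify this separation deterministically and uniformly over all $(\alpha,\beta,\gamma)$ at once --- that is precisely the Diophantine obstacle you flag, and your sketch defers it to ``Polymath's single-die argument'' which, being a per-frequency probabilistic bound, does not supply what your framework needs. A second, subtler point you gloss over: passing from an unbalanced to a balanced die cannot be done by dividing the unconditional failure probability by $\Pr[\text{balanced}]$, because for large $|\alpha|$ changing a single face can change $\fhat$ by $\Omega(1)$; the paper handles this with a bespoke coupling in the proof of Lemma~\ref{lem:fhat-decay}. Your cases (i) and (ii) are fine --- (i) is essentially Lemma~\ref{lem:large-gamma}, (ii) is Claim~\ref{cl:fhat-moments} plus a variance lower bound --- and your DKW treatment of the sup-norm is a clean substitute for Lemma~\ref{lem:supremum-norm}. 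But your item (c), a uniform small-ball/net lower bound on the quadratic form, is an additional unproven requirement the paper never needs: its small-frequency estimate (Lemma~\ref{lem:gamma-box-decay}) is a direct oscillatory-integral argument using only $\|U_A\|_\infty,\|U_B\|_\infty\le 5\sqrt{n\log n}$, and the nondegeneracy of the conditional variances is merely a hypothesis of Theorem~\ref{thm:conditional-clt}, not a high-probability statement uniform over directions.
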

Recall that $(G_A, G_B, H)$ are joint Gaussians with
the first and second moments matching those of
$(U_A,U_B,V)$. We let
\begin{align}
\ghat(\alpha,\beta,\gamma)
&:=\EE e\big(\alpha G_A^{*n}+\beta
G_B^{*n}+\gamma(H^{*n}-n^2/2)\big)\;,
\label{eq:41}\\
\hhat(\alpha,\beta,\gamma)
&:=\begin{cases}
\fhat(\alpha,\beta,\gamma)^n&
\text{if $|\alpha|,|\beta|\le 1/2$,}\\
0&\text{otherwise.}
\end{cases}
\nonumber
\end{align}
We then use Lemma~\ref{lem:fhat-bounded} and a couple
of other bounds to establish 
\begin{lemma}\label{lem:characteristic-close}
Whenever the thesis of Lemma~\ref{lem:fhat-bounded}
holds, in particular except with probability at most 
$n^{-2}$, we have
\begin{align}\label{eq:23}
\left\|\ghat-\hhat\right\|_1
=\int_{\mathbb{R}^3}\left|
\ghat(\alpha,\beta,\gamma)-\hhat(\alpha,\beta,\gamma)
\right|\,\mathrm{d}\alpha\beta\gamma
\le \frac{\log^{16} n}{n^4}\;.
\end{align}
\end{lemma}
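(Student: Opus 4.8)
\emph{Proof strategy.} Write $t=(\alpha,\beta,\gamma)$, set $W:=\alpha U_A+\beta U_B+\gamma(V-n/2)$, and let $\Sigma$ be the covariance matrix of $(U_A,U_B,V-n/2)$, so that $\ghat(t)=\exp(-2\pi^2 n\,t^{\mathsf T}\Sigma t)$ and $t^{\mathsf T}\Sigma t=\EE[W^2]$; write $\Sigma_{\mathrm{cond}}$ for the $2\times2$ Schur complement of the $(V-n/2)$-entry of $\Sigma$, i.e.\ the conditional covariance matrix of $(U_A,U_B)$ from Claim~\ref{cl:conditional-variance}. The plan is to split $\mathbb{R}^3$ along the thresholds of Lemma~\ref{lem:fhat-bounded} into the central box $R_{\mathrm{cen}}:=\{|\alpha|,|\beta|\le 10^{10}\log n/n\}\cap\{|\gamma|\le 6\log^2 n/n^{3/2}\}$, the shell $R_{\mathrm{mid}}:=\{|\alpha|,|\beta|\le 1/2\}\setminus R_{\mathrm{cen}}$, and $R_{\mathrm{far}}:=\{|\alpha|>1/2\}\cup\{|\beta|>1/2\}$, and to bound $\int|\ghat-\hhat|$ on each. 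On $R_{\mathrm{far}}$ we have $\hhat=0$; on $R_{\mathrm{mid}}$, Lemma~\ref{lem:fhat-bounded} gives $|\hhat|=|\fhat|^n\le n^{-10}$; the contribution of $R_{\mathrm{cen}}$ is the dominant term.

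The first, and I expect hardest, step is to extract quantitative non-degeneracy of $\Sigma$ from the thesis of Lemma~\ref{lem:fhat-bounded} — concretely $\lambda_{\min}(\Sigma_{\mathrm{cond}})=\Omega(n/\log n)$ and residual variance of $V$ after linear regression on $(U_A,U_B)$ of order $\Omega(n^2/\log^3 n)$. The obstacle is that Lemma~\ref{lem:fhat-bounded} bounds $|\fhat|$ only \emph{outside} $R_{\mathrm{cen}}$, so one has to probe $\fhat$ at points chosen to sit just outside the box. For a unit vector $(\bar\alpha,\bar\beta)$ let $\gamma^\ast$ minimize $\Var[\bar\alpha U_A+\bar\beta U_B+\gamma(V-n/2)]$ over $\gamma$; since $\|U_A\|_\infty,\|U_B\|_\infty\le 5\sqrt{n\log n}$ gives $\Var_A,\Var_B\le 25n\log n$, and $\Var(V-n/2)=n^2/12$, one gets $|\gamma^\ast|=\cO(\sqrt{\log n}/\sqrt n)$. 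Rescaling $(\bar\alpha,\bar\beta,\gamma^\ast)$ to a point $t'=(\alpha',\beta',\gamma')$ with $\max(|\alpha'|,|\beta'|)$ just above $10^{10}\log n/n$ keeps $|\gamma'|$ comfortably below $6\log^2 n/n^{3/2}$, so Lemma~\ref{lem:fhat-bounded} applies at $t'$; there $\|W\|_\infty=\cO(\log^{3/2}n/\sqrt n)=o(1)$, so a second-order expansion of $e(\cdot)$ (using $\EE[W]=0$) gives $|\fhat(t')|=1-2\pi^2\,t'^{\mathsf T}\Sigma t'\,(1+o(1))=1-2\pi^2 s^2\lambda_{\min}(\Sigma_{\mathrm{cond}})(1+o(1))$ with scaling factor $s=\Theta(\log n/n)$, and comparing with $|\fhat(t')|\le 1-10\log n/n$ yields $\lambda_{\min}(\Sigma_{\mathrm{cond}})=\Omega(n/\log n)$. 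The same argument with the roles of a $U$-coordinate and the $V$-coordinate exchanged — probing with $|\gamma'|$ just above $6\log^2 n/n^{3/2}$ — gives the lower bound on the residual variance of $V$. Hence $\det\Sigma=\Var(V-n/2)\cdot\det\Sigma_{\mathrm{cond}}\ge\tfrac{n^2}{12}\,\lambda_{\min}(\Sigma_{\mathrm{cond}})^2=\Omega(n^4/\log^2 n)$, so
\[
\int_{\mathbb{R}^3}|\ghat|=\frac{\pi^{3/2}}{(2\pi^2 n)^{3/2}\sqrt{\det\Sigma}}=\cO\!\left(\frac{\log n}{n^{7/2}}\right).
\]

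On $R_{\mathrm{cen}}$ one has $\|W\|_\infty\le|\alpha|\,\|U_A\|_\infty+|\beta|\,\|U_B\|_\infty+|\gamma|\,n/2=\cO(\log^2 n/\sqrt n)$ and $t^{\mathsf T}\Sigma t=\EE[W^2]=\cO(\log^4 n/n)$, so expanding $e(\cdot)$ to second order gives $\fhat(t)=1-2\pi^2 t^{\mathsf T}\Sigma t+\rho(t)$ with $|\rho(t)|\le\tfrac{4}{3}\pi^3\,\EE|W|^3\le\tfrac{4}{3}\pi^3\|W\|_\infty\,\EE[W^2]=\cO(\log^6 n/n^{3/2})$. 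Since $|\fhat(t)|\ge\tfrac12$ on $R_{\mathrm{cen}}$ we may take the principal logarithm and obtain $n\log\fhat(t)=-2\pi^2 n\,t^{\mathsf T}\Sigma t+\cO(\log^6 n/\sqrt n)$ (the error collects $n\rho(t)$ and $\cO(n(t^{\mathsf T}\Sigma t)^2)$), hence $\fhat(t)^n=\ghat(t)\bigl(1+\cO(\log^6 n/\sqrt n)\bigr)$ and, pointwise on $R_{\mathrm{cen}}$, $|\hhat(t)-\ghat(t)|\le|\ghat(t)|\cdot\cO(\log^6 n/\sqrt n)$. Integrating this against the bound on $\int|\ghat|$ above bounds the central contribution by $\cO(\log^7 n/n^4)$, well within the target $\log^{16}n/n^4$.

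Finally, the two tails are negligible. For $t\notin R_{\mathrm{cen}}$ at least one coordinate exceeds its threshold, so $t^{\mathsf T}\Sigma t\ge(\text{that coordinate})^2\cdot(\text{its residual variance})$ together with the bounds above gives $n\,t^{\mathsf T}\Sigma t\ge c\log n+\Omega\!\left(\frac{n^2\alpha^2}{\log n}+\frac{n^2\beta^2}{\log n}+\frac{n^3\gamma^2}{\log^3 n}\right)$ for an absolute constant $c$ that is very large (it carries the factor $10^{20}$ or $36$ from the thresholds), whence $|\ghat(t)|\le n^{-c'}\exp(-\Omega(\cdots))$ with $c'=2\pi^2 c$ huge, and $\int_{R_{\mathrm{mid}}}|\ghat|\le n^{-c'}\cdot\cO(n^{-3/2}\log^{3/2}n)\ll n^{-4}$, while $\int_{R_{\mathrm{mid}}}|\hhat|\le n^{-10}$. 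On $R_{\mathrm{far}}$, $\hhat=0$ and $t^{\mathsf T}\Sigma t\ge\tfrac14\lambda_{\min}(\Sigma_{\mathrm{cond}})=\Omega(n/\log n)$ (the residual variance of each $U$-coordinate given the other two is at least $\lambda_{\min}(\Sigma_{\mathrm{cond}})$), so $|\ghat(t)|\le\exp(-\Omega(n^2/\log n))$ and $\int_{R_{\mathrm{far}}}|\ghat|\le\exp(-\Omega(n^2/\log n))\poly(n)\ll n^{-4}$. Adding the three contributions gives $\|\ghat-\hhat\|_1=\cO(\log^7 n/n^4)\le\log^{16}n/n^4$ for $n$ large, as required.
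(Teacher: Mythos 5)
Your decomposition of $\mathbb{R}^3$, the second-order expansion of $\fhat$ near the origin, and the pointwise comparison of $\hhat$ and $\ghat$ inside the central box all parallel the paper's argument. Where you genuinely depart is the tail of $\ghat$: you extract explicit lower bounds $\lambda_{\min}(\Sigma_{\mathrm{cond}})=\Omega(n/\log n)$ and a residual-variance bound for $V$ from the thesis of Lemma~\ref{lem:fhat-bounded} by probing $\fhat$ at carefully chosen points on the boundary of $R_{\mathrm{cen}}$, and then compute $\int|\ghat|$ as a Gaussian integral via $\det\Sigma$. This probing idea is a nice way to turn the $|\fhat|^n\le n^{-10}$ information into quantitative non-degeneracy of $\Sigma$. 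The paper avoids this entirely: it bounds $|\ghat|\le 2n^{-10}$ on the boundary of the box by comparison with $\hhat$ (via Lemma~\ref{lem:exp-nq-approx}) and then integrates $\exp(-kt^2)$ along rays in spherical coordinates, which needs no eigenvalue information at all. Both routes can be made to work, but yours carries extra bookkeeping.

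There is, however, a genuine gap: the claim $\int_{R_{\mathrm{mid}}}|\hhat|\le n^{-10}$ is false as stated. On $R_{\mathrm{mid}}$ you have $|\alpha|,|\beta|\le 1/2$ and $\gamma$ ranging over all of $\mathbb{R}$, so this region has infinite Lebesgue measure, and the pointwise bound $|\hhat(\alpha,\beta,\gamma)|\le n^{-10}$ from Lemma~\ref{lem:fhat-bounded} does not give a finite integral by itself. (Indeed, without further decay it is not even clear $\hhat$ is integrable.) Some decay of $\hhat$ in $|\gamma|$ is indispensable, and none of the lemmas you invoke supplies it. The paper handles this with Lemma~\ref{lem:large-gamma}, which gives $|\fhat(\alpha,\beta,\gamma)|\le 1/|\gamma-\alpha-\beta|\le 2/|\gamma|$ for $|\gamma|>4$, hence $|\hhat|\le(2/|\gamma|)^n$ and a tail contribution of order $2^{-n}$. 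You would need this (or an equivalent decay estimate) to finish.

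A secondary issue: your bound $n\,t^{\mathsf T}\Sigma t\ge c\log n+\Omega\bigl(\frac{n^2\alpha^2}{\log n}+\frac{n^2\beta^2}{\log n}+\frac{n^3\gamma^2}{\log^3 n}\bigr)$ on $R_{\mathrm{mid}}$ does not follow from the three one-coordinate residual-variance inequalities, which give only the \emph{maximum} of the three quadratics as a lower bound, not their sum. This is needed for you to conclude $\int_{R_{\mathrm{mid}}}|\ghat|\ll n^{-4}$ (the crude bound $\int_{\mathbb{R}^3}|\ghat|=\cO(\log n/n^{7/2})$ is not small enough). One can salvage this — e.g.\ by arguing separately that $|\ghat|\le n^{-c'}$ on $R_{\mathrm{mid}}$ and that $|\ghat|\le\exp(-2\pi^2 n\gamma^2\sigma_V^2)$ for the $\gamma$-tail, then splitting the region — but it is noticeably more delicate than the paper's spherical-coordinate telescoping, which gets the bound in a few lines using only that $|\ghat|\le 2n^{-10}$ on the boundary of the box and that $\ghat$ is a centered Gaussian characteristic function.
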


This in turn is used in the proof of
Theorem~\ref{thm:conditional-clt} by applying
Fourier inversion formulas. Similar arguments
allow to directly prove Theorem~\ref{thm:one-fixed-equivalent}.
Full proofs of Theorems~\ref{thm:conditional-clt}
and~\ref{thm:one-fixed-equivalent} and Lemmas~\ref{lem:fhat-bounded}
and~\ref{lem:characteristic-close} are given in
Section~\ref{sec:clt-proof}. While we presented a rough outline
here, there are multiple technical details to take care of.
Most (but not all) of them are handled in a way which is 
identical to or inspired by the CLT proof in~\cite{Pol17}.
Except for adaptations of arguments to the continuous
setting, among the main differences to Polymath are: using a 
grid of points and interpolating by Lemma~\ref{lem:interpolation}; applying Poisson limit
theorem in addition to Berry-Esseen in the proof
of Lemma~\ref{lem:intermediate}; and a more careful
estimate required in the final proof of Theorem~\ref{thm:conditional-clt}.

\subsection{Bounding the covariance}
\label{sec:covariance-bound}

If the random variables $(U_A^{*n},U_B^{*n},V^{*n})$ 
behave like Gaussians, the way to show that
$\Pr[U_A^{*n},U_B^{*n}>0\mid V^{*n}=\EE V^{*n}]$ is significantly
different from $1/4$ is to bound away from zero
the relevant (conditional) covariance 
between $U_A$ and $U_B$. More precisely,
by~\eqref{eq:28} we are interested in the expression
$CV_{AB}-(CV_A CV_B/\Var V)$. Let us make more concise notation
\begin{align}\label{eq:39}
    \CVABcond:=CV_{AB}-\frac{CV_A CV_B}{\Var V}\;,
    \qquad
    \VarAcond:=\Var_A-\frac{CV_A^2}{\Var V}\;,
    \qquad
    \VarBcond:=\Var_B-\frac{CV_B^2}{\Var V}\;.
\end{align}
We now give two main lemmas that we use in the proof
of Theorem~\ref{thm:two-fixed-equivalent}.
In the following the probability is over the choice of random balanced dice $A,B$ with faces in $[0,1]$.

\begin{lemma} \label{lem:var-upper-bound}
For every $\eps>0$ there exists $K>0$ such that
\[
    \Pr\big[\VarAcond >Kn\big]<\eps\;.
\]
\end{lemma}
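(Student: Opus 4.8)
The plan is to first establish the claimed bound for \emph{unbalanced} dice — i.e.\ for a die $A$ whose faces are simply $n$ iid uniform points in $[0,1]$ with no conditioning — and then argue that the conditioning on the face-sum being $n/2$ does not change the situation. For the unbalanced case, recall that $\VarAcond = \Var_A - CV_A^2/\Var H \le \Var_A$, so it suffices to bound $\Var_A = \Var_V[g_A(V)]$ where $V$ is uniform in $[0,1]$ independent of $A$. By definition $g_A(x) = f_A(x) - xn$, and since $f_A(x) = \sum_{i=1}^n \1[a_i \le x]$ we have $\EE_A f_A(x) = xn$, so $g_A(x)$ is (over the choice of $A$) a centered sum of $n$ iid indicators; thus $\EE_A[g_A(x)^2] = n x(1-x) \le n/4$. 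Writing $\Var_A = \int_0^1 g_A(x)^2\,\mathrm{d}x - \big(\int_0^1 g_A(x)\,\mathrm{d}x\big)^2 \le \int_0^1 g_A(x)^2\,\mathrm{d}x$, Fubini gives $\EE_A[\Var_A] \le \int_0^1 n x(1-x)\,\mathrm{d}x = n/6$. Since $\Var_A \ge 0$, Markov's inequality yields $\Pr[\Var_A > Kn] < 1/(6K)$, which is below $\eps$ once $K > 1/(6\eps)$. (One could equally bound $\EE\CVABcond$-type quantities this way, but only the variance is needed here.)

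The remaining work is to transfer this from the unbalanced to the balanced model, i.e.\ to control the effect of conditioning on $\sum_{i=1}^n a_i = n/2$. Here I would reuse the local-CLT machinery that the paper develops elsewhere for exactly this purpose: the conditioning multiplies the probability of any event $E$ by a factor $\Pr[E \mid \text{balanced}]/\Pr[E] $, and by the local central limit theorem for the face-sum (an iid sum of $n$ bounded variables, so a standard LCLT applies), the density of $\sum a_i$ at the point $n/2$ is within a constant factor of its maximum. More carefully, for the event $E = \{\Var_A > Kn\}$ one wants $\Pr[\Var_A > Kn \mid \text{balanced}] \le C \cdot \Pr[\Var_A > Kn]$ for an absolute constant $C$; this follows once we know that, conditioned on any fixed value of the first $n-1$ faces $(a_1,\dots,a_{n-1})$ lying in a set on which $\Var_A$ is large, the induced density of $a_n = n/2 - \sum_{i=1}^{n-1} a_i$ landing in $[0,1]$ is bounded above by $O(1/\sqrt n)$ while the unconditional probability of $0 \le a_n \le 1$ is $\Theta(1/\sqrt n)$ — so the normalizing denominator in~\eqref{eq:44} is $\Theta(1/\sqrt n)$ and the ratio is $O(1)$. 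Multiplying the unbalanced bound by this constant $C$ and then choosing $K > C/(6\eps)$ completes the proof.

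The main obstacle is the transfer step, not the unbalanced moment computation. The delicacy is that we are conditioning on a measure-zero event, so everything must be phrased through densities (the rejection-sampling description in~\eqref{eq:44}), and one must be careful that the upper bound on the density of $a_n \in [0,1]$ holds \emph{uniformly} over the conditioning event $\{\Var_A > Kn\}$ rather than only on average. This is exactly the kind of estimate the paper handles via its precise local CLT / Berry--Esseen arguments (used, e.g., in Lemma~\ref{lem:fhat-bounded} and the moment bounds of Section~\ref{sec:moments}), so I would invoke that framework rather than redoing it. If one prefers to avoid the density language entirely, an alternative is to write $\Var_A$ as a function of the dice and bound $\EE[\Var_A \mid \text{balanced}]$ directly by expanding in terms of pair and single face statistics and then applying a local CLT to the joint law of $(f_A(x), \sum_i a_i)$ for fixed $x$ — but this is more computational and the clean Markov argument above, combined with the constant-factor density comparison, seems the most economical route.
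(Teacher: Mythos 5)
Your starting point is sound: $\VarAcond \le \Var_A$, and the unbalanced first-moment calculation giving $\EE_A[\Var_A] \le n/6$ is correct (it is essentially the heuristic identity the paper records in~\eqref{eq:40}). The Markov step is also the right tool; the paper itself uses Markov on $\Var_A^2$ (via its Lemma~\ref{lem:var-2nd-moment}) only because that second-moment bound is needed elsewhere anyway, and the first moment would suffice here.

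However, the transfer step from the unbalanced to the balanced model is a genuine gap, and the constant-factor density comparison you sketch does not go through. The key unproven claim is that $\Pr\big[a_n\in[0,1]\,\big|\,\Var_A > Kn\big]=\cO(1/\sqrt{n})$ \emph{uniformly}. The local CLT controls only the marginal density of the face-sum; it says nothing about the conditional distribution of the face-sum given the complicated event $\{\Var_A>Kn\}$, and there is no a priori reason conditioning on that event cannot substantially enrich the set where $a_n\in[0,1]$. (As a sanity check: for the event $E=\{\sum_{i<n}a_i\in[n/2-1,n/2]\}$ the conditional probability is $1$, not $\cO(1/\sqrt n)$, so the bound does not hold for arbitrary events; it would have to exploit the structure of $\{\Var_A>Kn\}$, which you don't do.) The paper's technique from Lemma~\ref{lem:supremum-norm} (dropping the indicator on $a_n$ in the numerator and paying $1/\Pr[a\in D]=\Theta(\sqrt n)$ in the denominator) \emph{does} give a valid transfer, but it loses a factor $\sqrt n$: it would yield $\Pr[\Var_A>Kn\mid\text{bal}]=\cO(\sqrt n / K)$, which cannot be made $<\eps$ with a constant $K$. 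That loss is harmless when the unconditional tail is exponentially small (as in Lemma~\ref{lem:supremum-norm}), but it is fatal against the merely polynomial tail you get from Markov.

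The paper sidesteps the transfer problem entirely by computing the relevant moment of $\Var_A$ directly in the \emph{balanced} model, using the local-CLT correction factors $p_{n-k}$ of Lemma~\ref{lem:intlemma} to express conditional expectations of low-degree face statistics (this is Lemma~\ref{lem:warm-up} for $\EE\Var_A=\cO(n)$, or Lemma~\ref{lem:var-2nd-moment} for $\EE\Var_A^2=\cO(n^2)$), and only then applies Markov. To repair your proof you would replace the transfer argument by this direct balanced moment computation; your unbalanced calculation then serves exactly the role of the paper's heuristic in~\eqref{eq:40}.
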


\begin{lemma} \label{lem:cv-lower-bound}
There exists $\eps>0$ such that
\[
    \Pr\big[\left|\CVABcond\right|>\eps n\big]>\eps\;. 
\]
\end{lemma}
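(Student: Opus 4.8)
The plan is to use the second moment method (more precisely the Paley--Zygmund inequality) on the random variable $\CVABcond$ viewed as a function of the two random balanced dice $A,B$. If we can show that $\EE[\CVABcond^2]=\Omega(n^2)$ while $\EE[\CVABcond^4]=\cO(n^4)$, then Paley--Zygmund immediately gives $\Pr[|\CVABcond|>\eps n]>\eps$ for a suitable $\eps>0$. (Here we use that $\EE[\CVABcond]=0$ by the symmetry $B\leftrightarrow C$ — swapping the roles of the two dice negates nothing but averages to a quantity that, combined with the genuine moment computation, lets us run the argument on $\CVABcond$ directly, or one works with $\CVABcond$ as is since its odd moments are what they are and only the second and fourth enter Paley--Zygmund.) So the whole lemma reduces to two moment estimates.

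The first step is to do the computation for \emph{unconditioned} dice, i.e.\ with faces genuinely iid uniform in $[0,1]$ with no constraint on the face-sums. In that setting $\CVABcond$ is built out of $CV_{AB}=\Cov[U_A,U_B]$, $CV_A=\Cov[U_A,V]$, $CV_B=\Cov[U_B,V]$, $\Var H$, and each of these can be written as an average over the $n^2$ (or $n^3,\dots$) tuples of face-indices of an explicit bounded function of the uniform faces; expanding $\EE[\CVABcond^2]$ and $\EE[\CVABcond^4]$ then becomes an elementary (if lengthy) calculation of expectations of products of such indicator/step functions, exactly in the spirit of the moment calculations in~\cite{HMRZ20}. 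The key quantitative point to extract is that $\EE[\CVABcond^2]$ is $\Theta(n^2)$ and not smaller: this is where one must check that the $-CV_ACV_B/\Var H$ correction does \emph{not} cancel the leading-order behavior of $\Cov[U_A,U_B]$. Intuitively $\Var H=\Theta(n^2)$ (it is $n$ times the variance of a single uniform face, up to the $[0,n]$ rescaling) while $CV_A,CV_B=\cO(\sqrt n\cdot n)$-ish after rescaling so that the product divided by $\Var H$ is of the same order as $CV_{AB}$, and one needs the actual covariance structure of $g_A$ (which is governed by a Brownian-bridge-type process) to see that no miracle cancellation occurs.

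The second step is to transfer these estimates to the \emph{conditioned} (balanced) dice. This is the place where a local central limit theorem is needed, and it is also the main obstacle. The issue is that conditioning each die on its face-sum equalling $n/2$ changes the distribution of the faces and hence of $\CVABcond$; one cannot just quote the unconditioned moments. The strategy, following~\cite{HMRZ20}, is to express $\EE[\CVABcond^k\mid \text{balanced}]$ as $\EE[\CVABcond^k\cdot \1(\text{face-sum}=n/2)]/\Pr[\text{face-sum}=n/2]$, and then to use a local CLT to show that the conditioning multiplies the relevant moments only by a bounded factor (bounded above \emph{and} below for $k=2$). Concretely one conditions on the face-sums of $A$ and of $B$ separately, writes the moment as an integral over the possible face-sum values weighted by a local-CLT density that is $\Theta(1/\sqrt n)$ near the mean, and argues that $\CVABcond$ is sufficiently ``smooth'' in the face-sums that restricting to the exact mean does not destroy the $\Theta(n^2)$ lower bound nor inflate the $\cO(n^4)$ upper bound. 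Care is required because $\CVABcond$ is a ratio (it contains $\Var H$, which is itself a random variable over the dice) — one handles this either by noting $\Var H$ concentrates and replacing it by its mean up to lower-order error, or by working on the high-probability event where $\Var H=\Theta(n^2)$ and absorbing the complement using Lemma~\ref{lem:var-upper-bound} and crude bounds. The hardest part is getting the local CLT precise enough and uniform enough that the lower bound on the second moment survives the conditioning; the upper bound on the fourth moment is more forgiving since any $\cO(1)$ distortion is acceptable. Once both moment bounds are in hand for the balanced model, Paley--Zygmund finishes the proof.
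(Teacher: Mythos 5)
Your overall strategy (Paley--Zygmund applied to $\CVABcond^2$, reducing to $\EE\CVABcond^2=\Omega(n^2)$ and $\EE\CVABcond^4=\cO(n^4)$, and proving the second-moment lower bound by an unconditioned computation plus a local CLT correction in the spirit of \cite{HMRZ20}) is the same as the paper's, and is sound. However, there are two concrete problems. First, you treat $\Var H$ as a random variable over the dice (``$\CVABcond$ is a ratio... contains $\Var H$, which is itself a random variable over the dice'') and devote a chunk of the argument to managing this. That is a misconception: $V$ is uniform on a fixed interval independent of the dice and $H$ is its matching Gaussian, so $\Var H$ is a deterministic constant; after the paper's rescaling to $[-\sqrt{3},\sqrt{3}]$ it is exactly $1$ and $\CVABcond=CV_{AB}-CV_ACV_B$. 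Nothing needs to be done about it. Second, the aside claiming $\EE\CVABcond=0$ ``by symmetry'' is both unneeded (Paley--Zygmund is applied to the nonnegative variable $\CVABcond^2$, so only the second and fourth moments of $\CVABcond$ enter) and incorrect as stated --- no symmetry forces that mean to vanish, and your own hedge acknowledges this, so the sentence should be dropped.

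Beyond these, your route to the fourth-moment bound is genuinely different from, and substantially more laborious than, the paper's. You propose to compute $\EE\CVABcond^4$ directly and then carry it through the local CLT conditioning. The paper avoids this entirely: it observes $CV_{AB}^2\le\Var_A\Var_B$ and $CV_A^2\le\Var_A\Var H=\Var_A$ by Cauchy--Schwarz (pointwise, for any pair of dice), so by independence of $A$ and $B$
\begin{align*}
\EE CV_{AB}^4\le\EE[\Var_A^2\Var_B^2]=\EEs^2\Var_A^2\,,\qquad
\EE\,(CV_ACV_B)^4=\EEs^2 CV_A^4\le\EEs^2\Var_A^2\,,
\end{align*}
and hence $\EE\CVABcond^4\le 16\EEs^2\Var_A^2=\cO(n^4)$ follows from the single estimate $\EE\Var_A^2=\cO(n^2)$ (Lemma~\ref{lem:var-2nd-moment}), which is already needed for other purposes. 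This means only one new explicit moment computation under conditioning is required --- the lower bound $\EE\CVABcond^2=\Omega(n^2)$ (Lemma~\ref{lem:cv-2nd-moment}) --- which the paper in turn reduces via the triangle inequality to comparing $\sqrt{\EE CV_{AB}^2}$ with $\EE CV_A^2$. Your direct fourth-moment computation would also work in principle, but it multiplies the number of terms one must track through the Petrov-type local CLT expansions, with no corresponding gain.
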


\paragraph{Proof strategy for Lemmas~\ref{lem:var-upper-bound}
and~\ref{lem:cv-lower-bound}}
To start with, as in Section~\ref{sec:clt} 
we will argue that throughout the whole proof
we can assume faces 
come from a uniform distribution on an interval different from $[0,1]$. This time, for consistency
with~\cite{HMRZ20} and to simplify some calculations
we take the faces to be uniform in $[-\sqrt{3},\sqrt{3}]$.
That is, the single-face distribution is centered
with variance $\Var V=\Var H=1$. As in Section~\ref{sec:clt},
this does not change the joint distribution
of the random variables
$\VarAcond$, $\VarBcond$ and $\CVABcond$.
Therefore, from now on we assume the faces 
and random variable $V$ are
uniform in $[-\sqrt{3},\sqrt{3}]$ and that a balanced
die has face-sum zero.

Lemmas~\ref{lem:var-upper-bound}
and~\ref{lem:cv-lower-bound} are proved by the first 
and second-moment methods. More precisely, in 
Section~\ref{sec:moments} we establish the following
moment bounds:

\begin{lemma}\label{lem:var-2nd-moment}
$\EE \Var_A^2=\cO(n^2)$.
\end{lemma}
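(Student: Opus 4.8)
The plan is to compute $\EE\,\Var_A^2$ directly from the definition $\Var_A = \int_0^1 g_A(x)^2\,\mathrm{d}x$ (using the balancedness of $A$, which forces $\EE_V[g_A(V)] = 0$, so the variance is just the second moment of $g_A(V)$), and then square and take expectation over the random balanced die $A$. Writing $\Var_A^2 = \int_0^1\!\!\int_0^1 g_A(x)^2 g_A(y)^2\,\mathrm{d}x\,\mathrm{d}y$, the quantity we must control is $\EE_A\big[g_A(x)^2 g_A(y)^2\big]$ for fixed $x,y$. First I would handle the case of an \emph{unbalanced} die, i.e.\ a die with $n$ faces drawn i.i.d.\ uniform in $[-\sqrt3,\sqrt3]$ (equivalently, after the rescaling discussed in Section~\ref{sec:covariance-bound}, in whatever interval is convenient) with \emph{no} conditioning on the face-sum. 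In that setting $g_A(x) = \sum_{i=1}^n\big(\1[a_i\le x] - F(x)\big)$ is a sum of $n$ i.i.d.\ centered bounded random variables, so $g_A(x)$ has variance $n\,F(x)(1-F(x)) = \cO(n)$ and, by standard moment bounds for sums of i.i.d.\ bounded variables (or Rosenthal's inequality), $\EE[g_A(x)^4] = \cO(n^2)$ uniformly in $x$. By Cauchy--Schwarz, $\EE\big[g_A(x)^2 g_A(y)^2\big]\le \sqrt{\EE[g_A(x)^4]\,\EE[g_A(y)^4]} = \cO(n^2)$ uniformly in $x,y$, and integrating over $(x,y)\in[0,1]^2$ gives $\EE\,\Var_A^2 = \cO(n^2)$ in the unbalanced model.

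The remaining and more delicate step is to transfer this bound to the \emph{balanced} model, where we condition on $\sum_i a_i$ equalling its mean. Here I would follow the strategy the paper itself flags for the moment calculations: express the conditional expectation via~\eqref{eq:44} as a ratio, with numerator $\EE\big[g_A(x)^2 g_A(y)^2\,\1(\text{last face in range})\big]$ over the unconditioned measure and denominator $\Pr[\text{last face in range}] = \Theta(1/n)$ (or more precisely, condition on $\sum_{i=1}^{n} a_i = n/2$ directly and use a local CLT). Concretely, letting $S = \sum_{i=1}^n a_i$, I would write $\EE_A[g_A(x)^2 g_A(y)^2 \mid S = \EE S]$ and compare it to the unconditioned $\EE_A[g_A(x)^2 g_A(y)^2]$ using a local-limit estimate for the joint density of $(g_A(x), g_A(y), S)$: since $(\1[a_i\le x], \1[a_i\le y], a_i)$ is an i.i.d.\ sequence of bounded vectors with a nondegenerate covariance structure for $x\ne y$, a local CLT (of the type established in Section~\ref{sec:clt-proof}, or a self-contained one) shows that conditioning on $S = \EE S$ multiplies the relevant density by a factor that is $\cO(1)$ — intuitively, the conditioning event has probability $\Theta(1/\sqrt n)$ of the total, but the contribution to $\EE[g_A(x)^2 g_A(y)^2]$ coming from the $S \approx \EE S$ slice is a $\Theta(1/\sqrt n)$ fraction of the whole, so the ratio stays bounded. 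One has to be slightly careful about the region where $x$ or $y$ is very close to $0$ or $1$ (so that $F(x)(1-F(x))$ is tiny and the local CLT degenerates), but there $g_A(x)^2$ is itself small — of order $nF(x)(1-F(x)) + \cO(1)$ in expectation — so that region contributes only $\cO(n^2)$ to the integral anyway, and can be handled by the crude bound $|g_A(x)|\le n$ together with $\Pr[g_A(x)\ne 0]\le nF(x)$.

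The main obstacle I anticipate is the uniformity of the local-CLT transfer argument over all pairs $(x,y)\in[0,1]^2$, including the near-diagonal and near-boundary regimes where the covariance matrix of $(\1[a_i\le x],\1[a_i\le y],a_i)$ becomes nearly singular; making the constant in the $\cO(n^2)$ genuinely independent of $x,y$ requires either a careful case split (bulk vs.\ boundary vs.\ near-diagonal) or a robust enough version of the local limit theorem. Everything else — the i.i.d.\ moment bounds, Cauchy--Schwarz, and the final integration over the unit square — is routine. An alternative that sidesteps the local CLT entirely would be to bound $\Var_A^2$ deterministically by something like $\big(\sup_x g_A(x)^2\big)^2\le \big(\sup_x |g_A(x)|\big)^4$ and invoke a maximal/DKW-type inequality for $\sup_x|g_A(x)|$ in the balanced model, but controlling the supremum after conditioning is itself nontrivial, so I would only fall back on this if the density-comparison approach proves too cumbersome.
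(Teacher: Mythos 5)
Your plan splits into two parts: (i) a clean bound $\EE[g_A(x)^4]=\cO(n^2)$ for the \emph{unconditioned} iid model, using standard moment inequalities and Cauchy--Schwarz; (ii) a transfer of this bound to the balanced model via a density-comparison/local-CLT argument. Part (i) is correct and elementary. Part (ii) is where the proposal remains a heuristic rather than a proof, and this is not a cosmetic issue.

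The claim that ``conditioning on $S=\EE S$ multiplies the relevant density by a factor that is $\cO(1)$'' is the crux, and as stated it is not a sufficient substitute for an actual estimate. What is true (in the Gaussian analogue) is that the ratio of conditional to unconditional density of $g_A(x)$ is $\le (1-\rho^2)^{-1/2}$ at the center, and then \emph{decays} in the tail; one does have $|\rho|=\tfrac{1}{2}\sqrt{3-x^2}\le\tfrac{\sqrt3}{2}$, uniformly bounded away from $1$, so the constant is fine. But for the actual, non-Gaussian pair $(g_A(x),S)$ — which is a mixed lattice/continuous pair — making this ratio bound hold \emph{pointwise, uniformly in $x$, and out to the regime $|t|\sim n$ relevant for a fourth moment} is precisely the kind of local-limit statement that one cannot simply wave at. If instead one uses the cruder route you flag as a fallback — bounding the conditional tail by $\cO(\sqrt n)$ times the unconditional tail, as in the proof of Lemma~\ref{lem:supremum-norm} — the extra $\sqrt n$ costs you: $\EE[\Var_A^2\mid\text{balanced}]$ comes out $\cO(n^2\log^2 n)$ or $\cO(n^{5/2})$ depending on the bookkeeping, which then fails in the downstream Markov step of Lemma~\ref{lem:var-upper-bound} (for fixed $K$ the bound no longer tends to $0$). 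In other words, the lemma genuinely needs $\cO(n^2)$, not $\cO(n^2\,\text{polylog})$, and the fallback you mention does not deliver it without additional work. (There is also a minor slip: ``$\Pr[g_A(x)\neq 0]\le nF(x)$'' should refer to $f_A(x)$; $g_A(x)$ is almost never exactly zero.)

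The paper avoids all of this by expanding $\Var_A^2$ as an explicit polynomial in $n$ whose coefficients are conditional expectations of bounded functions of at most four faces, and then evaluating those conditional expectations \emph{exactly to the needed order} via the precise correction factors $p_{n-k}(x)=1+k/2n-x^2/2n+\cO(n^{-2})$ coming from Petrov's local CLT for densities (Lemma~\ref{lem:intlemma}, Theorem~\ref{thm:Pet75}); the $n^4$ and $n^3$ coefficients are then verified to cancel, leaving $\cO(n^2)$. This is more computational than your sketch, but it is fully rigorous, it produces the exact coefficient rather than just an upper bound, and — importantly — the same machinery is reused to prove the \emph{lower} bound $\EE\,\CVABcond^2=\Omega(n^2)$ in Lemma~\ref{lem:cv-2nd-moment}, which a density-ratio upper-bound argument cannot give. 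If you wanted to pursue your route, the honest version would be to expand $\EE[g_A(x)^4\mid S=0]$ into the $\cO(n^4)$ tensor sum of terms $\EE[Z_{i}Z_{j}Z_{k}Z_{l}\mid S=0]$ with $Z_i=\mathbbm{1}[a_i\le x]-F(x)$, and control each via $p_{n-k}$; but at that point you have reproduced the paper's method, merely in a different coordinate system.
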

\begin{lemma}\label{lem:cv-2nd-moment}
$\EE \CVABcond^2=\Omega(n^2)$.
\end{lemma}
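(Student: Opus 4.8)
The plan is to compute $\EE\CVABcond^2$ directly by expanding the definition $\CVABcond = CV_{AB} - CV_A CV_B/\Var H$ and exploiting the product structure of the expectation over two independent dice $A$ and $B$ (independent once we forget the balancing, which we handle afterwards). Recall that in the $[-\sqrt3,\sqrt3]$ normalization we have $\Var H = 1$, so $\CVABcond = CV_{AB} - CV_A CV_B$. First I would write everything in terms of the functions $g_A, g_B$: for $V$ uniform on $[-\sqrt3,\sqrt3]$ and independent of $A,B$, we have $CV_{AB} = \EE_V[g_A(V)g_B(V)] = \int g_A g_B\,\mathrm dV$ (using $\EE U_A=\EE U_B=0$ for balanced dice), and similarly $CV_A = \int g_A(x)\,x\,\mathrm dx$, $CV_B = \int g_B(x)\,x\,\mathrm dx$. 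Squaring, $\CVABcond^2$ becomes a sum of terms each of which is an integral over two or four auxiliary variables of products of $g_A$-values and $g_B$-values. Since $A$ and $B$ are independent, $\EE[\cdots]$ factors into a product of an $A$-expectation and a $B$-expectation of the form $\EE[g_A(x)g_A(y)]$ or $\EE[g_A(x)g_A(y)g_A(z)g_A(w)]$.

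The second key step is to understand these correlation functions of $g_A$. For a \emph{non-balanced} die with iid faces, $f_A(x)$ is a sum of $n$ iid indicators, so $\EE[g_A(x)g_A(y)] = n\big(F(x\wedge y) - F(x)F(y)\big)$, and the fourth-order correlation is likewise an explicit low-degree expression in $n$ coming from the multinomial structure (it is $\cO(n^2)$, with the leading $n^2$ term being the product of two pair-correlations). Plugging these into the expanded $\CVABcond^2$ and integrating, the $CV_{AB}^2$ term alone contributes $\big(\int\int \EE[g_A(x)g_A(y)]\,\EE[g_B(x)g_B(y)]\big)$-type quantities of order $n^2$; one checks by an explicit (but routine) integral that the leading coefficient is a strictly positive constant, and that the cross terms $-2\,\EE[CV_{AB}CV_ACV_B]$ and $+\EE[CV_A^2CV_B^2]$ are either also $\Theta(n^2)$ or lower order, and in any case do not cancel the positive contribution — this is exactly the point of subtracting off the $V$-mediated part, so the surviving $\Theta(n^2)$ term is genuinely the ``conditional'' covariance. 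This gives $\EE\CVABcond^2 = \Omega(n^2)$ in the non-balanced model; the matching $\cO(n^2)$ upper bound (needed so that $\Omega$ is meaningful and for Lemma~\ref{lem:var-2nd-moment}'s analogue) follows from the same expansion and the $\cO(n^2)$ bound on the fourth correlation.

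The third step — and the one I expect to be the main obstacle — is transferring the estimate from the non-balanced model to the balanced model, i.e.\ accounting for the conditioning on face-sums equal to zero. Following the strategy announced in the proof outline (and used in~\cite{HMRZ20}), I would express the balanced expectation via~\eqref{eq:44} as a ratio: $\EE_{\mathrm{bal}}[\CVABcond^2] = \EE[\CVABcond^2\cdot\1(0\le a_n\le 1)\cdot\1(0\le b_n\le1)]/\Pr[\cdots]^2$ in the original coordinates, or more cleanly compare $\EE_{\mathrm{bal}}$ and $\EE$ of the same functional. The denominator is $\Theta(1/n)$ per die by a local CLT for the face-sum. For the numerator, since $\CVABcond$ is a function of the dice, I would condition on the value of the face-sums and invoke a \emph{local central limit theorem} (the same kind of precise LCLT developed for Theorem~\ref{thm:conditional-clt}, or a simpler scalar version) to show that the joint density of (face-sum of $A$, face-sum of $B$, and the relevant degree-$\le 4$ polynomial statistics that build $\CVABcond^2$) is, near the balanced point, within $1+o(1)$ of what the Gaussian approximation predicts. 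Concretely one needs that conditioning on the face-sum shifts $\EE[\CVABcond^2]$ only by a $(1+o(1))$ factor, which amounts to controlling the covariance between $\CVABcond^2$ and the (centered) face-sums — these covariances must be shown to be lower-order relative to the $\Theta(n^2)$ main term. The delicate part is that $\CVABcond^2$ is a quartic-in-$g$ object, so its interaction with the conditioning involves correlation functions up to order five or six; bounding these uniformly (via the explicit multinomial formulas, or via the characteristic-function machinery already set up) is where the real work lies. Once this transfer lemma is in place, the $\Omega(n^2)$ lower bound survives the conditioning and Lemma~\ref{lem:cv-2nd-moment} follows; combined with the fourth-moment bound it feeds the Paley–Zygmund step that yields Lemma~\ref{lem:cv-lower-bound}.
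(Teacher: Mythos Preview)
Your high-level strategy---expand the moment, reduce to a few correlation terms, and repair for the balanced conditioning via a local CLT---is broadly the paper's strategy, but the execution differs in two respects and your transfer step has a real gap.

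First, the paper avoids computing the cross term $\EE[CV_{AB}\,CV_A CV_B]$ altogether by invoking the reverse triangle inequality in $L^2$: $\EE\big[(CV_{AB}-CV_ACV_B)^2\big]\ge\big(\sqrt{\EE CV_{AB}^2}-\EE CV_A^2\big)^2$. It then computes, directly in the \emph{balanced} model, $\EE CV_{AB}^2=\tfrac{11}{12600}n^2+\cO(n)$ and $\EE CV_A^2=\tfrac{n}{60}+\cO(1)$, which is enough. These are obtained by writing $CV_{AB}$ and $CV_A$ explicitly as polynomials in the face values, squaring, grouping monomials by index pattern, and evaluating each conditional expectation (of functions of one to four faces) using correction factors $p_{n-k}(x)$ derived from Petrov's local CLT.

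Second, and more seriously, your claim that conditioning shifts $\EE\CVABcond^2$ ``only by a $(1+o(1))$ factor'' hides the main difficulty. When you expand $CV_{AB}^2$ as a polynomial in the faces, the leading terms carry coefficients of order $n^4$ and $n^3$; these must cancel exactly for the answer to be $\Theta(n^2)$. In the unconditioned model the cancellation is one computation, but in the balanced model the few-face expectations change at the $1/n$ level---for instance $\EE[a_1^2]=1-\tfrac{2}{5n}+\cO(n^{-2})$ and $\EE[\max\{a_1,b_1\}\max\{a_2,b_2\}]=\tfrac{1}{3}\big(1-\tfrac{19}{10n}+\cO(n^{-2})\big)$---and these $\cO(1/n)$ shifts feed back into the $n^4$ and $n^3$ coefficients and affect the $n^2$ answer. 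One therefore needs the Petrov expansion of $p_{n-1}$ and $p_{n-2}$ out to $\cO(n^{-3})$ error (the paper's Lemma~\ref{lem:intlemma}), not merely a $(1+o(1))$ comparison. Your proposed covariance-with-face-sum heuristic does not by itself furnish this precision; as stated, your third step would not close.
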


These lemmas are proved through a careful calculation.
If $A$ and $B$ are random (not balanced) dice,
the expressions $\EE\Var_A^2$ and $\EE \CVABcond^2$ are polynomials in $n$ that
we can explicitly compute. Adding conditioning on 
balanced dice changes the formulas, but if we use a 
precise version of the local central limit theorem for 
densities  from~\cite{Pet75} we can 
sufficiently control the error terms and show
that qualitatively the moment bounds remain
the same. The technique we use is very similar
to the one employed in~\cite{HMRZ20}
(in particular in the proof of Proposition~1
therein), 
but we need to use more error terms than them.
On the other hand, some expressions simplify since we consider
a special case of the uniform distribution.
The proofs of both lemmas are left to Section~\ref{sec:moments}.

Given Lemmas~\ref{lem:var-2nd-moment} 
and~\ref{lem:cv-2nd-moment} it is not difficult
to establish Lemmas~\ref{lem:var-upper-bound}
and~\ref{lem:cv-lower-bound}:

\begin{proof}[Proof of Lemma~\ref{lem:var-upper-bound}]
Fix $\eps>0$. By Lemma~\ref{lem:var-2nd-moment}
we have $\EE \Var_A^2=\cO(n^2)$.
But now by Markov
\begin{align*}
    \Pr\left[\VarAcond>Kn\right]
    \le\Pr\left[\Var_A>Kn\right]
    =\Pr\left[\Var^2_A>K^2n^2\right]
    =\cO\left(\frac{1}{K^2}\right)<\eps\;,
\end{align*}
where we can certainly satisfy the last inequality
by choosing $K$ large enough.
\end{proof}

\begin{proof}[Proof of Lemma~\ref{lem:cv-lower-bound}]
By Lemma~\ref{lem:cv-2nd-moment} we have
$\EE\CVABcond^2=\Omega(n^2)$. On the other hand,
applying Lemma~\ref{lem:var-2nd-moment}
and Cauchy-Schwarz multiple times, we also see that
\begin{align*}
    \EE CV_{AB}^4
    &\le
    \EE\left[\Var_A^2\Var_B^2\right]
    =\EEs^2\Var_A^2=\cO(n^4)\;,\\
    \EE\,(CV_ACV_B)^4
    &=\EEs^2 CV_A^4
    \le \EEs^2\Var_A^2=\cO(n^4)\;,
\end{align*}
and therefore also
\begin{align*}
    \EE\CVABcond^4
    =\EE\,(CV_{AB}-CV_ACV_B)^4
    \le 16\left(\EE CV_{AB}^4+\EE\, (CV_A CV_B)^4\right)
    =\cO(n^4)\;.
\end{align*}
Recalling the Paley-Zygmund inequality
$\Pr[Z>c]\ge(1-c/\EE Z)^2\frac{(\EE Z)^2}{\EE Z^2}$
for $Z\ge 0$ and $0\le c\le \EE Z$, we apply it
to $Z=\CVABcond^2$ and get
\begin{align*}
    \Pr\big[\left|\CVABcond\right|>\eps n\big]
    =\Pr\big[\CVABcond^2>\eps^2n^2\big]
    \ge \left(1-\frac{\eps^2n^2}{\EE\CVABcond^2}\right)^2
    \cdot\frac{\EEs^2\CVABcond^2}{\EE\CVABcond^4}
    >\eps>0\;,
\end{align*}
where the last inequality is true if $\eps$ is chosen to be
a small enough absolute constant.
\end{proof}

\subsection{Proof of Theorem~\ref{thm:two-fixed-equivalent}}

We present the remaining steps in the
proof of Theorem~\ref{thm:two-fixed-equivalent}.
Recall the notation we defined in~\eqref{eq:39}.
From Lemmas~\ref{lem:var-upper-bound} and~\ref{lem:cv-lower-bound}
we know that for some constants $\eps, K>0$, with probability
at least $\eps$, random choice of balanced $A$ and $B$ gives
the dice such that
$\VarAcond, \VarBcond\le Kn$ and $|\CVABcond|>\eps n$ all hold.
In fact, this means that 
at least one of $\CVABcond>\eps n$ or $\CVABcond<-\eps n$ holds
with probability at least $\eps/2$.
Let us consider the former case $\CVABcond>\eps n$, the latter being 
symmetric.

Note that by Claim~\ref{cl:conditional-variance}
random variables $G_A^{*n}$ and $G_B^{*n}$ conditioned
on $H^{*n}=n/2$ are joint centered Gaussians with variances
respectively $n\VarAcond$ and $n\VarBcond$ and
covariance $n\CVABcond$. Let
\begin{align*}
    \Gamma_\rho(a,b):=\Pr\big[\cG_1< a\text{ and }\cG_2< b\big]
\end{align*}
where $\cG_1$ and $\cG_2$ are joint, centered, unit variance Gaussians
with covariance $\rho$. We will need a couple of standard
properties of $\Gamma_\rho$ summarized in the following:
\begin{claim}\label{cl:gaussian-gamma}
Using the notation above, we have
\begin{align*}
    \Gamma_\rho(0,0)=\Pr[\cG_1,\cG_2<0]=\Pr[\cG_1,\cG_2>0]\;.
\end{align*}
Furthermore, for $-1\le\rho\le 1$ the value of
$\Gamma_\rho(0,0)$ is a strictly increasing
continuous function of $\rho$, such
that $\Gamma_{-1}(0,0)=0$, $\Gamma_0(0,0)=1/4$ and
$\Gamma_1(0,0)=1/2$.
\end{claim}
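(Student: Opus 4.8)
The plan is to reduce all three assertions to one explicit formula for the bivariate Gaussian orthant probability. Write $\rho=\cos\theta$ with $\theta=\arccos\rho\in[0,\pi]$, and represent the pair as $(\cG_1,\cG_2)=(Z_1,\,Z_1\cos\theta+Z_2\sin\theta)$, where $Z_1,Z_2$ are independent standard Gaussians; checking means and covariances shows this has the prescribed joint law, and the representation (with the obvious reading) stays valid at the endpoints $\theta\in\{0,\pi\}$, i.e.\ $\rho=\pm1$, where $\cG_2=\pm\cG_1$. With this in hand the first assertion is immediate: the central symmetry $(x,y)\mapsto(-x,-y)$ preserves the law of $(\cG_1,\cG_2)$, so $\Pr[\cG_1<0,\cG_2<0]=\Pr[-\cG_1<0,-\cG_2<0]=\Pr[\cG_1>0,\cG_2>0]$, and both equal $\Gamma_\rho(0,0)$.

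The second step is to evaluate $\Gamma_\rho(0,0)$ geometrically. Passing to polar coordinates $(Z_1,Z_2)=R(\cos\Psi,\sin\Psi)$, the angle $\Psi$ is uniform on $[0,2\pi)$ and $R>0$ almost surely, so both events $\{\cG_1<0\}=\{\cos\Psi<0\}$ and $\{\cG_2<0\}=\{\cos(\Psi-\theta)<0\}$ depend on $\Psi$ alone. The former is the semicircular arc $\Psi\in(\pi/2,3\pi/2)$ and the latter is its rotation by $\theta$, namely $\Psi\in(\pi/2+\theta,3\pi/2+\theta)\pmod{2\pi}$; a short case check shows that for $\theta\in[0,\pi]$ these two arcs of length $\pi$ intersect in a single arc of length $\pi-\theta$. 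Hence
\begin{align*}
\Gamma_\rho(0,0)=\frac{\pi-\theta}{2\pi}=\frac{1}{2}-\frac{\arccos\rho}{2\pi}=\frac{1}{4}+\frac{\arcsin\rho}{2\pi}\;,
\end{align*}
which is Sheppard's classical formula.

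Everything else then follows by inspection: $\arccos$ is a continuous strictly decreasing bijection of $[-1,1]$ onto $[0,\pi]$, so $\Gamma_\rho(0,0)$ is a continuous strictly increasing function of $\rho$ on $[-1,1]$, with $\Gamma_{-1}(0,0)=\tfrac12-\tfrac{\pi}{2\pi}=0$, $\Gamma_0(0,0)=\tfrac12-\tfrac14=\tfrac14$, and $\Gamma_1(0,0)=\tfrac12$. There is essentially no genuine obstacle here; the only point deserving a moment's attention is the arc-intersection count for all $\theta\in[0,\pi]$ (the endpoints $\theta=0,\pi$ being exactly the degenerate cases $\cG_2=\pm\cG_1$, which one can also read off directly), and this is a one-line verification rather than a real difficulty.
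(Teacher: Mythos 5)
The paper states Claim~\ref{cl:gaussian-gamma} without a proof, treating it as a collection of standard facts about bivariate Gaussian orthant probabilities; so there is no "paper proof" to compare against. Your derivation is correct and is essentially the canonical one: the Cholesky-type representation $(\cG_1,\cG_2)=(Z_1,\,Z_1\cos\theta+Z_2\sin\theta)$ has the right second moments, the symmetry $(x,y)\mapsto(-x,-y)$ gives the equality of the two orthant probabilities, and passing to polar coordinates turns each event into a half-circle in $\Psi$, whose intersection for $\theta\in[0,\pi]$ is an arc of length $\pi-\theta$. This yields Sheppard's formula $\Gamma_\rho(0,0)=\frac14+\frac{\arcsin\rho}{2\pi}$, from which continuity, strict monotonicity, and all three boundary values follow immediately. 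The arc-intersection count is the one place that deserves a moment's care, and your brief case split (handling $\theta\le\pi/2$ and $\theta>\pi/2$ separately and noting the wrap-around piece is empty) is exactly the right verification. In short: this is a complete and correct proof of a statement the paper only asserts, using the standard geometric argument.
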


By the discussion above, letting
\begin{align*}
    \rho:=\frac{n\CVABcond}{\sqrt{n^2\VarAcond\VarBcond}}
    >\frac{\eps}{K}>0\;,
\end{align*}
we have
\begin{align*}
    \Pr\left[G_A^{*n},G_B^{*n}>0\mid H^{*n}=\frac{n}{2}\right]
    =\Gamma_\rho(0,0)>\frac{1}{4}+\delta
\end{align*}
for some absolute constant $\delta>0$. Furthermore, by
Cauchy-Schwarz,
\begin{align*}
    \VarAcond\ge\frac{\CVABcond^2}{\VarBcond}
    \ge\frac{\eps^2}{K}\cdot n
\end{align*}
and similarly $\VarBcond\ge\eps^2n/K$. Consequently,
except with probability $1/n$, the conclusion
of Theorem~\ref{thm:conditional-clt} holds. Overall,
with probability at least $\eps/2-1/n>\eps/4$
we have
\begin{align*}
\Pr\left[U_A^{*n},U_B^{*n}>0\mcond V^{*n}=\frac{n}{2}\right]
&> \frac{1}{4}+\delta-\Otilde\left(\frac{1}{\sqrt{n}}\right)
>\frac{1}{4}+\delta/2\;.\pushQED{\qed}\qedhere\popQED
\end{align*}

\section{CLT for two dice}\label{sec:clt-proof}

In this section we prove the results concerned with our central limit
theorem for dice, that is Theorems~\ref{thm:one-fixed-equivalent}
and~\ref{thm:conditional-clt}, Claim~\ref{cl:conditional-variance} and Lemmas~\ref{lem:fhat-bounded} and~\ref{lem:characteristic-close}.
We remind that in this section we assume that the dice have
faces which are iid uniform in $[0,n]$, a balanced die has face-sum
$n^2/2$ and $g_A(x)=f_A(x)-x$.
The method for the most part follows the Polymath draft~\cite{Pol17}
with adaptations to the continuous setting.

The proof takes some space and we separate it into several modules.
We start with some basic machinery of 
characteristic functions and Gaussians,
as well as some useful tools and concentration bounds.
Then, we give concentration bounds on $U_A$ and $U_B$
and related Lipschitz bounds on the characteristic
function $\fhat$. Subsequently, we address
the decay of $\fhat$ in different regimes, e.g., small
$|\alpha|$ and $|\beta|$ and/or large $|\gamma|$.
Finally, we put everything together in the proofs of the aforementioned
lemmas and theorems.

\subsection{Characteristic functions and joint Gaussians}
\label{sec:facts-cf}

In this section we gather some basic claims
about characteristic functions and Gaussian random
variables.
Before we proceed, let us make a more systematic exposition of
our notation. Recall from~\eqref{eq:41}
that the characteristic function of the 
convoluted Gaussians $(G_A^{*n}, G_B^{*n}, H^{*n}-n^2/2)$ is denoted by $\ghat(\alpha,\beta,\gamma)$.
Accordingly, we let $g(x,y,z)$ be the joint density
function of these random variables. Furthermore,
we let $g(z)$ to be the marginal density of
$H^{*n}-n^2/2$.
Analogously, recall that $\uhat(\alpha,\beta,\gamma)$
is the characteristic function of convoluted random variables $(U_A^{*n},U_B^{*n},V^{*n}-n^2/2)$
(as we will discuss shortly, since the first two random variables lie on a lattice, the relevant domain
is $|\alpha|,|\beta|\le 1/2$). Only the third of these
random variables is fully continuous, so there is
no joint density, but we denote by $u(z)$ the marginal
density of $V^{*n}-n^2/2$.

First, we state a claim that follows from the definition
of $\ghat$ and the formula for multivariate Gaussian
characteristic function
$\EE\exp\left(i \Vec{t}\cdot\Vec{G}\right)=
\exp\left(-\frac{1}{2}\Vec{t}^T\Sigma\Vec{t}\right)$,
where $\Sigma$ is the covariance matrix:
\begin{claim}\label{cl:ghat-formula}
$\ghat(\alpha,\beta,\gamma)=\exp(-nQ)$, where
\begin{align}\label{eq:38}
Q:=Q(\alpha,\beta,\gamma)=2\pi^2\big(
\alpha^2\Var_A+\beta^2\Var_B+\gamma^2\Var V+
2\alpha\beta CV_{AB}+2\alpha\gamma CV_A
+2\beta\gamma CV_B\big)\;.
\end{align}
\end{claim}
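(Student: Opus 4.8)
The plan is to unwind the definition of $\ghat$ from~\eqref{eq:41} and reduce everything to a single application of the multivariate Gaussian characteristic function formula quoted just before the claim. First I would record the covariance structure of the relevant Gaussian vector. By definition $(G_A,G_B,H)$ has the same first and second moments as $(U_A,U_B,V)$, so by Claim~\ref{cl:simple-g} we have $\EE G_A=\EE G_B=0$ and $\EE H=\EE V=n/2$ in the present $[0,n]$ setting; hence the centered vector $(G_A,G_B,H-n/2)$ is Gaussian with covariance matrix
\[
  \Sigma:=\begin{pmatrix}
    \Var_A & CV_{AB} & CV_A\\
    CV_{AB} & \Var_B & CV_B\\
    CV_A & CV_B & \Var V
  \end{pmatrix},
\]
where I use $\Var[H-n/2]=\Var H=\Var V$ and $\Cov[G_A,H-n/2]=\Cov[U_A,V]=CV_A$, and similarly for the remaining entries.

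Next I would observe that $(G_A^{*n},G_B^{*n},H^{*n}-n^2/2)$ is, by construction, a sum of $n$ independent copies of $(G_A,G_B,H-n/2)$; since an independent sum of Gaussian vectors is Gaussian with covariances added, this convolved triple is a centered Gaussian vector with covariance matrix $n\Sigma$. Writing the exponent in~\eqref{eq:41} as the pairing of $t:=2\pi(\alpha,\beta,\gamma)$ with $(G_A^{*n},G_B^{*n},H^{*n}-n^2/2)$ — so that $e(\cdot)=\exp(2\pi i\,\cdot)$ matches the form $\exp(i\,t\cdot G)$ — the Gaussian characteristic function formula yields
\[
  \ghat(\alpha,\beta,\gamma)=\exp\!\Big(-\tfrac12\,(2\pi)^2 n\,(\alpha,\beta,\gamma)\,\Sigma\,(\alpha,\beta,\gamma)^{T}\Big).
\]
Expanding the quadratic form $(\alpha,\beta,\gamma)\,\Sigma\,(\alpha,\beta,\gamma)^{T}=\alpha^2\Var_A+\beta^2\Var_B+\gamma^2\Var V+2\alpha\beta\,CV_{AB}+2\alpha\gamma\,CV_A+2\beta\gamma\,CV_B$ and using $(2\pi)^2/2=2\pi^2$ gives exactly $-nQ$ with $Q$ as in~\eqref{eq:38}, which proves the claim.

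There is no genuine mathematical obstacle here; the only care needed is bookkeeping. Specifically, one must subtract the correct mean $n^2/2$ from $H^{*n}$ so that the vector is actually centered (otherwise an extraneous linear phase of the form $e(\gamma\cdot\EE H^{*n})$ survives), and one must track the factor $2\pi$ coming from the convention $e(x)=\exp(2\pi i x)$ so that it reconciles with the coefficient $2\pi^2$ in the definition of $Q$. Both the additivity of covariance under independent sums and the fact that such sums of Gaussians are Gaussian are standard, so once the notation is set up the entire claim is a one-line computation.
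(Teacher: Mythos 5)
Your proof is correct and matches the paper's intended approach: the paper simply states that the claim follows from the definition of $\ghat$ and the quoted multivariate Gaussian characteristic function formula, and you have filled in exactly that computation (identifying the covariance matrix $\Sigma$ of the centered triple, scaling by $n$ for the convolution, setting $t=2\pi(\alpha,\beta,\gamma)$, and expanding the quadratic form).
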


Turning to the other random variables,
the distribution of $(U_A^{*n},U_B^{*n},V^{*n})$ is a particular
kind of a discrete-continuous mix. More concretely, recall
from Claim~\ref{cl:simple-g} that $V^{*n}$
is continuous, and $U_A^{*n}, U_B^{*n}$ are both supported
on $\mathbb{Z}-V^{*n}$. 
In particular, since we
assumed $n$ is odd, if $V^{*n}=n^2/2$, then
$U^{*n}_A,U^{*n}_B\in\mathbb{Z}+1/2$.
Therefore, their inverse Fourier transform
can be checked to
take a particular form that we state below:
\begin{claim}[Inverse Fourier transform]\label{cl:inverse-fourier}
Let $u(z)$ denote the density function of $V^{*n}-n^2/2$.
For fixed dice $A$ and $B$, for every $a,b\in\mathbb{Z}+1/2$
we have the formula
\begin{align}
    \Pr\left[U_A^{*{n}}=a,U_B^{*n}=b\;\middle\vert\;V^{*n}=\frac{n^2}{2}
    \right]\cdot u(0)&=
    \int_{-1/2}^{1/2}\int_{-1/2}^{1/2}\int_{-\infty}^\infty
    \fhat(\alpha,\beta,\gamma)^n
    e(-\alpha a-\beta b)\,\mathrm{d}\gamma\mathrm{d}\beta
    \mathrm{d}\alpha
    \nonumber\\
    &=
    \int_{\mathbb{R}^3}\hhat(\alpha,\beta,\gamma)
    e(-\alpha a-\beta b)\,\mathrm{d}\alpha\beta\gamma
    \;.
    \label{eq:60}
\end{align}
At the same time, if $g(x,y,z)$ denotes the joint density
of $(G_A^{*n},G_B^{*n},H^{*n}-n^2/2)$, then we have
\begin{align}\label{eq:61}
    g(a,b,0)=\int_{\mathbb{R}^3}
    \ghat(\alpha,\beta,\gamma)
    e(-\alpha a-\beta b)\,\mathrm{d}\alpha\beta\gamma
    \;.
\end{align}
\end{claim}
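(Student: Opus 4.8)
The statement is essentially the Fourier inversion formula adapted to the specific discrete-continuous structure of our random variables, so the plan is to derive it carefully rather than to invoke anything deep.

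The plan is to start from the observation, recalled in the text right before the claim, that for a fixed pair of dice $A,B$ the pair $(U_A^{*n},U_B^{*n})$ is supported on the lattice $(\mathbb{Z}-V^{*n})^2$, and in particular, conditioned on $V^{*n}=n^2/2$ with $n$ odd, on $(\mathbb{Z}+1/2)^2$. First I would write the joint law of $(U_A^{*n},U_B^{*n},V^{*n}-n^2/2)$ as a mixture: conditionally on $V^{*n}$, the first two coordinates are a discrete measure on the shifted lattice, and $V^{*n}-n^2/2$ itself has a density which I will call $u(z)$ (this exists because $V^{*n}$ is a sum of $n$ continuous uniforms). So the characteristic function $\fhat(\alpha,\beta,\gamma)^n = \EE\, e(\alpha U_A^{*n}+\beta U_B^{*n}+\gamma(V^{*n}-n^2/2))$, being an expectation over a lattice-times-continuous variable, is $1$-periodic in both $\alpha$ and $\beta$; this is why the natural inversion domain for those two variables is $[-1/2,1/2]$ while $\gamma$ ranges over all of $\mathbb{R}$.

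Next I would perform the inversion in two stages. Integrating $\fhat(\alpha,\beta,\gamma)^n\, e(-\alpha a - \beta b)$ over $\alpha,\beta\in[-1/2,1/2]$ picks out, by orthogonality of the characters $e(\alpha k)$ on the unit interval, exactly the "atom at $(a,b)$" part: it yields $\EE\big[\1(U_A^{*n}=a,\,U_B^{*n}=b)\, e(\gamma(V^{*n}-n^2/2))\big]$ as a function of $\gamma$. Then integrating this over $\gamma\in\mathbb{R}$ is the standard continuous Fourier inversion applied to the (finite, absolutely integrable — one should note $\fhat^n$ decays, using Lemma~\ref{lem:fhat-bounded}, so Fubini and the inversion are justified) density of $V^{*n}-n^2/2$ restricted to the event $\{U_A^{*n}=a, U_B^{*n}=b\}$, evaluated at the point $0$. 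This produces the "sub-density" $p_{a,b}(0)$, i.e.\ the density at $0$ of $V^{*n}-n^2/2$ weighted by that indicator, which by the mixture decomposition equals $\Pr[U_A^{*n}=a, U_B^{*n}=b \mid V^{*n}=n^2/2]\cdot u(0)$. Rearranging gives the first displayed equality. The second equality, rewriting the iterated integral over $[-1/2,1/2]^2\times\mathbb{R}$ as $\int_{\mathbb{R}^3}\hhat\, e(-\alpha a-\beta b)$, is immediate from the definition of $\hhat$ (it equals $\fhat^n$ on the box $|\alpha|,|\beta|\le 1/2$ and $0$ outside). The final formula~\eqref{eq:61} for the Gaussian density $g(a,b,0)$ is just the ordinary three-dimensional Fourier inversion theorem applied to the smooth, rapidly decaying density $g$ of $(G_A^{*n},G_B^{*n},H^{*n}-n^2/2)$, with no lattice subtleties.

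The main thing to get right — the only place where "care is required" rather than routine symbol-pushing — is the bookkeeping in the mixture decomposition and the exchange of the $\alpha,\beta$-integration with the expectation: one must confirm that the conditional atoms are well-defined (they are, since the lattice is determined by $V^{*n}$, which has a density, so disintegration is clean) and that integrating out $\gamma$ genuinely recovers the value of the relevant density at $0$ rather than some average. I expect no real obstacle here; the decay of $\fhat^n$ guaranteed by Lemma~\ref{lem:fhat-bounded} supplies the integrability needed to apply Fubini and to make each inversion step rigorous.
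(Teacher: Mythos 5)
The key difficulty in proving~\eqref{eq:60}, which the paper handles and your proposal does not, is that $U_A^{*n}$ and $U_B^{*n}$ are \emph{not} supported on a fixed lattice: they live on $\mathbb{Z}-V^{*n}$, a lattice that shifts with the continuous variable $V^{*n}$. Only after conditioning on $V^{*n}=n^2/2$ do they sit on $\mathbb{Z}+1/2$. This falsifies two of your claims. First, $\fhat^n$ is not $1$-periodic in $\alpha$ (one instead computes $\fhat^n(\alpha+1,\beta,\gamma)=e(-n^2/2)\fhat^n(\alpha,\beta,\gamma-1)$, a quasi-periodicity with a shift in $\gamma$). Second, and more importantly, the orthogonality step is wrong: since $U_A^{*n}-a$ is not an integer unconditionally, integrating over $\alpha,\beta\in[-1/2,1/2]$ does not produce $\EE[\1(U_A^{*n}=a,U_B^{*n}=b)\,e(\gamma(V^{*n}-n^2/2))]$; it produces a $\mathrm{sinc}(U_A^{*n}-a)\,\mathrm{sinc}(U_B^{*n}-b)$-weighted expectation, which is a genuinely different quantity. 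That your final answer could still come out right (because the subsequent $\gamma$-integration forces $V^{*n}$ to $n^2/2$ and thereby restores the lattice) is a delicate coincidence, not something your derivation establishes: the step where you pass from the incorrect intermediate formula to the final one has no justification.

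The paper resolves this by changing variables before applying orthogonality: it sets $W_A:=U_A^{*n}+V^{*n}-n^2/2$, $W_B:=U_B^{*n}+V^{*n}-n^2/2$, which live on the fixed lattice $\mathbb{Z}+1/2$ regardless of $V^{*n}$, rewrites $\uhat(\alpha,\beta,\gamma)=\uhat'(\alpha,\beta,\gamma-\alpha-\beta)$ in terms of the characteristic function of $(W_A,W_B,V^{*n}-n^2/2)$, shifts the $\gamma$-variable, and only then applies continuous inversion in $\gamma$ followed by discrete orthogonality in $\alpha,\beta$. One further, smaller issue: you invoke Lemma~\ref{lem:fhat-bounded} for integrability of $\fhat^n$, but that lemma only holds for \emph{typical} random dice (with probability $\ge 1-n^{-2}$), while Claim~\ref{cl:inverse-fourier} is asserted for all fixed dice $A,B$. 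The paper instead cites the deterministic pointwise bound $|\fhat(\alpha,\beta,\gamma)|\le 1/|\gamma-\alpha-\beta|$ from Lemma~\ref{lem:large-gamma}.
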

\begin{proof}
The formula in~\eqref{eq:61} is just standard Fourier inversion for an integrable
function $\ghat$. As for~\eqref{eq:60}, later on in Lemma~\ref{lem:large-gamma}
we will prove that
$|\fhat(\alpha,\beta,\gamma)|\le 1/|\gamma-\alpha-\beta|$, in particular
for $-1/2\le\alpha,\beta\le 1/2$ we have
$|\fhat(\alpha,\beta,\gamma)|\le\frac{2}{|\gamma|}$
whenever $|\gamma|\ge 2$.
Therefore, we have
$\left|\uhat(\alpha,\beta,\gamma)\right|\le\left(2/|\gamma|\right)^n$
and the characteristic function
$\uhat$ is integrable for $n>1$. Accordingly, the integral 
in~\eqref{eq:60} is well-defined and we are free to change the
order of integration and apply Fourier inversion in what follows.

The rest of the justification for~\eqref{eq:60} is a calculation that we include 
for completeness.
Since $U_A^{*n},U_B^{*n}\in\mathbb{Z}-V^{*n}$,
let $W_A:=U_A^{*n}+V^{*n}-n^2/2$ and
$W_B:=U_B^{*n}+V^{*n}-n^2/2$. Since we assumed
$n$ odd, that gives
$W_A,W_B\in\mathbb{Z}+1/2$ and
also
\begin{align*}
    \uhat(\alpha,\beta,\gamma)
    &=\EE e\big(\alpha U_A^{*n}+\beta U_B^{*n}
    +\gamma (V^{*n}-n^2/2)\big)\\
    &=\EE e\big(\alpha W_A+\beta W_B
    +(\gamma-\alpha-\beta)(V^{*n}-n^2/2)\big)
    =:\uhat'(\alpha,\beta,\gamma-\alpha-\beta)\;,
\end{align*}
where $\uhat'$ is the characteristic function
of random tuple $(W_A,W_B,V^{*n}-n^2/2)\in
(\mathbb{Z}+1/2)\times(\mathbb{Z}+1/2)\times\mathbb{R}$.
Let $w(x,y,z):=\Pr[W_A=x\text{ and } W_B=y\cond V^{*n}-n^2/2=z]\cdot u(z)$.
Now we calculate, starting from the right-hand side of~\eqref{eq:60},
for $a,b\in\mathbb{Z}+1/2$,
\begin{align*}
    &\int_{[-1/2,1/2]^2\times \mathbb{R}}\uhat(\alpha,\beta,\gamma)e(-\alpha a-\beta b)
    \,\mathrm{d}\alpha\beta\gamma
    =
    \int_{[-1/2,1/2]^2\times \mathbb{R}}\uhat'(\alpha,\beta,\gamma-\alpha-\beta)e(-\alpha a-\beta b)
    \,\mathrm{d}\alpha\beta\gamma\\
    &\qquad\qquad=
    \int_{[-1/2,1/2]^2\times \mathbb{R}}\sum_{x,y\in\mathbb{Z}+1/2}\int_{\mathbb{R}}
    w(x,y,z)
    e\big(-\alpha(a-x)-\beta(b-y)+(\gamma-\alpha-\beta)z\big)
    \,\mathrm{d}z\,\mathrm{d}\alpha\beta\gamma\\
    &\qquad\qquad=
    \sum_{x,y\in\mathbb{Z}+1/2}
    \int_{[-1/2,1/2]^2}
    e\big(-\alpha(a-x)-\beta(b-y)\big)
    \int_{\mathbb{R}}
    \int_{\mathbb{R}}
    w(x,y,z)e\big((\gamma-\alpha-\beta)z\big)
    \,\mathrm{d}\gamma\,\mathrm{d}z\,\mathrm{d}{\alpha\beta}\\
    &\qquad\qquad=
    \sum_{x,y\in\mathbb{Z}+1/2}
    \int_{[-1/2,1/2]^2}
    e\big(-\alpha(a-x)-\beta(b-y)\big)
    \int_{\mathbb{R}}
    \int_{\mathbb{R}}
    w(x,y,z)e(\gamma z)
    \,\mathrm{d}\gamma\,\mathrm{d}z\,\mathrm{d}{\alpha\beta}\;.
\end{align*}
For fixed $x,y$, by the Fourier inversion formula we have
\begin{align*}
    \int_{\mathbb{R}^2}w(x,y,z)e(\gamma z)
    \,\mathrm{d}\gamma z=w(x,y,0)\;.
\end{align*}
Substituting and continuing,
\begin{align*}
    \int_{[-1/2,1/2]^2\times\mathbb{R}}
    \uhat(\alpha,\beta,\gamma)e(-\alpha a-\beta b)
    \,\mathrm{d}\alpha\beta\gamma
    &=
    \sum_{x,y\in\mathbb{Z}+1/2}w(x,y,0)
    \int_{[-1/2,1/2]^2}e\big(-\alpha(a-x)-\beta(b-y)\big)
    \,\mathrm{d}\alpha\beta\\
    &=w(a,b,0)\;,
\end{align*}
where in the end we used the Fourier inversion for integers since
$a-x,b-y\in\mathbb{Z}$. But now we are done, since
\begin{align*}
    w(a,b,0)
    &=\Pr[W_A=a\text{ and } W_B=b\cond V^{*n}=n^2/2]u(0)
    =\Pr[U_A^{*n}=a\text{ and } U_B^{*n}=b\cond V^{*n}=n^2/2]u(0)\;.
    \qedhere
\end{align*}
\end{proof}

We also state the standard connection between 
characteristic function
$\fhat$ and respective moments, which follows from an
application of Taylor's theorem to $e(t)$ giving
$|e(t)-(1+2\pi it-2\pi^2t^2)|\le 4\pi^3|t|^3/3$: 
\begin{claim}\label{cl:fhat-moments}
We have 
$\fhat(\alpha,\beta,\gamma)=1-Q+R$,
where $Q$ is defined in~\eqref{eq:38}
and $R=R(\alpha,\beta,\gamma)$ is such that
\begin{align*}
    |R|&\le\frac{4\pi^3}{3}\big(
    |\alpha|\|U_A\|_\infty
    +|\beta|\|U_B\|_\infty
    +|\gamma|\|V-n/2\|_\infty
    \big)^3\\
    &\le1200\left(|\alpha|^3\|U_A\|_\infty^3
    +|\beta|^3\|U_B\|_\infty^3
    +|\gamma|^3\frac{n^3}{8}\right)\;.
\end{align*}
\end{claim}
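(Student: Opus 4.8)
The plan is to expand $e(\cdot)$ to second order inside the expectation that defines $\fhat$. Write $t := \alpha U_A + \beta U_B + \gamma(V - n/2)$, so that $\fhat(\alpha,\beta,\gamma) = \EE\, e(t)$. Applying the Taylor estimate $\bigl|e(t) - \bigl(1 + 2\pi i t - 2\pi^2 t^2\bigr)\bigr| \le \frac{4\pi^3}{3}|t|^3$ recorded just before the claim, pointwise, and then taking expectations, gives
\[
    \fhat(\alpha,\beta,\gamma) = 1 + 2\pi i\,\EE t - 2\pi^2\,\EE t^2 + R, \qquad |R| \le \frac{4\pi^3}{3}\,\EE|t|^3 .
\]

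Next I would evaluate the first two moments of $t$. By Claim~\ref{cl:simple-g} we have $\EE U_A = \EE U_B = 0$ for balanced dice, while $\EE(V - n/2) = 0$ since $V$ is uniform on $[0,n]$; hence $\EE t = 0$ and the linear term drops out. Then $\EE t^2 = \Var t$, and expanding the square of the three-term sum (using $\Var(V - n/2) = \Var V$ and the definitions of the variance and covariance symbols) yields exactly
\[
    \EE t^2 = \alpha^2 \Var_A + \beta^2 \Var_B + \gamma^2 \Var V + 2\alpha\beta\, CV_{AB} + 2\alpha\gamma\, CV_A + 2\beta\gamma\, CV_B = \frac{Q}{2\pi^2},
\]
with $Q$ as in~\eqref{eq:38}. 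Therefore $-2\pi^2\,\EE t^2 = -Q$ and the claimed identity $\fhat(\alpha,\beta,\gamma) = 1 - Q + R$ follows.

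It remains to convert $|R| \le \frac{4\pi^3}{3}\EE|t|^3$ into the two displayed bounds. Since $|t| \le |\alpha|\,|U_A| + |\beta|\,|U_B| + |\gamma|\,|V - n/2| \le |\alpha|\,\|U_A\|_\infty + |\beta|\,\|U_B\|_\infty + |\gamma|\,\|V - n/2\|_\infty$ holds almost surely and the right-hand side is deterministic, substituting it into $\EE|t|^3$ gives the first bound. For the second, apply the convexity inequality $(x + y + z)^3 \le 9(x^3 + y^3 + z^3)$ for $x, y, z \ge 0$ together with $\|V - n/2\|_\infty = n/2$, which yields $|R| \le 12\pi^3\bigl(|\alpha|^3\|U_A\|_\infty^3 + |\beta|^3\|U_B\|_\infty^3 + |\gamma|^3 n^3/8\bigr)$; since $12\pi^3 < 1200$ this is dominated by the stated bound. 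This argument is entirely routine; the only points needing a little care are the bookkeeping when expanding $(\alpha U_A + \beta U_B + \gamma(V-n/2))^2$ so that its six coefficients match $Q$, and the crude numerical step $12\pi^3 \le 1200$. I do not anticipate any real obstacle here.
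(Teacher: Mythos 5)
Your argument is correct and is exactly the one the paper intends: the claim is stated in the paper as an immediate consequence of the Taylor bound $|e(t)-(1+2\pi it-2\pi^2t^2)|\le 4\pi^3|t|^3/3$, and your filling-in (that the linear term vanishes because $\EE U_A=\EE U_B=\EE(V-n/2)=0$, that the quadratic term matches $Q/2\pi^2$, and the power-mean step $(x+y+z)^3\le 9(x^3+y^3+z^3)$ with $12\pi^3<1200$) is precisely the standard bookkeeping that the paper leaves implicit.
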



We will also need some elementary bounds on the densities of $H^{*n}$
and $V^{*n}$ around their means.
While the anticoncentration
bound in the third point in Claim \ref{cl:h-lower-bound} is not optimal, it will be enough for our purposes.
\begin{claim}\label{cl:h-lower-bound}
Let $g(z)$ be the density function of the random variable
$H^{*n}-n^2/2$ and $u(z)$ the density of $V^{*n}-n^2/2$.
Then, we have:
\begin{enumerate}
    \item $g(0),u(0)\ge \frac{1}{4n\sqrt{n}}$.
    \item For $0<\eps\le n\sqrt{n}$, it holds that
    $\Pr\big[|V^{*n}-n^2/2|\le\eps\big]\ge \eps/4n\sqrt{n}$.
    \item For any $\eps_1<\eps_2$, it holds that
    $\Pr\big[\eps_1\le V^{*n}\le\eps_2\big]\le(\eps_2-\eps_1)/n$.
\end{enumerate}
\end{claim}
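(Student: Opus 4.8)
The plan is to prove all three bounds by entirely elementary means, using only that $V$ is uniform on $[0,n]$ (so that $\Var V^{*n}=\Var H^{*n}=n^3/12$) together with the fact that the densities $u$ and $g$ are symmetric about $0$ and unimodal with mode at $0$. I would first record these preliminaries: unimodality of $u$ holds because the uniform density on $[0,n]$ is log-concave, a convolution of log-concave densities is log-concave, and a symmetric log-concave density is non-increasing on $[0,\infty)$; the Gaussian density $g$ is log-concave as well. In particular $u(0)=\max u$, $g(0)=\max g$, and both $u$ and $g$ are non-increasing on $[0,\infty)$.

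I would dispatch the third point first, since it is quickest and uses none of the above. Conditioning on all but one of the summands $V_1,\dots,V_n$, the density of $V^{*n}$ at any point is an average of the density of a single uniform $[0,n]$ variable, which never exceeds $1/n$; hence the density of $V^{*n}$ at any $z$ is at most $1/n$. Integrating this pointwise bound over $z\in[\eps_1,\eps_2]$ gives $\Pr[\eps_1\le V^{*n}\le\eps_2]\le(\eps_2-\eps_1)/n$.

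For the first point I would combine the elementary inequality that a density $\phi$ with mode $0$ satisfies $\Pr[|X|\le t]\le 2t\,\phi(0)$ for all $t>0$ with Chebyshev's inequality $\Pr[|X|\le\sqrt2\,\sigma]\ge 1/2$, applied to the centered variables with $\sigma^2=n^3/12$; this yields $u(0),g(0)\ge \frac{1}{4\sqrt2\,\sigma}=\frac{\sqrt6}{4}\,n^{-3/2}>\frac14\,n^{-3/2}$ (for $g$ one may instead just evaluate the Gaussian density, $g(0)=\sqrt{6/(\pi n^3)}$). For the second point, set $F(\eps):=\Pr[|V^{*n}-n^2/2|\le\eps]$; then $F(0)=0$ and $F$ is concave on $[0,\infty)$, since $F'(\eps)=2u(\eps)$ with $u$ non-increasing. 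Chebyshev with $t=n^{3/2}=\sqrt{12}\,\sigma$ gives $F(n^{3/2})\ge 1-1/12=11/12$, so by concavity the chord from $(0,0)$ to $(n^{3/2},F(n^{3/2}))$ lies below the graph of $F$, whence $F(\eps)\ge \frac{\eps}{n^{3/2}}F(n^{3/2})\ge \frac{11\eps}{12\,n^{3/2}}>\frac{\eps}{4n^{3/2}}$ for all $0<\eps\le n^{3/2}$.

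I do not expect any genuine obstacle here; the places that need a little care are making unimodality and monotonicity of $u$ rigorous (via log-concavity of the uniform density and stability of log-concavity under convolution), checking that $F$ is concave on the whole interval so the chord bound applies, and confirming that the deliberately wasteful constants close — in particular $\sqrt6/4>1/4$ and $11/12>1/4$. I lose room in the Chebyshev steps on purpose so that these numerical checks are immediate rather than requiring sharp estimates.
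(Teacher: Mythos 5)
Your proposal is correct and follows the same architecture as the paper's proof (unimodality and symmetry of $u$, a tail bound establishing $\int_{-T}^{T}u\ge 1/2$ for an appropriate $T$, then a pointwise density bound for the first point and the $1/n$ uniform density bound for the third), but it differs in two places in ways that are worth noting. For the tail bound you use Chebyshev with $\Var V^{*n}=n^3/12$ where the paper uses Hoeffding/Chernoff; this is slightly more elementary and gives you a smaller window $\sqrt{2}\sigma=n^{3/2}/\sqrt{6}$, which is why your version of the first point comes out with some slack ($\sqrt{6}/4>1/4$). For the second point the paper argues by contradiction: assuming $F(\eps)<\eps/(4n\sqrt n)$, symmetry and unimodality force $u(\eps)\ge 1/(8n\sqrt n)$, contradicting the assumption; you instead observe that $F(\eps)=2\int_0^\eps u$ has $F(0)=0$ and is concave on $[0,\infty)$ because $F'=2u$ is non-increasing, so a chord argument from $(0,0)$ to $(n^{3/2},F(n^{3/2}))\ge(n^{3/2},11/12)$ directly gives the linear lower bound. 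The concavity packaging is arguably cleaner since it cites one structural property of $F$ instead of an ad-hoc contradiction. You also establish unimodality via log-concavity of the uniform density and Prékopa's theorem, where the paper just invokes preservation of symmetric unimodality under convolution; both are standard and fine.
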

\begin{proof}
Since $H^{*n}$ is Gaussian, clearly the density $g(z)$ achieves
maximum at $z=0$. Similarly, since the density 
of $V^{*n}$ is a convolution of $n$ symmetric,
unimodal densities, $u(z)$ achieves maximum at $z=0$;
in fact, $u(z)$ is unimodal and symmetric around zero.
On the other hand, by standard concentration
inequalities
(cf.~Claim~\ref{cl:chernoff}, recall that $V\in[0,n]$ and $\Var H^{*n}=n^3/12$)
\begin{align}\label{eq:35}
\Pr\left[\left|H^{*n}-\frac{n^2}{2}\right|\ge n\sqrt{n}\right],
\Pr\left[\left|V^{*n}-\frac{n^2}{2}\right|\ge n\sqrt{n}\right]
\le2\exp\left(-2\right)\le\frac{1}{2}
\end{align}
and hence $\int_{-n\sqrt{n}}^{n\sqrt{n}}g(z)\,\mathrm{d}z\ge 1/2$ and similarly for $u(z)$.
Consequently, $g(0),u(0)\ge 1/4n\sqrt{n}$, establishing the first point.

As for the second point, we proceed by contradiction. If
$\Pr\big[|V^{*n}-n^2/2|\le\eps\big]<\eps/4n\sqrt{n}$, by unimodality
and symmetry of $u$, $\int_{-n\sqrt{n}}^{n\sqrt{n}}u(z)\,\mathrm{d}z\ge 1/2$ implies
\begin{align*}
u(\eps)=u(-\eps)\ge \frac{1/2-\eps/4n\sqrt{n}}{2n\sqrt{n}}
\ge\frac{1}{8n\sqrt{n}}\;,
\end{align*}
which by the unimodality of $u(z)$ gives 
$\Pr\big[|V^{*n}-n^2/2|\le\eps\big]=\int_{-\eps}^\eps u(z)\,\mathrm{d}z
\ge\eps/4n\sqrt{n}$ anyway.

Finally, for the third point the bound follows from the fact that the density of $V$
is uniformly bounded by $1/n$ and the convolution cannot increase this bound,
hence $\int_{\eps_1}^{\eps_2}u(z)\,\mathrm{d}z\le(\eps_2-\eps_1)/n$.
\end{proof}

We will also use a standard anti-concentration
estimate for the Gaussians that follows from the
formula for Gaussian density:
\begin{claim}\label{cl:gaussian-anti}
Let $G$ be a Gaussian random variable with variance
at least $\sigma^2$. Then, for all
$a,b$, we have
\begin{align*}
    \Pr[a\le G\le b]\le\frac{b-a}{\sqrt{2\pi}\sigma}\;.
\end{align*}
\end{claim}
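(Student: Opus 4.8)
The plan is to deduce the bound directly from the closed form of the Gaussian density, with no appeal to anything deeper. Write $\mu:=\EE G$ and $\tau^2:=\Var G$; by hypothesis $\tau^2\ge\sigma^2$, and we may assume $\sigma,\tau>0$ (otherwise the right-hand side is $+\infty$ and there is nothing to prove) and $a\le b$ (otherwise the probability is $0$ and the asserted inequality carries no content). The density of $G$ is
\[
    \varphi(x)=\frac{1}{\sqrt{2\pi}\,\tau}\exp\!\left(-\frac{(x-\mu)^2}{2\tau^2}\right)\;,
\]
which is maximized at $x=\mu$ with value $\frac{1}{\sqrt{2\pi}\,\tau}$, and since $t\mapsto 1/t$ is decreasing on $(0,\infty)$ this maximum is at most $\frac{1}{\sqrt{2\pi}\,\sigma}$. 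Hence $\varphi(x)\le\frac{1}{\sqrt{2\pi}\,\sigma}$ for every $x\in\mathbb{R}$.

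First I would record this pointwise density bound, and then the claim follows simply by integrating it over the interval $[a,b]$:
\[
    \Pr[a\le G\le b]=\int_a^b\varphi(x)\,\mathrm{d}x\le\frac{b-a}{\sqrt{2\pi}\,\sigma}\;.
\]
There is essentially no obstacle here: the entire content is the observation that a Gaussian density peaks at its mean and that this peak value decreases as the variance grows, both of which are read off immediately from the displayed formula. If one wished to avoid invoking the density formula explicitly, one could instead couple $G$ with a mean-zero, variance-$\sigma^2$ Gaussian $G_0$ by writing $G\overset{d}{=}\mu+G_0+G'$ for an independent Gaussian $G'$, and then use that convolution with a probability density cannot increase the essential supremum of a density, exactly as in the proof of the third point of Claim~\ref{cl:h-lower-bound}; but the direct one-line computation above is shorter and is what I would write.
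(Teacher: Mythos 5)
Your argument is correct and is exactly what the paper has in mind: the paper gives no explicit proof but states that the claim ``follows from the formula for Gaussian density,'' which is precisely your observation that the density is uniformly bounded by $\frac{1}{\sqrt{2\pi}\sigma}$ and integrating over $[a,b]$ finishes. Nothing to add.
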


Finally, we derive the formulas for conditional variances
of $(G_A^{*n},G_B^{*n},H^{*n})$ given in Claim~\ref{cl:conditional-variance}:
\begin{proof}[Proof of Claim~\ref{cl:conditional-variance}]
Note that by considering the relevant variances and covariances
and using the Hilbert space structure of joint Gaussians,
random variables $G_A$ and $G_B$ can be written as
\begin{align*}
    G_A&=\sqrt{\Var_A-\frac{CV_A^2}{\Var V}}\cdot\mathcal{G}_A+
    \frac{CV_A}{\Var V}\cdot \left(H-\frac{n}{2}\right)\;,\\
    G_B&=\sqrt{\Var_B-\frac{CV_B^2}{\Var V}}\cdot\mathcal{G}_B+
    \frac{CV_B}{\Var V}\cdot \left(H-\frac{n}{2}\right)\;,
\end{align*}
where $\cG_A$ and $\cG_B$ are joint centered 
Gaussians independent of $H$, each with variance one.
Let $\alpha_A:=\sqrt{\Var_A-(CV_A^2/\Var V})$ and
$\alpha_B:=\sqrt{\Var_B-(CV_B^2/\Var V)}$
Accordingly, we also have
\begin{align}
    G_A^{*n}&=\alpha_A\cdot\mathcal{G}_A^{*n}+
    \frac{CV_A}{\Var V}\cdot \left(H^{*n}-\frac{n^2}{2}\right)\;,
    \label{eq:29}\\
    G_B^{*n}&=\alpha_B\cdot\mathcal{G}_B^{*n}+
    \frac{CV_B}{\Var V}\cdot \left(H^{*n}-\frac{n^2}{2}\right)\;.
    \label{eq:30}
\end{align}
The fact that after conditioning $G_A^{*n}$ and $G_B^{*n}$
are centered joint Gaussians, as well as~\eqref{eq:26} and~\eqref{eq:27}
all follow (note that $\Var\cG_A^{*n}=\Var\cG_B^{*n}=n$).

As for the covariance in~\eqref{eq:28}, first
by rearranging~\eqref{eq:29}
and~\eqref{eq:30} we observe that
\begin{align*}
\Cov[\cG_A^{*n},\cG_B^{*n}]
&=\frac{n}{\alpha_A\alpha_B}\left(
CV_{AB}-\frac{CV_ACV_B}{\Var V}
\right)\;.
\end{align*}
Finally, we invoke~\eqref{eq:29} 
and~\eqref{eq:30} again to see that
\begin{align*}
    \Cov\left[G_A^{*n},G_B^{*n}\;\middle\vert\;H^{*n}=\frac{n^2}{2}\right]
    &=\alpha_A\alpha_B
    \cdot\Cov\left[\mathcal{G}_A^{*n},\mathcal{G}_B^{*n}
    \right]
    =n\left(CV_{AB}-\frac{CV_ACV_B}{\Var V}\right)\;.
    \qedhere
\end{align*}
\end{proof}

\subsection{Tools}
\label{sec:tools}

In this section we present a few tools, mostly imported
from~\cite{Pol17}, that we will use in various proofs in
this section.
Many of them will be used to prove Lemma~\ref{lem:fhat-bounded}.
We start with referencing some standard concentration
bounds:
\begin{claim}\label{cl:chernoff}
If $X_1,\ldots,X_n$ are independent random variables
in $[0,1]$ and $t>0$, then
\begin{align*}
    \Pr\left[\left|\sum_{i=1}^nX_i-\sum_{i=1}^n\EE X_i\right|
    \ge t\right]\le 2\exp(-2t^2/n)\;.
\end{align*}
On the other hand, if $G$ is a centered Gaussian of variance
$\sigma^2$, then, for $t>0$,
\begin{align*}
    \Pr\big[|G|\ge t\big]\le 2\exp(-t^2/2\sigma^2)\;.
\end{align*}
\end{claim}

We also need other estimates of sums of iid random variables. 
In particular, we will use explicit
error bounds for both the central limit theorem and the Poisson limit theorem:
\begin{lemma}[Berry-Esseen theorem]
\label{lem:berry-esseen}
Let $X_1,\ldots,X_n$ be iid random variables with $\EE X_i=0$, 
$\EE X_i^2=\sigma^2$ and $\EE|X_i|^3=\rho$. 
Let $X:=\sum_{i=1}^n X_i$ and $N$ be a standard normal. Then,
for every real $a,b$,
\[
\left|\Pr[a\le X\le b]-\Pr\left[\frac{a}{\sigma\sqrt{n}}\le 
N\le\frac{b}{\sigma\sqrt{n}}\right]
\right|\le\frac{\rho}{\sigma^3\sqrt{n}}\;.
\]
\end{lemma}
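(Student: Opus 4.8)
The plan is to derive the stated interval bound from the classical one-sided Berry--Esseen estimate, and to recall the Fourier proof of the latter. Write $S:=X/(\sigma\sqrt n)$, let $F$ denote its cdf and $\Phi$ the standard normal cdf; note $\rho\ge\sigma^3$ by Jensen, so the bound is nontrivial only for $n$ large, in keeping with the rest of the paper. The one-sided theorem asserts $\sup_x|F(x)-\Phi(x)|\le C_0\,\rho/(\sigma^3\sqrt n)$ for an absolute constant $C_0\le 1/2$. Granting this, for $a\le b$,
\begin{align*}
\big|\Pr[a\le X\le b]-\Pr[a/(\sigma\sqrt n)\le N\le b/(\sigma\sqrt n)]\big|
&=\big|(F-\Phi)(b/(\sigma\sqrt n))-(F-\Phi)(a/(\sigma\sqrt n))\big|\\
&\le 2C_0\frac{\rho}{\sigma^3\sqrt n}\le\frac{\rho}{\sigma^3\sqrt n}\,,
\end{align*}
which is exactly the claim.

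For the one-sided estimate I would use Esseen's smoothing inequality: for any cdf $F$ and any $T>0$,
\[
\sup_x|F(x)-\Phi(x)|\le\frac{1}{\pi}\int_{-T}^{T}\left|\frac{\widehat F(t)-e^{-t^2/2}}{t}\right|\mathrm dt+\frac{C_1}{T}\sup_x\Phi'(x)\,,
\]
where $\widehat F$ is the characteristic function and $C_1$ an explicit absolute constant. Apply it with $F$ the cdf of $S$, so that $\widehat F(t)=\varphi(t/\sqrt n)^n$ with $\varphi$ the characteristic function of a single $X_i/\sigma$. Taylor's theorem --- the same computation as in Claim~\ref{cl:fhat-moments}, and the place where the third-moment hypothesis is used --- gives $|\varphi(s)-(1-s^2/2)|\le\frac{\rho}{6\sigma^3}|s|^3$, hence $|\varphi(s)|\le e^{-s^2/3}$ for $|s|\le\sigma^3/\rho$. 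Taking $T:=\sigma^3\sqrt n/\rho$ keeps $|t/\sqrt n|\le\sigma^3/\rho$ on the range $|t|\le T$, and then $|a^n-b^n|\le n|a-b|\max(|a|,|b|)^{n-1}$ leads to $|\widehat F(t)-e^{-t^2/2}|\le C\frac{\rho}{\sigma^3\sqrt n}|t|^3 e^{-t^2/4}$ there. Substituting, the integral term is $\cO(\rho/(\sigma^3\sqrt n))$ and the boundary term is $\frac{C_1}{T\sqrt{2\pi}}=\cO(\rho/(\sigma^3\sqrt n))$.

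The main obstacle is not conceptual but numerical: every ingredient above is standard, and the only real work is tracking constants --- the choice of $T$, the range on which the Taylor estimate for $\varphi$ is valid, and the constant $C_1$ in Esseen's lemma --- so that the final leading constant is at most $1$, i.e.\ $C_0\le 1/2$. (This is weaker than the sharp value $C_0<0.48$ known in the literature.) Since Lemma~\ref{lem:berry-esseen} is used later in the paper only up to $\Otilde(\cdot)$ factors, one could alternatively just cite a standard reference for Berry--Esseen and skip this bookkeeping altogether.
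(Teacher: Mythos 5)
The paper does not prove this lemma; it states it as a known classical theorem and uses it later as a black box, so there is no paper proof to compare against. Your reduction of the interval estimate to $2\sup_x|F(x)-\Phi(x)|$ and the subsequent appeal to a one-sided constant $C_0\le 1/2$ are both correct, and the lemma as stated is indeed true, since the sharp one-sided constant in the iid case is known to be below $0.48$ (Tyurin 2010, Shevtsova 2011). Citing such a result, as you suggest at the end, is what the paper implicitly does.

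The middle paragraph, however, has a genuine gap. The Esseen-smoothing argument you sketch cannot be completed to give $C_0\le 1/2$, and the obstruction is not bookkeeping. With $T=\sigma^3\sqrt{n}/\rho$, the boundary term of the textbook smoothing lemma alone already contributes roughly $\frac{24}{\pi\sqrt{2\pi}}\cdot\frac{\rho}{\sigma^3\sqrt{n}}\approx 3\cdot\frac{\rho}{\sigma^3\sqrt{n}}$ before the integral is even accounted for, and Esseen's original version of the full argument yields $C_0\approx 7.6$; careful refinements of the plain smoothing route plateau well above $1/2$. Getting below $1/2$ requires qualitatively different machinery --- tailored smoothing kernels, characteristic-function inequalities sharper than the cubic Taylor estimate, and substantial numerical optimization --- which is precisely the content of the modern sharp-constant results you mention. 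So either prove the lemma with a larger explicit constant and check that the slack is absorbed by the $\Otilde$ factors in the two places where the paper uses it (it is), or cite the sharp one-sided bound directly; the smoothing sketch as written does not yield the stated constant $1$.
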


\begin{lemma}[Poisson limit theorem convergence rate,
see~(1.1) in~\cite{BH84}]
\label{lem:poisson}
Let $X_1,\ldots,X_n$ be iid Bernoulli random variables with 
$\EE X_i=\lambda/n$. Let $X:=\sum_{i=1}^n X_i$ and let
$Z$ be a Poisson random variable with mean $\lambda$. Then,
for every $S\subseteq \mathbb{N}$,
\[
\Big|\Pr[X\in S]-\Pr[Z\in S]\Big|\le\frac{\lambda^2}{n}\;.
\]
\end{lemma}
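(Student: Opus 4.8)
The plan is to prove this via Le Cam's coupling argument, which gives exactly the stated bound. First I would build, on a common probability space, a coupling $(X_i,Z_i)_{i=1}^n$ of independent pairs such that each $X_i\sim\mathrm{Bernoulli}(\lambda/n)$, each $Z_i\sim\mathrm{Poisson}(\lambda/n)$, and $\Pr[X_i\neq Z_i]$ equals the total variation distance $d_{\mathrm{TV}}\big(\mathrm{Bernoulli}(\lambda/n),\mathrm{Poisson}(\lambda/n)\big)$, i.e.\ the maximal coupling of the two marginals, taken independently across $i$. Since a sum of independent Poisson variables is again Poisson, $Z:=\sum_{i=1}^n Z_i$ has the $\mathrm{Poisson}(\lambda)$ law, so $(X,Z)$ with $X=\sum_i X_i$ is a legitimate coupling of $X$ with the target.

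Next I would use the standard fact that coupling controls total variation: for any $S\subseteq\mathbb{N}$,
\[
\big|\Pr[X\in S]-\Pr[Z\in S]\big|\le\Pr[X\neq Z]\le\sum_{i=1}^n\Pr[X_i\neq Z_i]=n\cdot d_{\mathrm{TV}}\big(\mathrm{Bernoulli}(\lambda/n),\mathrm{Poisson}(\lambda/n)\big),
\]
where the middle inequality is the union bound applied to $\{X\neq Z\}\subseteq\bigcup_i\{X_i\neq Z_i\}$.

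It then remains to show $d_{\mathrm{TV}}(\mathrm{Bernoulli}(p),\mathrm{Poisson}(p))\le p^2$ for $p=\lambda/n\in[0,1]$, a one-line computation. Writing $d_{\mathrm{TV}}=\tfrac12\sum_{k\ge0}|b_k-q_k|$ with $b_k$ the Bernoulli and $q_k$ the Poisson masses, we have $|b_0-q_0|=e^{-p}-(1-p)$, $|b_1-q_1|=p(1-e^{-p})$, and $\sum_{k\ge2}q_k=1-e^{-p}-pe^{-p}$; adding these gives $2p(1-e^{-p})\le 2p^2$, hence $d_{\mathrm{TV}}\le p^2$. Plugging $p=\lambda/n$ into the displayed chain yields $n(\lambda/n)^2=\lambda^2/n$, as claimed.

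There is essentially no obstacle here: the argument is elementary and self-contained, and the only care needed is verifying the inequalities $1-p\le e^{-p}$ and $1-e^{-p}\le p$ used in the last step (so that the absolute values open up with the signs indicated and $p(1-e^{-p})\le p^2$). Alternatively one could invoke the Stein--Chen method, which also delivers the $\lambda^2/n$ rate, but the coupling proof is shorter and suffices for our application.
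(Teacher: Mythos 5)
Your proof is correct, but note that the paper itself offers no proof of this lemma: it is imported as a black-box citation to inequality~(1.1) of Barbour and Hall~\cite{BH84}. Your Le Cam coupling argument is the standard elementary route and is carried out correctly: the maximal coupling of $\mathrm{Bernoulli}(p)$ with $\mathrm{Poisson}(p)$ gives mismatch probability exactly $d_{\mathrm{TV}}=p(1-e^{-p})\le p^2$, the union bound over the $n$ independent coordinates gives $\Pr[X\ne Z]\le np^2$, and the coupling inequality transfers this to every $S\subseteq\mathbb{N}$; with $p=\lambda/n$ this is $\lambda^2/n$ as required. A couple of small remarks. First, the Barbour--Hall bound cited by the paper is actually sharper, namely $\frac{1-e^{-\lambda}}{\lambda}\sum_i p_i^2=\frac{(1-e^{-\lambda})\lambda}{n}\le\frac{\lambda^2}{n}$ in the iid case, obtained by the Stein--Chen method; your Le Cam argument only recovers the weaker form $\lambda^2/n$, but that is precisely what the lemma asserts, so it is adequate. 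Second, the arithmetic in your total variation computation is right: $\tfrac12\big[(e^{-p}-(1-p))+p(1-e^{-p})+(1-e^{-p}-pe^{-p})\big]=p(1-e^{-p})$, and the monotonicity facts $1-p\le e^{-p}$ and $1-e^{-p}\le p$ that you flag do indeed justify both the absolute-value resolutions and the final inequality. In short, the student's proof is a valid self-contained replacement for the paper's citation; it trades the sharper Stein--Chen constant for a shorter, more elementary argument, and for the purposes of this paper (which only needs a crude $\cO(1/n)$ error in Case~2 of the proof of Lemma~\ref{lem:intermediate}) the weaker constant is entirely sufficient.
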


We use a technical lemma from the Polymath paper connecting the
distance to integers to the values of $e(\cdot)$:
\begin{lemma}[Lemma 5.9 in~\cite{Pol17}]
\label{lem:e-to-mod1}
For real numbers $\theta_1$ and $\theta_2$, we have
\[
\frac{1}{2}\left|e(\theta_1)+e(\theta_2)\right|
\le 1-d_{\mathbb{Z}}(\theta_1-\theta_2)^2\;.
\]
Similarly, for any $\theta_1,\theta_2,\theta_3$ and $\theta_4$
it holds that
\[
\frac{1}{4}\left|e(\theta_1)+e(\theta_2)+e(\theta_3)+e(\theta_4)
\right|\le 1-d_{\mathbb{Z}}(
\theta_1-\theta_2+\theta_3-\theta_4)^2/4\;.
\]
\end{lemma}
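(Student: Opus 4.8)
The statement to prove is Lemma~\ref{lem:e-to-mod1}, which is cited from Polymath. Let me sketch a proof.

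\textbf{Proof proposal.}

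The plan is to prove both inequalities by reducing to a trigonometric estimate. For the first inequality, I would write $\frac{1}{2}|e(\theta_1)+e(\theta_2)| = \frac{1}{2}|e(\theta_1)|\cdot|1+e(\theta_2-\theta_1)| = \frac{1}{2}|1+e(\phi)|$ where $\phi := \theta_2-\theta_1$, since $|e(\theta_1)|=1$. A direct computation gives $|1+e(\phi)|^2 = (1+\cos 2\pi\phi)^2 + \sin^2 2\pi\phi = 2 + 2\cos 2\pi\phi = 4\cos^2(\pi\phi)$, so $\frac{1}{2}|1+e(\phi)| = |\cos(\pi\phi)|$. Thus it suffices to show $|\cos(\pi\phi)| \le 1 - d_{\mathbb{Z}}(\phi)^2$ for all real $\phi$. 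Both sides are periodic with period $1$ and even, so I may assume $0 \le \phi \le 1/2$, in which case $d_{\mathbb{Z}}(\phi) = \phi$ and the claim is $\cos(\pi\phi) \le 1 - \phi^2$ for $\phi \in [0,1/2]$. This follows from an elementary comparison: since $\cos(\pi\phi) \le 1 - \frac{(\pi\phi)^2}{2} + \frac{(\pi\phi)^4}{24}$ (the alternating Taylor series, valid because the terms decrease in magnitude on this range) and one checks $1 - \frac{\pi^2\phi^2}{2} + \frac{\pi^4\phi^4}{24} \le 1 - \phi^2$ on $[0,1/2]$ because $\frac{\pi^2}{2} \ge 1 + \frac{\pi^4\phi^2}{24}$ holds there (the right side is at most $1 + \frac{\pi^4}{96} < \pi^2/2$). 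Alternatively, one can just verify $\cos(\pi\phi) + \phi^2 \le 1$ directly on $[0,1/2]$ by noting the function is $0$ at $\phi=0$ and checking its derivative $-\pi\sin(\pi\phi) + 2\phi$ is nonpositive there, which reduces to $\sin(\pi\phi) \ge \frac{2}{\pi}\phi$, the standard Jordan inequality.

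For the second inequality, the key observation is a grouping trick: by the triangle inequality,
\[
\frac{1}{4}\bigl|e(\theta_1)+e(\theta_2)+e(\theta_3)+e(\theta_4)\bigr|
\le \frac{1}{2}\left(\frac{1}{2}\bigl|e(\theta_1)+e(\theta_3)\bigr| + \frac{1}{2}\bigl|e(\theta_2)+e(\theta_4)\bigr|\right).
\]
Applying the first inequality to each of the two pairs gives an upper bound of $\frac{1}{2}\bigl(1 - d_{\mathbb{Z}}(\theta_1-\theta_3)^2\bigr) + \frac{1}{2}\bigl(1 - d_{\mathbb{Z}}(\theta_2-\theta_4)^2\bigr) = 1 - \frac{1}{2}\bigl(d_{\mathbb{Z}}(\theta_1-\theta_3)^2 + d_{\mathbb{Z}}(\theta_2-\theta_4)^2\bigr)$. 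It then remains to bound $d_{\mathbb{Z}}(\theta_1-\theta_3)^2 + d_{\mathbb{Z}}(\theta_2-\theta_4)^2$ from below by $\frac{1}{2}d_{\mathbb{Z}}(\theta_1-\theta_2+\theta_3-\theta_4)^2$. Writing $s := \theta_1 - \theta_3$ and $t := \theta_4 - \theta_2$, we have $\theta_1-\theta_2+\theta_3-\theta_4 \equiv s - t \pmod{\mathbb{Z}}$ is not quite right; more carefully $\theta_1-\theta_2+\theta_3-\theta_4 = (\theta_1-\theta_3) - (\theta_4 - \theta_2) + 2\theta_3 - 2\theta_3$... let me instead set $s := \theta_1-\theta_3$, $t := \theta_2-\theta_4$, so the target combination is $\theta_1 - \theta_2 + \theta_3 - \theta_4$; this does not obviously equal $s \pm t$ in general, so I would instead pair as $(\theta_1,\theta_2)$ and $(\theta_3,\theta_4)$: $\frac14|e(\theta_1)+e(\theta_2)+e(\theta_3)+e(\theta_4)| \le \frac12\bigl(\frac12|e(\theta_1)+e(\theta_2)| + \frac12|e(\theta_3)+e(\theta_4)|\bigr) \le 1 - \frac12(d_{\mathbb{Z}}(\theta_1-\theta_2)^2 + d_{\mathbb{Z}}(\theta_3-\theta_4)^2)$, and then use $a^2 + b^2 \ge \frac12(a-b)^2$ together with the subadditivity $d_{\mathbb{Z}}(x) + d_{\mathbb{Z}}(y) \ge d_{\mathbb{Z}}(x+y)$ wait — I need $|a \pm b|$ where $a = d_{\mathbb{Z}}(\theta_1-\theta_2)$, $b = d_{\mathbb{Z}}(\theta_3-\theta_4)$; since $d_{\mathbb{Z}}$ is subadditive and symmetric, $d_{\mathbb{Z}}((\theta_1-\theta_2)+(\theta_3-\theta_4)) \le a + b$, but I want a lower bound on $a^2+b^2$ in terms of $d_{\mathbb{Z}}(\theta_1-\theta_2+\theta_3-\theta_4)$. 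Using $a^2 + b^2 \ge \frac{1}{2}(a+b)^2 \ge \frac{1}{2}d_{\mathbb{Z}}(\theta_1-\theta_2+\theta_3-\theta_4)^2$ closes the argument and yields $1 - \frac14 d_{\mathbb{Z}}(\theta_1-\theta_2+\theta_3-\theta_4)^2$.

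\textbf{Main obstacle.} The only genuinely non-routine point is the scalar inequality $|\cos(\pi\phi)| \le 1 - d_{\mathbb{Z}}(\phi)^2$; once that is in hand, everything else is bookkeeping with the triangle inequality, the identity $a^2+b^2\ge\frac12(a+b)^2$, and subadditivity of $d_{\mathbb{Z}}$. I expect the cleanest write-up routes the scalar inequality through the Jordan inequality $\sin x \ge \frac{2}{\pi}x$ on $[0,\pi/2]$ to show the derivative of $\phi \mapsto 1 - \phi^2 - \cos(\pi\phi)$ is nonnegative on $[0,1/2]$, rather than fussing with truncated Taylor series and verifying the polynomial inequality numerically.
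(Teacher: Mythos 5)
The paper does not prove Lemma~\ref{lem:e-to-mod1}; it cites it directly from the Polymath preprint, so there is no in-paper argument to compare against. Your proposed proof is correct and is the natural one. The reduction $\frac12|e(\theta_1)+e(\theta_2)|=|\cos(\pi(\theta_1-\theta_2))|$ and the scalar inequality $\cos(\pi\phi)\le 1-\phi^2$ on $[0,1/2]$ (via Jordan's inequality $\sin x\ge \tfrac{2}{\pi}x$ on $[0,\pi/2]$, which gives $\pi\sin(\pi\phi)\ge 2\pi\phi\ge 2\phi$, i.e.\ the derivative of $1-\phi^2-\cos(\pi\phi)$ is nonnegative there) establish the first bound. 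For the second bound, your pairing $(\theta_1,\theta_2)$ and $(\theta_3,\theta_4)$, followed by the triangle inequality, the first bound applied twice, the convexity estimate $a^2+b^2\ge\frac12(a+b)^2$, and subadditivity of $d_{\mathbb{Z}}$, closes the argument and yields exactly the stated constant $1/4$. The exploratory detour in the middle of your write-up (the abandoned pairing $(\theta_1,\theta_3),(\theta_2,\theta_4)$) should be pruned in a final version, but the argument you ultimately land on is complete and correct.
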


We also employ a concentration bound analogous to Lemmas~5.4 and~5.8 in~\cite{Pol17}. Rather than prove it directly as Polymath does,
we resort to the Azuma's inequality:
\begin{lemma}[Azuma's inequality]
\label{lem:azuma}
Let random variables $Y_0,\ldots,Y_n$ form a submartingale with
respect to a filtration $\mathcal{F}_0,\ldots,\mathcal{F}_n$
and with bounded differences, i.e., the
random variable $Y_i$ is measurable with respect
to the $\sigma$-algebra $\mathcal{F}_i$ and
\[
\EE\left[Y_{i+1}-Y_i\mid \mathcal{F}_i\right]\ge 0\;,\qquad\qquad
|Y_{i+1}-Y_i|\le c
\] 
both hold almost surely. Then, for all $n$ and for every $\eps>0$,
\[
\Pr[Y_n\le Y_0-\eps]\le\exp\left(
-\frac{\eps^2}{2cn}
\right)\;.
\]
\end{lemma}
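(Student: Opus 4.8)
The plan is to run the standard Chernoff / exponential-moment argument for martingale tail bounds, specialized to the one-sided submartingale setting. First I would pass to the reversed process $Z_i := Y_0 - Y_i$. Since $\EE[Z_{i+1}-Z_i\mid\mathcal{F}_i] = -\EE[Y_{i+1}-Y_i\mid\mathcal{F}_i]\le 0$, the sequence $(Z_i)$ is a supermartingale with $Z_0 = 0$, and its increments $D_i := Z_i - Z_{i-1}$ satisfy $|D_i|\le c$ and $\EE[D_i\mid\mathcal{F}_{i-1}]\le 0$ almost surely. The event $\{Y_n\le Y_0-\eps\}$ is exactly $\{Z_n\ge\eps\}$, so it suffices to bound $\Pr[Z_n\ge\eps]$.

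For any $\lambda>0$, Markov's inequality applied to $e^{\lambda Z_n}$ gives $\Pr[Z_n\ge\eps]\le e^{-\lambda\eps}\,\EE[e^{\lambda Z_n}]$, and I would control $\EE[e^{\lambda Z_n}]$ by peeling off increments: conditioning on $\mathcal{F}_{n-1}$,
\[
\EE[e^{\lambda Z_n}] = \EE\!\left[e^{\lambda Z_{n-1}}\,\EE[e^{\lambda D_n}\mid\mathcal{F}_{n-1}]\right]\;.
\]
The key inner estimate is the conditional Hoeffding lemma: if $W\in[-c,c]$ with $\EE[W\mid\mathcal{F}]\le 0$, then $\EE[e^{\lambda W}\mid\mathcal{F}]\le e^{\lambda^2 c^2/2}$. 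One proves this from the convexity bound $e^{\lambda w}\le\frac{c-w}{2c}e^{-\lambda c}+\frac{c+w}{2c}e^{\lambda c}$ valid on $[-c,c]$: taking conditional expectations gives an affine function of $\EE[W\mid\mathcal{F}]$ with positive slope (since $e^{\lambda c}>e^{-\lambda c}$ for $\lambda>0$), hence maximized at $\EE[W\mid\mathcal{F}]=0$, where it equals $\cosh(\lambda c)\le e^{(\lambda c)^2/2}$. Plugging this into the display and iterating down to $Z_0=0$ yields $\EE[e^{\lambda Z_n}]\le e^{n\lambda^2 c^2/2}$.

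Combining the two bounds gives $\Pr[Z_n\ge\eps]\le\exp(-\lambda\eps + n\lambda^2 c^2/2)$ for every $\lambda>0$; optimizing with $\lambda=\eps/(nc^2)$ produces $\exp(-\eps^2/(2nc^2))$, which in all our applications (where $c\le 1$, so $c^2\le c$) is at least as strong as the claimed $\exp(-\eps^2/(2cn))$. There is no serious obstacle here; the only point that needs a little care is invoking Hoeffding's lemma in its conditional, one-sided form, using the hypothesis $\EE[D_i\mid\mathcal{F}_{i-1}]\le 0$ rather than an equality — but this costs nothing, since the extra factor is $e^{\lambda\,\EE[D_i\mid\mathcal{F}_{i-1}]}\le 1$ for $\lambda>0$.
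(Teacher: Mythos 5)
The paper does not actually prove Lemma~\ref{lem:azuma}; it is stated as a standard citation (Azuma's inequality). Your proposal supplies the standard proof, and it is correct: pass to the supermartingale $Z_i=Y_0-Y_i$, apply Markov to $e^{\lambda Z_n}$, peel off increments via the conditional Hoeffding lemma to get $\EE[e^{\lambda Z_n}]\le e^{n\lambda^2 c^2/2}$, and optimize in $\lambda$. The conditional, one-sided Hoeffding step is handled correctly.

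One point worth flagging, which you did notice but should be stated more firmly: the bound your argument produces is $\exp\left(-\eps^2/(2nc^2)\right)$, which is the standard (and dimensionally correct) form, whereas the lemma as written in the paper has $2cn$ rather than $2c^2n$ in the denominator. For $c\le 1$ your bound is at least as strong, so the lemma as stated is implied; for $c>1$ the stated form would actually be stronger than Azuma and is false in general (a simple rescaling $Y_i\mapsto\lambda Y_i$, $\eps\mapsto\lambda\eps$, $c\mapsto\lambda c$ shows the exponent must scale like $\eps^2/c^2$, not $\eps^2/c$). In the paper's applications $c=1$, so nothing breaks, but the statement should really read $\exp\left(-\eps^2/(2c^2n)\right)$; your derivation, not the stated form, is the correct one.
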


\begin{lemma}[Azuma except with small probability]
\label{lem:modified-azuma}
As in Lemma~\ref{lem:azuma}, let
$Y_0,\ldots,Y_n$ be a sequence of random variables 
adapted to a filtration $\mathcal{F}_0,\ldots,\mathcal{F}_n$
and with bounded differences $|Y_{i+1}-Y_i|\le 1$.
Furthermore, let $D_{i+1}$ denote the event
\[
\EE[Y_{i+1}-Y_i\mid\mathcal{F}_i]< 0\;.
\] 
Then, for every $\eps>0$,
\begin{align}\label{eq:07}
\Pr[Y_n\le Y_0-\eps]\le
\exp\left(-\frac{\eps^2}{2n}\right)+
\Pr\left[\bigcup_{i=1}^{n} D_i\right]\;.
\end{align}
\end{lemma}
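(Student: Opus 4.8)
The plan is to reduce Lemma~\ref{lem:modified-azuma} to the ordinary Azuma inequality (Lemma~\ref{lem:azuma}) by \emph{stopping} the sequence just before its drift first becomes negative, rather than trying to repair the drift in place. For $i=1,\ldots,n$ write $d_i:=\EE[Y_i-Y_{i-1}\mid\mathcal{F}_{i-1}]$ for the one-step drift; this is $\mathcal{F}_{i-1}$-measurable, the bounded-difference hypothesis gives $|d_i|\le\EE[|Y_i-Y_{i-1}|\mid\mathcal{F}_{i-1}]\le 1$, and the event $D_i$ is exactly $\{d_i<0\}$. First I would introduce the stopping time
\[
\tau:=\min\bigl(\{0\le i\le n-1:d_{i+1}<0\}\cup\{n\}\bigr)\;,
\]
observe that $\{\tau\le k\}=\bigcup_{i=0}^{k}\{d_{i+1}<0\}\in\mathcal{F}_k$ for $k<n$ (so $\tau$ is a genuine stopping time), and record the key identity $\{\tau=n\}=\bigcap_{i=1}^{n}D_i^{c}$.

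Next I would verify that the stopped sequence $Y_{0\wedge\tau},\ldots,Y_{n\wedge\tau}$ still satisfies the hypotheses of Lemma~\ref{lem:azuma} with constant $c=1$. The increment is $Y_{(i+1)\wedge\tau}-Y_{i\wedge\tau}=(Y_{i+1}-Y_i)\,\1[\tau>i]$, so $|Y_{(i+1)\wedge\tau}-Y_{i\wedge\tau}|\le 1$ immediately, and each $Y_{i\wedge\tau}$ is $\mathcal{F}_i$-measurable. For the submartingale property I would split on the $\mathcal{F}_i$-measurable event $\{\tau\le i\}$, where the increment vanishes, versus $\{\tau>i\}$, where by definition of $\tau$ we have $d_1,\ldots,d_{i+1}\ge 0$, so $\EE[Y_{(i+1)\wedge\tau}-Y_{i\wedge\tau}\mid\mathcal{F}_i]=\1[\tau>i]\,d_{i+1}\ge 0$ almost surely. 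Lemma~\ref{lem:azuma} then yields $\Pr[Y_{n\wedge\tau}\le Y_0-\eps]\le\exp(-\eps^2/(2n))$.

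Finally I would split the event $\{Y_n\le Y_0-\eps\}$ according to whether $\tau=n$. On $\{\tau=n\}$ we have $Y_{n\wedge\tau}=Y_n$, so $\{Y_n\le Y_0-\eps\}\cap\{\tau=n\}\subseteq\{Y_{n\wedge\tau}\le Y_0-\eps\}$, while the complementary event satisfies $\{\tau<n\}=\bigcup_{i=1}^{n}D_i$. Combining,
\[
\Pr[Y_n\le Y_0-\eps]\le\Pr[Y_{n\wedge\tau}\le Y_0-\eps]+\Pr\Bigl[\bigcup_{i=1}^{n}D_i\Bigr]\le\exp\Bigl(-\frac{\eps^2}{2n}\Bigr)+\Pr\Bigl[\bigcup_{i=1}^{n}D_i\Bigr]\;,
\]
which is exactly~\eqref{eq:07}.

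The one point that actually requires care is the constant in the exponent. The more naive fix, subtracting the running sum $\sum_{j\le i}\min(d_j,0)$ to turn $Y$ into a submartingale, works but inflates the increment bound from $1$ to $2$ (since $|\min(d_j,0)|\le 1$), which would only give $\exp(-\eps^2/(4n))$. Stopping the process instead keeps every increment bounded by $1$, and that is what makes the stated bound $\exp(-\eps^2/(2n))$ come out; so the main thing to get right is the bookkeeping that $\tau$ is a stopping time and that the stopped increments genuinely retain the bound $1$ and the nonnegative conditional drift. Everything else is routine.
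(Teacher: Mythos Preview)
Your proof is correct and is essentially the same as the paper's: both freeze the process at the first index where the drift goes negative, verify the frozen process is a bounded-difference submartingale, apply Lemma~\ref{lem:azuma}, and then union-bound over the bad event. You phrase the freezing via the stopping time $\tau$ and the stopped process $Y_{i\wedge\tau}$, whereas the paper defines the same process by the explicit recursion $Y'_{i+1}=Y_{i+1}$ if none of $D_1,\ldots,D_{i+1}$ occurred and $Y'_{i+1}=Y'_i$ otherwise; these two constructions coincide pointwise.
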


\begin{proof}
The only difference to Lemma~\ref{lem:azuma} is that random
variables $Y_0,\ldots,Y_n$ do not always satisfy the
submartingale property.
To fix this, let $Y'_0,\ldots,Y'_n$ be a sequence of random variables given by
\[
Y'_0:=Y_0\;,
\qquad\qquad
Y'_{i+1}:=\begin{cases}
Y_{i+1}&\text{if none of events $D_1,\ldots,D_{i+1}$ happened,}\\
Y'_{i}&\text{otherwise.}
\end{cases}
\]
Keeping in mind the definitions of $D_i$ and $Y'_i$, by induction we see that $Y'_0,\ldots,Y'_n$ form a bounded difference submartingale with
respect to the filtration $\mathcal{F}_0,\ldots,\mathcal{F}_n$.
Consequently, by Lemma~\ref{lem:azuma},
\[
\Pr[Y'_n\le Y'_0-\eps]\le\exp\left(-\frac{\eps^2}{2n}\right)\;. 
\]
On the other hand, it also holds that if
none of $D_1,\ldots,D_{n}$ happened, then $Y'_n=Y_n$.
Hence,
\[
\Pr[Y_n\le Y_0-\eps]\le\Pr[Y'_n\le Y'_0-\eps]+\Pr[Y'_n\ne Y_n]
\le\exp\left(-\frac{\eps^2}{2n}\right)+
\Pr\left[\bigcup_{i=1}^{n} D_i\right]\;.
\qedhere\]
\end{proof}

Finally, in the proof of Lemma~\ref{lem:characteristic-close}
we are going to employ Lemma~6.2 from~\cite{Pol17}
in order to approximate $(1-Q+R)^n$ with $\exp(-nQ)$
for some $R\ll Q\ll 1$:
\begin{lemma}[Lemma~6.2 in \cite{Pol17}]
\label{lem:exp-nq-approx-real}
Let $Q,R\in\mathbb{R}$ such
that $Q^2,|R|\le 1/4n$. Then,
\[
\left|1-\frac{(1-Q+R)^n}{\exp(-nQ)}\right|,
\left|1-\frac{\exp(-nQ)}{(1-Q+R)^n}\right|
\le 4n(Q^2+|R|)\;.
\]
\end{lemma}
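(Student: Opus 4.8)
The plan is to take logarithms and reduce everything to a single estimate. Since $Q^2\le 1/(4n)$ forces $|Q|\le 1/(2\sqrt n)$ and $|R|\le 1/(4n)$, for $n\ge 2$ we have $|Q-R|\le \frac{1}{2\sqrt n}+\frac{1}{4n}<\frac12$; in particular $1-Q+R>0$, so
\[
    S:=n\log(1-Q+R)+nQ
\]
is well-defined and $(1-Q+R)^n/\exp(-nQ)=e^{S}$, $\exp(-nQ)/(1-Q+R)^n=e^{-S}$. The key step will be the bound $|S|\le 2n(Q^2+|R|)$. Granting it, and noting that $Q^2+|R|\le 1/(2n)$ forces $|S|\le 1$, I would apply the elementary inequality $|e^{t}-1|\le (e-1)|t|$ valid for $|t|\le 1$ (the function $t\mapsto(e^{t}-1)/t$ is increasing, hence attains its maximum on $[-1,1]$ at $t=1$). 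This gives $|e^{\pm S}-1|\le (e-1)|S|\le 2(e-1)\,n(Q^2+|R|)<4n(Q^2+|R|)$ since $2(e-1)=3.43\ldots<4$, which is exactly the two asserted bounds.

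To prove $|S|\le 2n(Q^2+|R|)$ I would expand $\log(1-x)=-x-\sum_{k\ge 2}x^{k}/k$ at $x=Q-R$; the linear term cancels $nQ$ up to $nR$, leaving $S=nR-n\sum_{k\ge2}(Q-R)^{k}/k$. Using $|Q-R|<\frac12$, the tail satisfies $\bigl|\sum_{k\ge2}(Q-R)^k/k\bigr|\le\tfrac12\sum_{k\ge2}|Q-R|^k=\tfrac12(Q-R)^2/(1-|Q-R|)<(Q-R)^2\le 2Q^2+2R^2$, and with $R^2\le |R|/(4n)$ this yields $n\bigl|\sum_{k\ge2}(Q-R)^k/k\bigr|< 2nQ^2+|R|/2$. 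Hence $|S|\le n|R|+2nQ^2+|R|/2\le 2nQ^2+2n|R|$, as desired. (The case $n=1$ needs a touch more care — e.g. work directly with $p:=(1-Q+R)e^{Q}$, bounding $|p-1|$ via $(1-Q)e^{Q}-1=-\sum_{j\ge0}Q^{j+2}/(j!\,(j+2))$ — but that regime is never used here.)

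There is no genuine obstacle; the only thing to watch is the numerical constant multiplying $n(Q^2+|R|)$. A careless argument overshoots: combining a crude tail bound with the weaker estimate $|e^{t}-1|\le |t|e^{|t|}$ would give only $|e^{S}-1|\le 2e\,n(Q^2+|R|)\approx 5.4\,n(Q^2+|R|)>4n(Q^2+|R|)$. What makes the bound go through is that $|Q-R|<\tfrac12$ renders the quadratic tail subordinate to $nQ^2$, so $|S|\le 2n(Q^2+|R|)\le 1$, together with the sharp constant $e-1$ in $|e^{t}-1|\le(e-1)|t|$ on $[-1,1]$, which satisfies $2(e-1)<4$.
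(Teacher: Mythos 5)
The paper itself does not prove this lemma; it imports it as Lemma~6.2 of the Polymath preprint, so there is no in-paper argument to compare against. Your proof is correct and self-contained for $n\ge 2$. The skeleton — write the ratio as $e^{S}$ with $S=n\log(1-Q+R)+nQ$, show $|S|\le 2n(Q^2+|R|)\le 1$ via the log series, then convert with $|e^{t}-1|\le(e-1)|t|$ on $[-1,1]$ — is clean, and all the numerical steps check out: $|Q-R|<1/2$ for $n\ge 2$, so the quadratic tail of the series is at most $(Q-R)^2\le 2Q^2+2R^2$, and $2nR^2\le|R|/2$ closes the estimate $|S|\le 2n(Q^2+|R|)$; the monotonicity of $t\mapsto(e^t-1)/t$ gives the sharp constant $e-1$, and $2(e-1)\approx 3.44<4$. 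Your remark about the margin is exactly right: the looser $|e^t-1|\le|t|e^{|t|}$ would not deliver the stated constant.

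The one gap you flag yourself — $n=1$, where $|Q-R|$ can reach $3/4$ and the bound $|S|\le 2n(Q^2+|R|)$ no longer follows from your series estimate — is real but immaterial: the paper's standing convention is that lemmas about dice hold for $n$ large enough, and the lemma is invoked only deep inside the CLT argument with $n\to\infty$. If you wanted to close it anyway, the direct route you sketch (expanding $(1-Q)e^{Q}-1$) works, or one can simply note that for $n=1$ the right-hand side $4(Q^2+|R|)$ can be as large as $2$ while the left-hand side is bounded by an explicit constant, and verify by a short calculus exercise.
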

Actually, we need a complex version of 
Lemma~\ref{lem:exp-nq-approx-real} which uses the real
version in the proof:
\begin{lemma}
\label{lem:exp-nq-approx}
Let $Q\in\mathbb{R}$, $R\in\mathbb{C}$ such
that $Q^2,|R|\le 1/100n$. Then,
\[
\left|1-\frac{(1-Q+R)^n}{\exp(-nQ)}\right|,
\left|1-\frac{\exp(-nQ)}{(1-Q+R)^n}\right|
\le 40n(Q^2+|R|)\;.
\]
\end{lemma}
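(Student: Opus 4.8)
The plan is to deduce the complex version from the real version (Lemma~\ref{lem:exp-nq-approx-real}) by splitting $R$ into its real and imaginary parts. Write $R = R_1 + iR_2$ with $R_1,R_2\in\mathbb{R}$, so that $|R_1|,|R_2|\le|R|\le 1/100n$. The quantity $1-Q+R = (1-Q+R_1) + iR_2$ is a complex number; I would factor it as $(1-Q+R_1)\bigl(1 + i R_2/(1-Q+R_1)\bigr)$, using that $1-Q+R_1$ is real and, since $Q,|R_1|$ are tiny, bounded away from zero (say in $[1/2,2]$ for $n$ large). Raising to the $n$-th power,
\[
\frac{(1-Q+R)^n}{\exp(-nQ)}
= \frac{(1-Q+R_1)^n}{\exp(-nQ)}\cdot\left(1 + \frac{iR_2}{1-Q+R_1}\right)^n\;.
\]
The first factor is controlled directly by Lemma~\ref{lem:exp-nq-approx-real} applied with the real perturbation $R_1$ (its hypothesis $Q^2,|R_1|\le 1/4n$ is implied by our stronger hypothesis), giving that it equals $1 + E_1$ with $|E_1|\le 4n(Q^2+|R_1|)\le 4n(Q^2+|R|)$.

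For the second factor, set $\theta := R_2/(1-Q+R_1)$, a real number with $|\theta|\le 2|R_2|\le 2|R|\le 1/50n$. Then $(1+i\theta)^n$ has modulus $(1+\theta^2)^{n/2}\le \exp(n\theta^2/2)$ and argument $n\arctan\theta$, with $|n\arctan\theta|\le n|\theta|\le 1/50$; so $(1+i\theta)^n$ is close to $1$. Quantitatively, I would bound $|(1+i\theta)^n - 1|$ by a telescoping/product estimate: since $|1+i\theta| \le 1 + \theta^2 \le 1 + 1/2500n^2$, all partial products stay within a factor $e$ of $1$, and $|(1+i\theta)^k - (1+i\theta)^{k-1}| = |\theta|\cdot|1+i\theta|^{k-1}\le e|\theta|$, so summing over $k=1,\dots,n$ gives $|(1+i\theta)^n - 1| \le e n|\theta| \le 2en|R| \le 6n|R|$. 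Hence the second factor equals $1 + E_2$ with $|E_2|\le 6n|R| \le 6n(Q^2+|R|)$. Multiplying, $(1-Q+R)^n/\exp(-nQ) = (1+E_1)(1+E_2) = 1 + E_1 + E_2 + E_1E_2$; since $|E_1|,|E_2|$ are each at most, say, $1/2$ (using $Q^2,|R|\le 1/100n$), the cross term is absorbed and $\bigl|1 - (1-Q+R)^n/\exp(-nQ)\bigr|\le |E_1| + |E_2| + |E_1E_2| \le 3(|E_1|+|E_2|)/2 \le \tfrac{3}{2}\cdot 10n(Q^2+|R|) \le 40n(Q^2+|R|)$, with room to spare.

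The reciprocal bound $\bigl|1 - \exp(-nQ)/(1-Q+R)^n\bigr|\le 40n(Q^2+|R|)$ follows by the same argument run with the reciprocals: the real part is handled by the second half of Lemma~\ref{lem:exp-nq-approx-real}, and $(1+i\theta)^{-n}$ is estimated exactly as $(1+i\theta)^n$ was, since $|1+i\theta|^{-1}\le 1$ makes the telescoping even easier. Alternatively, one can note that if $x = 1 + \delta$ with $|\delta|\le 1/2$ then $|1 - 1/x| = |\delta|/|1+\delta| \le 2|\delta|$, and apply this with $\delta = E_1 + E_2 + E_1E_2$, at the cost of a factor $2$ which the generous constant $40$ (versus $10$ in the real case) easily accommodates. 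I do not expect any genuine obstacle here; the only mild care needed is tracking that all the intermediate factors stay uniformly bounded away from $0$ and $\infty$ so that the crude product/telescoping estimates are valid, which is exactly what the hypothesis $Q^2,|R|\le 1/100n$ (stronger than the real version's $1/4n$) buys us.
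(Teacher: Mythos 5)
Your proof is correct, and it takes a genuinely different route from the paper. The paper sets $z := 1 - (1-Q+R)^n/\exp(-nQ)$ and bounds $|\Im z|$ and $|\Re z|$ separately: it expands $(1-Q+R)^n$ by the multinomial theorem, observes that the contribution to $\Im L$ (respectively the deviation of $\Re L$ from $(1-Q)^n$) is dominated in absolute value by $(1-Q+|\alpha|+|\beta|)^n - (1-Q)^n$, and then normalizes by $\exp(-nQ)$ and applies the real lemma to both of these real quantities; combining the two parts gives $|z|\le 20n(Q^2+|R|)$, and the reciprocal bound then follows because $|1-z|\ge 1/2$. You instead factor the base: $1-Q+R = (1-Q+R_1)(1+i\theta)$ with $\theta = R_2/(1-Q+R_1)$ real, so the ratio splits as a real factor (controlled directly by Lemma~\ref{lem:exp-nq-approx-real}) times $(1+i\theta)^n$ (controlled by an elementary telescoping estimate). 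Both arguments use the real lemma as the key input, but your factoring approach keeps the real and complex perturbations cleanly separated and avoids the multinomial bookkeeping; it also happens to land on slightly smaller numerical constants ($\le 15$ and $\le 30$ rather than $20$ and $40$), though both are well within the stated $40$. The details all check: $|1-Q+R_1|\ge 1/2$ holds for all $n\ge 1$ under $Q^2,|R|\le 1/100n$, so $|\theta|\le 2|R|$, $|1+i\theta|^n \le \exp(n\theta^2)\le e$, the telescoping sum gives $|(1+i\theta)^n - 1|\le en|\theta|\le 6n|R|$, and the cross-term absorption and the $|1-1/x|\le 2|\delta|$ trick for the reciprocal are both legitimate given $|E_1|,|E_2|\le 1/2$.
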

\begin{proof}
Let us write $R=\alpha+\beta i$ and denote
$L:=(1-Q+R)^n$, $z:=1-L/\exp(-nQ)$. We proceed to estimate
the imaginary and real part of $z$:
\begin{align*}
\left|\Im L\right|
&=\left|\Im\sum_{k=0}^n\binom{n}{k}(1-Q)^{n-k}
\sum_{j=0}^k\binom{k}{j}\alpha^{k-j}(\beta i)^j\right|
\le\sum_{k=1}^n\binom{n}{k}(1-Q)^{n-k}
\sum_{j=0}^k\binom{k}{j}|\alpha|^{k-j}|\beta|^j\\
&=(1-Q+|\alpha|+|\beta|)^n-(1-Q)^n\;,
\end{align*}
and consequently, applying Lemma~\ref{lem:exp-nq-approx-real}
to $(1-Q+|\alpha|+|\beta|)^n$ and $(1-Q)^n$,
\begin{align}
|\Im z|
&=\frac{|\Im L|}{\exp(-nQ)}
\le\frac{(1-Q+|\alpha|+|\beta|)^n}{\exp(-nQ)}
-\frac{(1-Q)^n}{\exp(-nQ)}\nonumber\\
&\le 1+4n(Q^2+2|R|)-1+4nQ^2=8n(Q^2+|R|)\;.
\label{eq:15}
\end{align}
By a similar argument, we also establish
\begin{align*}
    \left|\Re L-(1-Q)^n\right|
    \le(1-Q+|\alpha|+|\beta|)^n-(1-Q)^n
\end{align*}
and
\begin{align}
    \left|\Re z\right|
    &=\left|
    1-\frac{(1-Q)^n}{\exp(-nQ)}
    + \frac{(1-Q)^n-\Re L}{\exp(-nQ)}
    \right|
    \nonumber\\
    &\le\left|1-\frac{(1-Q)^n}{\exp(-nQ)}
    \right|+\frac{(1-Q+|\alpha|+|\beta|)^n}{\exp(-nQ)}
    -\frac{(1-Q)^n}{\exp(-nQ)}\le 12n(Q^2+|R|)\;.
    \label{eq:16}
\end{align}
Equations \eqref{eq:15} and~\eqref{eq:16} together imply
\begin{align*}
    \left|1-\frac{(1-Q+R)^n}{\exp(-nQ)}\right|
    \le 20n(Q^2+|R|)\;,
\end{align*}
as we wanted. Finally, we obtain the other inequality with
\begin{align*}
    \left|1-\frac{\exp(-nQ)}{(1-Q+R)^n}\right|
    &=|z|\frac{\exp(-nQ)}{|1-Q+R|^n}
    \le 40n(Q^2+|R|)\;.\qedhere
\end{align*}
\end{proof}

\subsection{Interpolating 
\texorpdfstring{$\fhat$}{f hat} 
and concentration
of \texorpdfstring{$U$}{U}}

We continue to introduce some more technical tools. We
start with some bounds on the characteristic function $\fhat$
that we will later use to interpolate values of $\fhat$
by adjacent values on a discrete grid:

\begin{claim}\label{cl:interpolation-a}
Let $A, A'$ and $B$ be three dice such that $A$ and $A'$
are equal except for
one face $i$ with $|a_i-a'_i|\le\eps$. Denote the characteristic
functions of $(U_A,U_B,V-n/2)$ and $(U_{A'},U_B,V-n/2)$ as,
respectively, $\fhat$ and $\fhat'$.
Then, for
every $\alpha,\beta,\gamma\in\mathbb{R}$,
\begin{align*}
    \Big|
    \fhat(\alpha,\beta,\gamma)-\fhat'(\alpha,\beta,\gamma)
    \Big|\le2\eps/n\;.
\end{align*}
\end{claim}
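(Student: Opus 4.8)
The plan is to bound the difference of the two characteristic functions pointwise by isolating the single coordinate in which $A$ and $A'$ differ. Write $U_A = g_A(V)$ and $U_{A'} = g_{A'}(V)$ where $V$ is uniform on $[0,n]$, and recall $g_A(x) = f_A(x) - x$, $g_{A'}(x) = f_{A'}(x) - x$. Since $A$ and $A'$ agree on all faces except face $i$, the counting functions $f_A$ and $f_{A'}$ differ only on the interval $I$ strictly between $a_i$ and $a'_i$: on that interval one of them counts face $i$ and the other does not, so $|g_A(x) - g_{A'}(x)| \le 1$ for $x \in I$ and $g_A(x) = g_{A'}(x)$ for $x \notin I$. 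Moreover $|I| = |a_i - a'_i| \le \eps$.

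Next I would write out the difference of the characteristic functions as a single integral over $V$:
\begin{align*}
\fhat(\alpha,\beta,\gamma) - \fhat'(\alpha,\beta,\gamma)
= \EE\Big[\big(e(\alpha U_A) - e(\alpha U_{A'})\big)\, e\big(\beta U_B + \gamma(V - n/2)\big)\Big]\;.
\end{align*}
Since $|e(\beta U_B + \gamma(V-n/2))| = 1$, taking absolute values inside the expectation gives the bound $\EE\,|e(\alpha U_A) - e(\alpha U_{A'})|$. The integrand vanishes unless $V \in I$, because outside $I$ we have $U_A = U_{A'}$. When $V \in I$, I bound $|e(\alpha U_A) - e(\alpha U_{A'})| \le 2$ trivially (one could also use $2\pi|\alpha|\,|U_A - U_{A'}| \le 2\pi|\alpha|$, but the crude bound $2$ suffices). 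Since $V$ is uniform on an interval of length $n$, $\Pr[V \in I] = |I|/n \le \eps/n$, and therefore
\begin{align*}
\big|\fhat(\alpha,\beta,\gamma) - \fhat'(\alpha,\beta,\gamma)\big|
\le 2 \cdot \Pr[V \in I] \le \frac{2\eps}{n}\;,
\end{align*}
which is exactly the claimed bound, uniformly in $(\alpha,\beta,\gamma)$.

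There is no real obstacle here; the only point requiring a moment's care is the observation that $g_A - g_{A'}$ is supported on the short interval $I$ and bounded by $1$ there, which follows immediately from the definition of $f_A$ as a count of faces below a threshold and the fact that $A,A'$ differ in exactly one face. The factor $2$ (rather than something involving $|\alpha|$) is deliberate: the claim needs to hold for all $\alpha$, including large $\alpha$, so using $|e(\cdot) - e(\cdot)| \le 2$ is both the simplest and the correct choice. One should double-check the edge cases where $a_i = a'_i$ (in which case $I = \emptyset$, $\fhat = \fhat'$, and the bound is trivial) and the orientation of $I$ (whether $a_i < a'_i$ or $a'_i < a_i$), but neither affects the argument — only $|I| = |a_i - a'_i|$ enters.
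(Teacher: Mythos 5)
Your proof is correct and follows essentially the same route as the paper: write $\fhat-\fhat'$ as an expectation over $V$, observe that the integrands agree outside the interval between $a_i$ and $a'_i$ (which has probability at most $\eps/n$ under $V$), bound the difference by $2$ there, and conclude by the triangle inequality. The extra detail you give about $|g_A-g_{A'}|\le 1$ on that interval is fine but not needed, since the crude bound $|e(\cdot)-e(\cdot)|\le 2$ is all one uses.
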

\begin{proof}
Let $z_A(v):=e(\alpha g_A(v)+\beta g_B(v)+\gamma (v-n^2/2))$
and $z_{A'}(v)=e(\alpha g_{A'}(v)+\beta g_B(v)+\gamma (v-n^2/2))$
that is $\fhat(\alpha,\beta,\gamma)=\EE z_A(V)$
and $\fhat'(\alpha,\beta,\gamma)=\EE z_{A'}(V)$.
Since $|a_i-a'_i|\le\eps$, we have $z_A(v)=z_{A'}(v)$ everywhere
except on an interval of length at most $\eps$, i.e., on
a set of measure at most $\eps/n$.  
Since $|z_A(v)-z_{A'}(v)|\le 2$ always holds,
the result follows by the triangle inequality.
\end{proof}

\begin{lemma}\label{lem:interpolation}
For every $\alpha_0,\alpha,\beta_0,\beta,\gamma\in\mathbb{R}$,
we have
\[
\left|\fhat(\alpha,\beta,\gamma)-\fhat(\alpha_0,\beta_0,\gamma)\right|
\le2\pi\Big(|\alpha-\alpha_0|\left\|U_A\right\|_{\infty}+
|\beta-\beta_0|\left\|U_B\right\|_\infty\Big)\;.
\]
\end{lemma}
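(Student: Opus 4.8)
The plan is to exploit that $\fhat$ is an expectation of $e(\cdot)$ and that $t\mapsto e(t)=\exp(2\pi i t)$ is $2\pi$-Lipschitz on $\bbR$. First I would write, using linearity of expectation and factoring out the common phase $e(\gamma(V-n/2))$, which has modulus $1$,
\[
\fhat(\alpha,\beta,\gamma)-\fhat(\alpha_0,\beta_0,\gamma)
=\EE\Big[e\big(\gamma(V-n/2)\big)\big(e(\alpha U_A+\beta U_B)-e(\alpha_0 U_A+\beta_0 U_B)\big)\Big]\;.
\]
Taking absolute values and applying $|\EE Z|\le\EE|Z|$ together with $|e(\gamma(V-n/2))|=1$ gives
\[
\big|\fhat(\alpha,\beta,\gamma)-\fhat(\alpha_0,\beta_0,\gamma)\big|
\le\EE\big|e(\alpha U_A+\beta U_B)-e(\alpha_0 U_A+\beta_0 U_B)\big|\;.
\]

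Next I would apply the elementary bound $|e(s)-e(t)|=|e^{2\pi i s}-e^{2\pi i t}|\le 2\pi|s-t|$ (immediate from $|e^{i\theta}-e^{i\varphi}|\le|\theta-\varphi|$), with $s=\alpha U_A+\beta U_B$ and $t=\alpha_0 U_A+\beta_0 U_B$, so that pointwise
\[
\big|e(\alpha U_A+\beta U_B)-e(\alpha_0 U_A+\beta_0 U_B)\big|
\le 2\pi\big(|\alpha-\alpha_0|\,|U_A|+|\beta-\beta_0|\,|U_B|\big)\;.
\]
Since $|U_A|\le\|U_A\|_\infty$ and $|U_B|\le\|U_B\|_\infty$ almost surely, taking expectations of both sides yields the claimed bound.

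There is essentially no obstacle here; this is a routine Lipschitz estimate. The only points to keep track of are the factor $2\pi$ arising from the normalization $e(\cdot)=\exp(2\pi i\,\cdot)$, and the fact that $U_A=g_A(V)$ and $U_B=g_B(V)$ are driven by the same randomness $V$ — but this plays no role, since the bound is applied pointwise before integrating. (Equivalently, one could note $|\partial_\alpha\fhat|\le 2\pi\,\EE|U_A|\le 2\pi\|U_A\|_\infty$ and similarly for $\partial_\beta\fhat$, then integrate along the segment from $(\alpha_0,\beta_0)$ to $(\alpha,\beta)$ at fixed $\gamma$, obtaining the same constant.)
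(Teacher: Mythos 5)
Your proof is correct and is essentially the same as the paper's: the paper factors out the full phase $e(\alpha_0 U_A+\beta_0 U_B+\gamma(V-n/2))$ to reduce to bounding $|e((\alpha-\alpha_0)U_A+(\beta-\beta_0)U_B)-1|\le 2\pi|(\alpha-\alpha_0)U_A+(\beta-\beta_0)U_B|$, which is exactly your $|e(s)-e(t)|\le 2\pi|s-t|$ estimate after noting $|e(t)|=1$. The only cosmetic difference is which portion of the phase you pull out; the underlying Lipschitz bound and the final step are identical.
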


\begin{proof}
Since we have
\[
|e(x)-1|^2=\left(\cos(2\pi x)-1\right)^2+\sin^2(2\pi x)
=2\left(1-\cos(2\pi x)\right)\le 4\pi^2 x^2\;,
\]
where in the last step we used $\cos x\ge 1-x^2/2$ for $x\in\mathbb{R}$,
we can use it to write
\begin{align*}
\left|\fhat(\alpha,\beta,\gamma)-\fhat(\alpha_0,\beta_0,\gamma)\right|
&=\Big|\EE\Big[
e\left(\alpha_0 U_A+\beta_0 U_B+\gamma(V-n/2)\right)
\big(e\left((\alpha-\alpha_0)U_A+(\beta-\beta_0)U_B\right)-1\big)
\Big]\Big|\\
&\le\EE\Big[\Big|
e((\alpha-\alpha_0)U_A+(\beta-\beta_0)U_B)-1
\Big|\Big]\\
&\le2\pi\Big(|\alpha-\alpha_0|\left\|U_A \right\|_\infty
+|\beta-\beta_0|\left\|U_B \right\|_\infty\Big)\;.\qedhere
\end{align*}
\end{proof}

We also show that, with high probability
over the choice of balanced $A$, the random variable
$U_A$ is uniformly bounded by $\Otilde\left(\sqrt{n}\right)$.
As in other places, 
this is an adaptation of a similar Polymath argument
to the continuous setting.
\begin{lemma}\label{lem:supremum-norm}
Let $A$ be a random balanced die. Then, except with probability
$n^{-10}$, we have that
\[
\left|g_A(t)\right|<5\sqrt{n\log n}\;.
\]
for every $0\le t\le n$. In other words,
$\|U_A\|_\infty\le5\sqrt{n\log n}$.
\end{lemma}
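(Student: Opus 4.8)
The plan is to fix a single value $t\in[0,n]$, control $g_A(t)=f_A(t)-t$ for a \emph{random balanced} die, and then take a union bound over a sufficiently fine grid of $t$'s, using that $f_A$ (hence $g_A$) is monotone and changes by at most $1$ at each face so that control on a grid of spacing $1$ suffices to control all of $[0,n]$. The main point is that for an \emph{unconditioned} die (faces iid uniform in $[0,n]$), $f_A(t)$ is a sum of $n$ iid Bernoulli's with mean $t/n$, so $\EE f_A(t)=t$ and the Chernoff bound of Claim~\ref{cl:chernoff} gives $\Pr[|g_A(t)|\ge s]\le 2\exp(-2s^2/n)$; with $s=4\sqrt{n\log n}$ this is $\le 2n^{-32}$, which beats the $n+1$ grid points and the $n^{-10}$ error budget with a lot of room to spare.

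The one subtlety is the conditioning on the die being balanced, i.e.\ on the face-sum $\sum_i a_i=n^2/2$, equivalently $V^{*n}=n^2/2$. First I would pass to the rejection-sampling description in~\eqref{eq:44}: conditioning on a balanced die means conditioning the event on $\{0\le a_n\le 1\}$ in the $[0,1]$ normalization, i.e.\ on a set of probability bounded below by a constant (actually $\Theta(1/n)$ in the $[0,n]$ normalization, by Claim~\ref{cl:h-lower-bound}, since $u(0)\ge 1/4n\sqrt n$ forces $\Pr[V^{*n}\in[n^2/2-1,n^2/2]]=\Omega(1/n)$). Hence any unconditioned event of probability $p$ has conditional probability at most $\cO(n\cdot p)$ after conditioning on balancedness. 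Applied with $p\le 2n^{-32}$ per grid point and $n+1$ grid points, the conditional failure probability is still $\cO(n^{-28})\ll n^{-10}$. Concretely: let $E_t$ be the event $|g_A(t)|\ge 4\sqrt{n\log n}$ for $t\in\{0,1,\dots,n\}$; bound $\Pr[E_t]$ for the unconditioned die by Chernoff; take a union bound over the $n+1$ values; and finally inflate by the $\cO(n)$ factor coming from conditioning on the balancedness event. Then observe that if $|g_A(t)|<4\sqrt{n\log n}$ at all integer $t$, then for arbitrary $s\in[0,n]$, writing $t=\lfloor s\rfloor$ and $t'=\lceil s\rceil$, monotonicity of $f_A$ gives $f_A(t)\le f_A(s)\le f_A(t')$ while $t\le s\le t'\le t+1$, so $g_A(s)$ lies between $g_A(t)-1$ and $g_A(t')+1$, hence $|g_A(s)|<4\sqrt{n\log n}+1<5\sqrt{n\log n}$ for $n$ large.

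The step I expect to be the only mild obstacle is getting the conditioning-inflation factor honestly: one must quote a lower bound on $\Pr[\text{$A$ balanced}]$, and Claim~\ref{cl:h-lower-bound} (together with the third bullet there, or with unimodality of $u$) gives $\Pr[|V^{*n}-n^2/2|\le 1]\ge 1/(4n\sqrt n)$, which suffices; any polynomial lower bound would do, since the Chernoff tail is superpolynomially small. Everything else is a routine union bound plus the monotone-interpolation remark, and the slack ($n^{-28}$ versus the required $n^{-10}$) means no part of the estimate needs to be tight.
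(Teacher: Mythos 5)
Your overall plan matches the paper's: a Chernoff bound at each integer $t$, inflation by the reciprocal of the acceptance probability in the rejection sampling, a union bound over the $\cO(n)$ grid points, and interpolation to all $t\in[0,n]$ using monotonicity of $f_A$. The interpolation and union-bound steps are correct and essentially identical to the paper. However, there is a genuine gap in the Chernoff-plus-inflation step.

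You compute $\Pr[|g_A(t)|\ge s]\le 2\exp(-2s^2/n)$ for ``an unconditioned die (faces iid uniform in $[0,n]$),'' i.e.\ for the model with $n$ iid faces, and then want to divide by $\Pr[\text{balanced}]$. But the balanced conditioning is a measure-zero event in the $n$-iid model, so the inequality $\Pr[E\cond\text{balanced}]\le\Pr[E]/\Pr[\text{balanced}]$ is not available there. In the rejection-sampling model from~\eqref{eq:44}, where the acceptance event \emph{does} have positive probability, the die $A$ has $n-1$ iid coordinates $a_1,\ldots,a_{n-1}$ and a deterministic last face $a_n=n^2/2-\sum_{i<n}a_i$; consequently $f_A(t)=\big|\{i<n:a_i\le t\}\big|+\mathbbm{1}[a_n\le t]$ is \emph{not} a sum of $n$ iid Bernoulli variables, and the indicator $\mathbbm{1}[a_n\le t]$ is correlated with the first $n-1$ coordinates. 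So the Chernoff bound you wrote down is for a different random variable than the one the inflation inequality applies to. The paper's proof patches exactly this: it replaces $f_A(t)$ by $W:=|\{i\in[n-1]:a_i\le t\}|$, notes $W\in\{f_A(t),f_A(t)-1\}$, and observes that $|g_A(t)|\ge\alpha$ implies an event depending only on $W$ (losing a $\pm 1$), after which $W$ is an honest sum of $n-1$ iid Bernoullis and Chernoff applies cleanly in the right model. This one-line trick is what is missing from your writeup, and without it the ``inflate by $\cO(n\cdot p)$'' step is not justified.

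Two smaller points: the acceptance probability is $\Omega(1/\sqrt{n})$, not $\Theta(1/n)$ --- the acceptance window for $\sum_{i<n}a_i$ has width $n$ and the relevant density is of order $n^{-3/2}$ (your cited $\Pr[V^{*n}\in[n^2/2-1,n^2/2]]$ is $\Omega(n^{-3/2})$, not $\Omega(1/n)$, and in any case concerns the wrong window). These errors do not affect the conclusion because of the ample slack, but they suggest you were implicitly mixing the $n$-iid and $(n-1)$-iid descriptions, which is the same confusion behind the main gap. Once you insert the $W$-reduction and redo the Chernoff bound with $n-1$ Bernoullis, the rest of your argument goes through as intended.
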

\begin{proof}
For the purposes of this proof, let $A$ be a die
with faces iid uniform in $[0,n]$ and let $\cE$
denote the event that $A$ is balanced.
If not for the balancing, we would be
done by an application of Claim~\ref{cl:chernoff}.

To deal with the balanced case, let
$\alpha\ge 2$ be a threshold that we will choose later.
Recall that
one way of sampling $A$ from the conditional
distribution is to sample $a_1,\ldots,a_{n-1}$ iid from $[0,n]$
and reject unless 
$\frac{n^2}{2}-n\le\sum_{i=1}^{n-1} a_i\le\frac{n^2}{2}$,
in which case we set $a_n:=n^2/2-\sum_{i=1}^{n-1} a_i$. Because
of that, it might be helpful to reduce the analysis
of the ``bad event'' $|g_A(t)|\ge\alpha$ to a related event
depending only on the initial $n-1$ coordinates. More specifically, let
$W:=|\{i\in[n-1]:a_i\le t\}|$ and observe
that $W\in\{f_A(t),f_A(t)-1\}$.
By an elementary analysis of
cases $f_A(t)\ge t+\alpha$ and
$f_A(t)\le t-\alpha$, one checks
that
\begin{align}\label{eq:57}
    \mathbbm{1}\Big[|g_A(t)|\ge\alpha\Big]
    &=
    \mathbbm{1}\Big[
    \left|f_A(t)-t\right|\ge
    \alpha\Big]
    \le
    \mathbbm{1}\Bigg[
    \left|\frac{W}{n-1}-\frac{t}{n}\right|
    \ge\frac{\alpha-1}{n-1}
    \Bigg]\;.
\end{align}
Let $\mathcal{F}$ denote the event
$\left|\frac{W}{n-1}-\frac{t}{n}\right|
\ge\frac{\alpha-1}{n-1}$.
Since $W$ is a sum
of $n-1$ iid binary random variables
with expectation $t/n$ each, by
Claim~\ref{cl:chernoff} we have
\begin{align}\label{eq:58}
\Pr[\mathcal{F}]
    \le2\exp\left(-\frac{2(\alpha-1)^2}{n-1}\right)
    \le2\exp\left(-\frac{\alpha^2}{2n}\right)\;,
\end{align}
where we used that $\alpha\ge 2$ implies
$(\alpha-1)^2\ge\alpha^2/4$.
Let $D:=\{(a_1,\ldots,a_{n-1})\in[0,n]^{n-1}:
\frac{n^2}{2}-n\le\sum_{i=1}^{n-1}a_i\le\frac{n^2}{2}\}$
and $a:=(a_1,\ldots,a_{n-1})$.
Putting~\eqref{eq:57} and~\eqref{eq:58} together,
we have, for random balanced $A$,
\begin{align*}
\Pr\Big[|g_A(t)|\ge\alpha\cond\cE\Big]
&
\le\frac
{n^{-(n-1)}\int
\mathbbm{1}\left[\mathcal{F}\right]
\cdot\mathbbm{1}[a\in D]
\,\mathrm{d}a}
{n^{-(n-1)}\int
\mathbbm{1}[a\in D]\,\mathrm{d}a}
\le
\frac
{\Pr[\mathcal{F}]}
{\Pr[a\in D]}
\le 5\sqrt{n} \cdot 
2\exp\left(-\frac{\alpha^2}{2n}\right)\;,
\end{align*}
where in the end we made the estimate
$\Pr[a\in D]\ge1/5\sqrt{n}$ by using 
Lemma~\ref{lem:berry-esseen} (for that purpose, one
checks $\EE(a_i-n/2)^2=n^2/12$ and
$\EE|a_i-n/2|^3=n^3/32$).

Letting $\alpha:=\sqrt{24 n\log n}$ we get that
$\Pr[|g_A(t)|\ge\alpha\cond\cE]\le 10\sqrt{n}\cdot n^{-12}$ for any fixed
$t$. Applying this for $t=1,2,\ldots,n-1$
(note that $g_A(0)=g_A(n)=0$ almost surely), by union bound
we get that, except with probability $n^{-10}$, 
$|g_A(i)|<\alpha$ for every
$i=1,\ldots,n$.
Finally, if $|g_A(i)|<\alpha$ for each integer $i$,
then also for any $i<t<i+1$ we have
\[
t-\alpha-1< i-\alpha<f_A(i)\le f_A(t)
\le f_A(i+1)<i+1+\alpha<t+\alpha+1
\]
implying
$|g_A(t)|< \alpha+1<5\sqrt{n\log n}$ for
every $0\le t\le n$.
\end{proof}

Finally, we make a similar conditional concentration bound for the 
convolutions $U_A^{*n}$ and $G_A^{*n}$.
\begin{lemma}\label{lem:u_a-concentration}
Let $A$ be a balanced die such that
$\|U_A\|_{\infty}\le5\sqrt{n\log n}$. Then,
\begin{align}\label{eq:24}
    \Pr\left[\left|U_A^{*n}\right|\ge 50n\log n
    \;\middle\vert\; V^{*n}=\frac{n^2}{2}\right]
    \le n^{-10}\;.
\end{align}
Similarly, for such a die it holds that
\begin{align}\label{eq:25}
    \Pr\left[\left|G_A^{*n}\right|\ge 30n\log n
    \;\middle\vert\; H^{*n}=\frac{n^2}{2}\right]
    \le n^{-10}\;.
\end{align}
\end{lemma}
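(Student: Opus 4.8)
I would treat the two bounds separately. The Gaussian estimate~\eqref{eq:25} is essentially immediate: by Claim~\ref{cl:conditional-variance}, conditioned on $H^{*n}=n^2/2=\EE H^{*n}$ the variable $G_A^{*n}$ is a centered Gaussian with variance $n\big(\Var_A-CV_A^2/\Var H\big)\le n\Var_A$. Since $\EE U_A=0$ by Claim~\ref{cl:simple-g}, we have $\Var_A=\EE U_A^2\le\|U_A\|_\infty^2\le 25 n\log n$, so the conditional variance is at most $25 n^2\log n$; plugging $t=30n\log n$ into the Gaussian tail bound of Claim~\ref{cl:chernoff} then gives $\Pr[|G_A^{*n}|\ge 30n\log n\cond H^{*n}=n^2/2]\le 2\exp(-18\log n)=2n^{-18}\le n^{-10}$.

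For~\eqref{eq:24} the point is that a probability conditioned on the null event $\{V^{*n}=n^2/2\}$ cannot be bounded by a union bound directly, so I would use the rejection-sampling description of the conditional law from convention~\eqref{eq:44}, in the $[0,n]$ normalization of this section: sample $V_1,\dots,V_{n-1}$ iid uniform in $[0,n]$, put $V_n:=n^2/2-\sum_{i=1}^{n-1}V_i$, and condition on the event $\cE:=\{0\le V_n\le n\}$, equivalently $\{\sum_{i=1}^{n-1}V_i\in[n^2/2-n,\,n^2/2]\}$; under this law $U_A^{*n}=\sum_{i=1}^{n-1}g_A(V_i)+g_A(V_n)$. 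The last term has absolute value at most $\|U_A\|_\infty\le 5\sqrt{n\log n}\le n\log n$, so $\{|U_A^{*n}|\ge 50n\log n\}\subseteq\{|\sum_{i=1}^{n-1}g_A(V_i)|\ge 49n\log n\}$ and hence
\[
\Pr\!\left[|U_A^{*n}|\ge 50n\log n\mcond V^{*n}=\tfrac{n^2}{2}\right]\le\frac{\Pr\big[\,\big|\sum_{i=1}^{n-1}g_A(V_i)\big|\ge 49n\log n\,\big]}{\Pr[\cE]}\;,
\]
where the numerator is now an \emph{unconditional} probability over iid $V_1,\dots,V_{n-1}$. The event $\cE$ is only polynomially unlikely: its target interval has length $n$ and is centred at $\EE\sum_{i=1}^{n-1}V_i=n^2/2-n/2$, so the Berry--Esseen estimate used in the proof of Lemma~\ref{lem:supremum-norm} (with $\EE(V_i-n/2)^2=n^2/12$ and $\EE|V_i-n/2|^3=n^3/32$) gives $\Pr[\cE]\ge 1/(5\sqrt n)$. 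For the numerator, $g_A(V_1),\dots,g_A(V_{n-1})$ are iid with mean zero (Claim~\ref{cl:simple-g}) and absolute value at most $\|U_A\|_\infty\le 5\sqrt{n\log n}$, so rescaling each $g_A(V_i)$ to lie in $[0,1]$ and applying Claim~\ref{cl:chernoff} gives $\Pr[|\sum_{i=1}^{n-1}g_A(V_i)|\ge 49n\log n]\le 2\exp(-48\log n)=2n^{-48}$. Combining, the conditional probability is at most $10\sqrt n\cdot n^{-48}\le n^{-10}$ for $n$ large.

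The only genuine obstacle is the one just resolved. A head-on attack through the sub-density of $(U_A^{*n},V^{*n})$ near $(\,\cdot\,,n^2/2)$ runs aground because $\sum_{i=1}^{n}f_A(V_i)=U_A^{*n}+V^{*n}$ is unconditionally spread over a window of width $\Theta(n^{3/2})$, so the relevant unconditional tail is not small; it is precisely the conditioning on $V^{*n}$ that kills the spread-out contribution of $V^{*n}$ and leaves the well-concentrated sum $\sum_{i=1}^{n}g_A(V_i)$. Passing to the $(n-1)$-coordinate rejection representation trades the null-set conditioning for an event of probability $\Omega(1/\sqrt n)$, against which the Hoeffding tail---boosted by the extra $\log n$ in the threshold---wins with a large margin, so every constant can be chosen crudely.
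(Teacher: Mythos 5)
Your proposal is correct and takes essentially the same approach as the paper's proof: for~\eqref{eq:24} you pass to the rejection-sampling description over the first $n-1$ iid coordinates, lower-bound the acceptance probability by $1/(5\sqrt n)$ via Berry--Esseen exactly as in Lemma~\ref{lem:supremum-norm}, and apply Hoeffding to the centered, $\|U_A\|_\infty$-bounded summands; for~\eqref{eq:25} you bound the conditional variance by $n\Var_A\le 25n^2\log n$ via Claim~\ref{cl:conditional-variance} and invoke the Gaussian tail bound. The only cosmetic difference is that you peel off the last summand $g_A(V_n)$ before applying Hoeffding, whereas the paper passes directly from the $n$-term threshold $50n\log n$ to the $(n-1)$-term threshold $40(n-1)\log n$; both are valid.
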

\begin{proof}
For~\eqref{eq:24}
the argument is similar as in the
proof of Lemma~\ref{lem:supremum-norm}.
Specifically, we consider sampling $v:=(v_1,\ldots,v_{n-1})$
iid from the uniform distribution on $[0,n]$, rejecting
if $\sum_{i=1}^{n-1} v_i>n^2/2$ or $\sum_{i=1}^{n-1} v_i<n^{2}/2-n$
and otherwise setting 
$D:=\{(v_1,\ldots,v_{n-1}):n^2/2-n\le\sum_{i=1}^{n-1}v_i\le n^2/2\}$,
$v_n:=n^2/2-\sum_{i=1}^{n-1} v_i$ and
$u_i:=g_A(v_i)$. Then, we have
\begin{align*}
    \Pr\left[\left|U_A^{*n}\right|\ge 50n\log n\;\middle\vert\;
    V^{*n}=\frac{n^2}{2}\right]
    &=\frac{n^{-(n-1)}\int \1\left[\left|\sum_{i=1}^n u_i\right|\ge 50n\log n\right]
    \cdot\1\left[v\in D\right]\,\mathrm{d}v}
    {n^{-(n-1)}\int \1\left[v\in D\right]\,\mathrm{d}v}\\
    &\le\frac{n^{-(n-1)}\int \1\left[\left|\sum_{i=1}^{n-1} u_i\right|\ge 40(n-1)\log n\right]
    \,\mathrm{d}v}
    {n^{-(n-1)}\int \1\left[v\in D\right]\,\mathrm{d}v}\\
    &=\frac{\Pr\left[\left|\sum_{i=1}^{n-1} u_i\right|/(n-1)
    \ge 40\log n
    \right]}
    {\Pr[v\in D]}\\
    &\le5\sqrt{n}\cdot
    2\exp\left(-2\frac{(40\log n)^2(n-1)}{100 n\log n}\right)\le n^{-10}\;,
\end{align*}
where to arrive in the last line we used $\Pr[v\in D]\ge1/5\sqrt{n}$
by the same argument as in the proof of Lemma~\ref{lem:supremum-norm}
and to bound the numerator we used the fact that
$u_1,\ldots,u_{n-1}$ are iid centered random variables
in $[-5\sqrt{n\log n},5\sqrt{n\log n}]$ and standard Hoeffding inequality 
$\Pr[\left|\sum_{i=1}^n x_i/n\right|\ge C]
\le2\exp(-2(C/2M)^2n)$ for iid centered 
random variables such that $|x_i|\le M$
(cf.~Claim~\ref{cl:chernoff}).

In case of~\eqref{eq:25}, the argument uses properties of
joint Gaussians $G_A^{*n}$ and $H^{*n}$. First, 
by Claim~\ref{cl:conditional-variance}, random variable
$G_A^{*n}$ conditioned on $H^{*n}=n^2/2$ is a centered
Gaussian such that
\begin{align*}
    \Var\left[G_A^{*n}\;\middle\vert\;H^{*n}=\frac{n^2}{2}\right]
    \le\Var\left[G_A^{*n}\right]
    =n\Var[U_A]\le n\|U_A\|_{\infty}^2
    \le 25n^2\log n\;.
\end{align*}
Consequently, using Claim~\ref{cl:chernoff},
\begin{align*}
    \Pr\left[\left|G_A^{*n}\right|\ge30n\log n\;\middle\vert\;
    H^{*n}=\frac{n^2}{2}\right]
    &\le2\exp\left(-\frac{30^2n^2\log^2 n}{50n^2\log n}\right)
    \le n^{-10}\;.
    \qedhere
\end{align*}
\end{proof}

\subsection{Decay of 
\texorpdfstring{$\fhat$}{f hat} 
with respect to 
\texorpdfstring{$\alpha$}{alpha}}

Clearly, $\fhat(0,0,0)=1$. Our whole argument
relies on controlling how fast $\fhat$ decays from $1$
in the neighborhood of the origin.
We now state the main lemma concerning a rate of decay of 
$\fhat$ in the most demanding case:
as the first argument $\alpha$ moves away from the origin. We will
spend the following couple of sections establishing:
\begin{lemma}\label{lem:fhat-decay}
Let $\alpha,\beta$ be such that $1/n^3\le|\alpha|\le 1/2$ and
$\delta:=10^{-14}\cdot\min\left(1,\frac{\alpha^2 n}{\log n}\right)$.
Then, except with
probability at most $40n^{-7}$ over the choice of balanced $A$ and $B$,
for every $\gamma\in\mathbb{R}$ it holds that
\begin{align*}
\left|\fhat(\alpha,\beta,\gamma)\right|\le 1-\delta/2\;.
\end{align*}
\end{lemma}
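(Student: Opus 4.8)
The plan is to bound the characteristic function
\[
\left|\fhat(\alpha,\beta,\gamma)\right|
=\left|\EE\, e\big(\alpha g_A(V)+\beta g_B(V)+\gamma(V-n/2)\big)\right|
\]
by exploiting the oscillation of $e(\alpha g_A(\cdot))$ alone, treating the $\beta g_B$ and $\gamma(V-n/2)$ terms as ``passengers'' that do not help but also do not destroy the decay. The key structural fact is that $g_A$ is a random step function: $g_A(t)=f_A(t)-t$, where $f_A$ increases by $1$ at each of the $n$ faces of $A$ and otherwise decreases with slope $-1$. First I would fix a subinterval structure: partition $[0,n]$ into roughly $\Theta(1/|\alpha|)$ blocks (or, when $|\alpha|$ is tiny, into blocks of some fixed length like $1/(100|\alpha|)$ capped appropriately), and argue that on most blocks $g_A$ is, with constant probability over the randomness of $A$, not too close to constant — concretely that $g_A$ takes two values differing by a definite amount on a definite fraction of the block, or that the local average of $e(\alpha g_A(v))$ over the block has modulus bounded away from $1$. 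The cleanest way to see this: on a block of length $L$, $f_A$ jumps at a Binomial$(n, L/n)$ number of faces, so for $L=\Theta(1)$ the number of faces in the block is $\Theta(1)$ in distribution (Poisson-like), the increments of $g_A$ are genuinely spread out, and $e(\alpha g_A(v))$ cannot be essentially constant unless $\alpha g_A$ varies by $o(1)$ over the block, which fails with constant probability.

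Next I would convert ``constant probability of local non-degeneracy on each block'' into ``high probability of global decay'' by a concentration/averaging argument over the $\Theta(n|\alpha|)$-many blocks (or $\Theta(n)$ blocks in the small-$\alpha$ regime). Write $\fhat(\alpha,\beta,\gamma)=\frac1n\int_0^n e(\alpha g_A(v)+\cdots)\,dv = \frac1n\sum_{\text{blocks }I} \int_I e(\cdots)\,dv$. On each block the integral has modulus at most $|I|$, and with constant probability it is at most $(1-c)|I|$ for an absolute $c>0$; since the blocks are ``nearly independent'' — the face counts in disjoint blocks are only mildly negatively correlated, and after conditioning on the face counts the positions are independent — a Chernoff-type bound (or the Azuma/martingale machinery already set up as Lemma~\ref{lem:modified-azuma}) shows that except with probability $n^{-\Omega(1)}$ a constant fraction of blocks are ``good,'' so that $|\fhat|\le 1-\Omega(c\cdot(\text{fraction of good blocks})) \le 1-\delta/2$ with $\delta$ of the stated order $10^{-14}\min(1,\alpha^2n/\log n)$. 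The $\alpha^2 n/\log n$ scaling appears because in the regime $|\alpha|\lesssim \sqrt{\log n / n}$ the increments $\alpha g_A$ per block are themselves small — of order $|\alpha|\sqrt{\log n}$ by Lemma~\ref{lem:supremum-norm} or of order $|\alpha|$ times a bounded count — so each block contributes only $\Theta(\alpha^2)$ to the ``variance,'' and summing over $\Theta(n)$ blocks gives $\Theta(\alpha^2 n)$, with a $\log n$ loss from the concentration bound; above that threshold the increments saturate and $\delta$ is an absolute constant. The balancing conditioning is handled exactly as in Lemma~\ref{lem:supremum-norm}: pass to unconditioned dice on $[0,n]$, lose a factor $\Pr[\text{balanced}]^{-1}=\cO(\sqrt n)$ in failure probabilities, which is harmless since our raw bounds are $n^{-\Omega(1)}$ with a large exponent (hence the $40n^{-7}$).

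The main obstacle, and the part requiring genuine care, is the quantitatively correct local anti-concentration statement for $e(\alpha g_A(v))$ over a single block that is simultaneously (i) uniform in the nuisance parameters $\beta,\gamma$, and (ii) correctly scaled across the full range $1/n^3\le|\alpha|\le 1/2$, including the delicate small-$\alpha$ regime where one must extract decay of order $\alpha^2$ rather than a constant. For (i), the point is that $\beta g_B(v)+\gamma(v-n/2)$ is, on a short block, nearly affine in $v$ and independent of $A$, so it only rotates and mildly shears the picture and cannot cause cancellation of the $\alpha g_A$ oscillation — but making this rigorous requires either a careful Fubini over the independent randomness of $A$ versus $B$, or a deterministic argument that $\left|\int_I e(\alpha g_A + \ell(v))\,dv\right|$ is close to $\left|\int_I e(\alpha g_A)\,dv\right|$ when $\ell$ has small total variation on $I$, using $|e(x)-e(y)|\le 2\pi|x-y|$. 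For (ii), the small-$\alpha$ case forces a second-order Taylor expansion: $e(\alpha g_A(v))\approx 1 + 2\pi i\alpha g_A(v) - 2\pi^2\alpha^2 g_A(v)^2$, so $|\EE e(\alpha g_A(V))|^2 \approx 1 - 4\pi^2\alpha^2 \Var[g_A(V)] + \cdots$, and one needs a high-probability lower bound $\Var[g_A(V)]\gtrsim n/\log n$ (or an unconditional $\Omega(n)$ from a moment computation like Lemma~\ref{lem:var-2nd-moment}'s relatives combined with Paley–Zygmund) — this is where the $\log n$ in the definition of $\delta$ is really paid. I would structure the write-up to isolate exactly this local lemma, prove it once with the $\alpha^2$ scaling built in, and then run the blockwise concentration as a relatively mechanical second step.
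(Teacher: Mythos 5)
Your blockwise/amplification skeleton matches the paper's in outline (the paper also partitions $[0,n]$ into $\Theta(1/|\alpha|)$-scale intervals, proves a constant-probability local non-degeneracy statement conditioned on the past of $A$, and amplifies via the Azuma variant Lemma~\ref{lem:modified-azuma}), but two of your key steps would not go through as written, and they are precisely where the paper's actual argument has a cleverer structure.

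First, your treatment of the nuisance parameter $\gamma$ is a genuine gap. You propose to control $\big|\int_I e(\alpha g_A+\ell(v))\,\mathrm{d}v\big|$ versus $\big|\int_I e(\alpha g_A)\,\mathrm{d}v\big|$ by the total variation of the affine nuisance $\ell(v)=\beta g_B(v)+\gamma(v-n/2)$ on the block. But $\gamma$ ranges over all of $\mathbb{R}$ and $|\gamma|\cdot|I|$ is not small; moreover, because $g_A(v)=f_A(v)-v$ itself has a linear part, there are choices of $\gamma$ (e.g.\ near $\alpha+\beta$) where the linear contributions interact with $\alpha g_A$ and the naive ``oscillation of $\alpha g_A$ alone'' picture breaks down. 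The paper's way around this is to look at the \emph{alternating second difference} $\theta(v)-\theta(v+m)-\theta(v+2m)+\theta(v+3m)$, which identically annihilates any affine function of $v$ — so the $\gamma(v-n/2)$ term drops out exactly, for every $\gamma$, before any probabilistic estimate is made. Only then does one apply the four-point version of Lemma~\ref{lem:e-to-mod1}. This algebraic cancellation is the idea your proposal is missing, and it is what makes the bound uniform over $\gamma\in\mathbb{R}$.

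Second, your handling of the balancing condition — ``pass to unconditioned dice and lose a factor $\Pr[\text{balanced}]^{-1}=\cO(\sqrt n)$'' — is exactly the approach the paper considers and rejects. That technique works for events that depend only on the first $n-1$ faces (as in Lemma~\ref{lem:supremum-norm}), but $\fhat$ depends on all $n$ faces and, for $|\alpha|$ near $1/2$, changing a single face can move $\fhat$ by $\Theta(1)$, so you cannot cheaply drop the conditioning on $a_n$. The paper instead runs a coupling: it shows (via Lemma~\ref{lem:fhat-decay-unbalanced}) a high-probability bound for dice with face-sum in a thin window $[n^2/2-s,\,n^2/2+s]$ with $s=n^{-13/2}$, and then couples such a die to a balanced one differing in a single face by at most $s$, so that Claim~\ref{cl:interpolation-a} degrades the bound only by $2s/n\ll\delta$. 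Without this (or an equivalent) device your failure probability does not transfer to the conditioned model. Your proposed small-$\alpha$ argument via a second-order Taylor expansion and a high-probability lower bound on $\Var[g_A(V)]$ is also not what the paper does (it uses Berry--Esseen and Poisson limit estimates on the number of faces of $A$ in a short window), and the variance lower bound you would need is not established anywhere in the paper.
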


We prove Lemma~\ref{lem:fhat-decay} via two intermediate
lemmas. First, we prove its version where $A$ is a random die
\emph{without} conditioning on $A$ being balanced:
\begin{lemma}\label{lem:fhat-decay-unbalanced}
Let $\alpha,\beta$ be such that $|\alpha|\le 1/2$ and
$\delta:=10^{-14}\cdot\min\left(1,\frac{\alpha^2 n}{\log n}\right)$
and $B$ be a fixed die.
Then, except with
probability at most $2n^{-15}$ over the choice of random die $A$
with faces iid in $[0,n]$,
for every $\gamma\in\mathbb{R}$ it holds that
\begin{align}\label{eq:12}
\left|\fhat(\alpha,\beta,\gamma)\right|\le 1-\delta\;.
\end{align}
\end{lemma}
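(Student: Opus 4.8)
The plan is to follow the strategy of~\cite{Pol17}: bound $1-|\fhat|$ from below by a quantity of order $\delta$, uniformly in $\gamma$, and then absorb the $2n^{-15}$ failure probability into a handful of concentration estimates over the draw of $A$.

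I would first dispose of large $|\gamma|$. Since $g_A(x)=f_A(x)-x$ and $g_B(x)=f_B(x)-x$, one has the identity $|\fhat(\alpha,\beta,\gamma)|=\bigl|\EE e(\alpha f_A(V)+\beta f_B(V)+\lambda V)\bigr|$ with $\lambda:=\gamma-\alpha-\beta$ and $V$ uniform in $[0,n]$. The forthcoming estimate $|\fhat|\le 1/|\lambda|$ from Lemma~\ref{lem:large-gamma} (whose proof does not use this lemma) together with $\delta\le 10^{-14}$ makes~\eqref{eq:12} trivial once $|\lambda|\ge 2$, so from now on $\lambda\in[-2,2]$ and $\gamma$ ranges over a bounded interval. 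For the surviving values of $\gamma$ I would use the Lipschitz bound $|\partial_\gamma\fhat|\le\pi n$ to reduce, after interpolation, to checking~\eqref{eq:12} on a grid of $\poly(n)/\delta$ values of $\gamma$; in the regime where $\delta$ is small the per-point estimate below fails with probability $n^{-\omega(1)}$, which survives even a fine grid.

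For the core estimate (fixed $\alpha,\beta,\gamma$) I would rotate so that the expectation is real and use $1-\cos 2\pi t\ge 8\,d_{\mathbb{Z}}(t)^2$, giving
\[
1-|\fhat(\alpha,\beta,\gamma)|\ \ge\ 8\min_{c\in\mathbb{R}}\EE\bigl[d_{\mathbb{Z}}\bigl(\alpha f_A(V)+\beta f_B(V)+\lambda V-c\bigr)^2\bigr],
\]
and then split on $|\alpha|$. When $|\alpha|\ge n^{-1/2}$, so that $\delta=10^{-14}$ is a constant, I would instead show directly that $|\fhat|=o(1)$: decomposing $\fhat=\tfrac1n\sum_{k=0}^n e(\alpha k)\,c_k$ with $c_k=\int_{a_{(k)}}^{a_{(k+1)}}e(\beta f_B(v)+\lambda v)\,\mathrm{d}v$ and $\sum_k|c_k|\le n$, cancellation among the $e(\alpha k)$ — exploiting that the spacings $a_{(k+1)}-a_{(k)}$ behave like iid exponentials and do not conspire with $e(\alpha k)$ unless the phase is essentially constant — forces $|\sum_k e(\alpha k)c_k|=\Otilde(\sqrt n+1/|\alpha|)=o(n)$. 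Making this quantitative, and robust to the possibly very uneven fixed die $B$, is exactly where one compares a window count (a sum of $n$ Bernoulli indicators) to a Gaussian via Berry--Esseen (Lemma~\ref{lem:berry-esseen}) when its mean is bounded below and to a Poisson (Lemma~\ref{lem:poisson}) when its mean is $o(1)$ — the content flagged as Lemma~\ref{lem:intermediate}. When $|\alpha|<n^{-1/2}$ I would use the displayed bound: the phase equals $\alpha g_A(V)+\beta g_B(V)+\gamma(V-n/2)$, so if it does not wrap modulo $1$ the minimum over $c$ equals $\min_{\beta,\gamma}\Var\bigl(\alpha g_A(V)+\beta g_B(V)+\gamma(V-n/2)\bigr)=\alpha^2\,\mathrm{dist}_{L^2[0,n]}(g_A,\mathrm{span}\{1,g_B,v\})^2$; since $A$ is random while $B$ is fixed, with probability $1-n^{-\omega(1)}$ this distance is $\ge cn/\log n$ (concentration from below of the ``area'' $\tfrac1n\int_0^n g_A^2$, which can be small only if $A$'s faces are pathologically evenly spread), yielding $1-|\fhat|\ge c'\alpha^2 n/\log n\ge\delta$; the wrap-around case is again covered by the $|\fhat|=o(1)$ estimate above.

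The main obstacle is making all of this uniform in $\gamma$ while $\delta\sim\alpha^2 n/\log n$ may be tiny, and simultaneously robust to an arbitrary fixed die $B$ whose faces may be strongly clustered: the clean ``variance'' heuristic is what produces the factor $n$, but the clustering of $B$ and the need to rule out \emph{every} $\gamma$ force the case distinction on $|\alpha|$ and the Berry--Esseen/Poisson bookkeeping, which is the bulk of the work. The probability over $A$ is then collected from the concentration of $\|U_A\|_\infty$ (Lemma~\ref{lem:supremum-norm}), of the relevant window counts, and of $\tfrac1n\int_0^n g_A^2$ from below, each failing with probability $n^{-\omega(1)}$, so that the total stays below $2n^{-15}$.
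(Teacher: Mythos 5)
Your proposal captures several ingredients that do appear in the paper's proof (the role of Lemma~\ref{lem:intermediate}, the Berry--Esseen/Poisson split according to $|\alpha|$, $\|U_A\|_\infty$ concentration), but the overall architecture differs and contains gaps at exactly the points you flag as hardest.

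The central difficulty you identify --- uniformity in $\gamma$ while $\delta$ may be tiny --- is where your route does not close. You propose to discretize $\gamma$ and use the Lipschitz bound $|\partial_\gamma\fhat|\le\pi n$, which forces a grid of spacing $\sim\delta/n$. Since here $\delta$ can be as small as $\alpha^2 n/\log n$ (and when this lemma is invoked, $|\alpha|$ can be as small as $1/n^3$, so $\delta$ can be of order $n^{-7}$), the grid has $\poly(n)$ points whose cardinality grows with $1/\delta$, and the per-point failure probability would need to beat this union bound while still summing to $2n^{-15}$ --- you assert this but do not control it. The paper avoids the grid entirely by observing that the fourth-order finite difference
\[
\theta(v)-\theta(v+m)-\theta(v+2m)+\theta(v+3m)
\]
of $\theta(t)=\alpha f_A(t)+\beta f_B(t)$ coincides (after adding back $-\gamma(t-n/2)$, which is affine in $t$ and hence annihilated by the difference) with the corresponding combination of the full phase. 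So the ``goodness'' event on $A$ --- that enough windows $S_i$ give a nontrivial $d_{\mathbb{Z}}$ lower bound --- is literally $\gamma$-independent, and a single concentration argument (the modified Azuma of Lemma~\ref{lem:modified-azuma}) handles every $\gamma$ at once.

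Two further gaps. First, your small-$|\alpha|$ branch rests on replacing $\min_c\EE[d_{\mathbb{Z}}(\cdot-c)^2]$ by a genuine variance, which requires no wrap-around modulo $1$; since $\|U_A\|_\infty$ is typically $\Theta(\sqrt{n\log n})$, for $|\alpha|$ between roughly $1/\sqrt{n\log n}$ and $n^{-1/2}$ the phase $\alpha g_A(V)$ does wrap, and your fallback (``covered by the $|\fhat|=o(1)$ estimate above'') was only proposed for $|\alpha|\ge n^{-1/2}$, so that regime is left uncovered. Second, your failure-probability accounting is off: the per-window event in Lemma~\ref{lem:intermediate} has probability $1/20$, i.e.\ constant, not $n^{-\omega(1)}$; the small overall failure probability in the paper comes from aggregating $k=\Omega(\log n)$ such windows with a submartingale/Azuma argument, not from strong concentration of any individual window count. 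As written, your collection of events ``each failing with probability $n^{-\omega(1)}$'' does not reflect the actual structure needed.
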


The setting we use to prove Lemma~\ref{lem:fhat-decay-unbalanced} is as follows: Consider fixed $\alpha,\beta\in\mathbb{R}$ and a die
$B$. Ultimately, we want to show that,
except with probability $2n^{-15}$ over
random choice of unbalanced die $A$, the bound
$|\fhat(\alpha,\beta,\gamma)|\le 1-\delta$ holds for every $\gamma\in\mathbb{R}$.

To that end, we choose some $m>0$ and $k\in\mathbb{N}$. We will
give specific values later, but we will always have
$k\ge 3\cdot 10^5\log n$ and $n/2-4m<4mk\le n/2$.
We partition the interval $[0,4mk)$ into
$k$ consecutive intervals $S_1,\ldots,S_k$ of length $4m$
each.
In other words, we let
\[
S_i:=[s_i,s_{i+1}):=[4m(i-1), 4mi)\;.
\]
We specify
\begin{align}\label{eq:06}
    \eps:=10^{-5}\cdot\min\left(1, |\alpha| 
    \sqrt{\frac{n}{\log n}}\right)
\end{align}
and
\begin{align*}
\theta(t)&:=\theta_{\alpha,\beta}(t):=
\alpha f_A(t)+\beta f_B(t)\;,\\
z(t)&:=z_{\alpha,\beta,\gamma}(t) :=
e\big(\alpha g_A(t)+\beta g_B(t)+\gamma(t-n/2)\big)\;,
\end{align*}
so that $\fhat(\alpha,\beta,\gamma)=\EE z(V)$ holds.
The strategy is to show that for every $i$, the bound $\EE\big[|z(V)|\;\vert\; V\in S_i\big]<1-\Omega(\eps^2)$ holds with
probability at least $1/20$ over the choice of $A$.
Furthermore, we show that this is true
even if conditioned on all the faces of $A$ that lie
in the preceding intervals $S_1,\ldots,S_{i-1}$. Then, we
use the tools from Section~\ref{sec:tools}
to conclude that with high probability, the event 
holds simultaneously for a constant
fraction of $S_i$ intervals, resulting in the overall
inequality $|\fhat|<1-\delta$.

With all that in mind, we state the second intermediate lemma
giving the bound for fixed $i\in[k]$:
\begin{lemma}
\label{lem:intermediate}
In the setting above, i.e., with fixed $\alpha,\beta$ and $B$,
as well as random $A$ with faces iid in $[0,n]$,
there exists a choice of $k$ and $m$ such
that $k\ge 3\cdot 10^5\log n$, $n/2-4m<4mk\le n/2$ and the following holds:

Let $i\in[k]$. Define the event 
$D_i:\equiv f_A(s_i)> 3n/5$. Furthermore, let
\[
A_{\le x}:=(a'_1,\ldots,a'_n)\;,
\qquad\qquad
a'_j:=\begin{cases}
a_j&\text{if $a_j\le x$,}\\
?&\text{otherwise.}
\end{cases}
\]
Then, for every $s_i\le v<s_i+m$, it always holds that
\begin{align}\label{eq:05}
\Pr_{A}\Big[
d_{\mathbb{Z}}\big(\theta(v)-\theta(v+m)-\theta(v+2m)+\theta(v+3m)\big)\ge\eps
\;\Big\vert\; A_{\le s_i}, \lnot D_i
\Big] \ge \frac{1}{20}\;.
\end{align}
\end{lemma}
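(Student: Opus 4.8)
The plan is to fix $i\in[k]$ and condition on $A_{\le s_i}$ and on $\lnot D_i$, so that at least $2n/5$ faces of $A$ are still ``free'' to land in $[s_i,n]$, hence in particular the conditional distribution of the number of faces of $A$ falling into the window $[v,v+3m]\subseteq S_i$ is controlled. The key observation is that, writing $\theta(v)=\alpha f_A(v)+\beta f_B(v)$, the alternating combination $\theta(v)-\theta(v+m)-\theta(v+2m)+\theta(v+3m)$ splits as an $A$-part and a $B$-part. The $B$-part is a fixed real number (since $B$ is fixed), call it $c_B$; the $A$-part equals $\alpha\bigl(f_A(v)-f_A(v+m)-f_A(v+2m)+f_A(v+3m)\bigr) = -\alpha\bigl(N_1-N_2\bigr)$, where $N_1:=|\{j: v<a_j\le v+m\}|$ and $N_2:=|\{j: v+2m<a_j\le v+3m\}|$ are the face-counts of $A$ in the two subintervals $[v,v+m]$ and $[v+2m,v+3m]$ of $S_i$. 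Thus the quantity whose distance to $\mathbb Z$ we must bound below is $\alpha(N_2-N_1) + c_B$, and $N_1,N_2$ are, conditionally on $A_{\le s_i}$ and $\lnot D_i$, independent of each other and each distributed as a sum of (at least $2n/5$, at most $n$) iid Bernoullis with success probability $m/(n-s_i)=\Theta(m/n)$.

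The heart of the argument is an anticoncentration statement: for the random integer $M:=N_2-N_1$ we want $\Pr[d_{\mathbb Z}(\alpha M + c_B)\ge\eps]\ge 1/20$. This is where the two regimes of $\eps$ in~\eqref{eq:06} come from, and where the choice of $m$ and $k$ is made. In the regime where $|\alpha|$ is not too small (so $\eps=10^{-5}$), I would choose $m$ so that $\EE N_1=\EE N_2=\Theta(1)$ is a constant of moderate size; then by the Poisson limit theorem (Lemma~\ref{lem:poisson}) each of $N_1,N_2$ is within total variation $\cO(1/n)$ of a Poisson, and $M=N_2-N_1$ is close to a Skellam random variable, which puts mass $\Omega(1)$ on each of the values $0,\pm1$; since $0,\pm1$ are spaced at distance $1$ in $\mathbb Z$ and $|\alpha|\le 1/2$, the three numbers $\alpha\cdot 0 + c_B$, $\alpha\cdot 1 + c_B$, $\alpha\cdot(-1)+c_B$ cannot all be within $\eps$ of an integer simultaneously (they are genuinely spread out modulo $1$ because $|\alpha|$ is bounded below), so one of them witnesses $d_{\mathbb Z}\ge\eps$ with probability $\ge 1/20$. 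In the regime where $|\alpha|$ is small (so $\eps = 10^{-5}|\alpha|\sqrt{n/\log n}$, which is still $\le 10^{-5}$ in the relevant range $|\alpha|\le\sqrt{\log n/n}$), I would instead take $m$ a bit larger so that $\Var M = \Theta(\EE N_1) = \Theta(\eps^2/\alpha^2)=\Theta(n/\log n\cdot\,\text{const})$ while keeping $4mk\le n/2$; then by Berry--Esseen (Lemma~\ref{lem:berry-esseen}) the integer $M$ is spread over an interval of length $\Theta(\sqrt{\Var M})=\Theta(\eps/|\alpha|)$, so $\alpha M$ ranges over an interval of length $\Theta(\eps)$ with roughly uniform-on-lattice behavior at scale $|\alpha|\le\eps$; a constant fraction of the lattice points $\{\alpha M + c_B\}$ then land at distance $\ge\eps$ from $\mathbb Z$, again giving probability $\ge 1/20$ (with room to spare in the constant). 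The constraints $k\ge 3\cdot 10^5\log n$ and $n/2-4m<4mk\le n/2$ are then met by picking $m$ as just described and $k:=\lfloor n/(8m)\rfloor$; one checks $m=\cO(n/\log n)$ in both regimes so $k=\Omega(\log n)$, and enlarging $n$ absorbs the constant.

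The main obstacle, and the step requiring the most care, is the anticoncentration of $\alpha M + c_B$ modulo $1$ uniformly in the offset $c_B$ and in $\alpha$ across the whole range $1/n^3$-ish up to $1/2$ — i.e.\ interpolating cleanly between the ``$M$ takes a few values at unit spacing'' picture (small integer support, $|\alpha|$ not tiny) and the ``$M$ is Gaussian-like over many lattice points'' picture (large support, $|\alpha|$ tiny), and in both cases ruling out the adversarial possibility that the fixed shift $c_B$ aligns the reachable values of $\alpha M$ with integers. The conditioning on $\lnot D_i$ is exactly what guarantees the ``at least $2n/5$ free faces'' lower bound needed for the variance/mean of $N_1,N_2$ to be of the claimed order regardless of $A_{\le s_i}$; one must also verify that conditioning on $A_{\le s_i}$ leaves $N_1,N_2$ independent (true since $[v,v+m]$ and $[v+2m,v+3m]$ are disjoint subsets of $(s_i,n]$ and the remaining faces are iid uniform there) and does not distort their Bernoulli parameters away from $\Theta(m/n)$. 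Once the anticoncentration estimate is in hand, \eqref{eq:05} follows immediately by taking the event inside the probability to be the one witnessed by whichever of the reachable values of $M$ realizes $d_{\mathbb Z}(\alpha M+c_B)\ge\eps$.
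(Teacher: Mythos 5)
Your plan follows the same broad strategy as the paper — split $\theta(v)-\theta(v+m)-\theta(v+2m)+\theta(v+3m)$ into a fixed $B$-part plus an $A$-part consisting of face-counts in sub\-intervals of $S_i$, then show anticoncentration modulo $1$ via Poisson (large $|\alpha|$) or Berry--Esseen (small $|\alpha|$) — but you use a genuinely different decomposition of the $A$-part. You work with two random counts $N_1$ and $N_2$ and their difference $M=N_2-N_1$, conditioning only on $A_{\le s_i}$. The paper instead conditions further on $A_{\le v+2m}$, which freezes $f_A(v),f_A(v+m),f_A(v+2m)$ and leaves a \emph{single} random count $\Xtilde$ (your $N_2$), at the modest cost of an extra Markov step ($\Pr[f_A(v+2m)\le 2n/3]\ge 0.99$). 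Having a single binomial/Poisson random variable is cleaner: one Berry--Esseen application, one Poisson application, and the geometric verification (the paper's Claim~\ref{cl:one-event} and Figure~\ref{fig:one-event}) involves only two target windows for $\Xtilde$ rather than the Skellam-type analysis of a difference.

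There is one concrete error in your plan. You assert that ``conditioning on $A_{\le s_i}$ leaves $N_1,N_2$ independent (true since $[v,v+m]$ and $[v+2m,v+3m]$ are disjoint subsets of $(s_i,n]$ and the remaining faces are iid uniform there).'' This is false: the counts of a fixed number $n-f_A(s_i)$ of iid uniform points in disjoint windows are coordinates of a multinomial vector and are \emph{negatively correlated}, not independent. Disjointness of the windows gives you independence only if the number of trials were itself Poisson. The error is fixable — either by conditioning further on $A_{\le v+2m}$ as the paper does so that only $N_2$ remains random, or by conditioning on $N_1$ and noting $N_2$ is then binomial on the remaining faces, or by arguing that the multinomial negative correlation can only help anticoncentration — but as written the step fails. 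A secondary point worth flagging: your constraint check ``$m=\cO(n/\log n)$ so $k=\Omega(\log n)$'' establishes the right order of magnitude but does not by itself give $k\ge 3\cdot 10^5\log n$; the paper's explicit choice $m=\max(4\eps^2/\alpha^2,10^5)\le 4\cdot10^{-10}n/\log n$ is what makes the constant work out, and you would need to pin down your $m$ similarly.
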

Before proving Lemma~\ref{lem:intermediate}, let us see how it
implies Lemma~\ref{lem:fhat-decay-unbalanced}. In short,
we will use the triangle inequality to divide contributions
to $|\fhat|$ into intervals $S_i$,
Lemma~\ref{lem:e-to-mod1} to connect
distance to integers from $\theta(t)$ to $z(t)$ values,
and Lemma~\ref{lem:modified-azuma} to amplify
$1/20$ probability from Lemma~\ref{lem:intermediate}
into high probability.

\begin{proof}[Proof of Lemma~\ref{lem:fhat-decay-unbalanced} assuming Lemma~\ref{lem:intermediate}]
Define random variables $Z_1,\ldots,Z_k$ as
\[
Z_i:=\Pr\Big[
d_{\mathbb{Z}}\big(
\theta(V)-\theta(V+m)-\theta(V+2m)+\theta(V+3m)
\big)\ge\eps\;\Big\vert\;A,s_i\le V<s_i+m\Big]\;.
\]
Note that the value of $Z_i$ is fully determined by the die $A$. We have
$0\le Z_i\le 1$ and, by applying~\eqref{eq:05} pointwise, it almost surely holds
that
\[
\lnot D_i\implies
\EE\left[Z_i\;\middle\vert\;A_{\le s_i}\right]
\ge\frac{1}{20}\;.
\]
By the first moment method applied to $\EE[Z_i\cond A_{\le s_i}]$ pointwise
for all conditionings on $A_{\le s_i}$, we get,
again almost surely, that
\[
\lnot D_i\implies
\Pr\left[Z_i\ge\frac{1}{40}\;\middle\vert\;
A_{\le s_i}\right]\ge\frac{1}{40}\;.
\]
Now let $X_i:=\mathbbm{1}[Z_i\ge 1/40]$ 
be the indicator of the event $Z_i\ge 1/40$.
It should now be clear that $X_i$ is measurable
with respect to the $\sigma$-algebra $\mathcal{F}_i$
generated by $A_{\le s_{i+1}}$. Furthermore, 
if the event $D_i$ does not happen, then
$\EE[X_i\;\vert\;\mathcal{F}_{i-1}]\ge 1/40$.
Therefore, defining $Y_0:=0$ and 
$Y_{i+1}:=Y_i+X_{i+1}-1/40$, we get a sequence
of random variables $Y_0,\ldots,Y_k$ adapted
to $\mathcal{F}_0,\ldots,\mathcal{F}_k$
with $|Y_{i+1}-Y_i|\le 1$ and
$\EE[Y_{i+1}-Y_i\;\vert\;\mathcal{F}_i]\ge 0$ whenever
the event $D_{i+1}$ does not happen. Therefore, we can apply
Lemma~\ref{lem:modified-azuma} to obtain
\begin{align*}
\Pr\left[\sum_{i=1}^kX_i\le\frac{k}{80}\right]
&=\Pr\left[Y_k\le-\frac{k}{80}\right]
\le\exp\left(-\frac{k}{2\cdot 10^4}\right)+\Pr\left[\bigcup_{i=1}^k D_i\right]\\
&\le n^{-15}+\Pr\big[f_A(n/2)>3n/5\big]
\le 2n^{-15}\;,
\end{align*}
where in the end we used the bound $k\ge 3\cdot 10^5\log n$ and
Claim~\ref{cl:chernoff} for the deviation of $f_A(n/2)$.

Therefore, except with probability $2n^{-15}$, we get that
for at least $k/80$ indices $i\in[k]$, for at least $1/40$
fraction of $v\in[s_i,s_i+m]$, we have
\begin{align}\label{eq:10}
d_{\mathbb{Z}}
\big(\theta(v)-\theta(v+m)-\theta(v+2m)+\theta(v+3m)\big)
\ge\eps\;.
\end{align}
Recall that $\theta(t)=\alpha f_A(t)+\beta f_B(t)$ and
let $\theta'(t):=\alpha g_A(t)+\beta g_B(t)+\gamma(t-n/2)$
and again recall that $z(t)=e(\theta'(t))$.
Note that
\begin{align}\label{eq:11}
\theta(v)-\theta(v+m)-\theta(v+2m)+\theta(v+3m)
=
\theta'(v)-\theta'(v+m)-\theta'(v+2m)+\theta'(v+3m)\;,
\end{align}
and from~\eqref{eq:10} and~\eqref{eq:11}, applying
Lemma~\ref{lem:e-to-mod1}, we get that for all $v$
such that~\eqref{eq:10} holds we have
\[
\big|
z(v)+z(v+m)+z(v+2m)+z(v+3m)
\big|\le 4-\eps^2\;.
\]
Consequently, we partition the interval $[0,n]$ into $S_1,\ldots,S_k$
and the leftover interval of length $n-4km$.
Using the trivial bound of one for the modulus
of the characteristic function in the leftover interval, we can write
\begin{align*}
\left|\fhat(\alpha,\beta,\gamma)\right|
&=
\frac{1}{n}\left|\int_{0}^n z(t)\,\mathrm{d}t\right|
\le 1-\frac{4km}{n}+\frac{1}{n}\sum_{i=1}^k
\left|\int_{S_i}z(t)\,\mathrm{d}t\right|\\
&\le 1-\frac{4km}{n}+\frac{1}{n}\sum_{i=1}^k
\int_{s_i}^{s_i+m} \big|z(t)+z(t+m)+z(t+2m)+z(t+3m)\big|
\,\mathrm{d}t\\
&\le 1-\frac{4km}{n}+\frac{4km}{n}-\frac{1}{n}\cdot\frac{k}{80}\cdot\frac{m}{40}\cdot\eps^2
\le1-\frac{\eps^2}{10^4}.
\end{align*}
Recalling~\eqref{eq:12} and~\eqref{eq:06} for the definitions
of $\delta$ and $\eps$, we notice that~\eqref{eq:12} is
now established and the proof is concluded.
\end{proof}

We still need to show that Lemma~\ref{lem:fhat-decay-unbalanced} implies
Lemma~\ref{lem:fhat-decay}. The
Polymath draft in the discrete setting can just use the union bound, but we are conditioning on an event of measure zero.
Furthermore, a basic approach like in Lemmas~\ref{lem:supremum-norm}
and~\ref{lem:u_a-concentration} does not seem to work, since,
for larger values of $|\alpha|$,
in principle changing just one face in a die can significantly
change the value of $\fhat$.
Instead, we use a little more
precise coupling argument:
\begin{proof}[Proof of Lemma~\ref{lem:fhat-decay} assuming
Lemma~\ref{lem:fhat-decay-unbalanced}]
Fix $\alpha$, $\beta$ and a die $B$ and let $s:=n^{-13/2}$.
Consider random choice of an unbalanced die $A$ with faces
in $[0,n]$. We define two events: 
let $\cE$ denote that there exists $\gamma$ such that
$\big|\fhat(\alpha,\beta,\gamma)\big|>1-\delta$, 
and $\cS$ that $|\sum_{i=1}^n a_i-n^2/2|\le s$. 

By
Claim~\ref{cl:h-lower-bound}.2, we have
$\Pr[\cS]\ge n^{-8}/4$. Applying Lemma~\ref{lem:fhat-decay-unbalanced}, we get that
\begin{align*}
\Pr[\cE\cond\cS]
&\le\frac{\Pr[\cE]}{\Pr[\cS]}
\le\frac{2n^{-15}}{n^{-8}/4}=8n^{-7}\;.
\end{align*}
Fix $s'$ with $|s'|\le s$. Recall that one way to choose
$A$ conditioned on face-sum equal to $n^2/2$ is by
rejection sampling. That is, if the event
$\mathcal{T}_1$ denoting $n^2/2-n\le \sum_{i=1}^{n-1} a_i\le n^2/2$ happens,
then set $a_n$ to make the die balanced, otherwise
reject and repeat. Similarly, to sample $A$ conditioned
on face-sum equal to $n^2/2+s'$, one can utilize the rejection sampling using the
event $\mathcal{T}_2$ standing for $n^2/2+s'-n\le\sum_{i=1}^{n-1} a_i\le n^2/2+s'$.

Note that the symmetric difference 
$\mathcal{T}_1\triangle\mathcal{T}_2$
consists of $\sum_{i=1}^{n-1}a_i$ being in one of two intervals
of width $|s'|$ and by Claim~\ref{cl:h-lower-bound}.3
it has probability at most 
$\Pr[\mathcal{T}_1\triangle\mathcal{T}_2]\le 
2s/(n-1)\le 3s/n=3n^{-15/2}$. On the other hand,
again by Claim~\ref{cl:h-lower-bound}.2, we have
\begin{align*}
\Pr[\mathcal{T}_1]
&=\Pr\left[\left|\sum_{i=1}^{n-1} a_i-(n-1)n/2\right|\le \frac{n}{2}\right]
\ge \frac{n}{8(n-1)\sqrt{n-1}}\ge \frac{1}{8\sqrt{n}}\;.   
\end{align*}

Consider the procedure of rejection sampling of two dice using
joint randomness.
One die is balanced, using the event $\mathcal{T}_1$
and the other die has face-sum $n^2/2+s'$ using the
event $\mathcal{T}_2$. Consider the round when
one of the dice is successfully output, i.e,
the first round when
$\mathcal{T}_1\cup\mathcal{T}_2$ occurs. The probability
that the other die is not output in that round
is at most
\begin{align*}
    \frac{\Pr[\mathcal{T}_1\triangle\mathcal{T}_2]}{
    \Pr[\mathcal{T}_1\cup\mathcal{T}_2]}
    \le \frac{3n^{-15/2}}{n^{-1/2}/8}=24n^{-7}\;.
\end{align*}
At the same time, if both dice are output in the same
round of this joint rejection sampling, they will agree
everywhere except for the last face.
Consequently, there exists a coupling between a
random balanced $A$ on the one hand,
and random $A$ conditioned on face-sum equal to $n^2/2+s'$
on the other hand,
such that their first $n-1$ faces are equal except with probability
$24n^{-7}$. Averaging over $s'$, there also exists such
a coupling between $A$ conditioned on being balanced and
$A$ conditioned on $\cS$. 

Let $A'$ be a die with
face-sum $n^2/2+s'$ such that $|s'|\le s$ and let $A$ be $A'$
with the last face modified so that $A$ is balanced.
By Claim~\ref{cl:interpolation-a}, if the event $\cE$ does
not hold for $A'$, then for $A$ it holds that for all $\gamma\in\mathbb{R}$ we
have
\begin{align}\label{eq:36}
    \big|\fhat(\alpha,\beta,\gamma)\big|\le 1-\delta+2s/n
    =1-\delta+2n^{-15/2}
    \le 1-\delta/2\;,
\end{align}
where we used that $|\alpha|\ge 1/n^3$ implies 
$\delta\ge 1/n^6$. By union bound it follows that~\eqref{eq:36}
holds for a random balanced $A$ except with probability
$24n^{-7}+8n^{-7}<40n^{-7}$.
\end{proof}

\subsection{Proof of Lemma~\ref{lem:intermediate}}

We delay choosing a specific $m$ and for now
assume that the unique integer $k$
satisfying $n/2-4m<4mk\le n/2$ has value at least
$3\cdot 10^5\log n$.
Following the statement of the lemma, 
we fix $v$ in the interval
$s_i\le v<s_i+m$ and a die $B$
and condition on $A_{\le s_i}$, i.e., on all faces of $A$ that
have values not exceeding $s_i$. Note that this determines
the values of $f_A(x)$ for all $x\le s_i$ and that we assume that
the conditioning satisfies $f_A(s_i)\le 3n/5$.

First, we observe that, after all the foregoing conditioning,
letting $t:=f_A(v+2m)$, the difference
$t-f_A(s_i)$ is a sum of $n-f_A(s_i)$ iid binary random
variables, each with expectation $(v+2m-s_i)/(n-s_i)$.
Therefore, we have
\[
\EE[t\;\vert\; A_{\le s_i}]=f_A(s_i)+\big(n-f_A(s_i)\big)\frac{v+2m-s_i}{n-s_i}
\le 3n/5 + 6m\;,
\]
where we used that $n-s_i\ge n/2$. 
Since the conditional expectation of $t$ can be written as a sum of a constant
which is at most $3n/5$
and a nonnegative random variable with expectation at most $6m\le n/100\log n$ (due to bounds on $k$ and $m$),
by Markov's inequality, we have that
$t\le 2n/3$ with probability at least $0.99$.

Consequently, let us further condition on 
$A_{\le v+2m}$ such that $t\le 2n/3$
and consider the distribution of
$\theta(v)-\theta(v+m)-\theta(v+2m)+\theta(v+3m)$.
After conditioning on $A_{\le v+2m}$,
the only part of this expression that remains random
is $f_A(v+3m)$ featuring in $\theta(v+3m)=\alpha f_A(v+3m)+\beta f_B(v+3m)$.
What is more, this can be written as
\[
    f_A(v+3m) = f_A(v+2m)+\Xtilde\;,
\]
where $\Xtilde=\sum_{i=1}^{n-t} X_i$ is a sum
of iid binary random variables with
$m/n\le\Pr[X_i=1]=\frac{m}{n-(v+2m)}\le 2m/n$.

All in all, omitting the conditioning from the notation,
we can write the probability from~\eqref{eq:05}
as
\begin{align}\label{eq:08}
\Pr\left[
d_\mathbb{Z/|\alpha|}
\big(\Xtilde+C\big)
\ge \frac{\eps}{|\alpha|}
\right]
\end{align}
for some constant $C\in\mathbb{R}$. In order
to lower bound~\eqref{eq:08}, we proceed with two
cases, using different limit theorems.

\paragraph{Case 1: $|\alpha|\le 10^{-4}$ and Berry-Esseen theorem}
In this case we choose
\begin{align}\label{eq:59}
m:=\max\left(\frac{4\eps^2}{\alpha^2},10^5\right)\;.   
\end{align}
Considering two cases in~\eqref{eq:06}
$|\alpha|<\sqrt{\log n/n}$ and
$|\alpha|\ge\sqrt{\log n/n}$, we see that
$m\le 4\cdot 10^{-10}n/\log n$.
Accordingly, we verify that we have
$k=\lfloor n/8m\rfloor\ge 3\cdot 10^5\log n$.

Recall that random variables $X_i$ are 
binary iid
with $m/n\le\EE X_i\le 2m/n$. Consequently, also
$0.99\cdot m/n\le\Var X_i=:\sigma^2\le 2m/n$ and
$\EE|X_i-\EE X_i|^3\le 2m/n$.
Applying Lemma~\ref{lem:berry-esseen}, we get
that for any $a,b\in\mathbb{R}$,
\[
\Pr\left[
a\le\Xtilde-\EE\Xtilde\le b
\right]
\ge\Pr\left[
\frac{a}{\sigma\sqrt{n-t}}\le N
\le\frac{b}{\sigma\sqrt{n-t}}
\right]
- \sqrt{13/m}\;,
\]
where $N$ is a standard normal. Choosing
$a:=\sqrt{m}/2$, 
$b:=2\sqrt{m}$, 
$a':=-2\sqrt{m}$,
$b':=-\sqrt{m}/2$, we get
\begin{align*}
    \Pr\left[a\le\Xtilde\le b\right],\
    \Pr\left[a'\le\Xtilde\le b'\right]
    &\ge\Pr\left[\frac{\sqrt{m}}{2\sigma\sqrt{n-t}}
    \le N\le \frac{2\sqrt{m}}{\sigma\sqrt{n-t}}\right]
    -\sqrt{13/m}\\
    &\ge \Pr\left[1.01\cdot \sqrt{3}/2\le N\le \sqrt{2}\right]-\sqrt{13/m}
    \ge 1/10\;,
\end{align*}
where in the last step we used 
$\Pr[1.01\cdot\sqrt{3}/2\le N\le \sqrt{2}]\ge 0.112$ and
$m\ge 10^5$. Hence, we have found two disjoint intervals
such that $\Xtilde$ lies within
each of them with probability at least $1/10$.
To conclude the analysis of this case it is enough
that we prove the following:
\begin{claim}\label{cl:one-event}
At least one of the events $a'\le\Xtilde\le b'$
and $a\le\Xtilde\le b$ implies 
$d_{\mathbb{Z}/|\alpha|}(\Xtilde+C)\ge\sqrt{m}/2
\ge\eps/|\alpha|$.
\end{claim}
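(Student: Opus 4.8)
The plan is to prove this as a statement in elementary geometry of the real line — how the two disjoint intervals $[a',b']$ and $[a,b]$ sit relative to the lattice $\tfrac{1}{|\alpha|}\mathbb{Z}$ — forgetting entirely that $\Xtilde$ is a random variable. Writing $\mu:=\EE\Xtilde$, the intervals are $I':=[a',b']=[\mu-2\sqrt m,\,\mu-\tfrac12\sqrt m]$ and $I:=[a,b]=[\mu+\tfrac12\sqrt m,\,\mu+2\sqrt m]$: each has length $\tfrac32\sqrt m$, they are separated by a gap of width $\sqrt m$, and both are contained in the window $W:=[\mu-2\sqrt m,\,\mu+2\sqrt m]$ of width $4\sqrt m$. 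Setting $q:=\mu+C$ and translating everything by $C$, I would work with the shifted intervals $I'+C$, $I+C$ and $W+C=[q-2\sqrt m,\,q+2\sqrt m]$, together with the \emph{bad set} $\mathcal{B}:=\{x\in\mathbb{R}:\, d_{\mathbb{Z}/|\alpha|}(x)<\tfrac12\sqrt m\}$, which is a disjoint union of open intervals of length $\sqrt m$ centred at the lattice points $k/|\alpha|$, $k\in\mathbb{Z}$.

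The one numerical input needed is that the lattice spacing is comfortably larger than $\sqrt m$. Using the Case~1 hypothesis $|\alpha|\le 10^{-4}$ together with the choice of $m$ in~\eqref{eq:59} (so $m=\max(4\eps^2/\alpha^2,10^5)$) and $\eps\le 10^{-5}$ from~\eqref{eq:06}, one computes $|\alpha|\sqrt m=\max(2\eps,\,|\alpha|\sqrt{10^5})\le\max(2\cdot 10^{-5},\,10^{-4}\sqrt{10^5})<\tfrac15$, hence $1/|\alpha|>5\sqrt m$ (in fact $1/|\alpha|\ge 30\sqrt m$). Since the $\tfrac12\sqrt m$-neighbourhood of $W+C$ is an interval of width $5\sqrt m$, shorter than one lattice spacing, it contains at most one lattice point; equivalently, at most one constituent interval of $\mathcal{B}$ can meet $W+C$. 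If none does, then both $I'+C$ and $I+C$ lie outside $\mathcal{B}$ and we are done; otherwise call the unique such interval $B=(c-\tfrac12\sqrt m,\,c+\tfrac12\sqrt m)$ with $c\in\mathbb{Z}/|\alpha|$.

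The combinatorial core is that $B$ cannot meet both $I'+C$ and $I+C$. A one-line endpoint computation gives $B\cap(I'+C)\neq\emptyset$ iff $c\in(q-\tfrac52\sqrt m,\,q)$, and $B\cap(I+C)\neq\emptyset$ iff $c\in(q,\,q+\tfrac52\sqrt m)$; these two open ranges for $c$ share only the point $q$, which is excluded from both. So $B$ misses at least one of $I'+C$, $I+C$; since $B$ is the only bad interval meeting $W+C$ and $I'+C,I+C\subseteq W+C$, that interval is disjoint from all of $\mathcal{B}$. Translating back, for every value of $\Xtilde$ in the corresponding interval ($I'$ or $I$) we get $d_{\mathbb{Z}/|\alpha|}(\Xtilde+C)\ge\tfrac12\sqrt m$; and $\tfrac12\sqrt m\ge\eps/|\alpha|$ because $m\ge 4\eps^2/\alpha^2$. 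This is exactly the asserted implication for at least one of the two events.

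The only step requiring any attention is the bookkeeping in the two endpoint computations of the previous paragraph, and in particular the role of the gap between $I'$ and $I$ being exactly $\sqrt m$ — the same as the length $|B|$ of a bad interval — which is what forces the two forbidden ranges for $c$ to meet in a single point rather than overlap. Everything else is a definition chase together with the single estimate $|\alpha|\sqrt m<\tfrac15$; there is no genuine obstacle here, and the Case~1 constants leave generous slack.
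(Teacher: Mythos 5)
Your proof is correct and captures the same underlying idea as the paper's: the gap plus the two interval widths (a window of size $5\sqrt m$, accounting for the $\sqrt m/2$ fuzz around each lattice point) is smaller than the lattice spacing $1/|\alpha|$, so a single bad lattice point cannot afflict both intervals. The paper argues this by contradiction with a direct endpoint computation pinning $[a,b]+C$ between $k/|\alpha|+\sqrt m/2$ and $(k+1)/|\alpha|-\sqrt m/2$; your reformulation as ``at most one bad interval meets the window, and it cannot intersect both $I'+C$ and $I+C$'' is a cleaner geometric restatement of the same fact, and your numerics ($1/|\alpha|>5\sqrt m$, in fact $\ge 30\sqrt m$, versus the paper's quoted $\ge 6\sqrt m$) check out.
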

\begin{proof}[Proof (cf.~Figure~\ref{fig:one-event})]
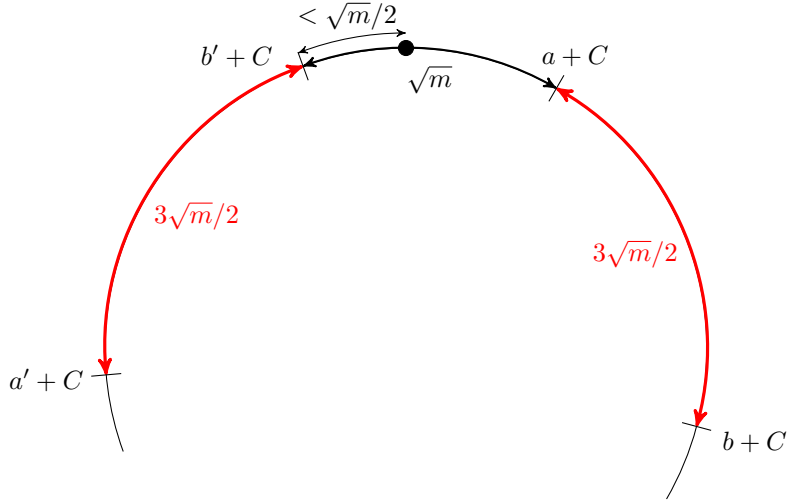
\begin{figure}[!ht]\centering\begin{tikzpicture}
\draw (4, 0) arc (0:200:4);
\draw (4, 0) arc (0:-30:4);
\filldraw (0, 4) [black] circle [radius=0.1];

\draw [thick, <->] ({4*cos(60)}, {4*sin(60)}) arc (60:110:4);
\draw ({3.6*cos(85)}, {3.6*sin(85)}) node {$\sqrt{m}$};

\draw [very thick, red, <->] ({4*cos(110)}, {4*sin(110)}) arc (110:185:4);
\draw ({3.8*cos(185)}, {3.8*sin(185)}) --
({4.2*cos(185)}, {4.2*sin(185)});
\draw ({3.8*cos(110)}, {3.8*sin(110)}) --
({4.2*cos(110)}, {4.2*sin(110)});
\draw ({3.3*cos(147.5)}, {3.3*sin(147.5)}) node[text=red] {$3\sqrt{m}/2$};
\draw ({4.8*cos(185)}, {4.8*sin(185)}) node {$a'+C$};
\draw ({4.5*cos(120)}, {4.5*sin(120)}) node {$b'+C$};

\draw [very thick, red, <->] ({4*cos(-15)}, {4*sin(-15)}) arc (-15:60:4);
\draw ({3.8*cos(60)}, {3.8*sin(60)}) --
({4.2*cos(60)}, {4.2*sin(60)});
\draw ({3.8*cos(-15)}, {3.8*sin(-15)}) --
({4.2*cos(-15)}, {4.2*sin(-15)});
\draw ({3.3*cos(22.5)}, {3.3*sin(22.5)}) node[text=red] {$3\sqrt{m}/2$};
\draw ({4.8*cos(-15)}, {4.8*sin(-15)}) node {$b+C$};
\draw ({4.5*cos(60)}, {4.5*sin(60)}) node {$a+C$};

\draw [<->] ({4.2*cos(90)}, {4.2*sin(90)}) arc (90:111:4);
\draw ({4.5*cos(100)}, {4.5*sin(100)}) node {$<\sqrt{m}/2$};
\end{tikzpicture}    
\caption{Illustration of the proof
of Claim~\ref{cl:one-event}. The circle represents
the set $\faktor{\mathbb{R}}{\mathbb{Z}/|\alpha|}$ and its circumference
is at least $6\sqrt{m}$. The black dot represents
zero.}
\label{fig:one-event}
\end{figure}
The fact that $\sqrt{m}/2\ge\eps/|\alpha|$
follows directly from~\eqref{eq:59}
and therefore it remains to show that the distance
to $\mathbb{Z}/|\alpha|$ exceeds $\sqrt{m}/2$.
Assume that the first event does not imply
the conclusion, that is that
there exists $a'\le x'\le b'$ such that
$d_{\mathbb{Z}/|\alpha|}(x'+C)<\sqrt{m}/2$.
In other words, there exists $k\in\mathbb{Z}$ such that
\[
\left|x'+C-\frac{k}{|\alpha|}\right|<\sqrt{m}/2\;.
\]
We will be done if we show that for every
$a\le x\le b$ we have
$k/|\alpha|+\sqrt{m}/2\le x+C\le (k+1)/|\alpha|-\sqrt{m}/2$.
Equivalently, we want to show $k/|\alpha|+\sqrt{m}/2\le a+C$
and $b+C\le (k+1)/|\alpha|-\sqrt{m}/2$. But indeed,
\begin{align*}
a+C-\frac{k}{|\alpha|}&
=a-x'+\left(x'+C-\frac{k}{|\alpha|}\right)
>a-b'-\sqrt{m}/2=\sqrt{m}/2\;,\\
\frac{k+1}{|\alpha|}-(b+C)
&=\frac{1}{|\alpha|}-(b-x')-
\left(x'+C-\frac{k}{|\alpha|}\right)
>\frac{1}{|\alpha|}-4\sqrt{m}-\sqrt{m}/2
\ge\sqrt{m}\;,
\end{align*}
where in the last calculation one checks from~\eqref{eq:59}
and~\eqref{eq:06} that
$1/|\alpha|\ge 6\sqrt{m}$.
\end{proof}

\paragraph{Case 2: $10^{-4}<|\alpha|\le 1/2$ and
Poisson limit theorem}
In the case of larger $|\alpha|$ we will use
$m$ of the order of small constant. Here
the Berry-Esseen theorem might be less helpful, since
$\Xtilde$ converges in distribution to a Poisson random
variable. Note that also conceptually this case
seems to be slightly different. For example, to establish
$|\fhat(1/2,0,1/2)|=|\EE e(f_A(t)/2)|<1$ we need to exclude the possibility that function $f_A$ always takes even (or odd) values.
In order to do that, we need to be able to home in
on specific values of $\Pr[\Xtilde=k]$.

More concretely, in this case we fix
$m:=1$. Again, it is easily checked that
$k\ge 3\cdot 10^5\log n$. 
Let $\lambda:=\EE\Xtilde=(n-t)\Pr[X_i=1]$
and note that we have
$1/3\le\lambda\le 2$.
By Lemma~\ref{lem:poisson},
\begin{align*}
    \Pr[\Xtilde=0]
    &\ge\Pr[Z_\lambda=0]-4/(n-t)\ge
    \exp(-\lambda)-12/n
    \ge 1/10\;,\\
    \Pr[\Xtilde=1]
    &\ge\Pr[Z_\lambda=1]-4/(n-t)
    \ge\lambda\exp(-\lambda)-12/n
    \ge 1/10\;.
\end{align*}
As in case 1, to finish the proof it is enough
to argue that if $\big|C-k/|\alpha|\big|< \eps/|\alpha|$,
then $k/|\alpha|+\eps/|\alpha|\le 1+C\le
(k+1)/|\alpha|-\eps/|\alpha|$ and therefore
at least one of $\Xtilde=0$ and $\Xtilde=1$
must imply the event from~\eqref{eq:08}.
But this is again a straightforward verification
using bounds on $|\alpha|$ and $\eps\le 10^{-5}$:
\begin{align*}
    1+C-\frac{k}{|\alpha|}
    &>1-\frac{\eps}{|\alpha|}
    \ge \frac{\eps}{|\alpha|}\;,\\
    \frac{k+1}{|\alpha|}-(1+C)
    &=\frac{1}{|\alpha|}-1+
    \left(\frac{k}{|\alpha|}-C\right)
    >\frac{1}{|\alpha|}-1-\frac{\eps}{|\alpha|}
    \ge 1-\frac{\eps}{|\alpha|}\ge\frac{\eps}{|\alpha|}\;,
\end{align*}
concluding the analysis of case 2.
\medskip

To sum up, in both cases we obtained that, conditioned
on $t\le 2n/3$, which happens with probability at least
0.99, with probability at least $1/10$,
\begin{align}\label{eq:09}
d_\mathbb{Z}\big(
\theta(v)-\theta(v+m)-\theta(v+2m)+\theta(v+3m)
\big)\ge \eps\;.
\end{align}
Therefore,~\eqref{eq:09} certainly holds with overall
probability at least $1/20$, as claimed in~\eqref{eq:05}.
\hfill\qedsymbol

\subsection{Bounding 
\texorpdfstring{$\fhat$}{f hat} 
with respect to 
\texorpdfstring{$\gamma$}{gamma}}

For the central limit theorem, we will need that $|\fhat|$
is small everywhere outside a small box around the
origin. We will use Lemma~\ref{lem:fhat-decay} to deal with the
case of larger $|\alpha|$ and $|\beta|$. In this section we
prove some similar, but simpler results to handle
small $|\alpha|$ and $|\beta|$ and larger $|\gamma|$.

\begin{lemma}\label{lem:large-gamma}
For any choice of dice $A$ and $B$, we always have $|\fhat(\alpha,\beta,\gamma)|
\le\frac{1}{|\gamma-\alpha-\beta|}$. 
\end{lemma}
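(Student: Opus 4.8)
The plan is to reduce $\fhat$ to a one–dimensional oscillatory integral and exploit that $f_A$ and $f_B$ are piecewise constant. First I would rewrite the exponent using $U_A=g_A(V)=f_A(V)-V$ and $U_B=f_B(V)-V$: writing $\mu:=\gamma-\alpha-\beta$,
\[
\alpha U_A+\beta U_B+\gamma\!\left(V-\tfrac n2\right)=\alpha f_A(V)+\beta f_B(V)+\mu V-\tfrac{\gamma n}{2}\;,
\]
so that, since $V$ is uniform on $[0,n]$,
\[
\big|\fhat(\alpha,\beta,\gamma)\big|=\frac1n\left|\int_0^n e\big(\alpha f_A(t)+\beta f_B(t)\big)\,e(\mu t)\,\mathrm dt\right|\;.
\]
If $\mu=0$ there is nothing to prove, so from now on assume $\mu\neq 0$.

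Next I would partition $[0,n]$ at the (at most $2n$) distinct values attained by the faces of $A$ and $B$, obtaining at most $2n+1$ subintervals $I_1,\dots,I_\ell$ on each of which $f_A$ and $f_B$, hence $e(\alpha f_A+\beta f_B)$, are constant of modulus one. By the triangle inequality and the elementary identity $\int_a^b e(\mu t)\,\mathrm dt=\frac{e(\mu b)-e(\mu a)}{2\pi i\mu}$, whose modulus is at most $\frac1{\pi|\mu|}$,
\[
\big|\fhat(\alpha,\beta,\gamma)\big|\le\frac1n\sum_{j=1}^\ell\left|\int_{I_j}e(\mu t)\,\mathrm dt\right|\le\frac{\ell}{\pi n|\mu|}\le\frac{2n+1}{\pi n|\mu|}\;.
\]
Since $2n+1\le 3n\le\pi n$ for every $n\ge 1$, the right-hand side is at most $\frac1{|\mu|}=\frac1{|\gamma-\alpha-\beta|}$, which is the claim.

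There is no genuine obstacle here: the argument is an elementary stationary-phase-type estimate. The only points that need a little care are counting the constancy intervals of the pair $(f_A,f_B)$ correctly — there are at most $2n+1$ of them irrespective of ties within or between the two dice — and checking that the accumulated constant $\tfrac{2n+1}{\pi n}$ stays below $1$, which it does with room to spare; in fact the same argument yields the slightly stronger bound $|\fhat|\le\frac{2}{\pi|\mu|}$, and the clean statement $\frac{1}{|\gamma-\alpha-\beta|}$ is all that is needed later (e.g.\ in the proof of Claim~\ref{cl:inverse-fourier}).
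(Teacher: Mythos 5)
Your proposal is correct and follows essentially the same argument as the paper: rewrite the phase so that the oscillatory factor becomes $e\big((\gamma-\alpha-\beta)t\big)$, partition $[0,n]$ at the (at most $2n$) face values into subintervals where $e(\alpha f_A+\beta f_B)$ is constant, integrate the exponential explicitly on each piece, and conclude by the triangle inequality with the observation $\tfrac{2n+1}{\pi n}\le 1$.
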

\begin{proof}
We calculate, using the fact that the interval
$[0,n]$ can be partitioned into $2n+1$ intervals
$[c_i,c_{i+1}]$ such that on each of the intervals the functions
$f_A$ and $f_B$ are constant with $f_A(t)=d_i$ and
$f_B(t)=e_i$:
\begin{align*}
\fhat(\alpha,\beta,\gamma)
&=\EE e\big(\alpha g_A(V)+\beta g_B(V)+\gamma (V-n/2)\big)
=\frac{1}{n}\int_0^n e\big(
\alpha g_A(t)+\beta g_B(t)+\gamma (t-n/2)
\big)\,\mathrm{d}t\\
&=\frac{1}{n}\int_0^n e\big(
\alpha f_A(t)+\beta f_B(t)+(\gamma-\alpha-\beta)t +\gamma n/2)
\big)\,\mathrm{d}t\\
&=\frac{1}{n}\sum_{i=0}^{2n}e\big(\alpha d_i+\beta e_i+\gamma n/2\big)
\int_{c_i}^{c_{i+1}} e\big((\gamma-\alpha-\beta)t\big)\,\mathrm{d}t\\
&=\frac{1}{n}\sum_{i=0}^{2n} 
\frac{F_i\cdot\Big[e\big((\gamma-\alpha-\beta)c_{i+1}\big)
-e\big((\gamma-\alpha-\beta)c_i\big)\Big]}
{2\pi(\gamma-\alpha-\beta)}\;,
\end{align*}
where in the last step we denote the constant factor
on the $i$-th interval by
$F_i$ with $|F_i|=1$. Consequently, and using the triangle
inequality and
$|e(a)-e(b)|\le 2$,
\begin{align*}
\left|\fhat(\alpha,\beta,\gamma)\right|
&\le \frac{2(2n+1)}{2\pi n|\gamma-\alpha-\beta|}
\le\frac{1}{|\gamma-\alpha-\beta|}\;.\qedhere
\end{align*}
\end{proof}

Lemma~\ref{lem:large-gamma} handles the case where
$|\gamma|$ is quite large. The remaining case is
very small $|\alpha|,|\beta|$ and somewhat larger 
$|\gamma|$:

\begin{lemma}\label{lem:gamma-box-decay}
Let $|\alpha|,|\beta|\le\frac{10^{10}\log n}{n}$. Then,
provided that dice $A$ and $B$ satisfy
$\|U_A\|_\infty, \|U_B\|_\infty\le 5\sqrt{n\log n}$,
for every $\gamma$
with $|\gamma|\ge 6\log^2 n/n^{3/2}$ it holds that
\[
    \left|\fhat(\alpha,\beta,\gamma)\right|
    \le 1-\frac{\log^4 n}{2n}\;.
\]
\end{lemma}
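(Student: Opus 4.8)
The plan is to isolate the one‑dimensional oscillation coming from $V$ — which alone already gives enough decay — from the tiny perturbation coming from $\alpha U_A+\beta U_B$. Write $\widetilde V:=V-n/2$ (uniform on $[-n/2,n/2]$, centred) and $W:=\alpha U_A+\beta U_B$, so that $\fhat(\alpha,\beta,\gamma)=\EE\big[e(\gamma\widetilde V)\,e(W)\big]$; recall $\EE U_A=\EE U_B=0$ for balanced dice by Claim~\ref{cl:simple-g} (the lemma is only applied to balanced dice). A direct integration gives the marginal $\fhat(0,0,\gamma)=\EE e(\gamma\widetilde V)=\frac{\sin(\pi\gamma n)}{\pi\gamma n}$. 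From $\|U_A\|_\infty,\|U_B\|_\infty\le 5\sqrt{n\log n}$ and $|\alpha|,|\beta|\le 10^{10}\log n/n$ I would first record two crude consequences: $\Var W\le(|\alpha|\|U_A\|_\infty+|\beta|\|U_B\|_\infty)^2=\Otilde(1/n)$, and — crucially using $\EE U_A=0$ — $\big|\EE[e(\gamma\widetilde V)U_A]\big|=\big|\EE[(e(\gamma\widetilde V)-1)U_A]\big|\le 2\pi|\gamma|\,\EE[|\widetilde V|\,|U_A|]\le\pi|\gamma|n\|U_A\|_\infty$, and likewise for $U_B$.

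Next I would bound the decoupling error $E(\gamma):=\fhat(\alpha,\beta,\gamma)-\fhat(0,0,\gamma)=\EE[e(\gamma\widetilde V)(e(W)-1)]$ in two different ways. The crude bound, valid for every $\gamma$, is $|E(\gamma)|\le\EE|e(W)-1|\le 2\pi\EE|W|\le 2\pi\sqrt{\Var W}=\Otilde(n^{-1/2})=o(1)$. The refined bound uses the second‑order estimate $|e(W)-1-2\pi iW|\le 2\pi^2W^2$ together with $\EE W=0$ and the displayed consequences above:
\[
|E(\gamma)|\le 2\pi\big|\EE[e(\gamma\widetilde V)W]\big|+2\pi^2\EE[W^2]\le 2\pi(|\alpha|+|\beta|)\cdot\pi|\gamma|n\cdot 5\sqrt{n\log n}+2\pi^2\Var W=\cO\big(|\gamma|n^{1/2}\log^{3/2}n\big)+\Otilde(1/n).
\]

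With these in hand I would split into two ranges of $|\gamma|$. If $|\gamma|\ge 1/n$ then $|\fhat(0,0,\gamma)|\le\frac1{\pi|\gamma|n}\le\frac1\pi$, so the crude error bound gives $|\fhat(\alpha,\beta,\gamma)|\le\frac1\pi+o(1)$, which is below $1-\frac{\log^4n}{2n}$ for $n$ large. If $6\log^2n/n^{3/2}\le|\gamma|\le 1/n$ then $|\pi\gamma n|\le\pi$, and an elementary estimate $|\sin x/x|\le 1-cx^2$ on $[-\pi,\pi]$ (absolute $c>0$) gives $|\fhat(0,0,\gamma)|\le 1-c\pi^2\gamma^2n^2$; combining with the refined error bound,
\[
|\fhat(\alpha,\beta,\gamma)|\le 1-c\pi^2\gamma^2n^2+\cO\big(|\gamma|n^{1/2}\log^{3/2}n\big)+\Otilde(1/n).
\]
Writing $|\gamma|=x\,n^{-3/2}$ with $x\in[6\log^2n,\,n^{1/2}]$, the right‑hand side equals $1-\tfrac1n\big(c\pi^2x^2-\cO(x\log^{3/2}n)-\Otilde(1)\big)$, and since $x\ge 6\log^2n$ the quadratic term dominates: for $n$ large one gets $c\pi^2x^2-\cO(x\log^{3/2}n)-\Otilde(1)\ge\tfrac12x^2\ge\tfrac12\log^4n$, which yields $|\fhat(\alpha,\beta,\gamma)|\le 1-\frac{\log^4n}{2n}$.

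I expect the last range to be the main obstacle: one must check that the decoupling error is genuinely of smaller order than the gain $\gamma^2n^2$ from the marginal. Two ingredients make this work and both are essential: exploiting $\EE U_A=\EE U_B=0$ to extract a factor $|\gamma|$ from $\EE[e(\gamma\widetilde V)U_A]$, so that the mixed part of the error is only linear in $|\gamma|$ and hence negligible against $\gamma^2n^2$ once $\gamma n^{3/2}\gtrsim\log^2n$; and using the \emph{second}‑order (not first‑order) expansion of $e(W)$, so that the $(\alpha,\beta)$‑only part of the error is $\cO(\Var W)=\Otilde(1/n)$ rather than $\cO(\sqrt{\Var W})=\Otilde(n^{-1/2})$. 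Weighing these two estimates against the threshold $6\log^2n/n^{3/2}$ (which forces $n$ to be large in the sense of the paper's running convention) is where the real work lies; everything else is routine.
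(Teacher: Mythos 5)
Your proposal is correct but takes a genuinely different route from the paper. The paper reuses the pairing machinery of the $\alpha$-decay argument: it partitions $[0,n]$ into consecutive pairs of length-$m$ intervals, chooses $m$ so that $3\log^2 n/\sqrt{n}\le|\gamma|m\le 1/4$, observes that $\gamma m + R(t+m)-R(t)$ is then at distance at least $\log^2n/\sqrt{n}$ from $\mathbb{Z}$ because the perturbation $R$ is uniformly $\cO(\log^2 n/\sqrt n)$, and applies Lemma~\ref{lem:e-to-mod1} to each pair to get the cancellation. You instead compute the $\gamma$-marginal $\fhat(0,0,\gamma)=\sin(\pi\gamma n)/(\pi\gamma n)$ in closed form (exploiting the exact uniform distribution of $V$), lower-bound the decay $1-|\sin x/x|\ge x^2/\pi^2$ on $[-\pi,\pi]$, and control the decoupling error $E(\gamma)$ by a second-order Taylor expansion of $e(W)$ together with the centering $\EE W=0$ to extract the crucial extra factor of $|\gamma|$ from the mixed term. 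Both arguments are sound; yours is more analytic and self-contained, while the paper's is more flexible (no explicit formula needed, and it applies verbatim to the discrete model).

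One point worth flagging explicitly: your proof genuinely needs $\EE U_A=\EE U_B=0$ — i.e., that $A$ and $B$ are balanced — both in the refined bound for $\EE[e(\gamma\widetilde V)W]$ and in passing from $\EE W^2$ to $\Var W$. The lemma as stated only assumes the sup-norm bound, and the paper's proof uses nothing beyond that. Since Lemma~\ref{lem:gamma-box-decay} is invoked only inside the proof of Lemma~\ref{lem:fhat-bounded}, where $A,B$ are always balanced, this is harmless, but it does make your version of the lemma formally narrower than the paper's. Everything else checks out: the crude estimate $|E(\gamma)|=\Otilde(n^{-1/2})$ settles $|\gamma|\ge 1/n$, and in the window $6\log^2n/n^{3/2}\le|\gamma|\le 1/n$ the quadratic gain $\gamma^2n^2\ge 36\log^4n/n$ indeed dominates both $\cO(|\gamma|n^{1/2}\log^{3/2}n)$ and the $\Otilde(1/n)$ term once $n$ is sufficiently large (with the same kind of ``astronomical $n$'' threshold that the paper's constants also implicitly require).
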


\begin{proof}
Recall that we are bounding 
$\fhat(\alpha,\beta,\gamma)=
\EE e(\alpha g_A(V)+\beta g_B(V)+\gamma(V-n/2))$.
By our assumption we have 
$|g_A(t)|,|g_B(t)|\le 5\sqrt{n\log n}$ for every $t$.
Therefore, by the assumption on $\alpha$ and $\beta$,
we can write
\[
\fhat(\alpha,\beta,\gamma)
=e(-\gamma n/2)\EE e(\gamma V + R(V))\;,
\]
where it always holds that
$|R(V)|\le (\log^2 n/\sqrt{n})$.
Following~\cite{Pol17}, we adopt a simpler version of arguments
from the proof of
Lemma~\ref{lem:fhat-decay-unbalanced}. 
Specifically, we 
will (later) 
choose some $0<m\le n/2$ and, letting
$k:=\lfloor n/2m\rfloor$,
partition the interval $[0,n]$
into $2k$ intervals $S_1,T_1\ldots,S_k,T_k$
of length $m$ each and a possible ``leftover'' interval
$S_0$ of length at most $n/2$.

We focus on values of $z(t)$ and $z(t+m)$ inside the intervals.
More specifically, we will choose $m$ so
that it always holds
\begin{align}\label{eq:14}
\frac{3\log^2 n}{\sqrt{n}}\le |\gamma|m\le \frac{1}{4}\;,
\end{align}
consequently giving
\begin{align*}
    \frac{\log^2 n}{\sqrt{n}}
    \le|\gamma|m-\frac{2\log^2 n}{\sqrt{n}}
    \le\big|\gamma m+R(t+m)-R(t)\big|
    \le|\gamma|m+\frac{2\log^2 n}{\sqrt{n}}
    \le\frac{1}{2}\;,
\end{align*}
and
\begin{align*}
    d_{\mathbb{Z}}\big(\gamma m+R(t+m)-R(t)\big)
    \ge\frac{\log^2 n}{\sqrt{n}}\;.
\end{align*}
Applying triangle inequality and Lemma~\ref{lem:e-to-mod1}, 
we then get
\begin{align*}
\left|\fhat(\alpha,\beta,\gamma)\right|
&\le\frac{1}{n}\left(
\int_{S_0}\,\mathrm{d}t+
\sum_{i=1}^k\int_{S_i}
\big|e(\gamma t+R(t))+e(\gamma(t+m)+R(t+m))\big|\,\mathrm{d}t
\right)\\
&\le\frac{1}{n}\left(
\int_{S_0}\,\mathrm{d}t+
\sum_{i=1}^k\int_{S_i}
2-2\frac{\log^4 n}{n}
\,\mathrm{d}t
\right)
=1-\frac{2km}{n}\cdot\frac{\log^4 n}{n}\\
&\le 1-\frac{\log^4 n}{2n}\;,
\end{align*}
as claimed. It remains to show that a value
of $0<m\le n/2$ satisfying~\eqref{eq:14} can be chosen.
This is done by considering two cases.

\paragraph{Case 1: $6\log^2 n/n^{3/2}\le |\gamma|\le 1/2n$} 
In this case we just pick $m:=n/2$. Indeed, we clearly have
\[
\frac{3\log^2 n}{\sqrt{n}}\le
|\gamma|m=\frac{|\gamma|n}{2}\le\frac{1}{4}\;.
\]
\paragraph{Case 2: $|\gamma|>1/2n$}
Here we choose $m:=1/4|\gamma|$ and easily check
$\frac{3\log^2 n}{\sqrt{n}}\le m|\gamma|=1/4$.
\end{proof}

\subsection{Proof of Lemma~\ref{lem:fhat-bounded}}

By Lemma~\ref{lem:supremum-norm},
$\|U_A\|_{\infty},\|U_B\|_{\infty}<5\sqrt{n\log n}$ holds
except with probability $2n^{-10}$. Assume that it is so.
If both $|\alpha|$ and $|\beta|$ are at most
$10^{10}\log n/n$ and $|\gamma|$ exceeds
$6\log^2 n/n^{3/2}$,
then we get $|\fhat(\alpha,\beta,\gamma)|\le1-10\log n/n$
directly by
Lemma~\ref{lem:gamma-box-decay}.

As for the case where either $|\alpha|$ or
$|\beta|$ exceed $10^{10}\log n/n$,
we cover the square 
$[-1/2,1/2]^2$ with a grid
of points $(\alpha_i,\beta_i)$ such that
\begin{align*}
\alpha_i,\beta_i\in\left\{
-\frac{1}{2},-\frac{1}{2}+\frac{1}{n^2},
-\frac{1}{2}+\frac{2}{n^2},\ldots
-\frac{1}{2n^2},\frac{1}{2n^2},\ldots,\frac{1}{2}
\right\}\;.
\end{align*}
This is a discrete grid of $(n^2+1)^2\le 2n^4$ points. We
will proceed, for each applicable $(\alpha,\beta,\gamma)$,
to find a grid point $(\alpha_i,\beta_i)$ such that it
holds both that
\begin{align}\label{eq:37}
|\alpha-\alpha_i|+|\beta-\beta_i|\le \frac{2}{n^2}
\qquad\text{ and }\qquad
\left|\fhat(\alpha_i,\beta_i,\gamma)\right|\le 1-12\log n/n\;.
\end{align}
Then we will be finished, since, applying
Lemma~\ref{lem:interpolation}, we will have
\begin{align*}
\left|\fhat(\alpha,\beta,\gamma)\right|
&\le\left|\fhat(\alpha_i,\beta_i,\gamma)\right|
+\left|\fhat(\alpha_i,\beta_i,\gamma)-
\fhat(\alpha,\beta,\gamma)\right|\\
&\le1-\frac{12\log n}{n}+\frac{20\pi\sqrt{n\log n}}{n^2}
<1-\frac{10\log n}{n}\;.
\end{align*}
To achieve that we apply Lemma~\ref{lem:fhat-decay}. Given
$\alpha$ and $\beta$,
we take $(\alpha_i,\beta_i)$ to be the closest possible 
to
$(\alpha,\beta)$ with $|\alpha_i|\ge|\alpha|$
and $|\beta_i|\ge|\beta|$. Clearly, the first
condition in~\eqref{eq:37} holds and by
Lemma~\ref{lem:fhat-decay} we get
that $|\fhat(\alpha_i,\beta_i,\gamma)|\le 1-12\log n/n$ for
every $\gamma\in\mathbb{R}$ except with probability
$40n^{-7}$. 

Taking union bound
over the whole grid and the events from Lemma~\ref{lem:supremum-norm},
we see that $|\fhat(\alpha,\beta,\gamma)|\le 1-10\log n/n$
for all applicable points, except with probability
at most $2n^{-10}+2n^4\cdot 40n^{-7}<n^{-2}$.
\hfill\qedsymbol

\subsection{Proof of Lemma~\ref{lem:characteristic-close}}

Assume that the thesis of Lemma~\ref{lem:fhat-bounded}
holds, in particular that 
$\|U_A\|_\infty,\|U_B\|_\infty\le 5\sqrt{n\log n}$.
By Claim~\ref{cl:fhat-moments}, we have
$\fhat(\alpha,\beta,\gamma)=1-Q+R$, where 
$Q$ is given in~\eqref{eq:38} and $R$ is such that
\begin{align}
    |R|
    &\le2\cdot 10^5(n\log n)^{3/2}
    \big(|\alpha|^3+|\beta|^3\big)
    +150n^3|\gamma|^3\;.\label{eq:17}
\end{align}
Our task is to estimate the integral
\begin{align*}
    \int_{\bbR^3}\left|\ghat(\alpha,\beta,\gamma)
    -\hhat(\alpha,\beta,\gamma)
    \right|\,\mathrm{d}\alpha\beta\gamma\;.
\end{align*}
To that end, we are going to divide $\bbR^3$ into various 
parts. More precisely, let 
$B:=\big\{(\alpha,\beta,\gamma):\allowbreak
|\alpha|,|\beta|\le 10^{10}\log n/n,
|\gamma|\le 6\log^2 n/n^{3/2}
\big\}$. We make an estimate
\begin{align*}
    \int_{\bbR^3}\left|\ghat(\alpha,\beta,\gamma)
    -\hhat(\alpha,\beta,\gamma)
    \right|\,\mathrm{d}\alpha\beta\gamma
    &\le\int_{B}\left|\ghat(\alpha,\beta,\gamma)
    -\hhat(\alpha,\beta,\gamma)
    \right|\,\mathrm{d}\alpha\beta\gamma\\
    &\qquad\qquad+\int_{\Bcompl}
    \left|\ghat(\alpha,\beta,\gamma)\right|\,\mathrm{d}\alpha\beta\gamma
    +\int_{\Bcompl}
    \left|\hhat(\alpha,\beta,\gamma)\right|\,\mathrm{d}\alpha\beta\gamma
\end{align*}
and proceed to bounding each of the three resulting terms.

\paragraph{Term 1: $\int_B|\ghat-\hhat|$}
For this term, note that for every $(\alpha,\beta,\gamma)\in B$
we have
\begin{align}
    |Q|&\le 6\pi^2\big(\alpha^2\Var_A+\beta^2\Var_B+\gamma^2\Var V\big)
    \le6\pi^2\left(2\cdot\frac{10^{20}\log^2n}{n^2}\cdot
    25n\log n+\frac{36\log^4 n}{n^3}\cdot \frac{n^2}{4}
    \right)\nonumber\\
    &\le \frac{\log^5 n}{n}\;,\label{eq:18}
\end{align}
and accordingly $Q^2\le 1/100n$. Similarly, continuing from~\eqref{eq:17}, we have
\begin{align}
    |R|&\le
    2\cdot 10^5n^{3/2}\log^{3/2}n\cdot 2 \cdot
    \frac{10^{30}\log^3 n}{n^3}
    +150n^3\cdot\frac{6^3\log^6n}{n^{9/2}}\le\frac{\log^7 n}{n^{3/2}}\;,\label{eq:19}
\end{align}
in particular again $|R|\le 1/100n$. 
Therefore, Lemma~\ref{lem:exp-nq-approx} applies inside
$B$ and we can use it, together with definitions
of $\ghat$ and $\hhat$ and Claim~\ref{cl:ghat-formula}
to estimate
\begin{align*}
    \int_B\left|\ghat-\hhat\right|
    &=\int_B\left|\exp(-nQ)-(1-Q+R)^n\right|
    \le\int_B\left|1-\frac{(1-Q+R)^n}{\exp(-nQ)}\right|\\
    &\le\left(\frac{2\cdot 10^{10}\log n}{n}\right)^2
    \cdot \frac{12\log^2 n}{n^{3/2}}
    \cdot 40n\cdot\left(
    \frac{\log^{10} n}{n^2}+\frac{\log^7 n}{n^{3/2}}
    \right)
    \le \frac{\log^{15} n}{n^4}\;.
\end{align*}

\paragraph{Term 2: $\int_{\Bcompl} |\ghat|$}
By Lemma~\ref{lem:fhat-bounded}, we have
$|\hhat(\alpha,\beta,\gamma)|\le n^{-10}$ for all
the points on the boundary of $B$. Furthermore,
applying Lemma~\ref{lem:exp-nq-approx} as well
as~\eqref{eq:18} and~\eqref{eq:19}, we also get
\begin{align}\label{eq:20}
    |\ghat|\le|\hhat|+|\hhat-\ghat|
    =|\hhat|\left(1+\left|1-\frac{\ghat}{\hhat}\right|\right)
    \le|\hhat|\left(1+40n(Q^2+|R|)\right)
    \le 2n^{-10}
\end{align}
everywhere on the boundary of $B$. We now proceed to
estimating $\int_{\Bcompl} |\ghat|$ by a standard substitution
of spherical coordinates
$\alpha=t\sin\phi\cos\theta$, $\beta=t\sin\phi\sin\theta$,
$\gamma=t\cos\phi$, where $\phi$ ranges from $0$ to $\pi$ and $\theta$
from $0$ to $2\pi$:
\begin{align}\label{eq:21}
    \int_{\Bcompl}|\ghat(\alpha,\beta,\gamma)|\,
    \mathrm{d\alpha\beta\gamma}
    =\int_{\Bcompl}t^2\sin\phi\cdot
    \ghat(t\sin\phi\cos\theta,t\sin\phi\sin\theta,
    t\cos\phi)\,\mathrm{dt\phi\theta}\;,
\end{align}
For fixed $\phi$ and $\theta$,
we let $\gtilde(t):=\ghat(t\sin\phi\cos\theta,\allowbreak t\sin\phi\sin\theta,\allowbreak t\cos\phi)$ and note that for the corresponding
iterated integral we have, for some appropriate
$t_0=t_0(\phi,\theta)$,
\begin{align*}
    \int_{\Bcompl} t^2\sin\phi\cdot\gtilde(t)\,\mathrm{d}t
    =\int_{t_0}^\infty t^2\sin\phi\cdot\gtilde(t)\,\mathrm{d}t
    \le\int_{t_0}^\infty t^2\cdot\gtilde(t)\,\mathrm{d}t\;.
\end{align*}
In order to bound this last integral, note that by definition
of $\ghat=\exp(-nQ)$ it must be that
$\gtilde(t)=\exp(-kt^2)$ for some $k\ge 0$.
Furthermore, by~\eqref{eq:20} we have $\exp(-k t_0^2)\le 2n^{-10}$ and 
by definition of $B$ also $1/n^2\le|t_0|\le\log^2 n/n$. 
Letting $\delta:=t_0$ and $\eps:=2n^{-10}$, we estimate
\begin{align}
    \int_{t_0}^\infty t^2\cdot\gtilde(t)\,\mathrm{d}t
    &=\int_{t_0}^\infty t^2\exp(-kt^2)\,\mathrm{d}t
    \le\delta\sum_{j=0}^\infty (t_0+(j+1)\delta)^2
    \exp(-k(t_0+\delta j)^2)\nonumber\\
    &\le t_0^3\exp(-kt_0^2)\sum_{j=0}^\infty (j+2)^2\exp(-kt_0^2 j)
    \le t_0^3\eps\sum_{j=0}^\infty\Big( (j+2)(j+1) + (j+1) + 1\Big)
    \eps^j\nonumber\\
    &=t_0^3\eps\left(\frac{1}{(1-\eps)^3}+\frac{1}{(1-\eps)^2}+\frac{1}{1-\eps}\right)
    \le n^{-12}\;.\label{eq:22}
\end{align}
Chaining together~\eqref{eq:22} and~\eqref{eq:21} and 
applying Fubini's theorem, we finally get
$\int_{\Bcompl}|\ghat|\le 2\pi^2n^{-12}\le n^{-11}$, better than what we needed
for~\eqref{eq:23}.

\paragraph{Term 3: $\int_{\Bcompl} |\hhat|$}
Here we divide the area of $\Bcompl$ in two more subcases. First, for values
$|\gamma|\le 4$, by Lemma~\ref{lem:fhat-bounded} we have that the total contribution
to the integral is at most $8\cdot n^{-10}$ (recall that $\hhat$ is zero outside
of $|\alpha|,|\beta|\le 1/2$). For $|\alpha|,|\beta|\le 1/2$ and $|\gamma|>4$,
by Lemma~\ref{lem:large-gamma} we have
$|\fhat(\alpha,\beta,\gamma)|\le 1/|\gamma-\alpha-\beta|\le 2/|\gamma|$, and
consequently, for fixed $\alpha,\beta$,
\begin{align*}
    \int_{|\gamma|>4}\left|\hhat(\alpha,\beta,\gamma)\right|\,\mathrm{d}\gamma
    \le 2^{n+1}\int_4^\infty \gamma^{-n}\,\mathrm{d}\gamma
    =\frac{4}{(n-1)2^{n-1}}\le 2^{-n}\;.
\end{align*}

Therefore, we have $\int_{\Bcompl} |\hhat|\le 8n^{-10}+2^{-n}\le n^{-9}$ 
and putting all the cases together
$\int_{\mathbb{R}^3}\left|\ghat-\hhat\right|\le \log^{16}n/n^4$.\qed

\subsection{Proof of Theorem~\ref{thm:conditional-clt}}
Finally we are ready to prove the main theorem of this section. As a preliminary point, by 
Claim~\ref{cl:conditional-variance} the condition
$\Var_A-(CV_{A}^2/\Var V),\Var_B-(CV_{B}^2/\Var V)\ge\eps n$
implies
$\Var[G_A^{*n}\cond H^{*n}=n^2/2],
\Var[G_B^{*n}\cond H^{*n}=n^2/2]\ge\eps n^2$.
This will be used when invoking
Claim~\ref{cl:gaussian-anti} later on.
Throughout we use the notation $\cE_V$ for the event $V^{*n}=n^2/2$ and $\cE_H$
for $H^{*n}=n^2/2$. 

Below we show a detailed calculation establishing 
$\Pr[U_A^{*n},U_B^{*n}\cond\cE_V]\ge
\Pr[G_A^{*n},G_B^{*n}\cond\cE_H]-\Otilde(1/\sqrt{n})$.
The justification for the reverse inequality is very similar
and we skip it.
We proceed
by a string of inequalities applying the lemmas we proved
before. Using union bound, we check that all events
used in those lemmas happen
except with probability $1/n$.
Whenever we are summing over $a$ and $b$, the sum goes
over the set $\mathbb{Z}+1/2$: 
\begin{IEEEeqnarray*}{rCl}
  \Pr\big[U_A^{*n},U_B^{*n}>0\cond \cE_V\big]
  &\stackrel{\text{Lem~\ref{lem:u_a-concentration}}}{\ge}
    &\frac
    {
    \Pr\big[50n\log n\ge U_A^{*n},U_B^{*n}>0\cond \cE_V\big]
    }
    {\Pr\big[|U_A^{*n}|,|U_B^{*n}|\le 50n\log n\cond\cE_V\big]+2n^{-10}
    }\\
  &\stackrel{\text{Cl \ref{cl:inverse-fourier}}}{=}
    &\frac{
    \sum_{50n\log n\ge a,b>0}
    \int_{\mathbb{R}^3}\hhat(\alpha,\beta,\gamma)
    e\left(-\alpha a-\beta b\right)\,\mathrm{d\alpha\beta\gamma}
    }
    {
    \sum_{|a|,|b|\le 50n\log n}
    \int_{\mathbb{R}^3}\hhat(\alpha,\beta,\gamma)
    e\left(-\alpha a-\beta b\right)\,\mathrm{d\alpha\beta\gamma}
    +2n^{-10}
    }\\
  &\stackrel{\text{Lem \ref{lem:characteristic-close}}}{\ge}
    &\frac{
    \sum_{50n\log n\ge a,b>0}
    \int_{\mathbb{R}^3}\ghat(\alpha,\beta,\gamma)
    e\left(-\alpha a-\beta b\right)\,\mathrm{d\alpha\beta\gamma}
    -\log^{19} n/n^2
    }
    {
    \sum_{|a|,|b|\le 50n\log n}
    \int_{\mathbb{R}^3}\ghat(\alpha,\beta,\gamma)
    e\left(-\alpha a-\beta b\right)\,\mathrm{d\alpha\beta\gamma}
    +\log^{19}n/n^2
    }\\
  &\stackrel{\text{Cl \ref{cl:inverse-fourier}}}{=}
    &\frac{
    \sum_{50n\log n\ge a,b>0}
    g(a,b,0)
    -\log^{19} n/n^2
    }
    {
    \sum_{|a|,|b|\le 50n\log n}g(a,b,0)
    +\log^{19}n/n^2
    }\\
  &\stackrel{*}{\ge}
    &\frac
    {
    g(0)\Pr\big[50n\log n> G_A^{*n},G_B^{*n}>1/2\cond \cE_H\big] -\log^{19}n/n^2
    }
    {
    g(0)+g(0)8\Pr\big[|G_A^{*n}|\le 1/2\cond\cE_H\big]+\log^{19}n/n^2
    }\\
    &\stackrel{\text{Cl \ref{cl:gaussian-anti}}}{\ge}
    &\frac
    {
    g(0)\Big(
    \Pr\big[50n\log n> G_A^{*n},G_B^{*n}>0\cond \cE_H\big]-2/\sqrt{\eps}n\Big)-\log^{19}n/n^2
    }
    {
    g(0)\Big(1+8/\sqrt{\eps}n\Big)+\log^{19}n/n^2
    }\\
    &\stackrel{\text{Lem~\ref{lem:u_a-concentration}}}{\ge}
    &\frac
    {
    g(0)\Big(
    \Pr\big[G_A^{*n},G_B^{*n}>0\cond\cE_H\big]
    -2/\sqrt{\eps}n-2n^{-10}
    \Big)-\log^{19}n/n^2
    }
    {
    g(0)\Big(
    1+8/\sqrt{\eps}n
    \Big)+\log^{19}n/n^2
    }\\
    &\stackrel{\text{Cl~\ref{cl:h-lower-bound}}}{\ge}
    &\frac{
    \Pr\big[G_A^{*n},G_B^{*n}>0\cond\cE_H\big]
    -\Otilde(1/\sqrt{n})
    }{
    1+\Otilde(1/\sqrt{n})
    }
    \ge
    \Pr\big[G_A^{*n},G_B^{*n}>0\cond\cE_H\big]
    -\Otilde(1/\sqrt{n})\;.
\end{IEEEeqnarray*}

We need an additional explanation for the inequality marked with the $*$ sign. There, in the numerator we used the fact
that $g(a,b,0)\ge\int_{B}g(a',b',0)\,\mathrm{d}a'b'$, where $B$ is the
$1\times 1$ box
\begin{align*}
  B = \Big\{(a',b'): |a|+1>|a'|>|a|,\sgn(a')=\sgn(a)\text{ and }
  |b|+1>|b'|>|b|,\sgn(b')=\sgn(b)\Big\}\;.
\end{align*}
Similarly, in the denominator we used that
whenever $|a|,|b|>1$, then
$g(a,b,0)\le \int_B g(a',b',0)\,\mathrm{d}a'b'$
for
\begin{align*}
    B = \Big\{(a',b'):
    |a|>|a'|>|a|-1,\sgn(a')=\sgn(a)\text{ and }
  |b|>|b'|>|b|-1,\sgn(b')=\sgn(b)\Big\}\;,
\end{align*}
as well as $g(a,b,0)\le4\int_Bg(a',b',0)\,
\mathrm{d}a'b'$ if $|a|=1/2$ or $|b|=1/2$ with
\begin{align*}
    B = \Big\{(a',b'):
    |a|>|a'|>|a|-1/2,\sgn(a')=\sgn(a)\text{ and }
  |b|>|b'|>|b|-1/2,\sgn(b')=\sgn(b)\Big\}\;.
\end{align*}
An illustration of this argument is provided in Figure~\ref{fig:g-decreasing}.\qed

\begin{figure}[!ht]\centering\begin{tikzpicture}
    \draw [-latex] (0, -1) -- (0, 3);
    \draw [-latex] (-1, 0) -- (3, 0);
    \draw [thick] (1, -0.1) -- (1, 0.1);
    \node at (1, -0.3) {$1$};
    \draw [thick] (-0.1, 1) -- (0.1, 1);
    \node at (-0.3, 1) {$1$};
    \filldraw [pink!30] (0.5,0.5) rectangle (3,3);
    \filldraw (0.5, 0.5) [red] circle [radius=0.1];
    \filldraw (1.5, 0.5) [red] circle [radius=0.1];
    \filldraw (2.5, 0.5) [red] circle [radius=0.1];
    \filldraw (0.5, 1.5) [red] circle [radius=0.1];
    \filldraw (1.5, 1.5) [red] circle [radius=0.1];
    \filldraw (2.5, 1.5) [red] circle [radius=0.1];
    \filldraw (0.5, 2.5) [red] circle [radius=0.1];
    \filldraw (1.5, 2.5) [red] circle [radius=0.1];
    \filldraw (2.5, 2.5) [red] circle [radius=0.1];

    \filldraw [pink!30] (5.5, -1) rectangle (9.5, 3);
    \filldraw [pink!60] (7, -1) rectangle (8, 3);
    \filldraw [pink!60] (5.5, 0.5) rectangle (9.5, 1.5);
    \filldraw [pink] (7, 0.5) rectangle (8, 1.5);
    \draw [-latex] (7.5, -1) -- (7.5, 3);
    \draw [-latex] (5.5, 1) -- (9.5, 1);
    \draw [thick] (8.5, 0.9) -- (8.5, 1.1);
    \draw [thick] (7.4, 2) -- (7.6, 2);
    \filldraw (8, 1.5) [red] circle [radius=0.1];
    \filldraw (9, 1.5) [red] circle [radius=0.1];
    \filldraw (8, 2.5) [red] circle [radius=0.1];
    \filldraw (9, 2.5) [red] circle [radius=0.1];
    \filldraw (8, 0.5) [red] circle [radius=0.1];
    \filldraw (9, 0.5) [red] circle [radius=0.1];
    \filldraw (8, -0.5) [red] circle [radius=0.1];
    \filldraw (9, -0.5) [red] circle [radius=0.1];
    \filldraw (7, 1.5) [red] circle [radius=0.1];
    \filldraw (6, 1.5) [red] circle [radius=0.1];
    \filldraw (7, 2.5) [red] circle [radius=0.1];
    \filldraw (6, 2.5) [red] circle [radius=0.1];
    \filldraw (7, 0.5) [red] circle [radius=0.1];
    \filldraw (6, 0.5) [red] circle [radius=0.1];
    \filldraw (7, -0.5) [red] circle [radius=0.1];
    \filldraw (6, -0.5) [red] circle [radius=0.1];
\end{tikzpicture}
\caption{A graphical illustration of claims
in the proof of Theorem~\ref{thm:conditional-clt}.
We are using the fact that
$g(ta_0,tb_0,0)$ is decreasing in $|t|$ for
every direction $(a_0,b_0)$.}
\label{fig:g-decreasing}
\end{figure}
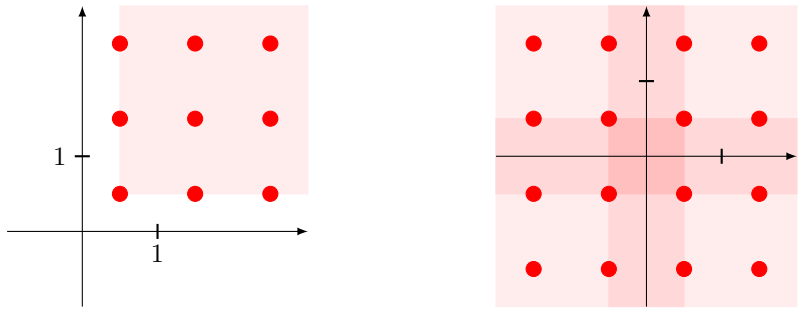

\subsection{Proof of Theorem~\ref{thm:one-fixed-equivalent}}
\label{sec:one-fixed-equivalent}

For the same reasons as those
given in Section~\ref{sec:clt}, let us assume,
as elsewhere in this section,
that the dice have faces in $[0,n]$ and that the conditioning
(denoted by $\cE_V$) is given by the event $V^{*n}=n^2/2$.
For purposes of this proof let 
$P(a,b):=\Pr[U_A^{*n}=a,U_B^{*n}=b\cond\cE_V]$.
To prove that $\Pr[U_A^{*n}>0\cond \cE_V]\approx 1/2$, i.e., that a random balanced
die $A$ is approximately ``fair'', we can employ a simplified argument using
symmetry. Specifically, by Claim~\ref{cl:inverse-fourier}
and Lemma~\ref{lem:characteristic-close}, except with probability
$n^{-2}$ we have that for every $a,b\in\mathbb{Z}+1/2$ it holds that
\begin{align*}
    \Big|
    P(a,b)\cdot h(0)-
    g(a, b, 0)
    \Big|
    \le
    \int_{\mathbb{R}^3}\Big|\hhat(\alpha,\beta,\gamma)
    -\ghat(\alpha,\beta,\gamma)\Big|\,\mathrm{d}\alpha\beta\gamma
    \le\frac{\log^{16} n}{n^4}\;.
\end{align*}
Since $g$ is a density function of a centered Gaussian, we have
$g(a,b,0)=g(-a,-b,0)$ which implies, using also
Claim~\ref{cl:h-lower-bound},
\begin{align}\label{eq:33}
\Big|P(a,b)-P(-a,-b)
\Big|\le\frac{8\log^{16}n}{n^{5/2}}\;.
\end{align}
Finally, we apply Lemma~\ref{lem:u_a-concentration}
(twice) and Lemma~\ref{lem:supremum-norm},
as well as~\eqref{eq:33} to get that,
except with probability $2n^{-10}+n^{-2}<1/n$,
\begin{align*}
    \left|\Pr\left[U_A^{*n}>0\mcond V^{*n}=\frac{n^2}{2}
    \right]-\frac{1}{2}\right|
    &\le\left|
    \sum_{\substack{0<a\le 50n\log n\\|b|\le 50n\log n}}
    P(a,b)
    -\frac{1}{2}\right|+2n^{-10}\\
    &\le\frac{1}{2}\left|
    \sum_{|a|,|b|\le 50n\log n} P(a,b)
    -1\right|+\Otilde\left(\frac{1}{\sqrt{n}}\right)
    \le\Otilde\left(\frac{1}{\sqrt{n}}\right)\;.
    \qed
\end{align*}

\section{Bounding moments} \label{sec:moments}
This section contains proofs of Lemmas~\ref{lem:var-2nd-moment}
and~\ref{lem:cv-2nd-moment}, following the method originally used in~\cite{HMRZ20}.
As a reminder, in this section we consider faces drawn uniformly in $[-\sqrt{3}, \sqrt{3}]$, i.e., $\EE V = 0$ and $\Var V = 1$
and for $x \in [-\sqrt{3}, \sqrt{3}]$ we have
\begin{align*}
    g_A(x) = \big|\{ i: a_i \leq x \}\big| - n \frac{x + \sqrt{3}}{2\sqrt{3}}.
\end{align*}
In this setting, a die $A= (a_i)_{i \in [n]}$ is said to be balanced if $\sum_{i=1}^n a_i = 0$
and the proper adaptation of the 
expectation formula~\eqref{eq:44}
for random balanced die becomes
\begin{align}\label{eq:49}
    \EE\left[f(a_1,\ldots,a_n)\right]
    &=
    \frac{
    \int_{\mathbb{R}^{n-1}}
    f(a_1,\ldots,a_n)\cdot\bigwedge_{i=1}^n\mathbbm{1}
    \left(-\sqrt{3}\le a_i\le\sqrt{3}\right)\,\mathrm{d}
    a_1\ldots a_{n-1}
    }{
    \int_{\mathbb{R}^{n-1}}
    \bigwedge_{i=1}^n\mathbbm{1}
    \left(-\sqrt{3}\le a_i\le\sqrt{3}\right)\,\mathrm{d}
    a_1\ldots a_{n-1}
    }\;,
\end{align}
where we let $a_n:=-\sum_{i=1}^{n-1}a_i$.

\subsection{Warm-up}
In this section we illustrate the methodology of the proofs of Lemmas~\ref{lem:var-2nd-moment} and~\ref{lem:cv-2nd-moment} on a simpler example. Namely, we show
\begin{lemma}\label{lem:warm-up}
$\EE[\Var_A] = \cO(n)$.
\end{lemma}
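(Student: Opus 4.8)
The plan is to expand $\Var_A$ over pairs of faces, dispatch the diagonal by a trivial pointwise bound, and control the off-diagonal terms through the local limit theorem machinery already underlying~\eqref{eq:49}.

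Since $A$ is balanced, Claim~\ref{cl:simple-g} gives $\EE_V U_A=\EE_V g_A(V)=0$, so $\Var_A=\EE_V[g_A(V)^2]$, where throughout $V$ is uniform in $[-\sqrt3,\sqrt3]$ and $F(x)=\tfrac{x+\sqrt3}{2\sqrt3}$ is its cdf. Writing $g_A(x)=\sum_{i=1}^n(\1[a_i\le x]-F(x))$ and expanding the square,
\begin{align*}
    \Var_A=\sum_{i,j=1}^n h(a_i,a_j),
    \qquad
    h(s,t):=\EE_V\big[(\1[s\le V]-F(V))(\1[t\le V]-F(V))\big].
\end{align*}
The diagonal is harmless: $0\le h(s,s)=\EE_V[(\1[s\le V]-F(V))^2]\le 1$ pointwise, so $\sum_i h(a_i,a_i)\le n$ always. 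By exchangeability of the faces of a random balanced die, the $n(n-1)$ off-diagonal terms all have the same expectation, so it remains to prove $|\EE[h(a_1,a_2)]|=\cO(1/n)$.

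For this, let $\psi_k$ denote the density of a sum of $k$ iid faces uniform in $[-\sqrt3,\sqrt3]$; it is symmetric, and smooth and positive on $[-2\sqrt3,2\sqrt3]$ once $k$ is not too small. A short rewriting of~\eqref{eq:49} identifies $\EE[h(a_1,a_2)]$ with the expectation of $h(a_1,a_2)$ when $(a_1,\dots,a_n)$ are iid uniform in $[-\sqrt3,\sqrt3]$ conditioned on $\sum a_i=0$, under which the density of $(a_1,a_2)$ on $[-\sqrt3,\sqrt3]^2$ equals $\tfrac1{(2\sqrt3)^2}\cdot\tfrac{\psi_{n-2}(a_1+a_2)}{\psi_n(0)}$ (using symmetry of $\psi_{n-2}$). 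Hence
\begin{align*}
    \EE[h(a_1,a_2)]
    =\frac1{(2\sqrt3)^2}\int_{[-\sqrt3,\sqrt3]^2}h(a_1,a_2)\,\frac{\psi_{n-2}(a_1+a_2)}{\psi_n(0)}\,\mathrm{d}a_1 a_2 .
\end{align*}
The crucial point is that the ``unconstrained'' version of this integral is \emph{exactly} zero: $\tfrac1{(2\sqrt3)^2}\int h(a_1,a_2)\,\mathrm{d}a_1 a_2=\EE_{a_1,a_2\ \mathrm{iid}}[h(a_1,a_2)]=\EE_V\big[\EE_{a_1}(\1[a_1\le V]-F(V))\,\EE_{a_2}(\1[a_2\le V]-F(V))\big]=0$, since $\EE_a[\1[a\le V]\mid V]=F(V)$. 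Subtracting this and using $|h|\le1$,
\begin{align*}
    \big|\EE[h(a_1,a_2)]\big|
    \le\sup_{|x|\le 2\sqrt3}\left|\frac{\psi_{n-2}(x)}{\psi_n(0)}-1\right|.
\end{align*}

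The only nontrivial step, and the one I expect to be the main obstacle, is bounding this supremum by $\cO(1/n)$: a soft local limit theorem of the form $\psi_k(x)=\tfrac1{\sqrt{2\pi k}}(1+o(1))$ only gives $o(1)$ here, hence a useless $o(n^2)$ bound on the off-diagonal sum, and it is precisely the vanishing of the leading iid contribution that forces us to be quantitative. Instead I would invoke the precise local limit theorem for densities from~\cite{Pet75}: the single-face distribution is bounded, has unit variance, and (being symmetric) has vanishing third cumulant, so $\psi_k(x)=\tfrac1{\sqrt{2\pi k}}e^{-x^2/2k}\big(1+\cO(1/k)\big)$ uniformly for $x$ in a fixed bounded set. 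Taking the ratio at $x=a_1+a_2\in[-2\sqrt3,2\sqrt3]$ and at $x=0$, together with $e^{-x^2/2(n-2)}=1+\cO(1/n)$ and $\sqrt{n/(n-2)}=1+\cO(1/n)$, yields $\tfrac{\psi_{n-2}(x)}{\psi_n(0)}=1+\cO(1/n)$, hence $|\EE[h(a_1,a_2)]|=\cO(1/n)$. Combining the two contributions, $\EE[\Var_A]\le n+n(n-1)\cdot\cO(1/n)=\cO(n)$, as claimed.
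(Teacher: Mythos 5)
Your proof is correct, and it takes a genuinely different and cleaner route than the paper's. The paper expands $\Var_A$ via~\eqref{eq:varA} into an explicit polynomial in $\sum a_i^2$ and $\sum_{i,j}\max\{a_i,a_j\}$, computes the conditional expectations $\EE[a_1^2]$ and $\EE[\max\{a_1,a_2\}]$ through the correction factors $p_{n-1},p_{n-2}$ of Lemma~\ref{lem:intlemma}, and then verifies by hand that the $n^2$-coefficient cancels. You instead decompose $\Var_A=\sum_{i,j}h(a_i,a_j)$ with $h(s,t)=\EE_V[(\1[s\le V]-F(V))(\1[t\le V]-F(V))]$, bound the diagonal trivially by $n$, and observe that the off-diagonal expectation vanishes \emph{exactly} in the unconditioned model because $\EE_a[\1[a\le V]\mid V]=F(V)$. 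This turns the balanced conditioning into the single quantity $\sup_x|\psi_{n-2}(x)/\psi_n(0)-1|$, which (in the paper's notation) is exactly $\sup_x|p_{n-2}(x)-1|$: your derivation of its $\cO(1/n)$ bound from~\cite{Pet75}, using symmetry to kill $q_1$ and finiteness of the fourth moment to reach the $1/n$ term, is the same computation as in Lemma~\ref{lem:intlemma}/Corollary~\ref{cor:clt}, just instantiated for $k=2$. So the key estimate is shared, but your organization replaces the paper's explicit polynomial cancellation by the structural observation that the unconstrained off-diagonal contribution is zero. Your version is shorter and conceptually sharper for this particular lemma; the paper's more computational setup, however, is the one that carries over to the fourth-moment bounds of Lemmas~\ref{lem:var-2nd-moment} and~\ref{lem:cv-2nd-moment}, where no such exact cancellation of the unconditioned expectation is available and the explicit constants genuinely matter (e.g.\ in Claim~\ref{cl:cvab} versus Claim~\ref{cl:cva}).
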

Note that strictly speaking Lemma~\ref{lem:warm-up}
is not necessary for our proof, since it follows
from Lemma~\ref{lem:var-2nd-moment} by Cauchy-Schwarz
inequality.

\begin{proof}
Let $A$ be a fixed die, not necessarily balanced, with faces
between $-\sqrt{3}$ and $\sqrt{3}$. Then, using
$\Pr[a\le V]=\Pr[a<V]=\frac{1}{2}-
\frac{a}{2\sqrt{3}}$
and
\begin{align}\label{eq:56}
\EE\big[\mathbbm{1}(a<V)V\big]
=\int_a^{\sqrt{3}} \frac{1}{2\sqrt{3}}
\cdot x\,\mathrm{d}x
=\frac{\sqrt{3}}{12}(3-a^2)\;,
\end{align}
we have
\begin{align}
     \EE[g_A(V)^2] &=  \sum_{i,j=1}^n \EE\left[\left(\mathbbm{1}(a_i<V) - \frac{V+\sqrt{3}}{2\sqrt{3}}\right) \left(\mathbbm{1}(a_j<V) - \frac{V+\sqrt{3}}{2\sqrt{3}}\right)\right] \nonumber\\
     & = \sum_{i,j=1}^n \Pr\left[\max\{a_i,a_j \} < V \right] - \frac{n}{\sqrt{3}} \sum_{i=1}^n \EE\left[ \mathbbm{1}(a_i <V) (V+ \sqrt{3}) \right]  + \frac{n^2}{12} \EE[(V + \sqrt{3})^2]
     \nonumber\\
    &=\frac{n^2}{2}-\frac{1}{2\sqrt{3}}\sum_{i,j=1}^n\max\{a_i,a_j\}
    -\frac{n^2}{4}+\frac{n}{12}\sum_{i=1}^n a_i^2
    -\frac{n^2}{2}+\frac{n}{2\sqrt{3}}\sum_{i=1}^n a_i
    +\frac{n^2}{3}
    \nonumber\\
   &=\frac{n^2}{12} 
   + \frac{n}{2 \sqrt{3}} \sum_{i=1}^n a_i 
   + \frac{n}{12} \sum_{i=1}^n a_i^2 
   - \frac{1}{2 \sqrt{3}}\sum_{i,j=1}^n \max \{ a_i,a_j \}. \label{eq:varA}
\end{align}
Let us start with a heuristic argument. To that
end, consider iid uniform $a_1,\ldots,a_n$
without balanced conditioning.
In that we case we can simply employ
$\EE[a_i]=0$, $\EE[a_i^2]=1$ and
\begin{align*}
    \EE[\max\{a_i,a_j\}]
    =2\int_{-\sqrt{3}}^{\sqrt{3}}\int_x^{\sqrt{3}}
    \frac{1}{12}y\,\mathrm{d}y\mathrm{d}x
    =\frac{1}{12}\int_{-\sqrt{3}}^{\sqrt{3}}3-x^2\,\mathrm{d}x
    =\frac{\sqrt{3}}{3}
\end{align*}
and substitute into~\eqref{eq:varA}, getting
\begin{align}\label{eq:40}
  \EE\Big[
  \EE[g_A(V)^2\mid A]\Big]
  & = \frac{n^2}{12}+\frac{n^2}{12} - \frac{n(n-1)}{2 \sqrt{3}}\cdot \frac{\sqrt{3}}{3} = \frac{n}{6}.
\end{align}
In other words, without conditioning, a typical value
of the second moment $\EE[g_A(V)^2]$ is $\Theta(n)$.
Now recall from Claim~\ref{cl:simple-g}
that if $A$ is a balanced die, then $\EE[g_A(V)] =0$
and consequently $\Var_A=\EE[g_A(V)^2]$. We are counting
on the estimate in~\eqref{eq:40} remaining of the same order of 
magnitude under the
conditioning. To verify that, we substitute into~\eqref{eq:varA}
in the balanced case and using $\sum_{i=1}^n a_i=0$, 
obtain
\begin{align} 
    \EE[\Var_A]&=\frac{n^2}{12}+\frac{n}{12} \EE\left[ \sum_{i=1}^n a_i^2\right] 
    - \frac{1}{2 \sqrt{3}} \EE\left[\sum_{i\neq j }\max \{a_i,a_j \}\right] \\
    & =\frac{n^2}{12}+\frac{n^2}{12} \EE[ a_1^2] - \frac{n(n-1)}{2 \sqrt{3}} \EE[\max \{a_1,a_2 \}]\;. \label{eq:expVA}
\end{align}
What remains is to compute the conditional expectations featured in (\ref{eq:expVA}):
For example, $\EE[a_1^2]$ (which is really a function of $n$) 
is the expectation of $a_1^2$ for a random balanced die.
In order to do that, we need to understand how the joint distribution of 
independent $(a_1,a_2)$ changes upon balanced conditioning. 

Let us start with $\EE[a_1^2]$.
Let $\tilde\phi_{n-1}$ be the density of
$(n-1)$-wise convolution of the uniform density
on $[-\sqrt{3},\sqrt{3}]$.
Recall the convolution formula
\begin{align*}
\tilde\phi_{n-1}(s)=(2\sqrt{3})^{-(n-1)}
\int_{\mathbb{R}^{n-1}}\bigwedge_{i=2}^n
\mathbbm{1}\left(-\sqrt{3}\le a_i\le\sqrt{3}\right)
\mathrm{d}a_2\ldots a_{n-1}\;,
\end{align*}
where $a_n=s-\sum_{i=2}^{n-1}a_i$.
Starting from~\eqref{eq:49}, and this time letting
$a_n:=-\sum_{i=1}^{n-1}a_i$, we see that
\begin{align*}
    \EE[a_1^2] 
    &=
    \frac{
    \int_{\mathbb{R}^{n-1}}
    a_1^2\cdot\bigwedge_{i=1}^n\mathbbm{1}
    \left(-\sqrt{3}\le a_i\le\sqrt{3}\right)\,\mathrm{d}
    a_1\ldots a_{n-1}
    }{
    \int_{\mathbb{R}^{n-1}}
    \bigwedge_{i=1}^n\mathbbm{1}
    \left(-\sqrt{3}\le a_i\le\sqrt{3}\right)\,\mathrm{d}
    a_1\ldots a_{n-1}
    }\\
    &=
    \frac{
    \int_{-\sqrt{3}}^{\sqrt{3}} a_1^2\cdot
    \int_{\mathbb{R}^{n-2}}\bigwedge_{i=2}^n
    \mathbbm{1}\left(-\sqrt{3}\le a_i\le\sqrt{3}\right)
    \,\mathrm{d}a_2\ldots a_{n-1}\,\mathrm{d} a_1
    }{
    \int_{-\sqrt{3}}^{\sqrt{3}}\,
    \int_{\mathbb{R}^{n-2}}\bigwedge_{i=2}^n
    \mathbbm{1}\left(-\sqrt{3}\le a_i\le\sqrt{3}\right)
    \,\mathrm{d}a_2\ldots a_{n-1}\,\mathrm{d} a_1
    }\\
    &= \frac{\int_{-\sqrt{3}}^{\sqrt{3}}  t^2 \tilde \phi_{n-1}(-t) d t}{\int_{-\sqrt{3}}^{\sqrt{3}} \tilde\phi_{n-1}(-t) dt}
    = \frac{\int_{-\sqrt{3}}^{\sqrt{3}}  t^2 \tilde \phi_{n-1}(t) d t}{\int_{-\sqrt{3}}^{\sqrt{3}} \tilde\phi_{n-1}(t) dt}
    = \frac{1}{2\sqrt{3}} \int_{-\sqrt{3}}^{\sqrt{3}} t^2  p_{n-1}(t) dt = \EE[V^2 p_{n-1}(V)]\;, 
\end{align*}
where $p_{n-1}(x) =\frac{\tilde\phi_{n-1}(x)}{\frac{1}{2\sqrt{3}}\int_{-\sqrt{3}}^{\sqrt{3}} \tilde\phi_{n-1}(s) ds} $ and $V \sim \mathcal{U}[-\sqrt{3},\sqrt{3}]$. This way, we expressed the expectation of $a_1^2$ in the conditional distribution through an unconditional expectation of another deterministic function.
Thus, our problem reduces to computing the density $\tilde \phi_{n-1}$ and the function $p_{n-1}$.

Similarly, for $\EE[\max\{a_1,a_2 \}] $ and letting $\tilde \phi_{n-2}$ be the density of $\sum_{i=3}^{n} a_i$, we have
\begin{align*}
    \EE[\max\{a_1,a_2 \}] 
    & = \frac{\int_{-\sqrt{3}}^{\sqrt{3}}\int_{-\sqrt{3}}^{\sqrt{3}} \max\{t_1,t_2 \}\tilde \phi_{n-2}(t_1+t_2) dt_1dt_2}{\int_{-\sqrt{3}}^{\sqrt{3}}\int_{-\sqrt{3}}^{\sqrt{3}} \tilde\phi_{n-2}(t_1+t_2) dt_1dt_2}\\
    & = \frac{1}{12}\int_{-\sqrt{3}}^{\sqrt{3}}\int_{-\sqrt{3}}^{\sqrt{3}} \max\{t_1,t_2 \}  p_{n-2}(t_1+t_2) dt_1 dt_2 = \EE[\max\{V_1,V_2\} p_{n-2}(V_1+V_2)], 
\end{align*}
where $p_{n-2}(x) =\frac{\tilde\phi_{n-2}(x)}{\frac{1}{12}\iint_{[-\sqrt{3},\sqrt{3}]^2} \tilde\phi_{n-2}(s_1+s_2) ds_1ds_2} $ and $V_1,V_2 \overset{iid}{\sim} \mathcal{U}[-\sqrt{3},\sqrt{3}]$. Again, the problem reduces to the computation of the 
function $p_{n-2}$.

We defer estimating $p_{n-1}$ and $p_{n-2}$ to the following section.
For now we finish the argument by applying~\eqref{eq:42} and~\eqref{eq:43}
from Lemma~\ref{lem:intlemma} to get
\begin{align*}
p_{n-1}(t) = 1 + \cO(n^{-1})\;,\qquad\qquad
p_{n-2}(t) = 1 + \cO(n^{-1})\;,
\end{align*}
and consequently
\begin{align*}
    \EE[a_1^2]&= \EE[V^2p_{n-1}(V)] = 1 + \cO(n^{-1})\;,\\
    \EE[\max \{ a_1,a_2 \}] &= 
    \EE\big[\max \{V_1,V_2\}p_{n-2}(V_1+V_2)\big]=
    \sqrt{3}/3 + \cO(n^{-1})\;,\\
    \EE[\Var_A]
    &=\frac{n^2}{12}+\frac{n^2}{12}\big(1+\cO(n^{-1})\big)
    -\frac{n(n-1)}{2\sqrt{3}}\left(\frac{\sqrt{3}}{3}+\cO(n^{-1})\right)
    =\cO(n)\;.\qedhere
\end{align*}
\end{proof}
\begin{remark}
As a matter of fact, Lemma~\ref{lem:intlemma} yields more precise estimates
\begin{align}\label{eq:50}
p_{n-1}(t)=1+\frac{1}{2n}-\frac{t^2}{2n}+\cO(n^{-2})\;,\qquad\qquad
p_{n-2}(t)=1+\frac{1}{n}-\frac{t^2}{2n}+\cO(n^{-2})\;,
\end{align}
which are then checked to lead to
\begin{align*}
    \EE[V^2p_{n-1}(V)]
    &=1+\frac{1}{2n}-\frac{\EE[V^4]}{2n}+\cO(n^{-2})
    =1-\frac{2}{5n}+\cO(n^{-2})\;,\\
    \EE\big[\max\{V_1,V_2\}p_{n-2}(V_1+V_2)\big]
    &=\frac{\sqrt{3}}{3}+\frac{\sqrt{3}}{3n}
    -\frac{\EE\big[\max\{V_1,V_2\}(V_1+V_2)^2\big]}{2n}
    +\cO(n^{-2})\\
    &=\frac{\sqrt{3}}{3}+\frac{2\sqrt{3}}{15n}+\cO(n^{-2})\;,\\
    \EE[\Var_A]
    &=\frac{n^2}{12}+\frac{n^2}{12}\left(1-\frac{2}{5n}\right)
    -\frac{n(n-1)}{2\sqrt{3}}\left(\frac{\sqrt{3}}{3}+\frac{2\sqrt{3}}{15n}\right)
    +\cO(1)\\
    &=\frac{1}{15}n+\cO(1)\;.
\end{align*}
Therefore, while conditioning on balanced $A$ decreased
the expected variance $\Var_A$, its order of magnitude
remained the same.
\end{remark}

In the following sections we will continue with justifying~\eqref{eq:50}.
This is obtained by estimating $\tilde\phi_{n-k}$ using a precise local central limit theorem for densities. Heuristically, for $x=\cO(1)$,
we have
\begin{align*}
   \tilde \phi_{n-k}(x) 
   &\propto \exp{\left(\frac{x^2}{2(n-k)}\right)} \approx 1-\frac{x^2}{2n}\;.
\end{align*}
However, there are several other error terms, including a correction
coming from the denominator in the definition of $p_{n-k}(x)$.
For our final objective, i.e., the proofs of Lemmas~\ref{lem:var-2nd-moment} and~\ref{lem:cv-2nd-moment}, we will need the approximations of $p_{n-1} (x)$ and $p_{n-2}(x)$ up to an $\cO(n^{-3})$ error term. 




\subsection{Estimating 
\texorpdfstring{$p_{n-k}$}{p\_\{n-k\}}
}

In this section we estimate the ``correction factors''
$p_{n-k}$ and derive general formulas for the balanced conditional
expectations. The precise statement we will use later on is:

\begin{lemma}
\label{lem:intlemma}
Let $A=(a_i)_{i \in [n]}$ and $B=(b_i)_{i \in [n]}$ be two random dice with faces drawn uniformly iid in $[-\sqrt{3}, \sqrt{3}] $, conditioned on both being balanced. Let $V_1,V_2,V_3,V_4 \overset{iid}{\sim} \mathcal{U}[-\sqrt{3},\sqrt{3}]$. Let $f$ denote a generic integrable function. Then,
\begin{align}
    \EE[f(a_1)]&= \EE[ f(V_1) p_{n-1}(V_1)]
    \nonumber\\
    \EE[f(a_1,a_2)]&= \EE[ f(V_1,V_2) p_{n-2}(V_1+V_2)]\nonumber\\
    \EE[f(a_1,a_2,a_3)] &= \EE[ f(V_1,V_2,V_3) p_{n-3}(V_1+V_2+V_3)]\nonumber\\
    \EE[f(a_1,a_2,a_3,a_4)] &= \EE[ f(V_1,V_2,V_3,V_4) p_{n-4}(V_1+V_2+V_3+V_4)]\nonumber\\
    \EE[f(a_1,b_1)] &= \EE[ f(V_1,V_2) p_{n-1}(V_1)p_{n-1}(V_2)]\nonumber\\
    \EE[f(a_1,a_2,b_1)] &= \EE[ f(V_1,V_2,V_3) p_{n-2}(V_1+V_2)p_{n-1}(V_3)]\nonumber\\
    \EE[f(a_1,a_2,b_1,b_2)] &= \EE[ f(V_1,V_2,V_3,V_4) p_{n-2}(V_1+V_2)p_{n-2}(V_3+V_4)]\;,
    \label{eq:55}
\end{align}
where
\begin{align}
    p_{n-1}(x)&=1 +\frac{1}{2n} - \frac{x^2}{2n}  + \frac{9}{40n^2} - \frac{ 9 x^2}{20n^2}+ \frac{x^4}{8 n^2} + \cO(n^{-3})\;,
    \label{eq:42}\\
     p_{n-2}(x)&= 1 + \frac{1}{n} - \frac{x^2}{2n} + \frac{6}{5n^2} - \frac{6 x^2}{5n^2} + \frac{x^4}{8 n^2}+ \cO(n^{-3})\;,
     \label{eq:43}
\end{align}
and
\begin{align}
     p_{n-3}(x)&=  1+ \frac{3}{2n} -\frac{x^2}{2n} + \cO(n^{-2}),
     \nonumber\\
     p_{n-4}(x)&=  1+ \frac{2}{n} -\frac{x^2}{2n} + \cO(n^{-2})\;.
     \label{eq:53}
\end{align}
\end{lemma}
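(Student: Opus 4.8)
The plan is to derive the change-of-measure identities~\eqref{eq:55} by Fubini, and then obtain the expansions~\eqref{eq:42}--\eqref{eq:53} from a precise local limit theorem applied to $\tilde\phi_{n-k}$. For the identities, start from~\eqref{eq:49} with the convention $a_n=-\sum_{i=1}^{n-1}a_i$. Given $f$ depending only on $a_1,\dots,a_k$, fix $(a_1,\dots,a_k)\in[-\sqrt3,\sqrt3]^k$, put $t:=a_1+\cdots+a_k$, and integrate out $a_{k+1},\dots,a_{n-1}$: a linear change of variables identifies the inner integral with $(2\sqrt3)^{\,n-1-k}$ times the density of a sum of $n-k$ iid $\mathcal U[-\sqrt3,\sqrt3]$ variables evaluated at $-t$, that is $(2\sqrt3)^{n-1-k}\tilde\phi_{n-k}(t)$, using that $\tilde\phi_{n-k}$ is even. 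The powers of $2\sqrt3$ cancel between numerator and denominator of~\eqref{eq:49}, and rewriting the remaining integrals over $[-\sqrt3,\sqrt3]^k$ as expectations over iid uniform $V_1,\dots,V_k$ yields $\EE[f(a_1,\dots,a_k)]=\EE[f(V_1,\dots,V_k)\,p_{n-k}(V_1+\cdots+V_k)]$ with $p_{n-k}(x):=\tilde\phi_{n-k}(x)/\EE[\tilde\phi_{n-k}(V_1+\cdots+V_k)]$; in particular $\EE[p_{n-k}(V_1+\cdots+V_k)]=1$. For the mixed identities one uses that $A$ and $B$ remain independent after conditioning (each die's balancedness is a function of that die alone), so the conditional joint density factorizes and the one-die computation applied to each factor produces the stated products. (The line for $\EE[f(a_1,\dots,a_4)]$ should carry $p_{n-4}$, in line with~\eqref{eq:53}.)

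For the asymptotics I would invoke the Edgeworth-type local limit theorem for densities of Petrov~\cite{Pet75}: $\mathcal U[-\sqrt3,\sqrt3]$ has mean $0$, variance $1$, all moments finite, and an integrable characteristic function after convolving at least twice, so for $m\to\infty$, uniformly for $x$ in any fixed compact set,
\[
\tilde\phi_m(x)=\frac{e^{-x^2/2m}}{\sqrt{2\pi m}}\left(1+\frac{\kappa_4}{24\,m}H_4\!\left(\frac{x}{\sqrt m}\right)+\frac{c_m}{m^2}+\cO(m^{-3})\right),
\]
where the odd-cumulant terms vanish by symmetry, $H_4(t)=t^4-6t^2+3$, $\kappa_4=-6/5$, and the whole $\cO(m^{-2})$ correction reduces, for bounded $x$, to an $x$-independent quantity $c_m$ up to $\cO(m^{-3})$ (the $t^4$ part of $H_4$ and the non-constant parts of the $H_6,H_8$ contributions are already $\cO(m^{-3})$). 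Since every argument appearing in the identities lies in a fixed compact set, this is enough. I would then expand with $m=n-k$, using $\tfrac1m=\tfrac1n+\tfrac{k}{n^2}+\cO(n^{-3})$ and $e^{-x^2/2m}=1-\tfrac{x^2}{2n}-\tfrac{kx^2}{2n^2}+\tfrac{x^4}{8n^2}+\cO(n^{-3})$, to write $\tilde\phi_{n-k}(x)=(2\pi(n-k))^{-1/2}N_k(x)$ with $N_k$ an explicit polynomial in $x$ of degree $\le4$ and in $1/n$. The same prefactor appears in $\EE[\tilde\phi_{n-k}(S_k)]$, $S_k:=V_1+\cdots+V_k$, so $p_{n-k}(x)=N_k(x)/\EE[N_k(S_k)]$, and every $x$-independent term of $N_k$ — in particular $c_m$ and the $\kappa_4$-constants — cancels in this ratio. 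The surviving computation uses only $\kappa_4=-6/5$ together with $\EE[S_k^2]=k$ and $\EE[S_k^4]=3k^2-\tfrac65k$; dividing the two power series in $1/n$ and collecting terms gives~\eqref{eq:42} and~\eqref{eq:43} through order $n^{-2}$ and~\eqref{eq:53} through order $n^{-1}$. One sees, in particular, that the $\tfrac{k}{2n}$ term of $p_{n-k}$ arises entirely from the normalization, via $\EE[S_k^2]=k$.

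The main obstacle is the bookkeeping in this second step: one must track, simultaneously and to order $n^{-2}$, the single surviving $x$-dependent Edgeworth coefficient, the $\cO(1/n)$-corrections from $n-k$ versus $n$, the Taylor expansion of the Gaussian factor, and the cancellations against the normalizing denominator — a single dropped cross-term changes a coefficient. A subsidiary point requiring care is justifying that the local expansion holds with a genuine $\cO(m^{-3})$ (resp.\ $\cO(m^{-2})$) remainder uniformly over the relevant compact set; this uses that all moments are finite and that the odd-order Edgeworth corrections vanish, so that the first omitted term is of the next even order.
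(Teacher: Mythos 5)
Your proposal is correct and follows essentially the same route as the paper: the change-of-measure identities via Fubini and the convolution density, followed by Petrov's local limit theorem to expand $\tilde\phi_{n-k}$. You also correctly spot the misprint in the fourth line of~\eqref{eq:55}, which should read $p_{n-4}$ rather than $p_{n-3}$. One small addition worth flagging: your claim that the $x$-independent constants (the $\kappa_4$-constant $3\Gamma_4/m$ from $H_4(0)$ and the $\cO(m^{-2})$ constant $c_m$) drop out of $p_{n-k}(x)=N_k(x)/\EE[N_k(S_k)]$ is not a trivial ``they appear in both numerator and denominator'' cancellation, since constants in $N_k$ also enter the $\cO(n^{-2})$ coefficients through cross-terms with the $x$-dependent parts of the $1/n$-series. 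The cancellation does hold, as you can see by factoring $N_k(x)=e^{-x^2/2m}(\mathbf{c}+Ex^2/m^2)+\cO(m^{-3})$ with $\mathbf{c}=1+A/m+C/m^2$ $x$-independent and then dividing both numerator and denominator of $p_{n-k}$ by $\mathbf{c}$: since $E/(m^2\mathbf{c})=E/m^2+\cO(m^{-3})$, the entire $\mathbf{c}$ disappears to the required order. This is a cleaner piece of bookkeeping than the paper's direct expansion, where the disappearance of $A$ and $C$ from the final formulas emerges only after the algebra is carried through.
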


The main tool in the proof of Lemma~\ref{lem:intlemma}
is a precise formula for $\tilde\phi_{n}(x)$.
We state it as an application to the uniform distribution 
of a general local limit theorem for densities from~\cite{Pet75}:

\begin{theorem}[\cite{Pet75}, Theorem 15, pp.~206-207] \label{thm:Pet75}
Let $(X_n)$ be a sequence of iid continuous random variables with bounded densities, zero mean, variance one, and finite absolute moment 
$\EE|X_1|^k$ for some $k \geq 3$. Moreover, for $\nu\in\mathbb{N}$, let
\begin{align}\label{eq:51}
    q_{\nu }(x):=\frac{1}{\sqrt{2 \pi}} e^{-x^{2} / 2} \sum H_{\nu+2 s}(x) \prod_{m=1}^{\nu} \frac{\Gamma_{m+2}^{k_m}}{k_{m} !},
\end{align}
where the summation is over all non-negative integer solutions $(k_1,k_2,...,k_\nu)$ of the equalities $k_1 + 2k_2+...+ \nu k_\nu = \nu $, $s = k_1+k_2+...+k_\nu$, $H_m$ is the $m^{th}$ (probabilists') 
Hermite polynomial and $\Gamma_k=\gamma_k/k!$ with
$\gamma_k$ being the cumulant of order $k$ of the random variable $X_1$. Then,
the density $\phi_n$ of the random variable $1/\sqrt{n}\sum_{i=1}^n X_i$
satisfies
\begin{align*}
    \phi_{n}(x)=\frac{1}{\sqrt{2 \pi}} e^{-x^{2} / 2}+\sum_{\nu=1}^{k-2} \frac{q_{\nu}(x)}{n^{\nu / 2}}+o\left(\frac{1}{n^{(k-2 )/ 2}}\right),
\end{align*}
uniformly in $x$.
\end{theorem}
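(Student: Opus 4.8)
This is a classical statement --- a uniform Edgeworth (local limit) expansion for densities --- and I would only indicate the standard Fourier-analytic argument. Write $f(t):=\EE e^{itX_1}$ for the characteristic function of $X_1$. Since $X_1$ has a bounded density $p$, we have $\int_{\bbR}|f(t)|^2\,\mathrm{d}t=2\pi\int_{\bbR}p(x)^2\,\mathrm{d}x\le 2\pi\|p\|_\infty<\infty$, so $f\in L^2(\bbR)$; moreover $f$ is continuous, $f(t)\to 0$ as $|t|\to\infty$ by Riemann--Lebesgue, and (since $X_1$ is non-lattice) $|f(t)|<1$ for every $t\neq 0$. The characteristic function of $n^{-1/2}\sum_{i=1}^n X_i$ is $f(t/\sqrt{n})^n$, which is integrable for $n\ge 3$ (on $|t|\ge\delta\sqrt n$ it is dominated by $|f(t/\sqrt n)|^2\big(\sup_{|u|\ge\delta}|f(u)|\big)^{n-2}$), so Fourier inversion gives $\phi_n(x)=\frac{1}{2\pi}\int_{\bbR} e^{-itx}f(t/\sqrt n)^n\,\mathrm{d}t$. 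The plan is to split this integral at $|t|=\delta\sqrt n$ for a small fixed $\delta>0$, carry out an Edgeworth expansion of the integrand on the central window, and discard the tail.

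\emph{Tail.} Put $\rho:=\sup_{|u|\ge\delta}|f(u)|$; this supremum is attained on the closed set $\{|u|\ge\delta\}$ and is $<1$. Then $\big|\int_{|t|\ge\delta\sqrt n}e^{-itx}f(t/\sqrt n)^n\,\mathrm{d}t\big|\le\rho^{\,n-2}\int_{|t|\ge\delta\sqrt n}|f(t/\sqrt n)|^2\,\mathrm{d}t\le\rho^{\,n-2}\sqrt n\,\|f\|_2^2=\cO(\sqrt n\,\rho^{\,n})$, which is $o(n^{-(k-2)/2})$ uniformly in $x$.

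\emph{Central window.} Choose $\delta$ small enough that $|f(u)|\ge 1/2$ and $\log|f(u)|\le -c u^2$ (some $c>0$) for $|u|\le\delta$, and write $f(u)=\exp\psi(u)$ there. Since $\EE|X_1|^k<\infty$, Taylor's theorem gives the cumulant expansion $\psi(u)=-\tfrac{u^2}{2}+\sum_{j=3}^{k}\Gamma_j(iu)^j+o(|u|^k)$ as $u\to 0$, with $\Gamma_j=\gamma_j/j!$ and $\gamma_2=1$. Hence for $|t|\le\delta\sqrt n$,
\[
n\,\psi(t/\sqrt n)=-\frac{t^2}{2}+\sum_{j=3}^{k}\frac{\Gamma_j(it)^j}{n^{(j-2)/2}}+o\!\left(\frac{1+|t|^{k}}{n^{(k-2)/2}}\right),
\]
and exponentiating --- expanding $\exp\big(\sum_{j=3}^{k}\Gamma_j(it)^j n^{-(j-2)/2}\big)$ as its order-$(k-1)$ Taylor polynomial in the variable $n^{-1/2}$ --- yields
\[
f(t/\sqrt n)^n=e^{-t^2/2}\Big(1+\sum_{\nu=1}^{k-2}\frac{P_\nu(it)}{n^{\nu/2}}\Big)+e^{-t^2/2}\,\eta_n(t),
\]
where $P_\nu(it)=\sum(it)^{\nu+2s}\prod_{m=1}^{\nu}\Gamma_{m+2}^{k_m}/k_m!$, the sum ranging over non-negative solutions of $\sum_m m\,k_m=\nu$ with $s=\sum_m k_m$ --- this is exactly the combinatorics of~\eqref{eq:51}, using $\prod_m(it)^{(m+2)k_m}=(it)^{\nu+2s}$. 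On $|t|\le\epsilon n^{1/6}$ the exponent is bounded, so the Taylor remainder of $\exp$ is $\cO\big((1+|t|^{3(k-1)})e^{t^2/4}n^{-(k-1)/2}\big)$ and the contribution of the $o(|u|^k)$ cumulant remainder is $o\big((1+|t|^k)e^{t^2/4}n^{-(k-2)/2}\big)$; on $\epsilon n^{1/6}\le|t|\le\delta\sqrt n$ one has $|f(t/\sqrt n)^n|\le e^{-ct^2}$, super-exponentially small. Since $e^{-t^2/2}e^{t^2/4}=e^{-t^2/4}$ times any polynomial is integrable, integrating $\tfrac{1}{2\pi}e^{-itx}$ against $e^{-t^2/2}\eta_n(t)$ (plus the latter tail) contributes $o(n^{-(k-2)/2})$ uniformly in $x$.

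\emph{Inversion and conclusion.} By the Hermite identity $\frac{1}{2\pi}\int_{\bbR}e^{-itx}e^{-t^2/2}(it)^m\,\mathrm{d}t=\frac{1}{\sqrt{2\pi}}e^{-x^2/2}H_m(x)$, linearity turns each main term into $\frac{1}{2\pi}\int_{\bbR}e^{-itx}e^{-t^2/2}P_\nu(it)\,\mathrm{d}t=q_\nu(x)$ with $q_\nu$ precisely as in~\eqref{eq:51}. Adding the Gaussian leading term, the $k-2$ corrections, and the $o(n^{-(k-2)/2})$ errors from the tail and from $\eta_n$ proves the theorem. I expect the main obstacle to be the error control over the entire central window: one cannot treat $|t|\le\delta\sqrt n$ uniformly but must break it (e.g. at $|t|\sim n^{1/6}$) and use the moment hypothesis $\EE|X_1|^k<\infty$ to bound the cumulant remainder, keeping every bound integrable in $t$ and uniform in $x$; once that is arranged, matching $P_\nu$ and $q_\nu$ with the sums in~\eqref{eq:51} is routine bookkeeping (Fa\`a di Bruno / Bell polynomials).
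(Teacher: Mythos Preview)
The paper does not prove this theorem: it is quoted verbatim from Petrov's book~\cite{Pet75} and used as a black box. So there is nothing to compare against in the paper itself.

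That said, your sketch is the standard Fourier-analytic proof (essentially Petrov's own argument): Fourier inversion, split at $|t|=\delta\sqrt n$, Cram\'er-type tail bound using $\sup_{|u|\ge\delta}|f(u)|<1$ and $f\in L^2$, cumulant expansion of $\log f$ on the central window, and the Hermite identity to read off $q_\nu$. The combinatorics matching $P_\nu$ to the sum in~\eqref{eq:51} is correct (Fa\`a di Bruno for $\exp$ of a polynomial in $n^{-1/2}$). One small slip: you write that $\rho=\sup_{|u|\ge\delta}|f(u)|$ ``is attained on the closed set $\{|u|\ge\delta\}$''; that set is not compact, so the supremum need not be attained. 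The conclusion $\rho<1$ is nonetheless correct because $|f|$ is continuous, strictly less than $1$ on $u\neq 0$, and tends to $0$ at infinity, so the supremum over $|u|\ge\delta$ equals the maximum over some compact annulus. Also, integrability of $f(\cdot/\sqrt n)^n$ already holds for $n\ge 2$ (since $|f|^n\le|f|^2\in L^1$), not just $n\ge 3$. With those cosmetic fixes your outline is fine.
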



\begin{corollary}\label{cor:clt}
Let $(X_n)$ be a sequence of iid random variables uniform in
$[-\sqrt{3},\sqrt{3}]$. Then, the density $\phi_n$ of the random variable
$1/\sqrt{n}\sum_{i=1}^n X_i$ satisfies
\begin{align*}
    \phi_n(x) 
    &= 
    \frac{e^{-\frac{x^2}{2}}}{\sqrt{2 \pi}} \left[
    1  + \frac{1}{n} \Gamma_4 H_4(x)+ \frac{1}{n^2} \left( \Gamma_6 H_6(x) + \frac{\Gamma_4^2}{2} H_8(x)\right)
    + \frac{P(x)}{n^3}
    \right] + o(n^{-3})\;,
\end{align*}
where $P(x)$ is a fixed polynomial,
$\Gamma_4=-1/20$, $\Gamma_6=1/105$ and
\begin{align*}
    &H_4(x)= 3 - 6x^2 + \cO(x^4)\;,\qquad
    H_6(x)= -15 + \cO(x^2)\;,\qquad
    H_8(x)= 105 +\cO(x^2)\;.
\end{align*}
\end{corollary}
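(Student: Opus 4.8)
The plan is to apply Theorem~\ref{thm:Pet75} with $k=8$ to the sequence $(X_n)$ of iid random variables uniform on $[-\sqrt{3},\sqrt{3}]$. First I would verify the hypotheses: the common density $\tfrac{1}{2\sqrt{3}}\mathbbm{1}[-\sqrt{3}\le x\le\sqrt{3}]$ is bounded, $\EE X_1=0$, $\EE X_1^2=\tfrac{1}{2\sqrt{3}}\int_{-\sqrt{3}}^{\sqrt{3}}x^2\,\mathrm{d}x=1$, and every absolute moment is finite, in particular $\EE|X_1|^8<\infty$. Theorem~\ref{thm:Pet75} then yields $\phi_n(x)=\tfrac{1}{\sqrt{2\pi}}e^{-x^2/2}+\sum_{\nu=1}^{6}n^{-\nu/2}q_\nu(x)+o(n^{-3})$ uniformly in $x$, so it remains to evaluate $q_1,\dots,q_6$ from~\eqref{eq:51}.

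The key simplification is that the uniform law on $[-\sqrt{3},\sqrt{3}]$ is symmetric, so all odd cumulants vanish, $\gamma_3=\gamma_5=\gamma_7=0$, and hence $\Gamma_3=\Gamma_5=\Gamma_7=0$. In each $q_\nu$ a solution $(k_1,\dots,k_\nu)$ of $k_1+2k_2+\dots+\nu k_\nu=\nu$ contributes the factor $\prod_{m=1}^{\nu}\Gamma_{m+2}^{k_m}$, which vanishes unless $k_1=k_3=k_5=0$; the surviving constraint $2k_2+4k_4+6k_6=\nu$ then forces $\nu$ even, so $q_1=q_3=q_5=0$. For the even indices I would enumerate the admissible partitions. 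For $\nu=2$ only $(k_2)=(1)$ survives, with $s=1$ and Hermite index $\nu+2s=4$, giving $q_2(x)=\tfrac{1}{\sqrt{2\pi}}e^{-x^2/2}\Gamma_4H_4(x)$. For $\nu=4$ the survivors are $(k_2,k_4)=(2,0)$ and $(0,1)$, with $s=2$ and $s=1$ and Hermite indices $8$ and $6$, giving $q_4(x)=\tfrac{1}{\sqrt{2\pi}}e^{-x^2/2}\bigl(\tfrac{\Gamma_4^2}{2}H_8(x)+\Gamma_6H_6(x)\bigr)$. For $\nu=6$ the survivors are $(k_2,k_4,k_6)=(3,0,0),(1,1,0),(0,0,1)$, giving $q_6(x)=\tfrac{1}{\sqrt{2\pi}}e^{-x^2/2}\bigl(\tfrac{\Gamma_4^3}{6}H_{12}(x)+\Gamma_4\Gamma_6H_{10}(x)+\Gamma_8H_8(x)\bigr)$, whose prefactor I would simply name $P(x)$ since its precise form is irrelevant downstream.

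To pin down the constants I would read off the even moments $\mu_{2\ell}=\tfrac{1}{2\sqrt{3}}\int_{-\sqrt{3}}^{\sqrt{3}}x^{2\ell}\,\mathrm{d}x=\tfrac{3^{\ell}}{2\ell+1}$, so $\mu_2=1$, $\mu_4=9/5$, $\mu_6=27/7$; with $\mu_1=\mu_3=0$ this gives $\gamma_4=\mu_4-3\mu_2^2=-6/5$ and $\gamma_6=\mu_6-15\mu_4\mu_2+30\mu_2^3=48/7$, hence $\Gamma_4=\gamma_4/4!=-1/20$ and $\Gamma_6=\gamma_6/6!=1/105$ as claimed. The stated expansions of the Hermite polynomials follow by inspection from $H_4(x)=x^4-6x^2+3$, $H_6(x)=x^6-15x^4+45x^2-15$ and $H_8(x)=x^8-28x^6+210x^4-420x^2+105$. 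Substituting $n^{-1}q_2+n^{-2}q_4+n^{-3}q_6$ into Petrov's expansion and factoring out $\tfrac{1}{\sqrt{2\pi}}e^{-x^2/2}$ produces exactly the asserted formula. No step presents a genuine obstacle; the only mildly delicate point is the partition bookkeeping in~\eqref{eq:51}, which the vanishing of the odd cumulants keeps short.
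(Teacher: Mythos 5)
Your proposal is correct and follows the same route as the paper's (very terse) proof: apply Theorem~\ref{thm:Pet75} to the symmetric uniform law, observe that odd cumulants vanish so only even $\nu$ contribute, enumerate the surviving partitions, and compute $\gamma_4=-6/5$, $\gamma_6=48/7$. You simply spell out the partition bookkeeping and cumulant arithmetic that the paper leaves implicit.
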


\begin{proof}
It is a direct application of Theorem~\ref{thm:Pet75} to the uniform
distribution on $[-\sqrt{3},\sqrt{3}]$. For verification, it is useful
to note that since the uniform distribution is symmetric
around its mean zero, its odd order cumulants are zero. 
In particular,
all $q_{\nu}$ polynomials for odd $\nu$ are identically zero
(since in each term of the sum in~\eqref{eq:51} there is at least
one factor $\Gamma_k=0$ for odd $k$).
On the other hand, we recall the formulas for
Hermite polynomials and check that
$\gamma_4=-6/5$ and $\gamma_6=48/7$.
\end{proof}
We continue with a variable substitution to get:
\begin{corollary}\label{cor:phi-tilde}
Let $\tilde\phi_n(x)$ be the density of $\sum_{i=1}^n X_i$, where
$(X_i)_{i\in[n]}$ are iid uniform in $[-\sqrt{3},\sqrt{3}]$. Then,
for $|x|=\cO(1)$, we have
\begin{align*}
    \tilde\phi_n(x)
    &=
    \frac{1}{ \sqrt{2 \pi n}} \left[ 1 + \frac{A}{n} - \frac{x^2}{2 n} + \frac{C}{n^2} -\frac{A x^2}{2  n^2} + \frac{E x^2}{n^2} + \frac{x^4}{8  n^2} \right] + \cO(n^{-7/2}),
\end{align*}
where we denoted $A:=3\Gamma_4$, 
$C:=-15 \Gamma_6 +\frac{105}{2} \Gamma_4^2$
and $E:=-6 \Gamma_4$.
\end{corollary}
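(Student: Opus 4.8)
The plan is to obtain Corollary~\ref{cor:phi-tilde} from Corollary~\ref{cor:clt} by the elementary scaling relation between the density of a sum and the density of its normalized version, followed by a Taylor expansion in $1/n$. First I would record that since $\sum_{i=1}^n X_i = \sqrt{n}\cdot\bigl(\tfrac{1}{\sqrt n}\sum_{i=1}^n X_i\bigr)$, a change of variables gives
\[
\tilde\phi_n(x) = \frac{1}{\sqrt n}\,\phi_n\!\left(\frac{x}{\sqrt n}\right).
\]
Then I would set $y := x/\sqrt n$ and feed it into the expansion of $\phi_n$ from Corollary~\ref{cor:clt}, using that $|x| = \cO(1)$ forces $y = \cO(n^{-1/2})$, so that every power of $y$ becomes a definite power of $1/n$.

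The bookkeeping then breaks into three expansions, each to be carried only to order $1/n^2$ inside the bracket (the outer $1/\sqrt n$ costing one further half-power): the Gaussian prefactor $e^{-y^2/2} = 1 - \tfrac{x^2}{2n} + \tfrac{x^4}{8n^2} + \cO(n^{-3})$; the first Edgeworth correction $\tfrac1n\Gamma_4 H_4(y) = \tfrac1n\Gamma_4\bigl(3 - 6y^2 + \cO(y^4)\bigr) = \tfrac{A}{n} + \tfrac{E x^2}{n^2} + \cO(n^{-3})$ via $A = 3\Gamma_4$ and $E = -6\Gamma_4$; and the second correction $\tfrac{1}{n^2}\bigl(\Gamma_6 H_6(y) + \tfrac{\Gamma_4^2}{2}H_8(y)\bigr) = \tfrac{1}{n^2}\bigl(-15\Gamma_6 + \tfrac{105}{2}\Gamma_4^2\bigr) + \cO(n^{-3}) = \tfrac{C}{n^2} + \cO(n^{-3})$. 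The leftover term $P(y)/n^3$ and the uniform $o(n^{-3})$ remainder of Corollary~\ref{cor:clt} are both $\cO(n^{-3})$ because $P(y) = \cO(1)$. Multiplying the bracket $1 + \tfrac{A}{n} + \tfrac{C}{n^2} + \tfrac{E x^2}{n^2} + \cO(n^{-3})$ by the prefactor expansion, the only new term generated at order $1/n^2$ is the cross term $\bigl(-\tfrac{x^2}{2n}\bigr)\bigl(\tfrac{A}{n}\bigr) = -\tfrac{A x^2}{2n^2}$, and collecting everything yields precisely $1 + \tfrac{A}{n} - \tfrac{x^2}{2n} + \tfrac{C}{n^2} - \tfrac{A x^2}{2n^2} + \tfrac{E x^2}{n^2} + \tfrac{x^4}{8n^2}$. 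Multiplying through by $1/\sqrt n$ turns $\tfrac{1}{\sqrt{2\pi}}$ into $\tfrac{1}{\sqrt{2\pi n}}$ and the $\cO(n^{-3})$ remainder into $\cO(n^{-7/2})$, as claimed.

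None of this is a genuine obstacle; the statement is essentially a substitution. The only places warranting care are: invoking Corollary~\ref{cor:clt} with $k = 8$ so that its remainder is $o(n^{-3})$ (legitimate since the uniform law has all moments); checking that the $\cO(y^4)$ error in $H_4$ and the $\cO(y^2)$ errors in $H_6$ and $H_8$ really fall below the $1/n^2$ threshold once $y = \cO(n^{-1/2})$; and using that the remainder in Corollary~\ref{cor:clt} is \emph{uniform} in $x$, so that the substitution $y = x/\sqrt n$ does not degrade it. The mild hazard is purely arithmetic: keeping straight which cross terms in the product land at order $1/n^2$ and which at $1/n^3$, since a slip there would corrupt exactly the coefficients ($A$, $C$, $E$, and the $x^4$ term) that the corollary is meant to pin down.
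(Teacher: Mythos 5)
Your proposal is correct and follows essentially the same route as the paper: the scaling relation $\tilde\phi_n(x) = n^{-1/2}\phi_n(x/\sqrt{n})$, substitution into Corollary~\ref{cor:clt}, Taylor expansion of the Gaussian prefactor and of the Hermite polynomials at $y = x/\sqrt{n} = \cO(n^{-1/2})$, and collection of cross terms up to order $n^{-2}$ inside the bracket. The remark about ``$k=8$'' is a minor misattribution (it is a parameter of Theorem~\ref{thm:Pet75} used in deriving Corollary~\ref{cor:clt}, not something you choose when invoking the corollary), but it does not affect the argument, and the observation that uniformity of the remainder is needed to survive the substitution $y = x/\sqrt{n}$ is a correct and relevant point.
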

\begin{proof}
Recall that $\phi_n$ is the density of $\frac{1}{\sqrt{n}} \sum_{i=1}^n a_i$
and $\tilde\phi_n$ the density of
$\sum_{i=1}^n a_i$.
These two are related by
$\tilde \phi_n(x) = \frac{1}{\sqrt{n}} \phi_n\left (\frac{x}{\sqrt{n}}\right)$. Substituting
into Corollary~\ref{cor:clt},
\begin{align*}
    \tilde \phi_n(x) 
    &=\frac{e^{-\frac{x^2}{2n}}}{\sqrt{2\pi n}}
    \left[1+\frac{\Gamma_4}{n}H_4\left(\frac{x}{\sqrt{n}}\right)
    +\frac{\Gamma_6}{n^2}H_6\left(\frac{x}{\sqrt{n}}\right)
    +\frac{\Gamma_4^2}{2n^2}H_8\left(\frac{x}{\sqrt{n}}\right)
    \right]+O\left(n^{-7/2}\right)\;,
    \\
     & = \frac{e^{-\frac{x^2}{2n  }}}{ \sqrt{2 \pi n}} \left[ 1 + \frac{A}{n}  +\frac{C}{n^2} + \frac{E x^2}{n^2}  \right] + O \left( n^{-7/2} \right)\\
    & =   \frac{1}{ \sqrt{2 \pi n}}  \left( 1-\frac{x^2}{2 n} + \frac{x^4}{8  n^2}\right)\left[ 1 + \frac{A}{n}  +\frac{C}{n^2} + \frac{E x^2}{n^2} \right] + 
    O \left( n^{-7/2} \right)\\
    & =  \frac{1}{ \sqrt{2 \pi n}} \left[ 1 + \frac{A}{n} - \frac{x^2}{2 n} + \frac{C}{n^2} -\frac{A x^2}{2  n^2} + \frac{E x^2}{n^2} + \frac{x^4}{8  n^2} \right] + O \left( n^{-7/2} \right).\qedhere
\end{align*}
\end{proof}

We are now ready to complete the proof of Lemma~\ref{lem:intlemma}.

\begin{proof}[Proof of Lemma~\ref{lem:intlemma}]
Let us start with approximating $\tilde\phi_{n-k}(x)$
for fixed $k$ and $|x|=\cO(1)$. Applying 
Corollary~\ref{cor:phi-tilde} and
using $1/(n-k)=1/n+k/n^2+\cO(n^{-3})$ and
$1/(n-k)^2=1/n^2+\cO(n^{-3})$,
\begin{align}
    \tilde \phi_{n-k}(x) = &\frac{1}{\sqrt{2 \pi (n-k)}} \left[ 1 + \frac{A}{n-k} - \frac{x^2}{2 (n-k)} + \frac{C}{n^2}
    -\frac{A x^2}{2n^2} + \frac{E x^2}{n^2} + 
    \frac{x^4}{8n^2} \right] + 
    O \left( n^{-7/2}\right)
    \nonumber\\
    = & \frac{1}{ \sqrt{2 \pi (n-k)}} \left[ 
    1+ \frac{A}{n} +\frac{kA}{n^2} - \frac{x^2}{2 n} - \frac{kx^2}{2 n^2}+ \frac{C}{n^2} -\frac{A x^2}{2 n^2} + \frac{E x^2}{n^2} + \frac{x^4}{8 n^2} \right] 
    + O \left( n^{-7/2}  \right) 
    \nonumber\\
    = & \frac{1}{ \sqrt{2 \pi (n-k)}} \left[ 1+ \frac{A}{n} - \frac{x^2}{2 n} + \frac{kA+C}{n^2} 
    +\frac{(2E-A-k) x^2}{2  n^2} + \frac{x^4}{8  n^2} 
    \right] + O \left( n^{-7/2}  \right)\;.
    \label{eq:52}
\end{align}
As in the proof of Lemma~\ref{lem:warm-up}, we let
\begin{align*}
    p_{n-k}(x):=\frac
    {\tilde\phi_{n-k}(x)}
    {\frac{1}{(2\sqrt{3})^k}\int_{[-\sqrt{3},\sqrt{3}]^k}
    \tilde\phi_{n-k}\left(\sum_{i=1}^k s_i\right)\,
    \mathrm{d}s_1\ldots s_k
    }\;.
\end{align*}
Having already established~\eqref{eq:52}, we turn
to the denominator in the definition of $p_{n-k}$.
Let $V_1,\ldots,V_k$ be iid uniform in $[-\sqrt{3},\sqrt{3}]$. In the following we will use
$\EE\left(\sum_{i=1}^k V_i\right)^2=k$
and $\EE\left(\sum_{i=1}^k V_i\right)^4
=k\EE V_1^4+\binom{k}{2}\binom{4}{2}\EE[V_1^2V_2^2]
=9k/5+6\binom{k}{2}$. Applying~\eqref{eq:52},
\begin{align*}
    Z_{n-k} 
    :&= \frac{1}{(2 \sqrt{3})^k} 
    \int_{[-\sqrt{3},\sqrt{3}]^k} 
    \tilde \phi_{n-k}\left(\sum_{i=1}^k t_i\right) 
    \mathrm{d}t_1\ldots t_k
    =\EE\tilde\phi_{n-k}\left(\sum_{i=1}^k V_i\right)\\
    &= \frac{1}{\sqrt{2 \pi (n-k)}}  
    \left[ 1 + \frac{A}{n} - \frac{k}{2n} +\frac{kA+C}{n^2} +\frac{k(2E-A-k)}{2n^2} + \frac{9k/5+6\binom{k}{2}}{8n^2} \right] + \cO(n^{-7/2})\\
    & = \frac{1}{\sqrt{2 \pi (n-k)}}  
    \left[ 1 + \frac{A- \frac{k}{2}}{n} 
    +\frac{\frac{kA}{2}+C +kE - \frac{6k+5k^2}{40}}{n^2}  
    \right] + \cO(n^{-7/2})\;.
\end{align*}
Let us first show less precise formulas up to
$\cO(n^{-2})$ error. Those give
\begin{align*}
    p_{n-k}(x)
    &=\frac{\tilde\phi_{n-k}(x)}{Z_{n-k}}
    =\frac{1+A/n-x^2/2n+\cO(n^{-2})}
    {1+A/n-k/2n+\cO(n^{-2})}
    =1+\frac{k}{2n}-\frac{x^2}{2n}+\cO(n^{-2})\;,
\end{align*}
which indeed is consistent with~\eqref{eq:42}--\eqref{eq:53}.
To establish more detailed~\eqref{eq:42} and~\eqref{eq:43},
we apply the same method, also using
$\frac{1}{1+\alpha/n+\beta/n^2}
=1-\alpha/n-\beta/n^2+\alpha^2/n^2+\cO(n^{-3})$
along the way.
This time we find it easiest to treat $k=1$ and
$k=2$ separately, recalling
that $E=-6\Gamma_4=3/10$:
\begin{align*}
   p_{n-1}(x)&= \frac{\tilde \phi_{n-1}(x)}{Z_{n-1}}  = \sqrt{2\pi(n-1)}\tilde \phi_{n-1}(x) \left( 1- \frac{A- \frac{1}{2}}{n} -\frac{\frac{A}{2}+C +E - \frac{11}{40}}{n^2} + \frac{A^2 - A + \frac{1}{4}}{n^2}  + \cO(n^{-3}) \right) \\
     &= \left( 1+ \frac{A}{n} - \frac{x^2}{2 n} + \frac{A+C}{n^2} +\frac{(2E-A-1) x^2}{2 n^2} +\frac{x^4}{8 n^2}\right)\\
    &\qquad\cdot \left( 1- \frac{A- \frac{1}{2}}{n} -\frac{ \frac{3A}{2}- A^2+C +E - \frac{21}{40}}{n^2} \right)+\cO(n^{-3})\\
    &=  1 +\frac{1}{2n} - \frac{x^2}{2n}  + \frac{\frac{21}{40} -E}{n^2} + \frac{(E - \frac{3}{4} ) x^2}{n^2}+ \frac{x^4}{8 n^2} + \cO(n^{-3})\\
    &
    =1 +\frac{1}{2n} - \frac{x^2}{2n}  + \frac{9}{40n^2} - \frac{ 9 x^2}{20n^2}+ \frac{x^4}{8 n^2} + \cO(n^{-3})\;.\\
  p_{n-2}(x)
  &=  \frac{\tilde \phi_{n-2}(x)}{Z_{n-2}} 
    = \sqrt{2\pi(n-2)}\tilde \phi_{n-2}(x) \left(1 - \frac{A- 1}{n} -\frac{A+C +2E - \frac{4}{5}}{n^2} + \frac{A^2 -2A + 1}{n^2} + \cO(n^{-3})\right)\\
    &= \left( 1+ \frac{A}{n} - \frac{x^2}{2 n} + \frac{2A+C}{n^2} +\frac{(2E-A-2) x^2}{2  n^2} + \frac{x^4}{8 n^2} \right)\\
    &\qquad\cdot\left(1 - \frac{A- 1}{n} -\frac{A+C +2E - \frac{4}{5}}{n^2} + \frac{A^2 -2A +1}{n^2}\right)
    +\cO(n^{-3})\\
    &= 1 + \frac{1}{n} - \frac{x^2}{2n} + \frac{\frac{9}{5} -2E}{n^2} + \frac{(E -\frac{3}{2}) x^2}{n^2} + \frac{x^4}{8 n^2}+ \cO(n^{-3})\\
    &=
    1 + \frac{1}{n} - \frac{x^2}{2n} + \frac{6}{5n^2} - \frac{6 x^2}{5n^2} + \frac{x^4}{8 n^2}+ \cO(n^{-3})\;.
\end{align*}
Finally, each of the formulas in~\eqref{eq:55}
is justified in the same way as in the proof
of Lemma~\ref{lem:warm-up}.
\end{proof}

\subsection{Conditional expectations of terms}

As in the proof of Lemma~\ref{lem:warm-up},
the expectations considered in 
Lemmas~\ref{lem:var-2nd-moment}
and~\ref{lem:cv-2nd-moment} consist
of several terms. In this section
we apply Lemma~\ref{lem:intlemma}
to estimate each of those terms.
We start with a list of unconditional expectations that we need. Each
of those can be verified by checking
an elementary integral:

\begin{lemma}   \label{lem:simple-integrals}
For $V_1,V_2,V_3,V_4 \overset{iid}{\sim} \mathcal{U} [-\sqrt{3},\sqrt{3}]$, we have
\begin{IEEEeqnarray*}{rClrCl}
    \EE[V_1^k]=0 \text{ for every k odd}, 
    \qquad\EE[V_1^2]&=&1,
    &\EE[V_1^4]=\frac{9}{5},
    \qquad\qquad\EE[V_1^6]&=&\frac{27}{7},\\
    \EE[\max\{V_1,V_2\}]&=&\frac{\sqrt{3}}{3},
    &\EE[\max\{V_1,V_2\}^2 ]&=&1,\\
    \EE[\max\{V_1,V_2\} V_1]&=&\frac{1}{2},
    &\EE[\max\{V_1,V_2\}V_1^2]&=&\frac{2\sqrt{3}}{5},\\
    \EE[\max\{ V_1,V_2\} V_1^3]&=&\frac{9}{10},
    &\EE[\max\{ V_1,V_2\} V_1^4]&=&\frac{27\sqrt{3}}{35},\\
    \EE[\max\{V_1,V_2\}V_1V_2 ]&=&-\frac{\sqrt{3}}{5},
    &\EE[\max\{V_1,V_2\}V_1^2V_2]&=&\frac{1}{2},\\
    \EE[\max\{V_1,V_2\}V_1^2V_2^2]&=&\frac{3 \sqrt{3}}{7},
    &\EE[\max\{V_1,V_2\}\max\{V_1,V_3\} ]&=&\frac{3}{5}\;,\\
    \EE[\max\{V_1,V_2\}\max\{V_1,V_3\}V_1^2 ]&=&\frac{33}{35},
    &\qquad\qquad\EE[\max\{V_1,V_2\}\max\{V_1,V_3\}V_2^2 ]&=&\frac{23}{35},\\
    \EE[\max\{V_1,V_2\}\max\{V_1,V_3\}V_2V_3 ]&=&\frac{13}{35}\;.
\end{IEEEeqnarray*}
\end{lemma}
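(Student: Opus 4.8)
The plan is to verify each of the thirteen identities by direct evaluation of an elementary integral against the density $\tfrac{1}{2\sqrt3}\1(|v|\le\sqrt3)$, organizing the list into three groups according to which $\max$ terms appear.

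\emph{Pure power moments.} Since the uniform law on $[-\sqrt3,\sqrt3]$ is symmetric around $0$, all odd moments vanish, while for $k=2m$ one has $\EE[V_1^{2m}]=\tfrac{1}{2\sqrt3}\int_{-\sqrt3}^{\sqrt3}v^{2m}\,\mathrm dv=\tfrac{3^m}{2m+1}$, giving $\EE[V_1^2]=1$, $\EE[V_1^4]=9/5$, $\EE[V_1^6]=27/7$.

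\emph{One factor of $\max$.} For the entries containing a single $\max\{V_1,V_2\}$ I would split on which of $V_1,V_2$ is larger and use symmetry, so that $\EE[\max\{V_1,V_2\}\,V_1^aV_2^b]=\EE[\1(V_1\ge V_2)(V_1^{a+1}V_2^b+V_1^bV_2^{a+1})]$, which after integrating $V_2$ over $[-\sqrt3,V_1]$ reduces to a one-dimensional polynomial integral in $V_1$. For instance $\EE[\max\{V_1,V_2\}]=2\,\EE[\1(V_1\ge V_2)V_1]=\tfrac{1}{6}\int_{-\sqrt3}^{\sqrt3}v(v+\sqrt3)\,\mathrm dv=\sqrt3/3$; the values of $\EE[\max\{V_1,V_2\}^2]$, $\EE[\max\{V_1,V_2\}V_1]$, $\EE[\max\{V_1,V_2\}V_1^2]$, $\EE[\max\{V_1,V_2\}V_1^3]$, $\EE[\max\{V_1,V_2\}V_1^4]$, $\EE[\max\{V_1,V_2\}V_1V_2]$, $\EE[\max\{V_1,V_2\}V_1^2V_2]$, $\EE[\max\{V_1,V_2\}V_1^2V_2^2]$ all come out of the same computation.

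\emph{Two factors of $\max$ sharing $V_1$.} For the remaining entries I would condition on $V_1=t$ and use that $V_2,V_3$ are then independent. Setting
\begin{align*}
M(t):=\EE[\max\{t,V_2\}]=\frac{1}{2\sqrt3}\Bigl(\frac{t^2}{2}+\sqrt3\,t+\frac32\Bigr),\qquad N(t):=\EE[V_2\max\{t,V_2\}]=\frac{1}{2\sqrt3}\Bigl(\frac{t^3}{6}-\frac{3t}{2}+\sqrt3\Bigr),
\end{align*}
and $P(t):=\EE[V_2^2\max\{t,V_2\}]$ (another explicit polynomial of degree $4$), one obtains $\EE[\max\{V_1,V_2\}\max\{V_1,V_3\}]=\EE[M(V_1)^2]$, $\EE[\max\{V_1,V_2\}\max\{V_1,V_3\}V_1^2]=\EE[V_1^2M(V_1)^2]$, $\EE[\max\{V_1,V_2\}\max\{V_1,V_3\}V_2^2]=\EE[P(V_1)M(V_1)]$, and $\EE[\max\{V_1,V_2\}\max\{V_1,V_3\}V_2V_3]=\EE[N(V_1)^2]$, each a polynomial integral in $t$ closed by the power-moment formulas above; e.g.\ $\EE[M(V_1)^2]=\tfrac{1}{12}\bigl(\tfrac14\EE V_1^4+\tfrac92\EE V_1^2+\tfrac94\bigr)=\tfrac{1}{12}\cdot\tfrac{36}{5}=\tfrac35$.

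There is no conceptual obstacle; the only real risk is arithmetic error spread across the many integrals. To guard against it I would keep the normalizing constants $\tfrac{1}{2\sqrt3}$ and $\tfrac{1}{12}$ outside until the last step, reuse $M$, $N$, $P$ throughout the third group, and cross-check a few entries via relations such as $\EE[\max\{V_1,V_2\}]+\EE[\min\{V_1,V_2\}]=0$ and $\EE[\max\{V_1,V_2\}^2]+\EE[\min\{V_1,V_2\}^2]=2\EE[V_1^2]=2$, which follow from $\max\{V_1,V_2\}+\min\{V_1,V_2\}=V_1+V_2$ and $\max\{V_1,V_2\}\min\{V_1,V_2\}=V_1V_2$.
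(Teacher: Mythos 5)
Your plan is essentially the same as the paper's (the paper does not spell out a proof for this lemma at all beyond the remark that each line ``can be verified by checking an elementary integral''), and organizing the thirteen identities into power moments, single-$\max$ terms, and double-$\max$ terms with the helper functions $M,N,P$ is a sensible way to keep the bookkeeping manageable. The worked examples you carry through ($\EE[\max\{V_1,V_2\}]=\sqrt{3}/3$ and $\EE[M(V_1)^2]=3/5$) are correct.

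However, the general identity you state for the single-$\max$ group has a sign-of-the-shift error that would give wrong answers if applied literally. You wrote
\begin{align*}
\EE[\max\{V_1,V_2\}V_1^aV_2^b]=\EE\big[\1(V_1\ge V_2)(V_1^{a+1}V_2^b+V_1^{\,b}V_2^{\,a+1})\big],
\end{align*}
but after splitting on the larger variable and relabeling the second piece by exchangeability, the $+1$ must land on the larger variable $V_1$ in both summands, giving $V_1^{a+1}V_2^b+V_1^{\,b+1}V_2^{\,a}$ inside the indicator. The version you wrote places the extra power on $V_2$, i.e.\ on the \emph{smaller} variable. One sees the discrepancy already at $a=b=0$: your formula reads $\EE[\1(V_1\ge V_2)(V_1+V_2)]$, and by symmetry this equals $\tfrac12\EE[V_1+V_2]=0$, not $\sqrt{3}/3$ — whereas your worked example correctly uses $2\,\EE[\1(V_1\ge V_2)V_1]$, i.e.\ the corrected form. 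So this is a transcription slip rather than a conceptual problem, but it should be fixed before the formula is used to churn through the remaining entries of the second group.
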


We now give the list of conditional expectations for each
term that we will need. Each of them is obtained
by the substitution of appropriate values from Lemma~\ref{lem:simple-integrals}
into one of the equations in~\eqref{eq:55}
from Lemma~\ref{lem:intlemma}:

\begin{lemma} \label{lem:calclemma}
Let $A=(a_i)_{i \in [n]}$ and $B=(b_i)_{i \in [n]}$ be two random
balanced dice with faces uniform in $[-\sqrt{3}, \sqrt{3}] $.
Then,
\begin{align*}
   \EE[a_1^2]&= 1 - \frac{2}{5n} -\frac{18}{175n^2} + \cO(n^{-3})\;,\\
   \EE[a_1^2a_2^2]&= 1 - \frac{4}{5n} +\frac{48}{175n^2} + \cO(n^{-3})\;,\\
   \EE[\max\{a_1,b_1\}]&= \frac{\sqrt{3}}{3}\left(1-\frac{1}{5n}-\frac{2}{25 n^2} 
   \right)+ \cO(n^{-3})\;,\\
   \EE[\max\{a_1,b_1\} \max\{a_2,b_2\}] 
   &= \frac{1}{3}\left(1 - \frac{19}{10n} -\frac{31}{50n^2} 
   \right)+ \cO(n^{-3})\;,\\
   \EE[a_1^2 \max\{a_2,b_1 \}] 
   &= \frac{\sqrt{3}}{3}\left(1 -\frac{3}{5n} -\frac{4}{175n^2}
   \right)+ \cO(n^{-3})\;.
\end{align*}
Furthermore,
\begin{align*}
    \EE[\max\{ a_1,a_2 \}]&= \frac{\sqrt{3}}{3} 
    \left(1+ \frac{2}{5n}\right) + \cO(n^{-2})\;,\\
    \EE[a_1^2 \max\{a_2,a_3 \}]&= \frac{\sqrt{3}}{3} + \cO(n^{-2})\;,\\
    \EE[\max \{a_1,a_2 \} \max \{ a_3,a_4 \}]&= \frac{1}{3}\left(1 - \frac{11}{5n}\right) +  \cO(n^{-2})\;,\\
    \EE[a_1^4] 
    &= \frac{9}{5}\left(1-\frac{4}{7n}\right)+ \cO(n^{-2})\;,\\
    \EE[\max\{a_1,b_1\}\max\{a_1,b_2\}]
    &=\frac{3}{5}\left(1-\frac{1}{n}\right)+\cO(n^{-2})\;,\\
    \EE[a_1^2\max\{a_1,b_1\}]
    &=\frac{2\sqrt{3}}{5}\left(1-\frac{1}{2n}\right)+\cO(n^{-2})\;,
\end{align*}
and
\begin{align*}
    \EE[\max \{a_1,a_2 \}^2] 
    &= 1+ \cO(n^{-1})\;,\\
    \EE[\max \{a_1,a_2\} \max \{a_1,a_3\}] 
    &= \frac{3}{5} + \cO(n^{-1})\;,\\
    \EE[a_1^2 \max \{a_1,a_2\}] 
    &= \frac{2 \sqrt{3}}{5}+ \cO(n^{-1})\;,\\
    \EE[\max\{a_1,b_1\}^2]&=1+\cO(n^{-1})\;.
\end{align*}
\end{lemma}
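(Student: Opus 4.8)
The plan is to obtain every entry of Lemma~\ref{lem:calclemma} by the same mechanical reduction: rewrite each conditional expectation as an unconditional integral against the correction densities $p_{n-j}$, expand, and read off the coefficients from Lemma~\ref{lem:simple-integrals}. For a given left-hand side $\EE[f]$, the first step is to determine which faces of which dice the test function $f$ involves and apply the corresponding identity from \eqref{eq:55} in Lemma~\ref{lem:intlemma}. This replaces $\EE[f(a_{i_1},\dots,a_{i_k})]$ for faces of a single balanced die by $\EE\!\big[f(V_1,\dots,V_k)\,p_{n-k}(V_1+\dots+V_k)\big]$ with $V_1,\dots,V_k$ iid uniform on $[-\sqrt{3},\sqrt{3}]$, and replaces the mixed ``$a$ and $b$'' terms by the analogous expectation with a product of two correction factors, one per die; since the two blocks of uniform variables are then independent, such an expectation factors into a product over the two dice.

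The second step is to substitute the expansions \eqref{eq:42}--\eqref{eq:53} of $p_{n-j}$, multiply out, and evaluate the resulting unconditional expectations by linearity using Lemma~\ref{lem:simple-integrals}. The only thing to keep track of is how far to expand, which is fixed by the target error: (i) for the five estimates with an $\cO(n^{-3})$ error we use the full second-order forms \eqref{eq:42}, \eqref{eq:43} of $p_{n-1}$ and $p_{n-2}$ (all five involve only one- or two-face blocks, so only these two correction densities appear); expanding the $\tfrac{x^2}{n}$ and $\tfrac{x^4}{n^2}$ terms in powers of the $V_i$ and using the moments through $\EE[V_1^6]$ together with the $\max$-type mixed moments of Lemma~\ref{lem:simple-integrals} produces the stated rational coefficients. (ii) For the six estimates with $\cO(n^{-2})$ error it suffices to keep $p_{n-j}$ only to first order, $1+\tfrac{j}{2n}-\tfrac{x^2}{2n}+\cO(n^{-2})$ (the four-face single-die correlations here use $p_{n-4}$, supplied by \eqref{eq:53}), so that only the moments already tabulated in Lemma~\ref{lem:simple-integrals} are needed. (iii) For the last four estimates, with $\cO(n^{-1})$ error, one uses $p_{n-j}\equiv 1+\cO(n^{-1})$ and reads the answers straight off Lemma~\ref{lem:simple-integrals}.

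The one step that is not pure formula-pushing is verifying that the error in the expansion of $p_{n-j}$ — inherited from Corollary~\ref{cor:phi-tilde}, which is uniform for $|x|=\cO(1)$ — is legitimately uniform over the arguments that actually occur; since every $V_i$ lies in $[-\sqrt{3},\sqrt{3}]$, the sums $V_1+\dots+V_k$ stay $\cO(1)$ and the test functions $f$ here are bounded, so the $\cO(n^{-3})$, $\cO(n^{-2})$, or $\cO(n^{-1})$ error survives integration against $f$. Granting this, the proof is a finite collection of elementary integral evaluations, and the main obstacle is purely bookkeeping: organizing the multinomial expansions of $(V_1+\dots+V_k)^2$ and $(V_1+\dots+V_k)^4$, matching each monomial with its value in Lemma~\ref{lem:simple-integrals}, and correctly collecting the $n^{-1}$ and $n^{-2}$ contributions — not anything conceptual.
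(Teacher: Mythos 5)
Your proposal is correct and follows essentially the same approach as the paper, which proves Lemma~\ref{lem:calclemma} by the one-line prescription of substituting the $p_{n-j}$ expansions~\eqref{eq:42}--\eqref{eq:53} into the reduction identities~\eqref{eq:55} of Lemma~\ref{lem:intlemma} and then evaluating the resulting unconditional expectations using the table in Lemma~\ref{lem:simple-integrals}. One small clarification: your parenthetical that the mixed ``$a$ and $b$'' expectations ``factor into a product over the two dice'' should read that the \emph{correction density} (not the expectation) factors as $p_{n-k}(\sum V^{(a)})\,p_{n-\ell}(\sum V^{(b)})$ because $A$ and $B$ are independent; the expectation itself does not factorize when the test function, e.g.\ $\max\{a_1,b_1\}$, mixes the two blocks.
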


Finally, we are ready to perform calculations needed to establish Lemmas~\ref{lem:var-2nd-moment}
and~\ref{lem:cv-2nd-moment}.

\subsection{Proof of Lemma \ref{lem:var-2nd-moment}} \label{sec:proof-var-2nd}
Our objective is to show that $\EE[\Var_A^2] =\cO(n^2)$ for a random balanced $A$. 
We will use the foregoing lemmas
to write this expression as a sum of conditional expectations of simple functions 
of a small number of faces of the random die. We will then substitute approximations from 
Lemma~\ref{lem:calclemma}
and see that, up to $\cO(n)$ error, it is a polynomial where the coefficients of degree more than
two vanish. We now proceed with this plan.

Let $A$ be a balanced die with faces in $[-\sqrt{3},\sqrt{3}]$.
Squaring the expression in (\ref{eq:varA}) and using $\sum_{i=1}^n a_i =0$, we get
\begin{align*}
    \Var_A^2 =& \frac{n^4}{144} + \frac{n^2}{144} \left( \sum_{i=1}^n a_i^2 \right)^2 + \frac{1}{12} \left( \sum_{i\neq j}\max\{a_i,a_j\} \right)^2 \\
    & + \frac{n^3}{72} \sum_{i=1}^n a_i^2 - \frac{n^2}{12\sqrt{3}} \sum_{i \neq j} \max \{a_i, a_j \} - \frac{n}{12 \sqrt{3}} \left( \sum_{i=1}^n a_i^2\right)  \left( \sum_{i \neq j} \max \{a_i, a_j \} \right).
\end{align*}
Taking expectation over balanced $A$, we have
\begin{align*}
   \EE [ \Var_A^2] 
   &= \frac{n^4}{144} + \frac{n^2}{144}\left( (n^2-n) \EE[ a_1 ^2 a_2^2] + n \EE[ a_1 ^4]\right)  + \frac{n(n-1)}{12} \Big(2 \EE[\max \{a_1,a_2 \}^2] \\
   &\quad+4(n-2) \EE[\max \{a_1,a_2 \}\max \{a_1,a_3 \}] + (n-2)(n-3) \EE[\max \{a_1,a_2 \} \max \{a_3,a_4 \}]   \Big)\\
   &\quad+\frac{n^4}{72} \EE[a_1^2] - \frac{n^3(n-1)}{12 \sqrt{3}} \EE[\max \{ a_1,a_2 \}] - \frac{n^2(n-1)}{12\sqrt{3}} \Big( (n-2)  \EE[a_1^2\max \{a_2,a_3 \}] \\
   &\quad+ 2 \EE[a_1^2 \max\{a_1,a_2 \}] \Big)\\
   &= \frac{n^4}{144} \Bigg( 1+ \EE[ a_1 ^2 a_2^2] +    12  \EE[\max \{a_1,a_2 \}\max \{a_3,a_4 \}]+2\EE[a_1^2]\\
   &\quad- 4 \sqrt{3}  \EE[\max \{a_1,a_2 \} ] - 4 \sqrt{3} \EE[a_1^2 \max \{a_2,a_3 \}] \Bigg)\\
   &\quad+ \frac{n^3}{144} \Bigg(
   - \EE[a_1^2a_2^2]
   +\EE[a_1^4] + 48 \EE[\max \{ a_1,a_2 \} \max \{a_1,a_3 \}]\\
   &\quad- 72 \EE[\max \{a_1,a_2 \}\max \{a_3,a_4 \}] 
   + 4 \sqrt{3} \EE[\max \{a_1,a_2 \}]  
   + 12 \sqrt{3}   \EE[a_1^2\max \{a_2,a_3 \}]\\
   &\quad- 8 \sqrt{3} \EE[a_1^2 \max \{ a_1,a_2 \}]  \Bigg) \\
   &\quad+ \cO(n^{2}).
\end{align*}
In order to show that the coefficients of $n^4$ and $n^3$ are zero we apply Lemma~\ref{lem:calclemma}.
To check the coefficient of $n^4$ we need to look at the leading terms of all summands grouped under
$n^4$. These cancel out in the same way as in the unconditioned case. For the coefficient of $n^3$
we need to check second-order terms of summands grouped by $n^4$ and the most significant terms
of summands grouped by $n^3$. All in all we conclude that $\EE[\Var_A]=\cO(n^2)$.
\qed

\subsection{Proof of Lemma \ref{lem:cv-2nd-moment}}
In this section, we show that 
$\EE[\CVABcond^2] = \Omega(n^2)$, where 
$\CVABcond= CV_{AB} - CV_ACV_B$. 
Note that by triangle inequality:
\begin{align*}
    \EE\big[(CV_{AB}- CV_A CV_B)^2\big]
    &\geq \left( \sqrt{ \EE[CV_{AB}^2]} - \sqrt{\EE[CV_A^2] 
    \EE[CV_B^2] } \right)^2 \\
    &=\left( \sqrt{ \EE[CV_{AB}^2]} - \EE[CV_A^2]\right)^2.
\end{align*}
Therefore, we only need to show that 
\begin{align*}
    \sqrt{ \EE[CV_{AB}^2]} - \EE[CV_A^2] = \Omega(n).
\end{align*}
Indeed, this is established by two claims below. Their subsequent proofs 
complete the justification of Lemma~\ref{lem:cv-2nd-moment}.

\begin{claim}\label{cl:cvab}
$\EE[CV_{AB}^2]=\frac{11}{12600}n^2+\cO(n)$, in particular
$\sqrt{\EE[CV_{AB}^2]}>0.025n$.
\end{claim}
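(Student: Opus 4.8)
The plan is to reduce $\EE[CV_{AB}^2]$ to a fixed finite combination of conditional expectations of simple functions of a bounded number of faces, exactly as in the proof of Lemma~\ref{lem:var-2nd-moment}, and then read off the coefficient of $n^2$. Since $A$ and $B$ are balanced we have $CV_{AB}=\Cov[U_A,U_B]=\EE_V[g_A(V)g_B(V)]$ (for fixed dice); writing $g_A(x)=\sum_{i=1}^n h_{a_i}(x)$ with $h_a(x):=\1(a<x)-\frac{x+\sqrt3}{2\sqrt3}$ gives $CV_{AB}=\sum_{i,j=1}^n\psi(a_i,b_j)$, where an elementary integral evaluates the kernel as
\[
\psi(a,b):=\EE_V\big[h_a(V)h_b(V)\big]=\frac{1}{12}-\frac{\max\{a,b\}}{2\sqrt3}+\frac{a^2+b^2}{24}+\frac{\sqrt3(a+b)}{12}\,.
\]
Squaring and grouping the $n^4$ index quadruples according to whether $i=k$ and whether $j=l$ (and using the $A\leftrightarrow B$ symmetry of the model together with $\psi(a,b)=\psi(b,a)$ to merge the two ``one coincidence'' cases) yields
\[
\EE[CV_{AB}^2]=n^2\,\EE\big[\psi(a_1,b_1)^2\big]+2n^2(n-1)\,\EE\big[\psi(a_1,b_1)\psi(a_1,b_2)\big]+n^2(n-1)^2\,\EE\big[\psi(a_1,b_1)\psi(a_2,b_2)\big]\,,
\]
each expectation taken over random balanced $A,B$. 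These three expectations involve at most two faces of each die and are precisely of the form handled by Lemma~\ref{lem:intlemma} (via the last three identities of~\eqref{eq:55}); their expansions are collected in Lemma~\ref{lem:calclemma}.

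The structural fact that drives the computation is that $\EE_V[h_a(V)]=0$ for every fixed $a$, hence $\EE_{W}[\psi(v,W)]=0$ for every fixed $v$ when $W$ is uniform. For \emph{unconditioned} (i.i.d.) faces this forces $\EE[\psi(V_1,V_3)\psi(V_2,V_4)]=0$ and $\EE[\psi(V_1,V_2)\psi(V_1,V_3)]=\EE_{V_1}\big[(\EE_{V_2}\psi(V_1,V_2))^2\big]=0$; since the balanced correction factors satisfy $p_{n-k}=1+\cO(n^{-1})$ they cannot affect the leading $\cO(1)$ terms, so the coefficients of $n^4$ and $n^3$ in $\EE[CV_{AB}^2]$ vanish, in complete analogy with the cancellations in the proof of Lemma~\ref{lem:var-2nd-moment}. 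It then remains to extract the coefficient of $n^2$, for which I need $\EE[\psi(a_1,b_1)^2]$ up to error $\cO(n^{-1})$, $\EE[\psi(a_1,b_1)\psi(a_1,b_2)]$ up to $\cO(n^{-2})$, and $\EE[\psi(a_1,b_1)\psi(a_2,b_2)]$ up to $\cO(n^{-3})$.

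Substituting the expansions of $p_{n-1}$ and $p_{n-2}$ from~\eqref{eq:42}--\eqref{eq:43} into the three identities of Lemma~\ref{lem:intlemma} and repeatedly using $\EE_W[\psi(v,W)]=0$ to annihilate almost every term, exactly three contributions to the coefficient of $n^2$ survive: (i) $\EE[\psi(V_1,V_2)^2]=\tfrac{1}{90}$ from the first group; (ii) $2d_1$ from the second group, where $d_1$ is the $n^{-1}$-coefficient of $\EE[\psi(a_1,b_1)\psi(a_1,b_2)]$ and (the $\tfrac32$- and $V_1^2$-pieces of $p_{n-1}(V_1)p_{n-2}(V_2+V_3)$ dropping out) reduces to $d_1=-\EE_{V_1}[N(V_1)^2]$ with $N(v):=\EE_W[\psi(v,W)W]=\tfrac{v(9-v^2)}{72}$, giving $d_1=-\tfrac{17}{1680}$; and (iii) $M_{11}^2$ from the third group, where $M_{11}:=\EE[\psi(V_1,V_2)V_1V_2]=\tfrac{1}{10}$ — here the point is that in $p_{n-2}(V_1+V_2)p_{n-2}(V_3+V_4)\psi(V_1,V_3)\psi(V_2,V_4)$ the only monomial surviving the vanishing identity is the fully mixed $V_1V_2V_3V_4$. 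Adding these, $\EE[CV_{AB}^2]=\big(\tfrac{1}{90}-\tfrac{17}{840}+\tfrac{1}{100}\big)n^2+\cO(n)=\tfrac{11}{12600}n^2+\cO(n)$; since $\sqrt{11/12600}>0.0295>0.025$, for $n$ large we get $\sqrt{\EE[CV_{AB}^2]}>0.025n$, which is the claim.

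I expect the main obstacle to be purely the bookkeeping, as in Lemma~\ref{lem:var-2nd-moment}: correctly identifying which of the roughly ten monomials in each expanded product survives the identity $\EE_W[\psi(v,W)]=0$, and then evaluating the handful of surviving elementary integrals without error. In particular the coefficient of $n^2$ genuinely depends on the $n^{-2}$-order terms of $p_{n-1}$ and $p_{n-2}$, so the detailed forms~\eqref{eq:42}--\eqref{eq:43} are needed rather than the cruder $p_{n-k}=1+\tfrac{k}{2n}-\tfrac{x^2}{2n}+\cO(n^{-2})$; all the elementary integrals required are already tabulated in Lemma~\ref{lem:simple-integrals} and all the needed conditional expectations in Lemma~\ref{lem:calclemma}, so no new tools are needed.
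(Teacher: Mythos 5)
Your proof is correct and arrives at the same value $\frac{11}{12600}n^2+\cO(n)$, but it genuinely differs in organization from what the paper does. The paper starts from the explicit formula $CV_{AB}=\frac{n^2}{12}+\frac{n}{24}\sum_i(a_i^2+b_i^2)-\frac{1}{2\sqrt3}\sum_{i,j}\max\{a_i,b_j\}$, squares it, expands the square into roughly a dozen conditional expectations of monomials in $\max\{a_i,b_j\}$ and $a_i^2$, $b_j^2$, and then substitutes the $\cO(n^{-3})$-accurate expansions tabulated in Lemma~\ref{lem:calclemma} term by term, observing at the end that the $n^4$ and $n^3$ coefficients happen to cancel. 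You instead rewrite $CV_{AB}=\sum_{i,j}\psi(a_i,b_j)$ with the ``rank-one'' kernel $\psi(a,b)=\EE_V[h_a(V)h_b(V)]$ (your explicit form for $\psi$ checks out), decompose the $n^4$ index quadruples by the coincidence pattern $(i{=}k,j{=}l)$, and then use the structural identity $\EE_W[\psi(v,W)]=0$ to annihilate almost every monomial produced by the $p_{n-k}$ expansions. This makes the vanishing of the $n^4$ and $n^3$ coefficients \emph{automatic} rather than something verified by bookkeeping, and isolates exactly three surviving pieces in the $n^2$ coefficient: $\EE[\psi(V_1,V_2)^2]=\tfrac1{90}$, $2d_1=-2\EE_{V}[N(V)^2]=-\tfrac{17}{840}$ with $N(v)=\EE_W[\psi(v,W)W]=\tfrac{v(9-v^2)}{72}$, and $M_{11}^2=\tfrac1{100}$ with $M_{11}=\EE[\psi(V_1,V_2)V_1V_2]=\tfrac1{10}$. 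I verified all three values and the sum $\tfrac{140-255+126}{12600}=\tfrac{11}{12600}$. Two small remarks. First, the sentence ``since $p_{n-k}=1+\cO(n^{-1})$ they cannot affect the leading $\cO(1)$ terms, so the coefficients of $n^4$ and $n^3$ vanish'' as stated only disposes of the $n^4$ coefficient; the $n^3$ coefficient involves the $n^{-1}$ terms of $p_{n-k}$ and of the $n^2(n-1)^2$ prefactor, and its vanishing needs the more careful argument you in fact give afterwards (namely that the $n^0$ coefficient of $\EE[\psi(a_1,b_1)\psi(a_1,b_2)]$ and the $n^{-1}$ coefficient of $\EE[\psi(a_1,b_1)\psi(a_2,b_2)]$ both vanish via $\EE_W[\psi(v,W)]=0$). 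Second, your route does require one elementary integral not in Lemma~\ref{lem:simple-integrals}, namely $\EE_W[\max\{v,W\}W]$ leading to $N(v)$, so the claim that ``no new tools are needed'' is marginally overstated; everything else is indeed covered by Lemmas~\ref{lem:intlemma} and~\ref{lem:simple-integrals}. Overall your organization explains the cancellations conceptually where the paper only verifies them, at the cost of introducing the kernel $\psi$ and the auxiliary $N$ and $M_{11}$; the two approaches are computationally comparable, but yours makes the $n^4$, $n^3$ cancellation transparent.
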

\begin{claim}\label{cl:cva}
$\EE[CV_A^2]=\frac{n}{60}+\cO(1)<0.02 n$.
\end{claim}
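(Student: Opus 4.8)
The plan is to obtain an exact closed form for $CV_A$ in terms of the faces of the die, and then reduce $\EE[CV_A^2]$ to the three balanced conditional moments that are already tabulated in Lemma~\ref{lem:calclemma}.

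First I would compute $CV_A$ explicitly. In the present setting $V$ is uniform in $[-\sqrt3,\sqrt3]$, so $\EE V=0$ and hence $CV_A=\Cov[g_A(V),V]=\EE[g_A(V)V]$. Writing $g_A(V)=\sum_{i=1}^n\mathbbm{1}(a_i<V)-\frac{n(V+\sqrt3)}{2\sqrt3}$ and using $\EE[\mathbbm{1}(a<V)V]=\frac{\sqrt3}{12}(3-a^2)$ from~\eqref{eq:56} together with $\EE[(V+\sqrt3)V]=\EE[V^2]=1$, a one-line computation gives
\begin{align*}
CV_A=\frac{\sqrt3}{12}\left(n-\sum_{i=1}^n a_i^2\right)\;,
\qquad\text{hence}\qquad
CV_A^2=\frac{1}{48}\left(n-\sum_{i=1}^n a_i^2\right)^2\;.
\end{align*}
Expanding the square and using symmetry of the balanced die, $\EE[CV_A^2]=\frac{1}{48}\bigl(n^2-2n^2\,\EE[a_1^2]+n\,\EE[a_1^4]+n(n-1)\,\EE[a_1^2a_2^2]\bigr)$, so everything reduces to the quantities estimated in Lemma~\ref{lem:calclemma}: $\EE[a_1^2]=1-\tfrac{2}{5n}+\cO(n^{-2})$, $\EE[a_1^2a_2^2]=1-\tfrac{4}{5n}+\cO(n^{-2})$ and $\EE[a_1^4]=\tfrac95+\cO(n^{-1})$.

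Substituting, the $n^2$ terms cancel (as they must — even the unconditioned computation gives $\EE[(n-\sum V_i^2)^2]=\tfrac45 n$), the $n$ coefficient works out to $\tfrac45$, and therefore $\EE[CV_A^2]=\frac{1}{48}\bigl(\tfrac45 n+\cO(1)\bigr)=\frac{n}{60}+\cO(1)$, which is smaller than $0.02n$ for large $n$ since $\tfrac1{60}=0.01\overline6<0.02$ and the gap $0.02-\tfrac1{60}$ is a positive constant dominating the $\cO(1)$ term. There is no real obstacle beyond careful bookkeeping; the only subtlety is that one genuinely needs the first-order corrections to $\EE[a_1^2]$ and $\EE[a_1^2a_2^2]$ (the leading order alone only yields the — correct but uninformative — cancellation at order $n^2$), whereas for $\EE[a_1^4]$ the leading constant $\tfrac95$ is enough; the $n^2$ cancellation also serves as a handy consistency check on the closed form for $CV_A$.
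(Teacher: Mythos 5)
Your proposal is correct and follows essentially the same route as the paper: compute the closed form $CV_A=\frac{1}{4\sqrt3}\bigl(n-\sum_i a_i^2\bigr)$, square it, take the expectation over a random balanced die, and substitute the conditional moments $\EE[a_1^2]$, $\EE[a_1^2a_2^2]$, $\EE[a_1^4]$ from Lemma~\ref{lem:calclemma}. The arithmetic checks out, including the observation that the $n^2$ coefficient cancels and the $n$ coefficient equals $\tfrac45$, giving $\frac{n}{60}+\cO(1)$.
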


Both claims are proved with the same method as we used for Lemma~\ref{lem:var-2nd-moment}.

\begin{proof}[Proof of Claim~\ref{cl:cvab}]
Let us calculate the expression for the covariance $CV_{AB}$ for balanced dice $A,B$.
For that, let $V \sim \mathcal{U}[-\sqrt{3},\sqrt{3}]$. 
It will be useful to recall~\eqref{eq:56}, as well as the formula $\Pr[a<V]=1/2-a/2\sqrt{3}$
and the fact that the face-sums of $A$ and $B$ are zero:
\begin{align*}
   CV_{AB} 
   &= \EE[g_A(V) g_B(V)]=  \sum_{i,j=1}^n \EE\left[\left(\mathbbm{1}(a_i<V) - \frac{V+\sqrt{3}}{2\sqrt{3}}\right) \left(\mathbbm{1}(b_j<V) - \frac{V+\sqrt{3}}{2\sqrt{3}}\right)\right]\\
    &=\sum_{i,j=1}^n\Pr[\max\{a_i,b_j\}<V]
    -\frac{n}{2\sqrt{3}}\sum_{i=1}^n\EE\left[\mathbbm{1}(a_i<V)(V+\sqrt{3})\right]\\
    &\qquad\qquad-\frac{n}{2\sqrt{3}}\sum_{j=1}^n\EE\left[\mathbbm{1}(b_j<V)(V+\sqrt{3})\right]
    +\frac{n^2}{12}\EE\left[(V+\sqrt{3})^2\right]
 \\
 &=\frac{n^2}{2}-\frac{1}{2\sqrt{3}}\sum_{i,j=1}^n\max\{a_i,b_j\}-\frac{n^2}{8}
 +\frac{n}{24}\sum_{i=1}^n a_i^2 - \frac{n^2}{4}
 -\frac{n^2}{8}+\frac{n}{24}\sum_{j=1}^n b_j^2-\frac{n^2}{4}+\frac{n^2}{3}\\
  & = \frac{n^2}{12} + \frac{n}{24} \sum_{i=1}^n (a_i^2 +b_i^2) - \frac{1}{2\sqrt{3}} \sum_{i,j=1}^n \max \{a_i,b_j \}.
\end{align*}
Taking the square we obtain 
\begin{align*}
    CV_{AB}^2 &= \frac{n^4}{144} + \frac{n^2}{576} \left( \sum_{i=1}^n (a_i^2+ b_i^2) \right)^2 + \frac{1}{12} \left( \sum_{i,j} \max \{ a_i,b_j \} \right)^2\\
    &\quad+\frac{n^3}{144} \sum_{i=1}^n (a_i^2+ b_i^2) - \frac{n^2}{12 \sqrt{3}} \sum_{i,j} \max \{ a_i,b_j \}- \frac{n}{24 \sqrt{3}} \left( \sum_{i=1}^n (a_i^2+ b_i^2) \right) \left( \sum_{i,j} \max \{ a_i,b_j \} \right).
\end{align*}
We can then compute the expectation over balanced $A$ and $B$,
\begin{align*}
    \EE[CV_{AB}^2]&=
    \frac{n^4}{144} 
    +\frac{n^3}{288}\Big(\EE[a_1^4] + (n-1) \EE[a_1^2 a_2^2] +n \EE[a_1^2]^2\Big) 
    + \frac{n^2}{12} \Big( \EE[\max \{a_1,b_1 \}^2] \\
    &\qquad+ 2(n-1) \EE[ \max \{ a_1,b_1\} \max \{ a_1,b_2 \}] 
    + (n-1)^2 \EE[\max \{a_1,b_1 \} \max \{a_2,b_2 \}]\Big)\\
    &\qquad+ \frac{n^4}{72}  \EE[a_1^2] - \frac{n^4}{12 \sqrt{3} } \EE[\max\{a_1,b_1\}]  \\
    &\qquad- \frac{n^3}{12 \sqrt{3}}
    \Bigg( \EE[a_1^2 \max \{ a_1, b_1 \}] + (n-1) \EE [ a_1^2\max \{ a_2,b_1\}]\Big)\\
    &= \frac{n^4}{288} \Bigg( 2+ \EE[a_1^2a_2^2]+\EE[a_1^2]^2 
    + 24 \EE[\max \{a_1,b_1 \} \max \{a_2,b_2 \}]+4 \EE[a_1^2]   \\
    &\qquad- 8 \sqrt{3} \EE[\max\{a_1,b_1\}] - 8 \sqrt{3} \EE [ a_1^2\max \{ a_2,b_1\}] \Bigg)\\
    &\;+ \frac{n^3}{288}\Bigg(
    \EE[a_1^4]  - \EE[a_1^2a_2^2] + 48 \EE[ \max \{ a_1,b_1\} \max \{ a_1,b_2 \}] - 48 \EE[ \max \{ a_1,b_1\} \max \{ a_2,b_2 \}] \\
    &\qquad- 8 \sqrt{3} \EE[a_1^2 \max \{ a_1, b_1 \}] + 8 \sqrt{3} \EE [ a_1^2\max \{ a_2,b_1\}] \Bigg)\\
    &\;+ \frac{n^2}{12} \Bigg(\EE[\max \{a_1,b_1 \}^2] -2 \EE[ \max \{ a_1,b_1\} \max \{ a_1,b_2 \}] + \EE[ \max \{ a_1,b_1\} \max \{ a_2,b_2 \}] \Bigg)\;.
\end{align*}
All that remains is a mechanical substitution of estimates from Lemma~\ref{lem:calclemma}
verifying that indeed 
\begin{align*}
   \EE[CV_{AB}^2] = \frac{11}{12600}n^2 + \cO(n).
\end{align*}
In fact, all we need for Lemma~\ref{lem:cv-2nd-moment} is 
$\EE[CV_{AB}^2]>11n^2/12600-\cO(n)$ and to establish
that one only has to check the coefficient of $n^2$ (since 
$CV_{AB}^2\ge 0$, the leading coefficient must be positive, and the positive
leading coefficient by $n^3$ or $n^4$ would only asymptotically 
increase $\EE[CV_{AB}^2]$).
\end{proof}

\begin{proof}[Proof of Claim~\ref{cl:cva}]
Let us now turn to the simpler case of $\EE[CV_A^2]$. 
Again fixing a balanced $A$ and taking $V \sim \mathcal{U} [-\sqrt{3},\sqrt{3}]$, we get
\begin{align*}
      CV_{A}&= 
      \EE[g_A(V) V]= \sum_{i=1}^n \EE\left[ \left( \mathbbm{1}(a_i <V) - \frac{V+\sqrt{3}}{2\sqrt{3}}\right)\cdot V \right] 
      = \frac{3n}{4\sqrt{3}} - \frac{1}{4 \sqrt{3}} \sum_{i=1}^n a_i^2
      - \frac{n}{2\sqrt{3}}\\
      &=\frac{n}{4\sqrt{3}} - \frac{1}{4 \sqrt{3}} \sum_{i=1}^n a_i^2\;,
\end{align*}
whose square is 
\begin{align*}
    CV_A^2 = \frac{1}{48} \left( n^2 + \left( \sum_{i=1}^n a_i^2 \right)^2 -2n \sum_{i=1}^n a_i^2 \right).
\end{align*}
Taking the expectation over balanced $A$ and substituting from Lemma~\ref{lem:calclemma}
as in the previous proofs,
\begin{align*}
    \EE[CV_A^2]
    &=\frac{1}{48} 
    \Big( n^2 + n\EE[a_1^4]+n(n-1) \EE[a_1^2a_2^2]  -2n^2 \EE[a_1^2]\Big)\\
    &=\frac{n^2}{48}\Big( 1+ \EE[a_1^2a_2^2]-2\EE[a_1^2]\Big) 
    + \frac{n}{48} \Big( \EE[a_1^4] - \EE[a_1^2a_2^2] \Big)\\
    &=\frac{n}{60}+\cO(1)\;.\qedhere
\end{align*}
\end{proof}

\bibliographystyle{alpha}
\bibliography{arxiv_final}
  
\end{document}